\tikzset{font={\fontsize{10pt}{12}\selectfont}}
\numberwithin{equation}{section}
\newtheorem{thm}{Theorem}[section]
\newtheorem{prop}[thm]{Proposition}
\newtheorem{lem}[thm]{Lemma}
\newtheorem{cor}[thm]{Corollary}
\theoremstyle{definition}
\newtheorem{rem}[thm]{Remark}
\newtheorem{eg}[thm]{Example}
\renewcommand{\theequation}{\thesection.\arabic{equation}}
\newcommand{\beq}{\begin{equation}}
\newcommand{\eeq}{\end{equation}}
\newcommand{\C}{{\mathbb C}}
\newcommand{\Z}{{\mathbb Z}}
\newcommand{\R}{{\mathbb R}}
\newcommand{\bean}{\begin{eqnarray}}
\newcommand{\eean}{\end{eqnarray}}
\newcommand{\be}{\begin{displaymath}}
\newcommand{\ee}{\end{displaymath}}
\newcommand{\cB}{\mathcal{B}}
\newcommand{\cDB}{\mathcal{D^B}}
\newcommand{\cM}{\mathcal{M}}
\newcommand{\bP}{\mathbb{P}}
\newcommand{\rdet}{\mathrm{rdet}}
\newcommand{\bs}{\boldsymbol}
\newcommand{\g}{\mathfrak{g}}
\newcommand{\h}{\mathfrak{h}}
\newcommand{\n}{\mathfrak{n}}
\newcommand{\gl}{\mathfrak{gl}}
\newcommand{\sing}{\mathrm{sing}}
\newcommand{\sln}{\mathfrak{sl}}
\newcommand{\bLa}{{\boldsymbol \La}}
\newcommand{\Gr}{{\mathrm{Gr}}}
\newcommand{\Wr}{{\mathrm{Wr}}}
\newcommand{\la}{\lambda}
\newcommand{\La}{\Lambda}
\newcommand{\s}{\mathrm{s}}
\newcommand{\sGr}{\mathrm{sGr}}
\newcommand{\Ker}{\mathop{\rm Ker}}
\newcommand{\Tr}{{\rm Tr}}
\newcommand{\gge}{\geqslant}
\newcommand{\lle}{\leqslant}
\newcommand{\opg}{\mathrm{op}_{\g}}
\newcommand{\Opg}{\mathrm{Op}_{\g}}
\newcommand{\calpha}{\check{\alpha}}
\newcommand{\crho}{\check{\rho}}
\newcommand{\cmu}{\check{\mu}}
\newcommand{\cla}{\check{\lambda}}
\newcommand{\on}{\operatorname}
\newcommand{\mc}{\mathcal}
\newcommand{\tl}{\tilde}
\newcommand{\mb}{\mathbb}
\newcommand{\pa}{{\partial}}
\def\@eqnnum{{\normalfont \color{red} (\theequation)}}
\begin{document}
	\pagestyle{myheadings}
	
	\setcounter{page}{1}
	
	\title[Self-dual Grassmannian]{\hspace{45pt} Self-dual Grassmannian, Wronski map, \newline and representations of $\gl_N$, ${\mathfrak{sp}}_{2r}$, ${\mathfrak{so}}_{2r+1}$}
	
	\author{Kang Lu, E. Mukhin, and A. Varchenko}
	\address{K.L.: Department of Mathematical Sciences,
		Indiana University-Purdue University Indianapolis,
		402 N.Blackford St., LD 270,
		Indianapolis, IN 46202, USA}\email{lukang@iupui.edu}
	\address{E.M.: Department of Mathematical Sciences,
		Indiana University-Purdue University Indianapolis,
		402 N.Blackford St., LD 270,
		Indianapolis, IN 46202, USA}\email{emukhin@iupui.edu}
	\address{A.V.: Department of Mathematics, University of North Carolina at Chapel Hill,
		Chapel Hill, NC 27599-3250, USA}
	\email{anv@email.unc.edu}

\dedicatory{Dedicated to Yuri Ivanovich Manin on the occasion of his 80-th birthday}
	
\begin{abstract}
We define a $\gl_N$-stratification of the Grassmannian of $N$ planes $\Gr(N,d)$.  The $\gl_N$-stratification consists of strata $\Omega_{\bLa}$ labeled by unordered sets $\bLa=(\la^{(1)},\dots,\la^{(n)})$ of nonzero partitions with at most $N$ parts, satisfying a condition depending on $d$, and such that
$(\otimes_{i=1}^n V_{\la^{(i)}})^{\sln_N}\ne 0$.  
Here $V_{\la^{(i)}}$ is the irreducible $\gl_N$-module with highest weight $\la^{(i)}$. 
We show that the closure of a stratum $\Omega_{\bLa}$ is the union of the strata
$\Omega_{\bm\Xi}$,  $\bm\Xi=(\xi^{(1)},\dots,\xi^{(m)})$, such
 that there is a partition $\{I_1,\dots,I_m\}$ of  $\{1,2,\dots,n\}$ with
$	{\rm {Hom}}_{\gl_N} (V_{\xi^{(i)}}, \otimes_{j\in I_i}V_{\la^{(j)}}\big)\neq 0$ for $i=1,\dots,m$. The $\gl_N$-stratification of the Grassmannian agrees with the Wronski map. 

We introduce and study the new object: the self-dual Grassmannian
$\sGr(N,d)\subset \Gr(N,d)$. Our main result is a similar $\g_N$-stratification of the self-dual Grassmannian governed
by representation theory of the Lie algebra $\mathfrak g_{2r+1}:=\mathfrak{sp}_{2r}$ if $N=2r+1$ and of the Lie algebra $\mathfrak g_{2r}:=\mathfrak{so}_{2r+1}$ if $N=2r$. 

\end{abstract}

	\maketitle
	
	\section{Introduction}
The Grassmannian $\Gr(N,d)$ of $N$-dimensional subspaces of the complex $d$-dimensional vector space has the standard
stratification by Schubert cells $\Omega_\la$  labeled by partitions 
$\la=
(d-N\gge\la_1\gge\dots\gge \la_N\gge 0)$. A Schubert cycle is the closure of a cell $\Omega_\la$. It is well known that the Schubert cycle $\overline{\Omega}_\la$ is the union of the cells $\Omega_\xi$  such that
the Young diagram of $\la$ is inscribed into the Young diagram of $\xi$. This stratification depends on a choice of a full flag in the $d$-dimensional space. 

In this paper we introduce a new
stratification of $\Gr(N,d)$ governed by
representation theory of $\gl_N$ and called the {\it $\gl_N$-stratification}, see Theorem \ref{thm gr dec}.  The $\gl_N$-strata 
$\Omega_\bLa$ are
labeled by unordered sets $\bLa=(\la^{(1)},\dots,\la^{(n)})$ of nonzero partitions 
$\la^{(i)}=(d-N\gge\la^{(i)}_1\gge\dots\gge \la^{(i)}_N\gge 0)$ such that
\beq\label{1}
\quad
(\otimes_{i=1}^n V_{\la^{(i)}})^{\sln_N}\ne 0, \qquad \sum_{i=1}^n\sum_{j=1}^N\la^{(i)}_j=N(d-N),
\eeq
where $V_{\la^{(i)}}$ is the irreducible $\gl_N$-module with highest weight
$\la^{(i)}$. We have $\dim \Omega_\bLa = n$. We call the closure of a stratum $\Omega_\bLa$ in $\Gr(N,d)$ a {\it $\gl_N$-cycle}. The $\gl_N$-cycle $\overline{\Omega}_\bLa$ is an algebraic set in $\Gr(N,d)$. We show that $\overline{\Omega}_\bLa$ is the union of the strata
$\Omega_{ \bm\Xi}$,  $\bm\Xi=(\xi^{(1)},\dots,\xi^{(m)})$, such that there is a partition $\{I_1,\dots,I_m\}$ of  $\{1,2,\dots,n\}$ with
$	{ \rm {Hom}}_{\gl_N} (V_{\xi^{(i)}}, \otimes_{j\in I_i}V_{\la^{(j) }}\big)\neq 0$ for $i=1,\dots,m$, see Theorem \ref{thm A strata}.

Thus we have a partial order on the set of sequences of partitions satisfying \eqref{1}. Namely $\bLa\gge \bm \Xi$ if there is a partition $\{I_1,\dots,I_m\}$ of  $\{1,2,\dots,n\}$ with
$	{ \rm {Hom}}_{\gl_N} (V_{\xi^{(i)}}, \otimes_{j\in I_i}V_{\la^{(j) }}\big)\neq 0$ for $i=1,\dots,m$. An example of the corresponding graph is given in Example \ref{A ex}. The $\gl_N$-stratification can be viewed as the geometrization of this partial order.

Let us describe the construction of the strata in more detail. We identify the Grassmannian $\Gr(N,d)$ with the Grassmannian of 
$N$-dimensional subspaces of the $d$-dimensional space $\C_d[x]$
of polynomials in $x$ of degree less than $d$. In other words, we always assume that for $X\in\Gr(N,d)$, we have $X\subset \C_d[x]$. Set $\bP^1=\C\cup\{\infty\}$. Then, for any $z\in {\Bbb {P}}^1$, we have the osculating flag $\mc F(z)$, see  \eqref{F(inf)}, \eqref{F(z)}. Denote the 
Schubert cells corresponding to  $\mc F(z)$ by $\Omega_{\la}(\mc F(z))$. Then the stratum $\Omega_{\bs \La}$ consists of spaces $X\in\Gr(N,d)$ such that $X$ belongs to the intersection of  Schubert cells $\Omega_{\la^{(i)}}(\mc F(z_i))$ for some choice of distinct $z_i\in{\Bbb {P}}^1$:
$$
\Omega_{\bs \La}=\underset{\underset{z_i\neq z_j}{z_1,\dots,z_n}}{\bigcup} \Big(\bigcap_{i=1}^n \Omega_{\la^{(i)}}(\mc F(z_i))\Big)\subset \Gr(N,d).
$$
A stratum $\Omega_{\bLa}$ is a ramified covering over $({\Bbb {P}}^1)^n$ without diagonals quotient by the free action of an appropriate symmetric group, see Proposition \ref{prop deg of cover A}. The degree of the covering is $\dim (\otimes_{i=1}^n V_{\la^{(i)}})^{\sln_N}$. 

For example, if $N=1$, then $\Gr(1,d)$ is the $(d-1)$-dimensional projective space of the vector space
$\C_d[x]$. The strata $\Omega_{\bs m}$ are labeled by unordered sets $\bs m=(m_1,\dots,m_n)$ of positive integers
such that $m_1+\dots+m_n=d-1$.
A stratum $\Omega_{\bs m}$ consists of all polynomials $f(x)$ which have $n$ distinct zeros of multiplicities 
$m_1,\dots,m_n$. In this stratum we also include the polynomials of degree 
 $d-1-m_i$ with $n-1$ distinct roots of multiplicities
 $m_1,\dots,m_{i-1},m_{i+1},\dots ,m_n$. We interpret these polynomials as having a zero of multiplicity $m_i$ at infinity. 
The stratum $\Omega_{(1,\dots,1)}$ is open in $\Gr(1,d)$. The union of other strata is classically called the {\it swallowtail} and the $\gl_1$-stratification is the standard stratification of the swallowtail, see for example Section 2.5 of Part 1 of \cite{AGV}. 

\medskip 

The $\gl_N$-stratification of $\Gr(N,d)$ agrees with the Wronski map
$$\Wr : \Gr(N,d) \to \Gr(1, N(d-N)+1)$$ which sends an $N$-dimensional subspace of polynomials to its Wronskian
$\det (d^{i-1}f_j/dx^{i-1})_{i,j=1}^N$, where $f_1(x),\dots, f_N(x)$ is a basis of the subspace. 
For any $\gl_1$-stratum $\Omega_{\bs m}$ of 
$\Gr(1, N(d-N)+1)$, the preimage  of $\Omega_{\bs m}$ under the Wronski map is the union of $\gl_N$-strata of  $\Gr(N,d)$ and the restriction of the Wronski map to each of those
strata $\Omega_\bLa$  is a ramified covering over $\Omega_{\bs m}$ of degree $b(\bLa)\dim (\otimes_{i=1}^n V_{\la^{(i)}})^{\sln_N},$ where
$b(\bLa)$ is some combinatorial symmetry coefficient of $\bLa$, see \eqref{sym co}.

\medskip

The main goal of this paper is to develop a similar picture for the new object 
$\sGr(N,d)\subset \Gr(N,d)$, called {\it self-dual Grassmannian}. Let $X\in\Gr(N,d)$ be an $N$-dimensional subspace of polynomials in $x$.
Let $X^\vee$ be the $N$-dimensional space of polynomials which are Wronski determinants of $N-1$ elements of $X$:
$$
X^\vee=\{\det\left(d^{i-1}f_j/dx^{i-1}\right)_{i,j=1}^{N-1}, \  f_j(x)\in X \}.
$$
The space $X$ is called \emph{self-dual} if $X^\vee= g\cdot X$ for some polynomial $g(x)$, see \cite{MV}. We define $\sGr(N,d)$ as the subset of
$\Gr(N,d)$ of all self-dual spaces. It is an algebraic set.

The main result of this paper is the stratification of
 $\s\Gr(N,d)$ governed by representation theory of the Lie algebras
$\mathfrak g_{2r+1}:=\mathfrak{sp}_{2r}$ if $N=2r+1$ and $\mathfrak g_{2r}:=\mathfrak{so}_{2r+1}$ if $N=2r$.
This stratification of $\s\Gr(N,d)$ is called the {\it $\g_N$-stratification}, see Theorem \ref{thm sgr dec}.

The $\g_N$-stratification of $\s\Gr(N,d)$ consists of $\g_N$-strata $\s\Omega_{\bLa,\bs k}$ labeled by unordered sets
 of dominant integral $\g_N$-weights $\bLa=(\la^{(1)},\dots,\la^{(n)})$, equipped with
nonnegative integer labels $\bs k=(k_1,\dots,k_n)$, such that
$(\otimes_{i=1}^n V_{\la^{(i)}})^{\mathfrak g_N}\ne 0$ and satisfying a condition similar to the second equation in (\ref{1}), see Section  
\ref{sec prop strata}. Here $V_{\la^{(i)}}$ is the irreducible
$\g_N$-module with highest weight $\la^{(i)}$. Different liftings of an $\sln_N$-weight to a $\gl_N$-weight differ by a vector $(k,\dots,k)$ with integer $k$. Our label $k_i$ is an analog of this parameter in the case of $\g_N$. 

A $\g_N$-stratum $\s\Omega_{\bLa,\bm k}$ is a ramified covering over $({\Bbb {P}}^1)^n$ without diagonals quotient by the free action of an appropriate symmetric group. The degree of the covering is $\dim (\otimes_{i=1}^n V_{\la^{(i)}})^{\g_N}$ and, in particular,
$\dim \s\Omega_{\bLa,\bs k}=n$, see Proposition \ref{prop bij BC}. We call the closure of a stratum $\s\Omega_{\bLa,\bs k}$ in $\s\Gr(N,d)$ a {\it $\g_N$-cycle}. The $\g_N$-cycle $\overline{\s\Omega}_{\bLa,\bs k}$ is an algebraic set. We show that $\overline{\s\Omega}_{\bLa,\bs k}$ is the union of the strata
$\s\Omega_{ \bm\Xi,\bs l}$,  $\bm\Xi=(\xi^{(1)},\dots,\xi^{(m)})$, such that there is a partition $\{I_1,\dots,I_m\}$ of  $\{1,2,\dots,n\}$ satisfying
$	{ \rm {Hom}}_{\g_N} (V_{\xi^{(i)}}, \otimes_{j\in I_i}V_{\la^{(j) }}\big)\neq 0$ for $i=1,\dots,m$, and the appropriate matching of labels, see Theorem \ref{thm BC strata}.

If  $N=2r$, there is exactly one stratum of top dimension  $2(d-N)=\dim \s\Gr(N,d)$. 
For example, the $\mathfrak{so}_5$-stratification of $\sGr(4,6)$ consists of 9 strata of dimensions 4, 3, 3, 3, 2, 2, 2, 2, 1, see the graph of adjacencies in Example \ref{eg gl-strata}. If $N=2r+1$,  there are many strata of top dimension  $d-N$ 
(except in the trivial cases of $d=2r+1$ and $d=2r+2$). For example, the ${\mathfrak {sp}}_4$-stratification of $\sGr(5,8)$ 
has  four strata of dimension 3, see Section \ref{sec ex}. In all cases we have exactly one one-dimensional stratum corresponding to $n=1$, $\bs\La=(0)$, and $\bs k=(d-N)$.

\medskip

Essentially, we obtain the $\g_N$-stratification of $\s\Gr(N,d)$ by restricting the $\gl_N$-stratification of $\Gr(N,d)$ to $\s\Gr(N,d)$.

\medskip

For $X\in \sGr(N,d)$, the multiplicity of every zero of the Wronskian of $X$ is divisible by $r$ if $N=2r$ and by $N$ if $N=2r+1$. We define the reduced Wronski map $\overline\Wr: \sGr(N,d) \to \Gr(1, 2(d-N)+1)$ if $N=2r$ and $\overline\Wr: \sGr(N,d) \to \Gr(1, d-N+1)$ if $N=2r+1$ by sending $X$ to the $r$-th root of its Wronskian if $N=2r$ and to the $N$-th root
if $N=2r+1$. The $\g_N$-stratification of $\sGr(N,d)$ agrees with the reduced Wronski map and swallowtail $\gl_1$-stratification of
$\Gr(1, 2(d-N)+1)$ or $\Gr(1, d-N+1)$. For any $\gl_1$-stratum $\Omega_{\bs m}$ the preimage of $\Omega_{\bs m}$ under $\overline\Wr$ is the union
of $\g_N$-strata, see Proposition \ref{B preimage}, and the restriction of the reduced Wronski map to each of those
strata $\s\Omega_{\bLa,\bs k}$ is a ramified covering over $\Omega_{\bs m}$, see Proposition \ref{prop cov}.

\medskip

Our definition of the $\gl_N$-stratification is motivated by the connection to the Gaudin model of type $\rm A$, see Theorem \ref{thm bijection}. Similarly, our definition of the self-dual Grassmannian and of the $\g_N$-stratification is motivated by the connection to the Gaudin models of types $\rm B$ and $\rm C$, see Theorem \ref{bi rep sgr}.

It is interesting to study the geometry and topology of strata, cycles, and of self-dual Grassmannian, see Section \ref{sec ex}.

\medskip

The exposition of the material is as follows. 
In Section \ref{sec lie algs} we introduce the $\gl_N$ Bethe algebra. In Section \ref{sec gr} we describe the $\gl_N$-stratification 
of $\Gr(N,d)$.
In Section \ref{sec sgr} we define the $\g_N$-stratification of the self-dual Grassmannian $\sGr(N,d)$.
In  Section \ref{sec more notation} we recall the interrelations of the Lie algebras  $\sln_N$, $\mathfrak{so}_{2r+1}$, $\mathfrak{sp}_{2r}$.
In Section \ref{sec oper} we discuss $\g$-opers and their relations to self-dual spaces. Section \ref{sec proof} contains proofs of theorems formulated in Sections \ref{sec gr} and \ref{sec sgr}.
In Appendix A we describe the bijection between the self-dual spaces and the set of $\gl_N$ Bethe vectors fixed by the
Dynkin diagram automorphism of $\gl_N$.

\medskip

{\bf Acknowledgments.}
The authors thank V. Chari, A. Gabrielov, and L. Rybnikov for useful discussions. A.V. was supported in part by NSF grants DMS-1362924, 
DMS-1665239, and Simons Foundation grant \#336826. E.M. was supported in part by Simons Foundation grant \#353831.

\section{Lie algebras}\label{sec lie algs}
\subsection{Lie algebra $\gl_N$}
Let $e_{ij}$, $i,j=1,\dots,N$, be the standard generators of the Lie algebra $\gl_N$, satisfying the relations $[e_{ij},e_{sk}]=\delta_{js}e_{ik}-\delta_{ik}e_{sj}$. We identify the Lie algebra $\sln_N$ with the subalgebra of $\gl_N$ generated by the elements $e_{ii}-e_{jj}$ and $e_{ij}$ for $i\ne j$, $i,j=1,\dots,N$.
	
Let $M$ be a $\gl_N$-module. A vector $v\in M$ has weight $\la=(\la_1,\dots,\la_N)\in\C^N$ if $e_{ii}v=\la_iv$ for $i=1,\dots,N$. A vector $v$ is called \emph{singular} if $e_{ij}v=0$ for $1\lle i<j\lle N$.
	
We denote by $(M)_{\la}$ the subspace of $M$ of weight $\la$, by $(M)^\sing$ the subspace of $M$ of all singular vectors and by $(M)_\la^\sing$ the subspace of $M$ of all singular vectors of weight $\la$.
	
Denote by $V_{\la}$ the irreducible $\gl_N$-module with highest weight $\la$. 
	
The $\gl_N$-module $V_{(1,0,\dots,0)}$ is the standard $N$-dimensional vector representation of $\gl_N$, which we denote by $L$.
	
A sequence of integers $\la = (\la_1,\dots,\la_N)$ such that $\la_1\gge\la_2\gge\dots\gge\la_N\gge 0$ is called
\emph{a partition with at most $N$ parts}. Set $|\la|=\sum_{i=1}^N\la_i$. Then it is said that $\la$ is a partition of $|\la|$. The $\gl_N$-module $L^{\otimes n}$ contains the module $V_{\la}$ if and only if $\la$ is a partition of $n$ with at most $N$ parts.
	
Let $\la,\mu$ be partitions with at most $N$ parts. We write $\la\subseteq\mu$ if and only if $\la_i\lle \mu_i$ for $i=1,\dots,N$.

\subsection{Simple Lie algebras}\label{sec sla}
Let $\g$ be a simple Lie algebra over $\C$ with Cartan matrix $A=(a_{i,j})_{i,j=1}^r$. Let $D=\mathrm{diag}\{d_1,\dots,d_r\}$ be the diagonal matrix with positive relatively prime integers $d_i$ such that $DA$ is symmetric.
	
Let $\h\subset\g$ be the Cartan subalgebra and let $\g=\mathfrak n_-\oplus\h\oplus \mathfrak n_+$ be the Cartan decomposition. Fix simple roots $\alpha_1,\dots,\alpha_r$ in $\h^*$. Let $\calpha_1,\dots,\calpha_r\in \h$ be the corresponding coroots. Fix a nondegenerate invariant bilinear form $(,)$ in $\g$ such that $(\calpha_i,\calpha_j)=a_{i,j}/d_j$. The corresponding invariant bilinear form in $\h^*$ is given by $(\alpha_i,\alpha_j)=d_ia_{i,j}$.
We have $\langle \lambda,\calpha_i\rangle=2(\lambda,\alpha_i)/(\alpha_i,\alpha_i)$ for $\lambda\in\h^*$. In particular, $\langle\alpha_j,\calpha_i\rangle=a_{i,j}$. Let $\omega_1,\dots,\omega_r\in\h^*$ be the fundamental weights, $\langle \omega_j,\calpha_i\rangle=\delta_{i,j}$.
	
Let $\mathcal P=\{\la\in\mathfrak h^*|\langle \la,\calpha_i\rangle\in\mathbb Z,\ i=1,\dots,r\}$ and $\mathcal P^+=\{\la\in\mathfrak h^*|\langle \la,\calpha_i\rangle\in\mathbb Z_{\gge 0},\ i=1,\dots,r\}$ be the weight lattice and the cone of dominant integral weights.
	
For $\la\in {\mathfrak h}^*$, let $V_\la$ be the irreducible $\g$-module with highest weight $\la$. We denote $\langle \la,\calpha_i\rangle$ by $\lambda_i$ and sometimes write $(\la_1,\la_2,\dots,\la_r)$ for $\la$.
	
Let $M$ be a $\g$-module. Let $(M)^\mathrm{sing}=\{v\in M~|~\n_+v=0\}$ be the subspace of singular vectors in $M$. For $\mu\in\h^*$ let $(M)_\mu=\{v\in M~|~hv=\mu(h)v,\text{ for all }h\in\h\}$  be the subspace of $M$ of vectors of weight $\mu$. Let $(M)_\mu^\mathrm{sing}=M^\mathrm{sing}\cap (M)_\mu$ be the subspace of singular vectors in $M$ of weight $\mu$.
	
Given a $\g$-module $M$, denote by $(M)^{\g}$ the subspace of $\g$-invariants in $M$. The subspace $(M)^{\g}$ is the multiplicity space of the trivial $\g$-module in $M$. The following facts are well known. Let $\la$, $\mu$ be partitions with at most $N$ parts, $\dim(V_{\la}\otimes V_{\mu})^{\sln_N}=1$ if $\la_i=k-\mu_{N+1-i}$, $i=1,\dots,N$, for some integer $k\gge \mu_1$ and $0$ otherwise. Let $\la$, $\mu$ be $\g$-weights, $\dim(V_{\la}\otimes V_{\mu})^{\g}=\delta_{\la,\mu}$ for $\g=\mathfrak{so}_{2r+1},\mathfrak{sp}_{2r}$.
	
For any Lie algebra $\g$, denote by $\mc U(\g)$ the universal enveloping algebra of $\g$.
	
\subsection{Current algebra $\g[t]$}Let $\g[t] = \g\otimes\C[t]$ be the Lie algebra of $\g$-valued polynomials with the pointwise commutator. We call it the \emph{current algebra} of $\g$. We identify the Lie algebra $\g$ with the subalgebra $\g\otimes 1$ of constant polynomials in $\g[t]$. Hence, any $\g[t]$-module has the canonical structure of a $\g$-module.
	
The standard generators of $\gl_N[t]$ are $e_{ij}\otimes t^p$, $i,j=1,\dots,N$, $p\in \Z_{\gge 0}$. They satisfy the relations $[e_{ij}\otimes t^p,e_{sk}\otimes t^q]=\delta_{js}e_{ik}\otimes t^{p+q}-\delta_{ik}e_{sj}\otimes t^{p+q}$.
	
It is convenient to collect elements of $\g[t]$ in generating series of a formal variable $x$. For $g\in \g$, set
\beq\label{eq generating series}
g(x)=\sum_{s=0}^\infty (g\otimes t^s)x^{-s-1}.
\eeq
For $\gl_N[t]$ we have $(x_2-x_1)[e_{ij}(x_1),e_{sk}(x_2)]=\delta_{js}(e_{ik}(x_1)-e_{ik}(x_2))-\delta_{ik}(e_{sj}(x_1)-e_{sj}(x_2))$.
	
For each $a\in\C$, there exists an automorphism $\tau_a$ of $\g[t]$, $\tau_a:g(x)\to g(x-a)$. Given a $\g[t]$-module $M$, we denote by $M(a)$ the pull-back of $M$ through the automorphism $\tau_a$. As $\g$-modules, $M$ and $M(a)$ are isomorphic by the identity map.
	
We have the evaluation homomorphism, $\mathrm{ev}: \g[t]\to \g$, $\mathrm{ev} : g(x)\to g x^{-1}$. Its restriction to the subalgebra $\g\subset \g[t]$ is the identity map. For any $\g$-module $M$, we denote by the same letter the $\g[t]$-module, obtained by pulling $M$ back through the evaluation homomorphism. For each $a\in\C$, the $\g[t]$-module $M(a)$ is called an \emph{evaluation module}.
	
For $\g=\sln_N$, ${\mathfrak{sp}}_{2r}$, ${\mathfrak{so}}_{2r+1}$, it is well known that finite-dimensional irreducible $\g[t]$-modules are tensor products of evaluation modules $V_{\la^{(1)}}(z_1)\otimes\dots\otimes V_{\la^{(n)}}(z_n)$ with dominant integral $\g$-weights $\la^{(1)},\dots,\la^{(n)}$ and distinct evaluation parameters $z_1,\dots,z_n$.
	
\subsection{Bethe algebra}\label{sec: glN bethe}
Let $S_l$ be the permutation group of the set $\{1,\dots,l\}$. Given an $N \times N$ matrix $B$ with possibly noncommuting entries $b_{ij}$, we define its \emph{row determinant} to be
\[\rdet~B=\sum_{\sigma\in S_N}(-1)^{\sigma}b_{1\sigma(1)}b_{2\sigma(2)}\dots b_{N\sigma(N)}.\]
	
Define \emph{the universal differential operator} $\cDB$ by
	\beq\label{eq rdet}
	\cDB=\rdet(\delta_{ij}\pa_x-e_{ji}(x))_{i,j=1}^N.
	\eeq
	It is a differential operator in variable $x$, whose coefficients are formal power series in $x^{-1}$ with coefficients in $\mc U(\gl_N[t])$,
	\beq\label{eq rdet expanded}
	\cDB=\pa_x^N+\sum_{i=1}^N B_i(x)\pa_x^{N-i},
	\eeq
	where
	\[B_i(x)=\sum_{j=i}^{\infty}B_{ij}x^{-j}\]
	and $B_{ij}\in \mc U(\gl_N[t])$, $i=1,\dots,N$, $j\in\Z_{\gge i}$. We call the unital subalgebra of $\mc U(\gl_N[t])$
	generated by $B_{ij}\in \mc U(\gl_N[t])$, $i=1,\dots,N$, $j\in\Z_{\gge i}$, the \emph{Bethe algebra} of $\gl_N$ and denote it by $\cB$.
	
	The Bethe algebra $\cB$ is commutative and commutes with the subalgebra $\mc U(\gl_N)\subset \mc U(\gl_N[t])$, see \cite{T}. As a subalgebra of $\mc U(\gl_N[t])$, the algebra $\cB$ acts on any $\gl_N[t]$-module $M$. Since $\cB$ commutes with $\mc U(\gl_N)$, it preserves the subspace of singular vectors $(M)^\sing$ as well as weight subspaces of $M$. Therefore, the subspace $(M)^{\sing}_{\la}$ is $\cB$-invariant for any
	weight $\la$.
	
	\medskip
	
	We denote $M(\infty)$ the $\gl_N$-module $M$ with the trivial action of the Bethe algebra $\mc B$. More generally,
	for a $\gl_N[t]$-module $M'$, we denote by $M'\otimes M(\infty)$ the $\gl_N$-module where we define the action of $\mc B$ so that it acts trivially on $M(\infty)$. Namely, the element $b\in\mc B$ acts on $M'\otimes M(\infty)$ by $b\otimes 1$.
    
	Note that for $a\in \C$ and $\gl_N$-module $M$, the action of $e_{ij}(x)$ on $M(a)$ is given by $e_{ij}/(x-a)$ on $M$. Therefore, 
	the action of series $B_i(x)$ on the module  $M'\otimes M(\infty)$ is the limit of the action of the series $B_i(x)$ on the module $M'\otimes M(z)$ as $z\to \infty$ in the sense of rational functions of $x$. However, such a limit of the action of coefficients $B_{ij}$ on the module $M'\otimes M(z)$ as $z\to \infty$ does not exist.

	Let $M=V_\la$ be an irreducible $\gl_N$-module and let $M'$ be an irreducible finite-dimensional $\gl_N[t]$-module. Let $c$ be the value of the $\sum_{i=1}^Ne_{ii}$ action on $M'$.	
	
	\begin{lem}\label{nonzero weight}
	We have an isomorphism of vector spaces: 
	$$\pi:\ (M'\otimes V_{\la})^{\sln_N}\to (M')^{\sing}_{\bar\la}, \ {\rm where}\ \bar\la_i=\frac{c+|\la|}{N}-\la_{N+1-i},$$ 
	given by the projection to a lowest weight vector in $V_{\la}$.  
	The map $\pi$ is an isomorphism of $\mc B$-modules $(M'\otimes V_{\la}(\infty))^{\sln_N}\to (M')^{\sing}_{\bar\la}$. \qed
	\end{lem}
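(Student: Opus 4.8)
The plan is to realize the projection $\pi$ as the concrete incarnation of the standard chain of isomorphisms relating invariants, homomorphism spaces, and singular vectors, and then to observe that the $\cB$-equivariance is automatic because $\cB$ acts only on the factor $M'$. First I would record that $\sum_{i=1}^N e_{ii}$ is \emph{central} in $\gl_N[t]$ (its bracket with every $e_{jk}\otimes t^p$ vanishes), so it acts on the irreducible module $M'$ by a well-defined scalar $c$, making the quantity $c$ in the statement meaningful. Consequently $\sum_i e_{ii}$ acts on $M'\otimes V_{\la}$ by $c+|\la|$, so any $\sln_N$-invariant necessarily has $\gl_N$-weight $\big(\tfrac{c+|\la|}{N},\dots,\tfrac{c+|\la|}{N}\big)$.

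For the vector-space statement I would use the canonical isomorphisms
$$(M'\otimes V_{\la})^{\sln_N}\ \cong\ \Hom_{\sln_N}(V_{\la}^*,M')\ \cong\ (M')^{\sing}_{\bar\la},$$
where the first sends $\phi$ to $w_\phi=\sum_k \phi(u_k^*)\otimes u_k$ (the image of the coevaluation, $\{u_k\}$ a weight basis of $V_{\la}$ and $\{u_k^*\}$ the dual basis), and the second sends $\phi$ to the value of $\phi$ on the highest weight vector of $V_{\la}^*$. Choosing $u_1$ to be the lowest weight vector $v_{\mathrm{low}}$ of $V_{\la}$, its dual $u_1^*$ is exactly the highest weight vector of $V_{\la}^*$, so the composite is $\phi\mapsto \phi(u_1^*)=$ the coefficient of $v_{\mathrm{low}}$ in $w_\phi$, i.e.\ precisely the projection $\pi$. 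Since both displayed maps are isomorphisms, so is $\pi$.

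It remains to pin down the target weight and to check singularity intrinsically. The lowest weight of $V_{\la}$ is $w_0\la=(\la_N,\dots,\la_1)$, so in order that $m\otimes v_{\mathrm{low}}$ contribute to total $\gl_N$-weight $\big(\tfrac{c+|\la|}{N},\dots\big)$ the extracted element $m=\pi(w)$ must have weight $\bar\la_i=\tfrac{c+|\la|}{N}-\la_{N+1-i}$, as claimed (one checks $\sum_i\bar\la_i=c$, consistent with the central character of $M'$). To see $\pi(w)\in(M')^\sing$ directly, I would project the relation $e_{ij}w=0$ (for $i<j$) onto the line $M'\otimes\C v_{\mathrm{low}}$: because $v_{\mathrm{low}}$ realizes the minimal weight of $V_{\la}$, no raising operator $e_{ij}$ can produce a $v_{\mathrm{low}}$-component in $V_{\la}$, hence this projection equals $e_{ij}\,\pi(w)$, giving $\n_+\pi(w)=0$.

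Finally, for the $\cB$-module claim I would invoke the definition of $M'\otimes V_{\la}(\infty)$: each $b\in\cB$ acts by $b\otimes 1$, that is, only on the factor $M'$. Since $\pi$ extracts the $M'$-coefficient at the \emph{fixed} vector $v_{\mathrm{low}}$, it manifestly commutes with $b\otimes 1$, so $\pi(bw)=b\,\pi(w)$; combined with the bijection above this yields the asserted isomorphism of $\cB$-modules. In this argument nothing is deep: the only step requiring genuine care is the weight bookkeeping that matches the concrete lowest-weight projection to the abstract $\sln_N$-to-$\gl_N$ central-character lift and produces exactly the weight $\bar\la$, whereas the $\cB$-equivariance is purely formal once the action $b\otimes 1$ is used.
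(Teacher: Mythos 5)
Your proof is correct. Note that the paper offers no proof of this lemma at all --- it is stated with a \qed\ as a standard fact --- so there is no argument of the authors to compare against; yours is the natural one they presumably had in mind. All the key points check out: $\sum_{i}e_{ii}\otimes 1$ is indeed central in $\gl_N[t]$, so $c$ is a well-defined scalar by Schur's lemma (this is why the paper can speak of ``the value of the $\sum_i e_{ii}$ action on $M'$''); the chain $(M'\otimes V_{\la})^{\sln_N}\cong\Hom_{\sln_N}(V_{\la}^*,M')\cong (M')^{\sing}_{\bar\la}$ is valid since $V_\la$ is finite-dimensional and $M'$ is completely reducible over $\sln_N$; the identification of the composite with the lowest-weight-vector projection is right, because the dual of the lowest weight vector is the highest weight vector of $V_\la^*$ and no raising operator can create a $v_{\mathrm{low}}$-component; and the weight bookkeeping ($\bar\la_i+\la_{N+1-i}=(c+|\la|)/N$, $\sum_i\bar\la_i=c$) matches the statement, with the central character of $M'$ ensuring that the $\sln_N$-weight determines the $\gl_N$-weight $\bar\la$ uniquely. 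One point you use implicitly and could state: the source $(M'\otimes V_{\la}(\infty))^{\sln_N}$ is actually $\cB$-stable, which follows since $\cB$ commutes with $\mc U(\gl_N)$ (recorded in the paper just before the lemma), so $[b\otimes 1,\,g\otimes 1+1\otimes g]=0$ for $g\in\sln_N$; with that noted, your observation that $\pi$ trivially intertwines $b\otimes 1$ with $b$ completes the $\cB$-module claim.
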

	
	\medskip
	
	Consider $\bP^1:=\C\cup\{\infty\}$. Set
	\[
	{\mathring{\bP}}_n:=\{\bm z=(z_1,\dots,z_n)\in (\mb{P}^1)^n~|~z_i\ne z_j\ \text{ for }\ 1\lle i<j\lle n\},
	\]
	\[
	\R{\mathring{\bP}}_n:=\{\bm z=(z_1,\dots,z_n)\in {\mathring{\bP}}_n~|~z_i\in\R \text{ or }z_i=\infty,\ \text{ for }\ 1\lle i\lle n\}.
	\]
	
We are interested in the action of the Bethe algebra $\cB$ on the tensor product $\bigotimes_{s=1}^n V_{ \la^{(s)}}(z_s)$, where
$\bLa=(\la^{(1)},\dots,\la^{(n)})$ is a sequence of partitions with at most $N$ parts and $\bm z=(z_1,\dots,z_n)\in {\mathring{\bP}}_n$. By Lemma \ref{nonzero weight}, it is sufficient to consider spaces of invariants $(\bigotimes_{s=1}^n V_{\la^{(s)}}(z_s))^{\sln_N}$. For brevity, we write $V_{\bLa,\bm z}$ for the $\mc B$-module $\bigotimes_{s=1}^n V_{ \la^{(s)}}(z_s)$ and $V_{\bLa}$ for the $\gl_N$-module $\bigotimes_{s=1}^n V_{\la^{(s)}}$.
	
	Let $v\in V_{\bLa,\bm z}$ be a common eigenvector of the Bethe algebra $\mc B$, $B_i(x)v = h_i(x)v$, $i=1,\dots,N$. Then	we call the scalar differential operator
	\[
	\mathcal D_v=\pa_x^N+\sum_{i=1}^Nh_i(x)\pa_x^{N-i}
	\]
	the \emph{differential operator associated with the eigenvector $v$.}
	
	\section{The $\gl_N$-stratification of Grassmannian}\label{sec gr}
	Let $N$, $d\in \Z_{>0}$ such that $N\lle d$.
	\subsection{Schubert cells}\label{sec schubert}
	Let $\C_d[x]$ be the space of polynomials in $x$ with complex coefficients of degree less than $d$. We have $\dim \C_d[x] = d$. Let $\mathrm{Gr}(N, d)$ be the Grassmannian of all $N$-dimensional subspaces in $\C_d[x]$. The Grassmannian $\Gr(N,d)$ is a
	smooth projective complex variety of dimension $N(d-N)$. 
	
	Let $\R_d[x]\subset\C_d[x]$ be the space of polynomials in $x$ with real coefficients of degree less than $d$. Let $\Gr^\R(N,d)\subset \Gr(N,d)$ be the set of subspaces which have a basis consisting of polynomials with real coefficients. For $X\in\Gr(N,d)$ we have 	$X\in\Gr^\R(N,d)$ if and only if $\dim_\R(X\cap \R_d[x])=N$. We call such points $X$ {\it real}.

	For a full flag $\mc F=\{0\subset F_1\subset F_2\subset \dots\subset F_d=\C_{d}[x]\}$ and a partition $\la=(\la_1,\dots,\la_N)$ such that $\la_1\lle d-N$, the Schubert cell $\Omega_{\la}(\mc F)\subset\Gr(N,d)$ is given by
	\begin{align*}
	\Omega_{\la}(\mc F)=\{X\in\Gr(N,d)~|~&\dim(X\cap F_{d-j-\la_{N-j}})=N-j,\\~&\dim(X\cap F_{d-j-\la_{N-j}-1})=N-j-1\}.
	\end{align*} 
	We have $\mathrm{codim}~\Omega_{\la}(\mc F)=|\la|$.
	
	The Schubert cell decomposition associated to a full flag $\mc F$, see for example \cite{GH}, is given by
	\beq\label{eq grass decom}
	\Gr(N,d)=\bigsqcup_{\la,\ \la_1\lle d-N}\Omega_{\la}(\mc F).
	\eeq
	The Schubert cycle  $\overline{\Omega}_{\la}(\mc F)$ is the closure of a Schubert cell $\Omega_{\la}(\mc F)$ in the Grassmannian $\Gr(N,d)$. Schubert cycles are algebraic sets with very rich geometry and topology.
	It is well known that Schubert cycle  $\overline{\Omega}_{\la}(\mc F)$  is described by the formula
	\beq\label{Omega bar}
	 \overline{\Omega}_{\la}(\mc F)=\bigsqcup_{\substack{\la\subseteq\mu,\\ \mu_1\lle d-N}}\Omega_{\mu}(\mc F).
	\eeq
	
	Given a partition $\la = (\la_1,\dots,\la_N)$ such that $\la_1\lle d-N$, introduce
	\iffalse a set and
	\[P=\{d_1,d_2,\dots,d_N\},\quad d_i=\la_i+N-i,\] \fi
	a new partition
	\[
	\bar\la=(d-N-\la_N,d-N-\la_{N-1},\dots,d-N-\la_1).
	\]
	We have $|\la|+|\bar\la|=N(d-N)$.
	
	Let $\mc F(\infty)$ be the full flag given by
	\begin{align}\label{F(inf)}
	\mc F(\infty)=\{0\subset \C_1[x]\subset\C_2[x]\subset\dots\subset\C_d[x]\}.
	\end{align}

	The subspace $X$ is a point of $\Omega_{\la}(\mc F(\infty))$ if and only if for every $i=1,\dots,N$, it
	contains a polynomial of degree $\bar\la_i+N-i$. %We have $\dim\Omega_{\la}(\mc F(\infty))=|\bar\la|$.
	
	For $z\in\C$, consider the full flag
	\begin{align}\label{F(z)}
	\mc F(z)=\{0\subset (x-z)^{d-1}\C_1[x]\subset(x-z)^{d-2}\C_2[x]\subset\dots\subset\C_d[x]\}.
	\end{align}
	
	The subspace $X$ is a point of $\Omega_{\la}(\mc F(z))$ if and only if for every $i=1,\dots,N$, it
	contains a polynomial with a root at $z$ of order $\la_i+N-i$.
	
	A point $z\in\C$ is called a {\it base point} for a subspace $X\subset \C_d[x]$ if $g(z)=0$ for every $g\in X$.
	\subsection{Intersection of Schubert cells}
	Let $\bLa=(\la^{(1)},\dots,\la^{(n)})$ be a sequence of partitions with at most $N$ parts and $\bm z=(z_1,\dots,z_n)\in {\mathring{\bP}}_n$. Set $|\bLa|=\sum_{s=1}^n|\la^{(s)}|$. 
	
	The following lemma is elementary.

\begin{lem}
	If $\dim(V_\bLa)^{\sln_N}>0$, then $|\bLa|$ is divisible by $N$. Suppose further $|\bLa|=N(d-N)$, then $\la_1^{(s)}\lle d-N$ for $s=1,\dots,n$.\qed
\end{lem}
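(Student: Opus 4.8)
The plan is to derive both assertions from the action of the central element $C=\sum_{i=1}^N e_{ii}$ of $\gl_N$, together with the two-factor duality recalled in Section~\ref{sec sla}. The underlying observation is that $V_\bLa=\bigotimes_{s=1}^n V_{\la^{(s)}}$ is a polynomial representation, so all of its $\gl_N$-weights lie in $\Z_{\gge 0}^N$, and $C$ acts on all of $V_\bLa$ by the single scalar $|\bLa|$.

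For the first assertion, I would argue as follows. Any nonzero $v\in(V_\bLa)^{\sln_N}$ is killed by every $e_{ii}-e_{jj}\in\sln_N$, hence lies in the span of the $\gl_N$-weight spaces whose weights have all coordinates equal; such a weight has the form $(w,\dots,w)$ with $w\in\Z_{\gge 0}$, since the weights of $V_\bLa$ are integral. Applying $C$ gives $|\bLa|\,v=Cv=Nw\,v$, so $|\bLa|=Nw$ is divisible by $N$.

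For the second assertion, fix $s$ and write $V_\bLa=V_{\la^{(s)}}\otimes W$ with $W=\bigotimes_{t\ne s}V_{\la^{(t)}}$. I would decompose $W$ into polynomial irreducibles $V_\mu$; since $C$ acts on $W$ by $|\bLa|-|\la^{(s)}|$, every constituent satisfies $|\mu|=|\bLa|-|\la^{(s)}|=N(d-N)-|\la^{(s)}|$. Because $(V_{\la^{(s)}}\otimes W)^{\sln_N}\neq 0$, the decomposition $(V_{\la^{(s)}}\otimes W)^{\sln_N}=\bigoplus_\mu \big(V_{\la^{(s)}}\otimes V_\mu\big)^{\sln_N}$ (with multiplicities) shows that some constituent $V_\mu$ must satisfy $(V_{\la^{(s)}}\otimes V_\mu)^{\sln_N}\neq 0$. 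By the two-factor fact this means $\la^{(s)}_i=k-\mu_{N+1-i}$ for all $i$ and some integer $k\gge\mu_1$. Summing over $i$ gives $|\la^{(s)}|=Nk-|\mu|$, and comparing with $|\mu|=N(d-N)-|\la^{(s)}|$ forces $k=d-N$. Taking $i=1$ then yields $\la^{(s)}_1=k-\mu_N\lle k=d-N$, as claimed.

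The argument is essentially immediate once the bookkeeping is set up; the only point requiring care is the use of the central (determinant) character in two roles at once—first to extract divisibility in part one, and then to pin the width of the complementary box to $k=d-N$ in part two. A secondary check is that passing from the $n$-fold invariant space to a single pair $(\la^{(s)},\mu)$ is legitimate, which follows from decomposing $W$ into $\gl_N$-irreducibles and noting that the space of $\sln_N$-invariants of $V_{\la^{(s)}}\otimes W$ splits as a direct sum over these constituents.
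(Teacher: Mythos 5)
Your proof is correct. The paper states this lemma with a \qed{} and no argument (it is flagged as elementary just before its statement), so there is no proof of record to diverge from; your write-up is precisely the kind of argument being taken for granted. Both steps are sound: the central element $C=\sum_{i=1}^N e_{ii}$ acts on $V_\bLa$ by the scalar $|\bLa|$, any $\sln_N$-invariant vector is supported on weights of the form $(w,\dots,w)$, and so $|\bLa|=Nw$; and your use of the two-factor duality recalled in Section 2.2, after decomposing $W=\bigotimes_{t\ne s}V_{\la^{(t)}}$ into polynomial constituents $V_\mu$ (all with $|\mu|=N(d-N)-|\la^{(s)}|$ by the same central character), correctly pins $k=d-N$ and gives $\la^{(s)}_1=k-\mu_N\lle d-N$. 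A marginally shorter packaging of your second step: the first step shows the trivial $\sln_N$-constituent of $V_\bLa$ is the $\gl_N$-module $V_{(d-N,\dots,d-N)}$ (the $(d-N)$-th power of the determinant character), and since this constituent occurs in $V_{\la^{(s)}}\otimes W$, the Littlewood--Richardson rule forces $\la^{(s)}\subseteq (d-N,\dots,d-N)$, i.e.\ $\la^{(s)}_1\lle d-N$ --- but this is the same mechanism your duality argument encodes, not a genuinely different route.
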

	
	Assuming $|\bLa|=N(d-N)$, denote by $\Omega_{\bLa,\bs z}$ the intersection of the Schubert cells:
	\beq\label{Omega}
	\Omega_{\bLa,\bs z}= \bigcap_{s=1}^n\Omega_{\la^{(s)}}(\mc F(z_s)).
	\eeq
	Note that due to our assumption, $\Omega_{\bLa,\bs z}$ is a finite subset of $\Gr(N,d)$. Note also that $\Omega_{\bLa,\bs z}$ is non-empty if and only if $\dim(V_\bLa)^{\sln_N}>0$.

	\begin{thm}\label{thm bijection}
		Suppose $\dim(V_\bLa)^{\sln_N}>0$. Let $v\in (V_{\bLa,\bm z})^{\sln_N}$ be an eigenvector of the Bethe algebra $\cB$. Then $\Ker\mathcal D_v\in\Omega_{\bLa,\bm z}$. Moreover, the assignment $\kappa:v\mapsto \Ker\mathcal D_v$ is a bijective correspondence between the set of eigenvectors of the Bethe algebra in $(V_{\bLa,\bm z})^{\sln_N}$ (considered up to multiplication by nonzero scalars) and the set $\Omega_{\bLa,\bm z}$.
	\end{thm}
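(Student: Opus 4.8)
The plan is to realize $\mathcal D_v$ as a Fuchsian scalar differential operator on $\bP^1$ and then to exploit the bijective correspondence between monic order-$N$ differential operators and their $N$-dimensional kernels. The first step is to compute the eigenvalues $h_i(x)$. Since $V_{\bLa,\bm z}=\bigotimes_{s=1}^n V_{\la^{(s)}}(z_s)$ is a tensor product of evaluation modules, each series $e_{ji}(x)$ acts as $\sum_{s=1}^n e_{ji}^{(s)}/(x-z_s)$, where $e_{ji}^{(s)}$ denotes the copy of $e_{ji}$ acting on the $s$-th factor. Hence every coefficient $h_i(x)$ of $\mathcal D_v$ is a rational function of $x$ with poles only in $\{z_1,\dots,z_n\}$ and $h_i(x)=O(x^{-i})$ as $x\to\infty$, so $\mathcal D_v$ is Fuchsian with regular singular points contained in $\{z_1,\dots,z_n,\infty\}$.

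To see that $\kappa$ is well defined, that is $\Ker\mathcal D_v\in\Omega_{\bLa,\bm z}$, I would analyze the indicial (exponent) data. A local computation at a finite point $z_s$, using that the $s$-th factor is the irreducible module $V_{\la^{(s)}}$ and that $v$ is singular of the weight dictated by Lemma~\ref{nonzero weight}, shows that the exponents of $\mathcal D_v$ at $z_s$ are exactly the $N$ distinct nonnegative integers $\{\la^{(s)}_i+N-i:i=1,\dots,N\}$. At $\infty$ the indicial equation, governed by the total $\gl_N$-weight $(d-N,\dots,d-N)$ of $v$ (forced by $\sln_N$-invariance together with $|\bLa|=N(d-N)$), yields exponents corresponding to degrees $\{d-N,d-N+1,\dots,d-1\}$. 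Granting polynomiality of the kernel (next paragraph), these exponents translate verbatim into the defining conditions: at each $z_s$ the space $X=\Ker\mathcal D_v$ contains a polynomial with a root at $z_s$ of order $\la^{(s)}_i+N-i$ for every $i$, which is precisely membership in $\Omega_{\la^{(s)}}(\mc F(z_s))$, while the exponents at $\infty$ give degree $<d$, that is $X\subset\C_d[x]$ and $X\in\Gr(N,d)$. Intersecting over $s$ gives $X\in\Omega_{\bLa,\bm z}$.

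The polynomiality of $\Ker\mathcal D_v$ is the first genuinely nontrivial point, since a priori the solutions at a regular singular point may be multivalued or carry logarithms. One must show that $\mathcal D_v$ has trivial monodromy and no logarithmic solutions; combined with the integer exponents, this makes every solution single valued and meromorphic on $\bP^1$, with nonnegative order at each $z_s$ and bounded degree at $\infty$, hence polynomial of degree $<d$. I would obtain this from the representation-theoretic origin of $\mathcal D_v$, either by using the Bethe ansatz construction of $v$, which exhibits an explicit polynomial basis of $\Ker\mathcal D_v$, or by citing the structural results of Mukhin--Tarasov--Varchenko.

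For the bijection I would factor $\kappa$ as $v\mapsto(h_1,\dots,h_N)\mapsto\mathcal D_v\mapsto\Ker\mathcal D_v$. The last arrow is a bijection, since a monic operator of order $N$ is uniquely recovered from its $N$-dimensional kernel; thus $\kappa(v)=\kappa(v')$ iff $v$ and $v'$ have identical Bethe eigenvalues, and the image of $\kappa$ consists of those $X\in\Omega_{\bLa,\bm z}$ whose associated operator lies in the spectrum of $\cB$. Injectivity and surjectivity then reduce to two facts about the image of $\cB$ on $W:=(V_{\bLa,\bm z})^{\sln_N}$: (i) $W$ is a cyclic $\cB$-module, equivalently the image is a maximal commutative subalgebra of $\End(W)$, so that joint eigenvectors up to scalar biject with the maximal ideals of the image; and (ii) the operator/kernel dictionary of the previous paragraphs identifies $\mathrm{Spec}$ of the image with $\Omega_{\bLa,\bm z}$. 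Statement (ii) is essentially the content already assembled, whereas statement (i), the completeness and cyclicity of the Bethe algebra, is the deepest input and the main obstacle; I would settle it by invoking the Mukhin--Tarasov--Varchenko theorem identifying the image of $\cB$ on $W$ with the coordinate ring of the scheme $\Omega_{\bLa,\bm z}$, that is with the functions on the corresponding fiber of the Wronski map. With (i) and (ii) in place, the asserted bijection is immediate.
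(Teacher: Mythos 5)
The paper itself does not prove this theorem internally: its proof is a two-line citation, the first statement to Theorem 4.1 of \cite{MTV2} and the second to Theorem 6.1 of \cite{MTV3}. Your outline is in substance a reconstruction of the skeleton of those MTV arguments --- rational eigenvalues and Fuchsianity of $\mathcal D_v$, the exponent computations at $z_s$ and at $\infty$ via the weight of $v$, trivial monodromy for polynomiality of the kernel, and the identification of the image of $\cB$ on $(V_{\bLa,\bm z})^{\sln_N}$ with the coordinate ring of the fiber --- with the two genuinely deep inputs deferred to MTV, which is exactly what the paper does. So the route is essentially the same; the exponent data you assert agrees with Lemma \ref{lem exp}, and your degree count at $\infty$ correctly uses that an $\sln_N$-invariant vector has $\gl_N$-weight $(d-N,\dots,d-N)$.

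One step, however, would fail as written: in your (i) you infer from cyclicity of $W=(V_{\bLa,\bm z})^{\sln_N}$ (equivalently, maximal commutativity of the image $A$ of $\cB$) that joint eigenvectors up to scalar biject with maximal ideals of $A$. This implication is false for a general commutative algebra in its regular representation: for $A=\C[x,y]/(x^2,xy,y^2)$ acting on itself, the unique character has eigenspace equal to the socle $\mathrm{span}\{x,y\}$, so a whole projective line of eigenvectors sits over one maximal ideal. To get the bijection you need each local summand of $A$ to have one-dimensional socle, i.e. $A$ must be Gorenstein (Frobenius). This is precisely an extra ingredient MTV establish --- the image of $\cB$ carries a nondegenerate invariant bilinear form coming from the tensor Shapovalov form, equivalently the relevant scheme-theoretic fibers are complete intersections, hence Gorenstein --- and it cannot be dropped from your step (i). A second, minor point: your eigenvalue computation assumes all $z_s\in\C$, whereas the theorem allows one coordinate equal to $\infty$, where $\cB$ acts trivially on the factor $V_{\la^{(n)}}(\infty)$ and the bookkeeping of exponents at infinity changes (compare Lemma \ref{nonzero weight}); this case is routine but should be treated explicitly.
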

	\begin{proof}
		The first statement is Theorem 4.1 in \cite{MTV2} and the second statement is Theorem 6.1 in \cite{MTV3}.
	\end{proof}
	
We also have the following lemma, see for example \cite{MTV}.

\begin{lem}\label{lem completeness} Let $\bm z$ be a generic point in ${\mathring{\bP}}_n$. Then the action of the Bethe algebra $\cB$ on $(V_{\bLa,\bm z})^{\sln_N}$ is diagonalizable. In particular, this statement holds for any sequence $\bm z\in \R{\mathring{\bP}}_n$.\qed
\end{lem}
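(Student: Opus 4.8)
The plan is to prove the stronger assertion that $\cB$ acts diagonalizably on $(V_{\bLa,\bm z})^{\sln_N}$ for \emph{every} $\bm z\in\R\mathring{\bP}_n$ (this is why the lemma phrases the real case as an "in particular": it holds for all real points, not just generic ones), and then to deduce the generic case by a density argument. The mechanism for the real case is self-adjointness with respect to a positive-definite Hermitian form, so the first step is to equip $V_{\bLa,\bm z}$ with the tensor product of the contravariant (Shapovalov) forms of the factors $V_{\la^{(s)}}$. With respect to the Cartan anti-involution $e_{ij}\mapsto e_{ji}$ of $\gl_N$, each finite-dimensional irreducible $V_{\la^{(s)}}$ carries a contravariant Hermitian form that is positive definite, namely the invariant inner product making $V_{\la^{(s)}}$ a unitary $U(N)$-module; the tensor product form is then positive definite on $V_{\bLa,\bm z}$. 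Since $\cB$ commutes with $\mc U(\gl_N)$ it preserves $(V_{\bLa,\bm z})^{\sln_N}$, and the form stays positive definite on this subspace.

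Second, I would introduce the anti-automorphism $\phi$ of $\mc U(\gl_N[t])$ determined by $\phi(e_{ij}\otimes t^p)=e_{ji}\otimes t^p$, so that $\phi(e_{ij}(x))=e_{ji}(x)$. The key point is that on the evaluation module $V_{\la^{(s)}}(z_s)$ the generator $e_{ij}\otimes t^p$ acts as $z_s^{\,p}e_{ij}$, whose Hermitian adjoint is $\bar z_s^{\,p}e_{ji}$; summing over the tensor factors, for $\bm z\in\R\mathring{\bP}_n$ the Hermitian adjoint of the action of any $u\in\mc U(\gl_N[t])$ with real coefficients in the standard monomials equals the action of $\phi(u)$. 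Applied to the generators $B_{ij}$ of $\cB$, this reduces self-adjointness to the purely algebraic identity $\phi(B_{ij})=B_{ij}$, equivalently $\phi(\cDB)=\cDB$. This is the heart of the matter and the step I expect to be the main obstacle: applying the order-reversing $\phi$ to $\cDB=\rdet(\delta_{ij}\pa_x-e_{ji}(x))_{i,j=1}^N$ sends the $(i,j)$ entry $\delta_{ij}\pa_x-e_{ji}(x)$ to $\delta_{ij}\pa_x-e_{ij}(x)$ and reverses the product order, so one must check that the resulting reverse-order column expansion of the same matrix reproduces the row determinant. This holds because $\delta_{ij}\pa_x-e_{ji}(x)$ is a Manin matrix, for which the various orderings of the (column) determinant coincide, but it requires careful bookkeeping of the orderings and of the formal symbol $\pa_x$.

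Granting $\phi(B_{ij})=B_{ij}$, for every $\bm z\in\R\mathring{\bP}_n$ each operator $B_{ij}$ is self-adjoint on the positive-definite Hermitian space $(V_{\bLa,\bm z})^{\sln_N}$, and these operators commute since $\cB$ is commutative. A commuting family of self-adjoint operators on a finite-dimensional positive-definite Hermitian space is simultaneously diagonalizable; hence $\cB$ acts diagonalizably for every $\bm z\in\R\mathring{\bP}_n$, establishing the second assertion of the lemma.

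Finally, for the generic statement I would argue by density. The operators $B_{ij}$ act on the trivial bundle with fibre $(V_{\bLa})^{\sln_N}$ over $\mathring{\bP}_n$ with coefficients depending algebraically on $\bm z$ (polynomially on the affine chart where all $z_s\in\C$), and the locus $U\subset\mathring{\bP}_n$ on which the commutative algebra $\cB$ acts semisimply is Zariski-constructible by Chevalley's theorem. We have just shown $U\supseteq\R\mathring{\bP}_n$, and $\R\mathring{\bP}_n$ is Zariski-dense in the irreducible variety $\mathring{\bP}_n$; a dense constructible subset of an irreducible variety contains a dense Zariski-open subset, since it is a finite union of locally closed sets one of which must be dense and hence open. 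Therefore $U$ contains a nonempty Zariski-open set, i.e. $\cB$ acts diagonalizably for generic $\bm z$, which completes the proof.
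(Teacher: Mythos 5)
You have reconstructed, essentially correctly in outline, the argument that the paper delegates to the literature: Lemma \ref{lem completeness} is stated with a \qed and the pointer ``see for example \cite{MTV}'', and the proof there is exactly your plan --- self-adjointness of the Bethe operators with respect to the positive definite tensor Shapovalov form at real $\bm z$, hence diagonalizability there, then a constructibility/density argument for generic $\bm z$. Your first step (positive definiteness of the contravariant form, preservation of $(V_{\bLa,\bm z})^{\sln_N}$), your reduction of self-adjointness at real $\bm z$ to $\phi(B_{ij})=B_{ij}$, and your final Chevalley-plus-density step are all sound; one small omission is that $\R{\mathring{\bP}}_n$ allows coordinates $z_s=\infty$, for which you should reduce via Lemma \ref{nonzero weight} to a finite real point acting on $(M')^{\sing}_{\bar\la}$, where the form is still positive definite.

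However, your proposed verification of the crucial identity --- the step you yourself flag as the heart of the matter --- fails as stated. A map with $\phi(\pa_x)=\pa_x$ that reverses products is not an anti-homomorphism of the algebra of differential operators: reversing the relation $[\pa_x,f]=f'$ forces $\phi(\pa_x)=-\pa_x$. Consequently ``$\phi(\cDB)$'' computed by transposing entries and reversing the monomials of the row determinant has no a priori relation to the normally ordered coefficients $\phi(B_{ij})$; and, worse, the expansion identity you reduce to is simply false. Already for $N=2$, with $M=(\delta_{ij}\pa_x-e_{ji}(x))_{i,j=1}^2$, the reverse-order column expansion $\sum_{\sigma}\sgn(\sigma)\,M_{\sigma(2)2}M_{\sigma(1)1}$ differs from $\rdet M$ by $[M_{22},M_{11}]=e_{22}'(x)-e_{11}'(x)\ne 0$, since $[e_{11}(x),e_{22}(x)]=0$ while the $\pa_x$'s hit the opposite diagonal entries. (Your Manin-matrix instinct is not baseless --- the \emph{row}-reversed expansion $\sum_\sigma\sgn(\sigma)\,M_{2\sigma(2)}M_{1\sigma(1)}$ does equal $\rdet M$ for $N=2$, the commutator $[e_{21}(x),e_{12}(x)]=e_{11}'(x)-e_{22}'(x)$ exactly compensating --- but that is not the expansion your $\phi$ produces, and in any case no such expansion identity transfers to the coefficients $B_{ij}$ without an honest anti-homomorphism.) The correct formulation is $\phi(B_{ij})=B_{ij}$ itself, equivalently $\Phi(\cDB)=(-1)^N(\cDB)^*$ for the extension $\Phi$ with $\Phi(\pa_x)=-\pa_x$ --- note the unavoidable appearance of the formal conjugate. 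This identity is true, but proving it requires the normal-ordering, Capelli-type technique: compare the paper's own Appendix, where the exactly analogous statement $\varpi(\cDB)=(\cDB)^*$ is Proposition \ref{prop B invol}, proved via the generalized Capelli identity of \cite{MTV6}; alternatively one can cite \cite{MTV}, \cite{MTV2}, where the symmetry of the Bethe operators with respect to the Shapovalov form is established. With that step repaired or cited, the rest of your argument goes through and recovers the lemma, including the strengthening to all of $\R{\mathring{\bP}}_n$.
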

	
\subsection{The $\gl_N$-stratification of $\Gr(N,d)$}
The following definition plays an important role in what follows.

Define a partial order $\gge$ on the set of sequences of partitions with at most $N$ parts as follows. Let $\bLa=(\la^{(1)},\dots,\la^{(n)})$, $\bm\Xi=(\xi^{(1)},\dots,\xi^{(m)})$ be two sequences of partitions with at most $N$ parts. We say that $\bLa\gge \bm\Xi$ if and only if there exists a partition $\{I_1,\dots,I_m\}$ of the set $\{1,2,\dots,n\}$ such that
	\[
	{ \rm {Hom}}_{\gl_N} (V_{\xi^{(i)}}, \bigotimes_{j\in I_i}V_{\la^{(j) }}\big)\neq 0,\quad i=1,\dots,m.
	\]Note that $\bLa$ and $\bm\Xi$ are comparable only if $|\bLa|=|\bm{\Xi}|$.

We say that $\bLa=(\la^{(1)},\dots,\la^{(n)})$ is \emph{nontrivial} if and only if $(V_{\bLa})^{\sln_N}\ne 0$ and $|\la^{(s)}|> 0$, $s=1,\dots,n$. The sequence $\bLa$ will be called $d$-\emph{nontrivial} if $\bLa$ is nontrivial and $|\bLa|=N(d-N)$. 
	
Suppose $\bm\Xi$ is $d$-nontrivial. If $\bLa\gge \bm\Xi$ and $|\la^{(s)}|>0$ for all $s=1,\dots,n$, then $\bLa$ is also $d$-nontrivial.
	
Recall that $\Omega_{\bLa,\bm z}$ is the intersection of Schubert cells for each given $\bm z$, see \eqref{Omega}, define $\Omega_{\bLa}$ by the formula
\beq\label{strata}
\Omega_{\bLa}:=\bigcup_{\bm z\in {\mathring{\bP}}_n}\Omega_{\bLa,\bm z}\subset \Gr(N,d).
\eeq
By definition, $\Omega_{\bLa}$ does not depend on the order of $\la^{(s)}$ in the sequence $\bLa=(\la^{(1)},\dots,\la^{(n)})$.
	Note that $\Omega_{\bLa}$ is a constructible subset of the Grassmannian
	$\Gr(N,d)$  in Zariski topology. We call  $\Omega_{\bLa}$ with a $d$-nontrivial $\bLa$ a {\it $\gl_N$-stratum} of $\Gr(N,d)$.
	
	 Let $\mu^{(1)},\dots,\mu^{(a)}$ be the list of all distinct partitions in $\bLa$. Let $n_i$ be the number of occurrences of ${\mu}^{(i)}$ in $\bLa$, $i=1,\dots,a$, then $\sum_{i=1}^an_i=n$. Denote $\bm n=(n_1,\dots,n_a)$. We shall write $\bLa$ in the following order: $\la^{(i)}=\mu^{(j)}$ for $\sum_{s=1}^{j-1}n_s+1\lle i\lle \sum_{s=1}^{j}n_s$, $j=1,\dots,a$.
	
	 Let $S_{\bm n;n_i}$ be the subgroup of the symmetric group $S_n$ permuting $\{n_1+\cdots+n_{i-1}+1,\dots,n_1+\cdots+n_{i}\}$, $i=1,\dots,a$. Then the group $S_{\bm n}=S_{\bm n;n_1}\times S_{\bm n;n_2}\times \dots\times S_{\bm n;n_a}$ acts freely on ${\mathring{\bP}}_n$ and on $\R{\mathring{\bP}}_n$. Denote by ${\mathring{\bP}}_n/S_{\bm n}$ and $\R{\mathring{\bP}}_n/S_{\bm n}$ the sets of orbits. 
	
	\begin{prop}\label{prop deg of cover A}
		Suppose $\bLa=(\la^{(1)},\dots,\la^{(n)})$ is $d$-nontrivial. The stratum $\Omega_{\bLa}$ is a ramified covering of ${\mathring{\bP}}_n/S_{\bm n}$. Moreover, the degree of the covering is equal to $\dim (V_{\bLa})^{\sln_N}$.  In particular, $\dim\Omega_{\bLa}=n$. Over $\R{\mathring{\bP}}_n/S_{\bm n}$, this covering is unramified of the same degree, moreover all points in fibers are real. 
	\end{prop}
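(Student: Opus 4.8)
The plan is to realize $\Omega_{\bLa}$ as the spectral cover of the Bethe algebra $\cB$ acting on a trivial vector bundle over $\mathring{\bP}_n/S_{\bm n}$, and then to read off every assertion from Theorem \ref{thm bijection} and Lemma \ref{lem completeness}. First I would construct the projection $\pi\colon\Omega_{\bLa}\to\mathring{\bP}_n/S_{\bm n}$ and check it is well defined. For $X\in\Omega_{\bLa}$ there is $\bm z\in\mathring{\bP}_n$ with $X\in\Omega_{\bLa,\bm z}=\bigcap_{s=1}^n\Omega_{\la^{(s)}}(\mc F(z_s))$; set $\pi(X)=[\bm z]$. The key is that $\bm z$ is intrinsic to $X$: every $X$ lies in a unique Schubert cell $\Omega_{\mu(X,z)}(\mc F(z))$ for each $z\in\bP^1$, the multiplicity of the root of the Wronskian $\Wr(X)$ at $z$ equals $|\mu(X,z)|$, and the total ramification of the Wronski map gives $\sum_{z\in\bP^1}|\mu(X,z)|=N(d-N)$. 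Since $\bLa$ is $d$-nontrivial we have $\sum_s|\la^{(s)}|=N(d-N)$ with all $|\la^{(s)}|>0$, so the $z_s$ are exactly the critical points of $X$ and $\la^{(s)}=\mu(X,z_s)$. Hence the unordered collection of pairs $(z_s,\la^{(s)})$, and therefore $[\bm z]\in\mathring{\bP}_n/S_{\bm n}$, is determined by $X$; the same count shows $\pi^{-1}([\bm z])=\Omega_{\bLa,\bm z}$.

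Next I would treat degree and ramification. Over $\mathring{\bP}_n/S_{\bm n}$ the spaces $W_{\bm z}:=(V_{\bLa,\bm z})^{\sln_N}$ assemble into a trivial bundle of rank $D:=\dim(V_{\bLa})^{\sln_N}$, since as a $\gl_N$-module the fiber is the fixed space $(V_{\bLa})^{\sln_N}$ and only the $\cB$-action varies, through coefficients depending algebraically on $\bm z$ (the bundle descends through $S_{\bm n}$ because that group permutes isomorphic tensor factors). The joint spectrum of $\cB$ then defines an algebraic spectral cover of degree $D$, and by Theorem \ref{thm bijection} the assignment $\kappa\colon v\mapsto\Ker\mathcal D_v$ identifies this spectral cover fiberwise with $\Omega_{\bLa,\bm z}$. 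Thus $|\pi^{-1}([\bm z])|$ equals the number of common $\cB$-eigenlines in $W_{\bm z}$, which never exceeds $D$ and attains $D$ precisely when $\cB$ acts semisimply (then with simple joint spectrum, by finiteness of $\Omega_{\bLa,\bm z}$). By Lemma \ref{lem completeness} this holds on a dense set, so generic fibers have exactly $D$ points, the degree is $D$, and since the base has dimension $n$ with finite fibers we get $\dim\Omega_{\bLa}=n$.

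For the real locus, Lemma \ref{lem completeness} gives diagonalizability of $\cB$ at every $\bm z\in\R\mathring{\bP}_n$, hence simple joint spectrum and exactly $D$ points in each such fiber, so the covering is unramified there. To see these points are real I would invoke the self-adjointness (reality) of the Bethe algebra from \cite{MTV}: for $\bm z\in\R\mathring{\bP}_n$ the algebra $\cB$ acts on $W_{\bm z}$ by operators self-adjoint with respect to a positive definite Hermitian form, so the eigenvalues $h_i(x)$ are real, the associated operator $\mathcal D_v=\pa_x^N+\sum_{i=1}^N h_i(x)\pa_x^{N-i}$ has real coefficients, and therefore $\Ker\mathcal D_v\in\Gr^\R(N,d)$.

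I expect the main obstacle to be the second step: promoting the pointwise bijection of Theorem \ref{thm bijection} to a statement about families, i.e.\ verifying that $\kappa$ is an isomorphism of the spectral cover onto $\Omega_{\bLa}$ over all of $\mathring{\bP}_n/S_{\bm n}$, so that ``ramified covering'' holds honestly (properness, finiteness, and constant generic degree) rather than merely fiber by fiber. The dimension count and the reality assertion are then essentially formal consequences of Lemma \ref{lem completeness} and \cite{MTV}.
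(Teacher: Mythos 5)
Your proposal is correct and takes essentially the same route as the paper, whose entire proof is the citation of Theorem \ref{thm bijection}, Lemma \ref{lem completeness}, and Theorem 1.1 of \cite{MTV2}: your well-definedness argument via the Wronskian, the spectral-cover packaging, and the dimension count simply spell out what that citation leaves implicit. The only difference is cosmetic: at the reality step you re-derive the claim from self-adjointness of $\cB$ with respect to a positive definite Hermitian form for real $\bm z$, which is precisely the mechanism behind Theorem 1.1 of \cite{MTV2} (the B.\ and M.\ Shapiro theorem) that the paper invokes directly.
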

	\begin{proof}
		The statement follows from Theorem \ref{thm bijection}, Lemma \ref{lem completeness}, and Theorem 1.1 of \cite{MTV2}.
	\end{proof}
	
	Clearly, we have the following theorem.
	\begin{thm}\label{thm gr dec}
		We have
		\beq\label{eq decom gr}
		\Gr(N,d)=\bigsqcup_{d\text{-nontrivial }\bLa}\Omega_{\bLa}.
		\eeq \qed
	\end{thm}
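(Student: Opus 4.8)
The plan is to establish the decomposition \eqref{eq decom gr} by showing that the $\gl_N$-strata $\Omega_{\bLa}$, as $\bLa$ ranges over all $d$-nontrivial sequences, partition $\Gr(N,d)$ into disjoint pieces whose union is everything. This has two halves: first that every point lies in some stratum (covering), and second that no point lies in two distinct strata (disjointness). I would take as the starting point the Schubert cell decomposition \eqref{eq grass decom} associated to a single flag and the observation that the $\Omega_{\bLa,\bm z}$ are built from intersections of Schubert cells for the osculating flags $\mc F(z)$.

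First I would prove the covering property. Given any $X\in\Gr(N,d)$, I want to produce a canonical $d$-nontrivial $\bLa$ and a point $\bm z\in\mathring{\bP}_n$ with $X\in\Omega_{\bLa,\bm z}$. The natural device is the Wronskian: the Wronski map sends $X$ to a nonzero polynomial (up to scalar) in $\Gr(1,N(d-N)+1)$, whose distinct roots $z_1,\dots,z_n$ in $\bP^1$ (including possibly $\infty$) are exactly the points where $X$ fails to be generic. At each such $z_s$, the ramification of $X$ with respect to the osculating flag $\mc F(z_s)$ determines a partition $\la^{(s)}$ via the definition of the Schubert cell $\Omega_{\la^{(s)}}(\mc F(z_s))$ in Section \ref{sec schubert}, namely from the orders of vanishing $\la^{(s)}_i+N-i$ of polynomials in $X$ at $z_s$. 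Collecting these gives $\bLa=(\la^{(1)},\dots,\la^{(n)})$ with each $|\la^{(s)}|>0$, and the total $|\bLa|=N(d-N)$ follows from the degree count of the Wronskian together with $|\la|+|\bar\la|=N(d-N)$. That $(V_\bLa)^{\sln_N}\ne 0$ is forced because $\Omega_{\bLa,\bm z}\ne\emptyset$, and by the remark after \eqref{Omega} nonemptiness is equivalent to $\dim(V_\bLa)^{\sln_N}>0$. Hence $\bLa$ is $d$-nontrivial and $X\in\Omega_{\bLa}$.

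For disjointness I would argue that the data $(\bLa,\bm z)$ is \emph{determined} by $X$. The unordered multiset of roots $\{z_1,\dots,z_n\}$ is intrinsic to $X$ (it is the support of the Wronskian, computed invariantly), and the partition attached to each $z_s$ is intrinsic too, since the numbers $\dim(X\cap F_j)$ appearing in the Schubert cell conditions depend only on $X$ and the flag $\mc F(z_s)$, not on any auxiliary choice. Thus if $X\in\Omega_{\bLa,\bm z}\cap\Omega_{\bm\Xi,\bm w}$, then necessarily $\{z_s\}=\{w_t\}$ as sets and the associated partitions agree, so $\bLa$ and $\bm\Xi$ coincide as unordered sequences. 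Since $\Omega_{\bLa}$ depends only on the unordered $\bLa$, this gives $\Omega_{\bLa}=\Omega_{\bm\Xi}$, establishing that distinct strata are disjoint.

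The main obstacle, and the only step requiring genuine care, is the translation between the local ramification data of $X$ at a point $z_s$ and the partition $\la^{(s)}$, and in particular matching the Wronskian's vanishing order at $z_s$ with $|\la^{(s)}|$. This is the bookkeeping that ties together the three facts from Section \ref{sec schubert}: the characterization of $\Omega_\la(\mc F(z))$ as spaces containing polynomials with a root at $z$ of order $\la_i+N-i$, the relation $|\la|+|\bar\la|=N(d-N)$, and the standard fact that the order of vanishing of $\Wr(X)$ at $z$ equals $|\la^{(s)}|$ for $X\in\Omega_{\la^{(s)}}(\mc F(z_s))$. Everything else—nonemptiness criteria, $d$-nontriviality—is already packaged in the lemmas and the remark after \eqref{Omega}, which is why the authors can call this theorem \emph{clear}.
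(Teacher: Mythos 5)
Your proposal is correct and takes essentially the same approach the paper leaves implicit: Theorem \ref{thm gr dec} is stated as ``clear'' with no written proof, and the intended justification is precisely your argument combining the Schubert cell decomposition \eqref{eq grass decom} for each osculating flag $\mc F(z)$, $z\in\bP^1$, the Wronskian degree count giving $|\bLa|=N(d-N)$, and the nonemptiness criterion noted after \eqref{Omega}. Both halves are sound, including the key point in disjointness that $d$-nontriviality forces $|\la^{(s)}|>0$ for all $s$, so the support $\{z_s\}$ (including a possible root at $\infty$) and the attached partitions are intrinsic to $X$.
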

	
	Next, for a $d$-nontrivial $\bLa$, 
	we call the closure of $\Omega_{\bLa}$ inside $\Gr(N,d)$, a {\it $\gl_N$-cycle}. The $\gl_N$-cycle $\overline{\Omega}_{\bLa}$ is an algebraic set. We describe the $\gl_N$-cycles as unions of $\gl_N$-strata.
	
	Let $\bLa=(\la^{(1)},\dots,\la^{(n)})$ and $\bm\Xi=(\xi^{(1)},\dots,\xi^{(n-1)})$ be such that $\bm\Xi\lle \bLa$. We call $\Omega_{\bm\Xi}$ a \emph{simple degeneration} of $\Omega_{\bLa}$ if and only if both $\bLa$ and $\bm\Xi$ are $d$-nontrivial. In view of Theorem \ref{thm bijection}, taking a simple degeneration is equivalent to making two coordinates of $\bm z$ collide.
	
	\begin{thm}\label{thm simple deg A}
		If $\Omega_{\bm\Xi}$ is a simple degeneration of $\Omega_{\bLa}$, then $\Omega_{\bm\Xi}$ is contained in the $\gl_N$-cycle $\overline{\Omega}_{\bLa}$.
	\end{thm}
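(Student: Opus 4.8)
The plan is to show that every point of $\Omega_{\bm\Xi}$ is an analytic limit of points of $\Omega_{\bLa}$; since $\Omega_{\bLa}$ is constructible, its closure in the analytic topology coincides with its Zariski closure $\overline{\Omega}_{\bLa}$, so this suffices. First I would unwind the meaning of a simple degeneration. Because $\{I_1,\dots,I_{n-1}\}$ partitions $\{1,\dots,n\}$ into $n-1$ blocks, exactly one block, say after relabeling $I_1=\{1,2\}$, has two elements and the rest are singletons; the singleton conditions $\Hom_{\gl_N}(V_{\xi^{(k)}},V_{\la^{(k+1)}})\neq0$ force $\xi^{(k)}=\la^{(k+1)}$ for $k\ge 2$, while the doubleton gives $\dim\Hom_{\gl_N}(V_{\xi^{(1)}},V_{\la^{(1)}}\otimes V_{\la^{(2)}})\ge 1$. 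Thus $\bm\Xi$ is obtained from $\bLa$ by fusing $\la^{(1)},\la^{(2)}$ into a single partition $\xi^{(1)}$ occurring in $V_{\la^{(1)}}\otimes V_{\la^{(2)}}$. Fix an arbitrary $X\in\Omega_{\bm\Xi,\bm w}$, $\bm w=(w_1,\dots,w_{n-1})\in\mathring{\bP}_{n-1}$; I must produce points of $\Omega_{\bLa}$ converging to $X$ as the two fused points split off from $w_1$.

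The representation-theoretic heart is the identity of $\gl_N[t]$-modules $V_{\la^{(1)}}(w_1)\otimes V_{\la^{(2)}}(w_1)\cong(V_{\la^{(1)}}\otimes V_{\la^{(2)}})(w_1)$, valid because at a single evaluation point the current-algebra action is the diagonal one; decomposing the right-hand side gives $\bigoplus_{\xi}V_{\xi}(w_1)^{\oplus c_\xi}$ as $\gl_N[t]$-modules, where $c_\xi=\dim\Hom_{\gl_N}(V_\xi,V_{\la^{(1)}}\otimes V_{\la^{(2)}})$. Being a $\gl_N[t]$-isomorphism it is in particular $\cB$-linear, and it remains so after tensoring with the fixed factor $V_{\la^{(3)}}(w_2)\otimes\dots\otimes V_{\la^{(n)}}(w_{n-1})$ and after taking $\sln_N$-invariants (since $\cB$ commutes with $\gl_N$). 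Consequently, at the collided configuration $\bm w'=(w_1,w_1,w_2,\dots,w_{n-1})$ the space $(V_{\bm\Xi,\bm w})^{\sln_N}$ is a $\cB$-module direct summand of $(V_{\bLa,\bm w'})^{\sln_N}$. Applying Theorem \ref{thm bijection} to the $d$-nontrivial $\bm\Xi$ at the distinct points $\bm w$, choose a Bethe eigenvector $v\in(V_{\bm\Xi,\bm w})^{\sln_N}$ with $\kappa(v)=\Ker\mathcal{D}_v=X$. Under the summand inclusion, $v$ is a Bethe eigenvector of $\cB$ on $(V_{\bLa,\bm w'})^{\sln_N}$ with the very same eigenvalues $h_i(x)$, hence the same differential operator $\mathcal{D}_v$ and the same kernel $X$.

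It remains to deform the collided configuration back into $\mathring{\bP}_n$. I would fix the finite-dimensional space $H=(V_{\bLa})^{\sln_N}$, on which $\cB$ acts through commuting operators $B_{ij}(\bm z)$ depending algebraically on $\bm z$, and choose a path $\bm z(t)$ in $\mathring{\bP}_n$ with $z_1(t),z_2(t)\to w_1$, $z_{k}(t)=w_{k-1}$ for $k\ge 3$, and $\bm z(0)=\bm w'$. The unordered multiset of differential operators $\{\mathcal{D}_\chi\}$ attached to the joint (generalized) eigencharacters $\chi$ of this commuting family is a continuous, indeed algebraic, function of $t$: its elementary symmetric functions are the traces $\tr\big(\prod B_{ij}(t)\big)$, which are polynomial in $t$. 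Since $\mathcal{D}_v$ occurs in this multiset at $t=0$, continuity of roots yields a branch $\mathcal{D}(t)\to\mathcal{D}_v$. By Lemma \ref{lem completeness} the non-semisimple locus is a proper subvariety, so along a sequence $t_k\to0$ of semisimple parameters one has $\mathcal{D}(t_k)=\mathcal{D}_{v(t_k)}$ for a genuine eigenvector $v(t_k)\in(V_{\bLa,\bm z(t_k)})^{\sln_N}$; then $X(t_k):=\kappa(v(t_k))=\Ker\mathcal{D}(t_k)\in\Omega_{\bLa,\bm z(t_k)}\subset\Omega_{\bLa}$. Finally, the kernel map $\mathcal{D}\mapsto\Ker\mathcal{D}$ is continuous into $\Gr(N,d)$ on the relevant family of Fuchsian operators (the continuous dependence underlying the covering of Proposition \ref{prop deg of cover A}), so $X(t_k)\to\Ker\mathcal{D}_v=X$, giving $X\in\overline{\Omega}_{\bLa}$.

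The main obstacle is precisely the passage through the collision $t=0$: the $\cB$-action on $(V_{\bLa,\bm w'})^{\sln_N}$ need not be semisimple, so one cannot simply deform the eigenvector $v$, and must instead track the multiset of associated operators (a symmetric-function argument insensitive to Jordan degeneration) and then reselect genuine eigenvectors at nearby semisimple $\bm z(t_k)$. The conceptual identity of the second paragraph is what guarantees that the selected branch limits to exactly $\mathcal{D}_v$, so that the limiting point is exactly the prescribed $X$ rather than some other point of $\Omega_{\bm\Xi,\bm w}$; the remaining analytic input is the continuity of $\mathcal{D}\mapsto\Ker\mathcal{D}$, which is standard and already implicit in the covering structure.
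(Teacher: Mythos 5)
Your proof is correct and follows essentially the same route as the paper: both reduce a simple degeneration to fusing two partitions at a colliding point, use the $\cB$-module decomposition $V_{\la}(z)\otimes V_{\mu}(z)\cong\bigoplus_{\nu}c^{\nu}_{\la\mu}V_{\nu}(z)$ to embed $(V_{\bm\Xi,\bm w})^{\sln_N}$ into $(V_{\bLa,\bm w'})^{\sln_N}$, and then invoke Theorem \ref{thm bijection} to realize the given $X$ as the kernel of a Bethe eigenvalue at the collided configuration before deforming back into $\mathring{\bP}_n$. Your third paragraph merely makes explicit the continuity-of-spectrum and kernel-convergence argument that the paper compresses into the single sentence ``It follows that $X$ is a limit point of $\Omega_{\bLa,\bm z}$ as $z_n$ approaches $z_{n-1}$,'' so it is a welcome elaboration rather than a different method.
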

	Theorem \ref{thm simple deg A} is proved in Section \ref{sec proof A}.
	
	\medskip
	
	Suppose $\bm\Theta=(\theta^{(1)},\dots,\theta^{(l)})$ is $d$-nontrivial and $\bLa\gge \bm\Theta$. Then, it is clear that 
	$\Omega_{\bm\Theta}$ is obtained from $\Omega_{\bLa}$ by a sequence of simple degenerations. We call $\Omega_{\bm\Theta}$ a \emph{degeneration} of $\Omega_{\bLa}$.
	
	\begin{cor}
		If $\Omega_{\bm\Theta}$ is a degeneration of $\Omega_{\bLa}$, then $\Omega_{\bm\Theta}$ is contained in the $\gl_N$-cycle 
		$\overline{\Omega}_{\bLa}$. \qed
	\end{cor}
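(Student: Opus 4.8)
The plan is to derive the corollary from the single-step result, Theorem~\ref{thm simple deg A}, by a short induction, using only the elementary topological fact that for subsets of $\Gr(N,d)$ the inclusion $A\subseteq\overline B$ forces $\overline A\subseteq\overline B$ (closure is monotone and idempotent). All of the genuine geometric content is already packaged in Theorem~\ref{thm simple deg A}; what remains is to factor a general degeneration into simple ones and then chain the resulting closure inclusions.

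First I would make explicit the assertion, stated as immediate in the paragraph defining degenerations, that $\Omega_{\bm\Theta}$ is reached from $\Omega_{\bLa}$ through a sequence of simple degenerations with $d$-nontrivial intermediate terms. Since $\bLa\gge\bm\Theta$, fix a partition $\{I_1,\dots,I_l\}$ of $\{1,\dots,n\}$ with $\Hom_{\gl_N}(V_{\theta^{(i)}},\bigotimes_{j\in I_i}V_{\la^{(j)}})\neq 0$. For a block $I_i$ with at least two elements, associativity of the tensor product lets one realize the appearance of $V_{\theta^{(i)}}$ in $\bigotimes_{j\in I_i}V_{\la^{(j)}}$ through a tree of binary decompositions: fixing an ordering $j_1,\dots,j_{|I_i|}$ of $I_i$, there exist partitions $\mu_1=\la^{(j_1)},\mu_2,\dots,\mu_{|I_i|}=\theta^{(i)}$ with $\Hom_{\gl_N}(V_{\mu_{t+1}},V_{\mu_t}\otimes V_{\la^{(j_{t+1})}})\neq 0$. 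Carrying out these binary merges block by block produces a chain
\[
\bLa=\bm\Theta^{0}\gge\bm\Theta^{1}\gge\dots\gge\bm\Theta^{k}=\bm\Theta,
\]
in which each $\bm\Theta^{p+1}$ has exactly one part fewer than $\bm\Theta^{p}$, obtained by replacing two factors with a single tensor-product component.

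Next I would confirm that every $\bm\Theta^{p}$ is $d$-nontrivial, so that each step is a bona fide simple degeneration in the sense of the definition preceding Theorem~\ref{thm simple deg A}. Each intermediate $\bm\Theta^{p}$ still satisfies $\bm\Theta^{p}\gge\bm\Theta$, since the remaining merges carry it to $\bm\Theta$ (grouping its parts by their target block gives the required partition), and all of its parts are nonzero, because each component $\mu_t$ has $|\mu_t|=\sum_{s\lle t}|\la^{(j_s)}|>0$. Applying the monotonicity of $d$-nontriviality recorded above, with $\bm\Xi=\bm\Theta$ and the role of the larger sequence played by $\bm\Theta^{p}$, then shows $\bm\Theta^{p}$ is $d$-nontrivial. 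Hence each consecutive pair is a simple degeneration, and Theorem~\ref{thm simple deg A} gives $\Omega_{\bm\Theta^{p+1}}\subseteq\overline{\Omega}_{\bm\Theta^{p}}$; taking closures yields $\overline{\Omega}_{\bm\Theta^{p+1}}\subseteq\overline{\Omega}_{\bm\Theta^{p}}$, and composing these for $p=0,\dots,k-1$ gives $\overline{\Omega}_{\bm\Theta}\subseteq\overline{\Omega}_{\bLa}$, so in particular $\Omega_{\bm\Theta}\subseteq\overline{\Omega}_{\bLa}$. The only point needing any care, and thus the sole (and very mild) obstacle, is the combinatorial refinement of the single relation $\bLa\gge\bm\Theta$ into one-step merges while preserving $d$-nontriviality; this is handled cleanly by the binary-decomposition argument together with the monotonicity remark, and the substantive work is already contained in Theorem~\ref{thm simple deg A}.
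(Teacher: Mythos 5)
Your proof is correct and follows essentially the same route as the paper's: the paper treats the factorization of a degeneration into a chain of simple degenerations as immediate (``it is clear that $\Omega_{\bm\Theta}$ is obtained from $\Omega_{\bLa}$ by a sequence of simple degenerations'') and then chains Theorem~\ref{thm simple deg A} through closures, exactly as you do. Your binary-merge construction via associativity and the verification that every intermediate sequence stays $d$-nontrivial merely make explicit the details the paper leaves implicit.
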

	
	\begin{thm}\label{thm A strata}
		For $d$-nontrivial $\bLa$, we have
		\beq\label{eq A strata}
		\overline{\Omega}_{\bLa}=\bigsqcup_{\substack{\bm\Xi\lle \bLa,\\ d\text{-nontrivial }\bm\Xi}}\Omega_{\bm\Xi}.
		\eeq
	\end{thm}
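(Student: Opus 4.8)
The plan is to prove the two inclusions separately. The inclusion $\supseteq$ is essentially free: if $\bm\Xi$ is $d$-nontrivial and $\bm\Xi\lle\bLa$, then $\Omega_{\bm\Xi}$ is a degeneration of $\Omega_{\bLa}$, so the Corollary to Theorem \ref{thm simple deg A} gives $\Omega_{\bm\Xi}\subseteq\overline{\Omega}_{\bLa}$. Taking the union over all such $\bm\Xi$ yields $\bigsqcup_{\bm\Xi\lle\bLa}\Omega_{\bm\Xi}\subseteq\overline{\Omega}_{\bLa}$. Disjointness of the right-hand side is automatic from Theorem \ref{thm gr dec}, and I will not need to argue separately that the closure is a union of \emph{whole} strata: once the reverse inclusion is known, any stratum meeting $\overline{\Omega}_{\bLa}$ will be shown comparable to $\bLa$ and hence, by the Corollary, entirely contained in $\overline{\Omega}_{\bLa}$.

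So the real content is $\overline{\Omega}_{\bLa}\subseteq\bigsqcup_{\bm\Xi\lle\bLa}\Omega_{\bm\Xi}$. Fix $X\in\overline{\Omega}_{\bLa}$ and let $\Omega_{\bm\Xi}$ be the unique stratum containing $X$ (Theorem \ref{thm gr dec}); I must show $\bm\Xi\lle\bLa$. Since $\Omega_{\bLa}$ is constructible, I would choose by curve selection a holomorphic arc $X(t)$, $t\in[0,\varepsilon)$, with $X(0)=X$ and $X(t)\in\Omega_{\bLa}$ for $t\ne 0$. For $t\ne 0$ the space $X(t)$ ramifies at $n$ distinct points $z_1(t),\dots,z_n(t)\in\bP^1$ with $X(t)\in\Omega_{\la^{(s)}}(\mc F(z_s(t)))$. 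As the ramification points vary continuously in the compact space $\bP^1$, after shrinking $\varepsilon$ the limits $\lim_{t\to 0}z_s(t)$ exist; grouping indices by their limit defines distinct points $w_1,\dots,w_m$ and a partition $\{I_1,\dots,I_m\}$ of $\{1,\dots,n\}$, where $I_i=\{s:z_s(t)\to w_i\}$. In the limit $X$ ramifies at $w_1,\dots,w_m$, and the ramification partition of $X$ at $w_i$ is exactly $\xi^{(i)}$.

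The crux, and the step I expect to be the main obstacle, is the collision lemma: when the points $\{z_s(t)\}_{s\in I_i}$ all merge to $w_i$, the limiting partition $\xi^{(i)}$ satisfies $\Hom_{\gl_N}(V_{\xi^{(i)}},\bigotimes_{j\in I_i}V_{\la^{(j)}})\ne 0$. Granting this for every $i$ gives precisely $\bLa\gge\bm\Xi$. I would establish the lemma by translating it, via Theorem \ref{thm bijection}, into the language of the Bethe algebra: the arc $X(t)=\Ker\mc D_{v(t)}$ comes from a family of eigenvectors $v(t)\in(V_{\bLa,\bm z(t)})^{\sln_N}$, and the confluence $z_s(t)\to w_i$ ($s\in I_i$) is the fusion of the corresponding tensor factors. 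Under this fusion the regular singular points of $\mc D_{v(t)}$ at the colliding $z_s(t)$ merge into a single regular singular point of the limiting operator $\mc D$ at $w_i$, whose exponents are those recorded by $\xi^{(i)}$, and the surviving irreducible constituent at $w_i$ must occur in $\bigotimes_{j\in I_i}V_{\la^{(j)}}$. The delicate point to control is that a naive confluence of Fuchsian singularities can create an \emph{irregular} singularity; I would argue that the self-dual Gaudin structure keeps the merged singularity regular and pins down its exponents representation-theoretically.

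Finally, I would cross-check and simultaneously rule out any spurious boundary stratum by a degree count. The covering $\Omega_{\bLa}\to{\mathring{\bP}}_n/S_{\bm n}$ has degree $\dim(V_{\bLa})^{\sln_N}$ (Proposition \ref{prop deg of cover A}), and for the partition $\{I_1,\dots,I_m\}$ determined by a collision pattern, associativity of the tensor-product decomposition gives $\dim(V_{\bLa})^{\sln_N}=\sum_{\bm\Xi}\big(\prod_{i=1}^m\dim\Hom_{\gl_N}(V_{\xi^{(i)}},\bigotimes_{j\in I_i}V_{\la^{(j)}})\big)\dim(V_{\bm\Xi})^{\sln_N}$, the sum over those $\bm\Xi$ for which this partition witnesses $\bm\Xi\lle\bLa$. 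Comparing with the fiber cardinalities of the boundary strata (again via Theorem \ref{thm bijection}) shows that over a collided configuration $\bm w$ the limit points of $\Omega_{\bLa}$ account for exactly the points of the strata $\Omega_{\bm\Xi}$ with $\bm\Xi\lle\bLa$ and no others; by upper semicontinuity of the number of sheets this forbids any $\bm\Xi\not\lle\bLa$ from meeting $\overline{\Omega}_{\bLa}$. Combined with the Corollary, each such $\Omega_{\bm\Xi}$ lies wholly inside $\overline{\Omega}_{\bLa}$, and \eqref{eq A strata} follows.
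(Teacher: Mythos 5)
Your architecture is the paper's: the inclusion $\supseteq$ is exactly the Corollary to Theorem \ref{thm simple deg A}, and the hard inclusion $\subseteq$ is handled by colliding evaluation points and fusing tensor factors through Theorem \ref{thm bijection} --- the same mechanism by which the paper proves Theorem \ref{thm simple deg A} (there $(V_{\bm\Xi,\bm z_0})^{\sln_N}$ is embedded into $(V_{\bLa,\bm z_0'})^{\sln_N}$ via the Littlewood--Richardson decomposition of the collided factors), after which the paper states that Theorem \ref{thm A strata} follows directly. The genuine flaw is in how you propose to justify your ``collision lemma''. You worry that confluence of Fuchsian singularities may create an irregular singularity and plan to exclude this using ``the self-dual Gaudin structure''. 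There is no self-duality anywhere in this statement: Theorem \ref{thm A strata} is the type A result, and self-duality enters only in the $\g_N$ story of Sections \ref{sec sgr}--\ref{sec oper}; an argument leaning on that structure would have nothing to lean on. The step is closed instead by a purely finite-dimensional observation: normalize the eigenvectors $v(t)$ in the compact space $\bP\big((V_{\bLa})^{\sln_N}\big)$ and extract a convergent subsequence; since the $\cB$-action on $V_{\bLa,\bm z}$ depends continuously on $\bm z$ (with the $M(\infty)$ convention of Section \ref{sec: glN bethe} for coordinates escaping to infinity), the limit $v$ is a $\cB$-eigenvector in $V_{\bLa,\bm w'}$, which as a $\cB$-module is $\bigotimes_i\big(\bigotimes_{j\in I_i}V_{\la^{(j)}}\big)(w_i)$ and hence decomposes as a direct sum of modules $V_{\bm\Xi,\bm w}$ with multiplicities $\prod_i\dim\Hom_{\gl_N}\big(V_{\xi^{(i)}},\bigotimes_{j\in I_i}V_{\la^{(j)}}\big)$. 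So $\mc D_v=\mc D_u$ for an eigenvector $u$ in some $(V_{\bm\Xi,\bm w})^{\sln_N}$ with $\bm\Xi\lle\bLa$, and Theorem \ref{thm bijection}, applied in the collided module, gives $\Ker\mc D_v\in\Omega_{\bm\Xi,\bm w}$: regularity of the merged singularity and its exponents come for free, with no analytic control of the confluence needed. (One then checks $X=\Ker\mc D_v$, which holds because $X(t)=\Ker\mc D_{v(t)}$, the coefficients of $\mc D_{v(t)}$ converge as rational functions, and the Grassmannian is compact.)

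Your closing degree-count paragraph is redundant once the collision lemma is in place --- that lemma already shows every limit point lies in a stratum $\Omega_{\bm\Xi}$ with $\bm\Xi\lle\bLa$, which is all that $\subseteq$ requires --- and as stated it would not survive scrutiny: matching total fiber cardinalities over a collided configuration does not by itself preclude some sheets from limiting into an unlisted stratum while others limit to a common point, and ``upper semicontinuity of the number of sheets'' is not available across strata of different dimensions. Nothing in the theorem rests on it, so simply delete it; with the crux repaired as above, your write-up becomes an expanded version of the paper's own (terser) argument.
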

	Theorem \ref{thm A strata} is proved in Section \ref{sec proof A}.
	
	\medskip 
	
		Theorems \ref{thm gr dec} and \ref{thm A strata} imply that the subsets $\Omega_{\bLa}$ with $d$-nontrivial $\bLa$ give a stratification of $\Gr(N,d)$. We call it the {\it $\gl_N$-stratification of $\Gr(N,d)$}.
	 
	\begin{eg}\label{A ex} We give an example of the $\gl_2$-stratification for $\Gr(2,4)$ in the following picture. In the picture, we simply write $\bLa$ for $\Omega_{\bLa}$. We also write tuples of numbers with bold font for $4$-nontrivial tuples of partitions, solid arrows for simple degenerations between $4$-nontrivial tuples of partitions. The dashed arrows go between comparable sequences where the set $\Omega_{\bm\Xi}$ corresponding to the smaller sequence is empty.

		\begin{center}
			\begin{tikzpicture}[->,>=stealth',shorten >=1pt,auto,node distance=2.8cm]
			\tikzstyle{every state}=[rectangle,draw=none,text=black]
			
			\node[state]         (S) at (-4, 4.6)              {$\bm{((1,0),(1,0),(1,0),(1,0))}$};
			\node[state]         (xin1) at (-7, 2.6)           {$\bm{((2,0),(1,0),(1,0))}$};
			\node[state]         (xin2) at (-1, 2.6)        {$\bm{((1,1),(1,0),(1,0))}$};
			\node[state]         (xout1) at (-1,0.6 )          {$\bm{((2,1),(1,0))}$};
			\node[state]         (xout2) at (-7, 0.6)        {$\bm{((2,0),(2,0))}$};
			\node[state]         (xout7) at (2, 0.6)        {$\bm{((1,1),(1,1))}$};
			\node[state]         (xout6) at (-10, 0.6)        {$((3,0),(1,0))$};
			\node[state]         (xout8) at (-4, 0.6)        {$((2,0),(1,1))$};
			\node[state]         (DC1) at (-4, -2)           {$((3,1))$};
			\node[state]         (DC) at (-1, -2)           {$\bm{((2,2))}$};
			\node[state]         (DC2) at (-7, -2)           {$((4,0))$};
			
			\path
			(S) edge                  node[xshift=-1.9cm,yshift=0.2cm]{} (xin1)
			edge              node[xshift=0.4cm,yshift=-0.4cm]{} (xin2)	
			(xin1) edge  node[xshift=-1.6cm,yshift=0.30cm]{} (xout2)
			(xin1) edge  node[xshift=-1.6cm,yshift=-0.23cm]{} (xout1)
			(xin1) edge[dashed]  (xout6)
			(xin2) edge  node[xshift=0.15cm,yshift=0.3cm]{} (xout1)
			
			(xin2) edge  node[xshift=-0.0cm,yshift=-0.2cm]{} (xout7)
			(xout1) edge   node[xshift=0cm,yshift=0cm]{} (DC)
			(xout1) edge[dashed] (DC1)
			(xin1) edge[dashed] (xout8)
			(xin2) edge[dashed] (xout8)
			(xout8) edge[dashed] (DC1)
			(xout2) edge[dashed] (DC2)
			(xout6) edge[dashed] (DC1)
			(xout6) edge[dashed] (DC2)
			(xout2) edge (DC)
			(xout7) edge (DC);
			\end{tikzpicture}
		\end{center}
		In particular, $\Omega_{((1,0),(1,0),(1,0),(1,0))}$ is dense in $\Gr(2,4)$.
	\end{eg}

	\begin{rem}
		In general, for $\Gr(N,d)$, let $\epsilon_1=(1,0,\dots,0)$ and let
		\[
		\bLa=(\underbrace{\epsilon_1,\epsilon_1,\dots,\epsilon_1}_{N(d-N)}).
		\]
		Then $\bLa$ is $d$-nontrivial, and $\Omega_{\bLa}$ is dense in $\Gr(N,d)$. Clearly,  $\Omega_{\bLa}$ consists of spaces of polynomials whose Wronskian (see Section \ref{sec wronski gr}) has only simple roots.
	\end{rem}

\iffalse
\begin{eg}[Stratification of $\Gr(2,3)$]\label{eg Gr(2,3)}
		Consider the three-dimensional vector space $\C_3[x]$ and the associated
		Grassmannian $\Gr(2,3)$. The Grassmannian consists of two strata: the 2-dimensional stratum $\Omega_{((1,0),(1,0))}$ of spaces  with  Wronskian with  two distinct roots and the 1-dimensional stratum $\Omega_{(1,1)}$ of spaces with Wronskian with one double root.
		
		Introduce projective coordinates $a=(a_0:a_1:a_2)$ on $\bP^2=\Gr(2,3)$ such that
		the 2-dimensional space $V_a\in \bP^2=\Gr(2,3)$ consists of polynomials $b_0+b_1x+b_2x^2$ with
		$a_0b_0+a_1b_1+a_2b_2=0$.
		The space of the form $\langle x^2+\alpha, x+\beta\rangle$ has projective coordinates
		$a=(a_0:a_1:a_2)$ with $	\alpha = -\frac{a_2}{a_0}$, $\beta = -\frac{a_1}{a_0}$.
		The Wronskian is $	\Wr(x^2+\alpha, x+\beta) = (\alpha+\beta^2)-(x+\beta)^2$.
		The stratum $\Omega_{(1,1)}$ in $\bP^2$ is defined by the equation  $\alpha+\beta^2=0$. In projective coordinates it is
		$a_1^2-a_0a_2=0$. The stratum $\Omega_{(1,1)}$ is a nonsingular curve of degree 2 in the projective space $\bP^2$.
	\end{eg}
	
	\fi 
	
	\begin{rem}\label{translations}
	The group of affine translations acts on $\C_d[x]$ by changes of variable. Namely, for  $a\in\C^*,b\in\C$, we have a map sending
	$f(x)\mapsto f(ax+b)$ for all $f(x)\in\C_d[x]$. This group action preserves the Grassmannian $\Gr(N,d)$ and the strata $\Omega_{\bLa}$.
	\end{rem}
	
	\subsection{The case of $N=1$ and the Wronski map}\label{sec wronski gr} We show that the decomposition in Theorems \ref{thm gr dec} and \ref{thm A strata} respects the Wronski map.
	
	From now on, we use the convention that $x-z_s$ is considered as the constant function $1$ if $z_s=\infty$.
	
	Consider the Grassmannian of lines $\Gr(1,\tl d)$.
	By Theorem \ref{thm gr dec}, the decomposition of $\Gr(1,\tl d)$ is parameterized by unordered sequences of positive integers $\bm m=(m_1,\dots,m_n)$ such that $|\bm m|=\tl d-1$. 
	
	Let $\bs z=(z_1,\dots,z_n)\in \mathring{\bP}_n$. We have $\C f\in \Omega_{\bm m,{\bm z}}$ if and only if
	\[
	f(x)=a\prod_{s=1}^n(x-z_s)^{m_s}, \quad a\neq 0.\]
	In other words, the stratum $\Omega_{\bm m}$ of the $\gl_1$-stratification \eqref{eq decom gr} of $\Gr(1,\tl d)$ consists of all points in $\Gr(1,\tl d)$ whose representative polynomials have $n$ distinct roots (one of them can be $\infty$) of multiplicities $m_1,\dots,m_n$. 
	
	Therefore the $\gl_1$-stratification is exactly the celebrated swallowtail stratification.
	
	For $g_1(x),\dots,g_l(x)\in \C[x]$, denote by $\Wr(g_1(x),\dots,g_l(x))$ the \emph{Wronskian},
	\[
	\Wr(g_1(x),\dots,g_l(x))=\det(d^{i-1}g_j/dx^{i-1})_{i,j=1}^l.
	\]
	Let $X\in\Gr(N, d)$. The Wronskians of two bases of $X$ differ by a multiplication by a nonzero number. We call the monic polynomial representing the Wronskian the \emph{Wronskian} of $X$ and denote it by $\Wr(X)$. It is clear that $\deg_x \Wr(X)\lle N(d-N)$.
	
	The \emph{Wronski} map
	\[
	\Wr:\Gr(N,d)\to\Gr(1,N(d-N)+1)
	\]
	is sending $X\in \Gr(N,d)$ to $\C\Wr(X)$.
	
	The Wronski map is a finite algebraic map, see for example Propositions 3.1 and 4.2 in \cite{MTV7}, of degree $\dim(L^{\otimes N(d-N)})^{\sln_n}$, which is explicitly given by
	$$
	(N(d-N))!\ \frac{0!\ 1!\ 2!\dots (d-N-1)!}{N!\ (N+1)!\ (N+2)!\dots (d-1)!},
	$$
	see \cite{S}.
	
	Let $\bLa=(\la^{(1)},\dots,\la^{(n)})$ be $d$-nontrivial and $\bs z=(z_1,\dots,z_n)\in \mathring{\bP}_n$. If $X\in \Omega_{\bLa,\bm z}$, then one has
	\[
	\Wr(X)=\prod_{s=1}^n(x-z_s)^{|\la^{(s)}|}.
	\]
	
	Set $\tl d=N(d-N)+1$. Therefore, we have the following proposition.
	
	\begin{prop}\label{preimage}
	The preimage of the stratum $\Omega_{\bm m}$ of $\Gr(1,N(d-N)+1)$ under the Wronski map is a union of all $d$-nontrivial strata $\Omega_{\bLa}$ of $\Gr(N,d)$ such that $|\la^{(s)}|=m_s$, $s=1,\dots,n$. \qed
	\end{prop}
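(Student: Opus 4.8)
The plan is to establish the two inclusions separately, the only real tools being the identity $\Wr(X)=\prod_{s=1}^n(x-z_s)^{|\la^{(s)}|}$ recorded just above for $X\in\Omega_{\bLa,\bm z}$ and the disjoint decomposition $\Gr(N,d)=\bigsqcup_{d\text{-nontrivial }\bLa}\Omega_{\bLa}$ of Theorem \ref{thm gr dec}. I would keep throughout the convention that a factor $x-z_s$ with $z_s=\infty$ equals the constant $1$, so that a root of $\Wr(X)$ at $\infty$ is encoded by the degree of the monic representative dropping below $N(d-N)$. Matching this with the definition of the $\gl_1$-stratum $\Omega_{\bm m}$ on $\Gr(1,N(d-N)+1)$, where a polynomial of degree less than $N(d-N)$ is read as having a root of the corresponding multiplicity at $\infty$, is the single point that requires attention.

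For the inclusion of each admissible stratum into the preimage I would take a $d$-nontrivial $\bLa=(\la^{(1)},\dots,\la^{(n)})$ with $|\la^{(s)}|=m_s$ and an arbitrary $X\in\Omega_{\bLa}$, so that $X\in\Omega_{\bLa,\bm z}$ for some $\bm z\in\mathring{\bP}_n$. The Wronskian identity then gives $\Wr(X)=\prod_{s=1}^n(x-z_s)^{m_s}$, a monic polynomial with exactly $n$ distinct roots in $\bP^1$ (one possibly $\infty$) of multiplicities $m_1,\dots,m_n$; by the description of $\gl_1$-strata this says precisely that $\C\,\Wr(X)\in\Omega_{\bm m}$, so $X$ lies in $\Wr^{-1}(\Omega_{\bm m})$.

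For the reverse inclusion I would take an arbitrary $X\in\Wr^{-1}(\Omega_{\bm m})$, so that $\Wr(X)$ has the root--multiplicity profile prescribed by $\bm m$. By Theorem \ref{thm gr dec} there is a unique $d$-nontrivial $\bm\mu=(\mu^{(1)},\dots,\mu^{(k)})$ with $X\in\Omega_{\bm\mu}$, hence $X\in\Omega_{\bm\mu,\bm w}$ for some $\bm w\in\mathring{\bP}_k$, and the Wronskian identity yields $\Wr(X)=\prod_{t=1}^k(x-w_t)^{|\mu^{(t)}|}$. Comparing this with $\prod_{s=1}^n(x-z_s)^{m_s}$ and using uniqueness of the factorization of a monic polynomial into distinct linear factors (with the point $\infty$ treated as an ordinary root through the degree convention above), I conclude that $k=n$ and that, after a permutation $\sigma$, one has $w_t=z_{\sigma(t)}$ and $|\mu^{(t)}|=m_{\sigma(t)}$. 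Since strata are unordered, $\bm\mu$ is thus one of the $d$-nontrivial strata with part sizes $m_1,\dots,m_n$, placing $X$ in the claimed union. The main obstacle is precisely the bookkeeping at $\infty$ just indicated; granting the two cited facts and unique factorization, the rest is immediate.
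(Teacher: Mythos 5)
Your proof is correct and follows exactly the route the paper intends: the paper states this proposition with a \qed because it is immediate from the displayed identity $\Wr(X)=\prod_{s=1}^n(x-z_s)^{|\la^{(s)}|}$ for $X\in\Omega_{\bLa,\bm z}$, the description of the $\gl_1$-strata, and the disjointness of the decomposition in Theorem \ref{thm gr dec}, which are precisely the three facts you invoke. Your careful bookkeeping at $\infty$ (degree deficit of the monic Wronskian versus the convention $x-z_s\equiv 1$ for $z_s=\infty$) and the uniqueness-of-factorization step are just the details the paper leaves implicit, so there is nothing to add.
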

	
Let  $\bLa=(\la^{(1)},\dots,\la^{(n)})$ be an unordered sequence of partitions with at most $N$ parts. Let $a$ be the number of distinct partitions in $\bLa$. We can assume that $\la^{(1)},\dots,\la^{(a)}$ are all distinct 
and let $n_1,\dots,n_a$ be their multiplicities in $\bLa$, $n_1+\dots+n_a=n$. Define the {\it symmetry coefficient} of $\bLa $ as the product of multinomial coefficients:
\beq
\label{sym co}
b(\bLa) = \prod_i \frac{\left(\sum_{s=1,\dots,a, \ |\la^{(s)}|=i} n_s\right)! }  {\prod_{s=1,\dots,a,\ |\la^{(s)}|=i} (n_s)!}.
\eeq

\begin{prop}
\label{prop cov}
Let $\bLa=(\la^{(1)},\dots,\la^{(n)})$ be $d$-nontrivial. Then the Wronski map 
$\Wr|_{\Omega_\bLa} : \Omega_\bLa\to \Omega_{\bs m}$ is a ramified covering
 of degree $b(\bLa) \dim (V_{\bLa})^{\sln_N}$.  
	\end{prop}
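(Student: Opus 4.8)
The plan is to realize $\Wr|_{\Omega_\bLa}$ as the composition of the ramified covering $\Omega_\bLa \to \mathring{\bP}_n/S_{\bm n}$ supplied by Proposition \ref{prop deg of cover A} with a genuine (unramified) covering of configuration-space quotients, and then to multiply the two degrees. Write $\bs m=(m_1,\dots,m_n)$ with $m_s=|\la^{(s)}|$; since $\bLa$ is $d$-nontrivial we have $\sum_s m_s=N(d-N)$, so $\bs m$ labels a stratum of $\Gr(1,\tl d)$ with $\tl d=N(d-N)+1$, and by Proposition \ref{preimage} the map $\Wr$ does carry $\Omega_\bLa$ into $\Omega_{\bs m}$. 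A point of $\Omega_{\bs m}$ is a line $\C f$ whose monic representative is $f=\prod_{s=1}^n(x-z_s)^{m_s}$ for some $\bs z\in\mathring{\bP}_n$ (with the convention $x-\infty=1$), and $f$ is recovered from the unordered collection of its labeled roots; hence the root map identifies $\Omega_{\bs m}$ with $\mathring{\bP}_n/S'$, where $S'\subset S_n$ is the subgroup permuting those indices $s$ for which the $m_s$ coincide. (Equivalently, this is Proposition \ref{prop deg of cover A} for $N=1$, where the covering degree $\dim(V_{\bs m})^{\sln_1}=1$.)

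Next I would compare the two symmetry groups. Since equal partitions have equal sizes, the group $S_{\bm n}$ permuting indices with equal $\la^{(s)}$ is a subgroup of $S'$, and both sit inside $S_n$, which acts freely on $\mathring{\bP}_n$; therefore the natural projection
\[
q:\ \mathring{\bP}_n/S_{\bm n}\longrightarrow \mathring{\bP}_n/S'
\]
is an unramified covering of degree $[S':S_{\bm n}]$. Grouping the distinct partitions of $\bLa$ by their common size and using $|S_{\bm n}|=\prod_s n_s!$ together with $|S'|=\prod_i\big(\sum_{s:\,|\la^{(s)}|=i}n_s\big)!$, this index equals the product of multinomial coefficients in \eqref{sym co}, i.e. $[S':S_{\bm n}]=b(\bLa)$.

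It remains to check that the resulting square commutes. By the computation recalled in Section \ref{sec wronski gr}, if $X\in\Omega_{\bLa,\bs z}$ then $\Wr(X)=\prod_{s=1}^n(x-z_s)^{|\la^{(s)}|}=\prod_{s=1}^n(x-z_s)^{m_s}$, so under the identification above $\Wr(X)$ is exactly the image of the class of $\bs z$ under $q$. Thus $\Wr|_{\Omega_\bLa}$ equals the composition $q\circ\pi_\bLa$ followed by the identification $\mathring{\bP}_n/S'\cong\Omega_{\bs m}$, where $\pi_\bLa:\Omega_\bLa\to\mathring{\bP}_n/S_{\bm n}$ is the degree $\dim(V_{\bLa})^{\sln_N}$ covering of Proposition \ref{prop deg of cover A}. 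A composition of a ramified covering with an unramified covering is again a ramified covering whose degree is the product of the two degrees: counting preimages over the real locus $\R\mathring{\bP}_n/S'$, where $\pi_\bLa$ is unramified of degree $\dim(V_{\bLa})^{\sln_N}$ by Proposition \ref{prop deg of cover A} and $q$ contributes $b(\bLa)$ sheets, gives exactly $b(\bLa)\dim (V_{\bLa})^{\sln_N}$ points.

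The only genuinely nonformal point is the index identity $[S':S_{\bm n}]=b(\bLa)$, which is a direct bookkeeping of the multiplicities $n_s$ regrouped by partition size, combined with a careful choice of the ordering convention for $\bLa$ (first by size, then by partition) that makes $S_{\bm n}\subseteq S'$ literally a containment of standard Young subgroups of $S_n$. Everything else—the identification of $\Omega_{\bs m}$ with a configuration-space quotient and the multiplicativity of covering degrees over the unramified locus—is formal once the Wronskian formula of Section \ref{sec wronski gr} pins down the commuting square.
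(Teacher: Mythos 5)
Your proof is correct and is in substance the same argument the paper compresses into its one-line citation of Theorem \ref{thm bijection}, Lemma \ref{lem completeness} (both already packaged in Proposition \ref{prop deg of cover A}, which you invoke) and Proposition \ref{preimage}: the fiber over a point of $\Omega_{\bs m}$ consists of the $b(\bLa)$ admissible assignments of the partitions $\la^{(s)}$ to the roots of the Wronskian, each contributing $\dim(V_{\bLa})^{\sln_N}$ points via the Bethe eigenvector correspondence. Your factorization $\Wr|_{\Omega_\bLa}=q\circ\pi_\bLa$ together with the index identity $[S':S_{\bm n}]=b(\bLa)$ is exactly this bookkeeping made explicit, so the two proofs coincide.
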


	\begin{proof}
		The statement follows from Theorem \ref{thm bijection}, Lemma \ref{lem completeness}, and Proposition \ref{preimage}.
	\end{proof}
	
In other words, the $\mathfrak{gl}_N$-stratification of $\Gr(N,d)$ given by Theorems \ref{thm gr dec} and \ref{thm A strata},
is adjacent to the swallowtail $\gl_1$-stratification of $\Gr(1, N(d-N)+1)$ and the Wronski map.

\section{The $\g_N$-stratification of self-dual Grassmannian}\label{sec sgr}
	
It is convenient to use the notation:  $\g_{2r+1}=\mathfrak{sp}_{2r}$, and $\g_{2r}=\mathfrak{so}_{2r+1}$, $r\gge 2$. We also set $\g_3=\sln_2$. The case of	$\g_3=\sln_2$ is discussed in detail in Section \ref{sec proof N=3}. 
	
\subsection{Self-dual spaces}\label{sec self-dual}Let $\bLa=( \la^{(1)},\dots,\la^{(n)})$ be a tuple of partitions with at most $N$ parts such that $|\bLa|=N(d-N)$ and let $\bs z=(z_1,\dots,z_n)\in \mathring{\bP}_n$.
	
Define a tuple of polynomials $\bs T=(T_1,\dots,T_N)$ by
\beq\label{eq def T}
T_i(x)=\prod_{s=1}^n(x-z_s)^{\la_{i}^{(s)}-\la_{i+1}^{(s)}},\quad i=1,\dots,N,
\eeq
where $\la_{N+1}^{(s)}=0$. We say that $\bs T$ is {\it associated with} $\bLa,\bm z$.
	
Let $X\in \Omega_{\bLa,\bm z}$ and $g_1,\dots,g_i\in X$. Define the \emph{divided Wronskian $\Wr^\dag$ with respect to $\bLa,\bm z$} by
\[
\Wr^\dag(g_1,\dots,g_i)=\Wr(g_1,\dots,g_i)\prod_{j=1}^i T_{N+1-j}^{j-i-1},\quad i=1,\dots,N.
\]
Note that $\Wr^\dag(g_1,\dots,g_i)$ is a polynomial in $x$.
	
Given $X\in \Gr(N,d)$, define the \emph{dual space} $X^\dag$ of $X$ by
$$ X^\dag=\{ \Wr^\dag (g_1,\dots,g_{N-1})\ | \ g_i\in X,\ i=1,\dots,N-1\}. $$
	
\begin{lem}\label{dual exponents} If $X\in \Omega_{\bLa,\bm z}$, then $X^\dag\in \Omega_{\tl{\bLa},\bm z}\subset \Gr(N,\tl d)$, where
$$\tl d=\sum_{s=1}^n\la_1^{(s)}-d+2N,$$
and
$\tl{\bLa}=(\tl{\la}^{(1)},\dots,\tl{\la}^{(n)})$ is a sequence of partitions with at most $N$ parts such that
\[
\tl{\la}_i^{(s)}=\la_1^{(s)}-\la_{N+1-i}^{(s)},\quad i=1,\dots,N,\quad s=1,\dots,n.
\]\qed
\end{lem}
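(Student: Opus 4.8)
The plan is to reduce everything to a local computation of vanishing orders at each point of $\bP^1$, since membership of a space in a Schubert cell $\Omega_\mu(\mc F(z))$ is governed precisely by its set of vanishing orders (exponents) at $z$. Recall that $X\in\Omega_{\la^{(s)}}(\mc F(z_s))$ means exactly that $X$ has a basis with vanishing orders $d_i:=\la^{(s)}_i+N-i$, $i=1,\dots,N$, at $z_s$, a strictly decreasing sequence.

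First I would settle the linear-algebra structure of $X^\dag$. The map $(g_1,\dots,g_{N-1})\mapsto \Wr^\dag(g_1,\dots,g_{N-1})$ is multilinear and alternating in its arguments, since the correction factor $\prod_{j}T_{N+1-j}^{\,j-i-1}$ does not depend on the $g_i$; hence it descends to a linear map $\bigwedge^{N-1}X\to\C[x]$. As $\dim X=N$, the space $\bigwedge^{N-1}X$ is $N$-dimensional and every one of its elements is decomposable, so $X^\dag$ coincides with the image of this linear map and is a subspace of $\C[x]$ of dimension at most $N$. It remains to pin down its vanishing orders and degrees.

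Second comes the local exponent computation at a finite $z_s$. Choosing a basis $g_1,\dots,g_N$ of $X$ adapted to $\mc F(z_s)$, so $\mathrm{ord}_{z_s}g_i=d_i$, I would use the elementary fact that functions vanishing to distinct orders $a_1<\dots<a_m$ have Wronskian of order $\sum a_j-\binom m2$. For $\hat k=\{1,\dots,N\}\setminus\{k\}$ this gives $\mathrm{ord}_{z_s}\Wr(g_j:j\ne k)=|\la^{(s)}|+\binom N2-d_k-\binom{N-1}2$. A short summation shows the correction factor $\prod_{l=2}^N T_l^{\,1-l}$ has order $\la^{(s)}_1-|\la^{(s)}|$ at $z_s$, so $\mathrm{ord}_{z_s}\Wr^\dag(g_j:j\ne k)=\la^{(s)}_1-\la^{(s)}_k+k-1$. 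As $k$ runs over $1,\dots,N$ these are $N$ distinct integers, and the substitution $i=N+1-k$ identifies them with $\{\tl\la^{(s)}_i+N-i\}$. Since $\dim X^\dag= N$ and I have exhibited $N$ elements of $X^\dag$ realizing exactly these distinct orders, the vanishing sequence of $X^\dag$ at $z_s$ is forced to be $\{\tl\la^{(s)}_i+N-i\}$, i.e. $X^\dag\in\Omega_{\tl\la^{(s)}}(\mc F(z_s))$.

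The behaviour at $\infty$ and the resulting degree bound is, I expect, the only genuinely delicate point, because the normalization of $\Wr^\dag$ and the convention $x-z_s=1$ at $z_s=\infty$ spoil the formal symmetry with finite points. There I would instead take a basis of $X$ adapted to $\mc F(\infty)$ (polynomials of distinct degrees $\delta_i$) and compute degrees by the same Vandermonde leading-term argument: one finds $\deg\Wr^\dag(g_j:j\ne k)=\sum_s\la^{(s)}_1-d+N-1+k$, so the $N$ elements have the distinct degrees $\tl d-N,\dots,\tl d-1$. This simultaneously shows $X^\dag\subset\C_{\tl d}[x]$ with the announced $\tl d=\sum_s\la^{(s)}_1-d+2N$, reconfirms $\dim X^\dag=N$, and yields the correct exponents at $\infty$; the consistency check $|\tl\bLa|=N\sum_s\la^{(s)}_1-|\bLa|=N(\tl d-N)$ shows $\tl\bLa$ is admissible for $\Gr(N,\tl d)$. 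Combining the three steps over all $s$ gives $X^\dag\in\bigcap_s\Omega_{\tl\la^{(s)}}(\mc F(z_s))=\Omega_{\tl\bLa,\bm z}$, as claimed, with the careful $\infty$/degree bookkeeping being the main work and all other steps reducing to the uniform local exponent computation.
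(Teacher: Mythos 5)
The paper does not prove this lemma at all --- it is stated without proof, with the \qed{} following the statement directly, as an elementary verification --- so your proposal supplies an omitted computation rather than shadowing an argument in the text. Your approach is the natural one, and your finite-point analysis is correct: the multilinear/alternating reduction identifies $X^\dag$ with the image of the linear map on $\bigwedge^{N-1}X$ (hence $\dim X^\dag\lle N$, with equality once the distinct-order elements are exhibited); the correction factor $\prod_{l=2}^{N}T_l^{1-l}$ indeed has order $\la_1^{(s)}-|\la^{(s)}|$ at $z_s$; and your count $\mathrm{ord}_{z_s}\Wr^\dag(g_j:j\ne k)=\la_1^{(s)}-\la_k^{(s)}+k-1=\tl\la_i^{(s)}+N-i$ (under $i=N+1-k$), combined with spanning and distinctness of these $N$ orders, correctly forces the exponent set of $X^\dag$ at each finite $z_s$.

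Two things at infinity must, however, be made explicit, and your displayed formula covers only one of the two cases. (1) If $\infty\notin\bm z$, your formula $\deg\Wr^\dag(g_j:j\ne k)=\sum_s\la_1^{(s)}-d+N-1+k$ tacitly assumes the adapted basis has degrees $\delta_j=d-j$, i.e., that $X$ lies in the open cell at $\infty$. This is true but uses the standing hypothesis $|\bLa|=N(d-N)$ of Section \ref{sec self-dual}: the polynomial $\prod_s(x-z_s)^{|\la^{(s)}|}$, of degree $N(d-N)$, divides $\Wr(X)$, while $\deg\Wr(X)\lle N(d-N)$ always, so equality holds and $\delta_j=d-j$; without this observation the formula is unjustified. (2) Since $\bm z\in\mathring{\bP}_n$, one coordinate may equal $\infty$, say $z_{s_0}=\infty$; then $\delta_j=d-j-\la_{N+1-j}^{(s_0)}$ and $T_l$ omits the factor at $s_0$ by the paper's convention, and the same Vandermonde computation gives $\deg\Wr^\dag(g_j:j\ne k)=\sum_{s\ne s_0}\la_1^{(s)}-d+N-1+k+\la_{N+1-k}^{(s_0)}$, which at $i=N+1-k$ equals $\tl d-i-\tl\la_{N+1-i}^{(s_0)}$ --- precisely the degrees prescribed by the cell $\Omega_{\tl\la^{(s_0)}}(\mc F(\infty))$ in $\Gr(N,\tl d)$, and in particular less than $\tl d$. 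So your assertion that the $N$ elements always have degrees $\tl d-N,\dots,\tl d-1$ fails in this case (though the conclusion of the lemma survives). With these two cases written out, your proof is complete and correct.
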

	
	Note that we always have $\tl{\la}^{(s)}_N=0$ for every $s=1,\dots,n$, hence $X^\dag$ has no base points.	
	
	Given a space of polynomials $X$ and a rational function $g$ in $x$, denote by $g\cdot X$ the space of rational functions of the form $g\cdot f$ with $f\in X$.
	
	A self-dual space is called a {\it pure self-dual space} if $X=X^\dag$. A space of polynomials $X$ is called \emph{self-dual} if $X = g\cdot X^\dag$ for some polynomial $g\in\C[x]$. In particular, if $X\in \Omega_{\bLa,\bm z}$ is self-dual, then $X = T_N\cdot X^\dag$, where $T_N$ is defined in \eqref{eq def T}. Note also, that if $X$ is self-dual then $g\cdot X$ is also self-dual.
	
	It is obvious that every point in $\Gr(2,d)$ is a self-dual space.
	
	 Let $\sGr(N,d)$ be the set of all self-dual spaces in $\Gr(N,d)$. We call $\sGr(N,d)$ the \emph{self-dual Grassmannian}.
	The self-dual Grassmannian $\sGr(N,d)$ is an algebraic subset of $\Gr(N,d)$. 
	
	Let  $\Omega_{\bLa,\bm z}$ be the finite set defined in \eqref{Omega}      
	and $\Omega_{\bLa}$ the set defined in \eqref{strata}. Denote by $\s\Omega_{\bLa,\bm z}$ the set of all self-dual spaces in $\Omega_{\bLa,\bm z}$ and by $\s\Omega_{\bLa}$ the set  of all self-dual spaces in $\Omega_{\bLa}$: 
	\begin{align*}
	\s\Omega_{\bLa,\bm z}=\Omega_{\bLa,\bm z}\bigcap \sGr(N,d) \quad {\rm and} \quad \s\Omega_{\bLa}=\Omega_{\bLa}\bigcap \sGr(N,d).
	\end{align*}
	We call the sets $\s\Omega_{\bLa}$ {\it $\g_N$-strata} of the self-dual Grassmannian.
 A stratum $\s\Omega_{\bLa}$ does not depend on the order of the set of partitions $\bLa$. Note that each $\s\Omega_{\bLa}$ is a constructible subset of the Grassmannian $\Gr(N,d)$ in Zariski
 topology.
 
A partition $\la$ with at most $N$ parts is called {\it $N$-symmetric} if $\la_{i}-\la_{i+1}=\la_{N-i}-\la_{N-i+1}$, $i=1,\dots, N-1$. If the stratum $\s\Omega_{\bLa}$ is nonempty, then all partitions $\la^{(s)}$ are $N$-symmetric, see also Lemma \ref{lem sym weight new} below.
	
	The self-dual Grassmannian is related to the Gaudin model in types B and C, see \cite{MV} and Theorem \ref{bi rep sgr} below. We show that $\sGr(N,d)$ also has a remarkable stratification structure similar to the $\gl_N$-stratification of $\Gr(N,d)$, governed by representation theory of $\g_N$, see Theorems \ref{thm sgr dec} and \ref{thm BC strata}.

	\begin{rem}\label{Schubert BC}
	The self-dual Grassmannian also has a stratification induced from the usual Schubert cell decomposition \eqref{eq grass decom}, \eqref{Omega bar}. 	For $z\in{\Bbb P}^1$, and an $N$-symmetric partition $\la$ with $\la_1\lle d-N$, set $\s\Omega_{\la}(\mc F(z))=\Omega_{\la}(\mc F(z))\cap \s\Gr(N,d)$. Then it is easy to see that
	\begin{align*}
	\s\Gr(N,d)=\underset{\underset {\mu_1\lle d-N}{N-{\rm symmetric\ \mu,}}} {\bigsqcup} \s\Omega_\mu(\mc F(z)) \quad {\rm and} \quad \overline{\s\Omega}_\la(\mc F(z))= \underset{\underset {\mu_1\lle d-N, \ \la\subseteq\mu}{N-{\rm symmetric}\ \mu,} } {\bigsqcup} \s\Omega_{\mu}(\mc F(z)).
	\end{align*}
	
	\end{rem}
	
	\subsection{Bethe algebras of types $\rm B$ and $\rm C$ and self-dual Grassmannian}\label{sec self-dual structure}
	
The Bethe algebra $\mc B$ (the algebra of higher Gaudin Hamiltonians) for a simple Lie algebras $\g$ were described in \cite{FFRe}. The Bethe algebra $\cB$ is a commutative subalgebra of $\mc U(\g[t])$ which commutes with the subalgebra $\mc U(\g)\subset \mc U(\g[t])$.
An explicit set of generators of the Bethe algebra in Lie algebras of types $\rm B$, $\rm C$, and $\rm D$ was given in \cite{M}.  Such a description in the case of $\gl_N$ is given above in Section \ref{sec: glN bethe}. For the case of $\g_N$ we only need the following fact. 
	
	Recall our notation $g(x)$ for the current of  $g\in\g$, see \eqref{eq generating series}.
	\begin{prop}[\cite{FFRe, M}]\label{prop bethe BC} Let $N>3$.
	There 
		exist elements $F_{ij}\in\g_N$, $i,j=1,\dots,N$, and polynomials $ G_s(x)$ in $d^k F_{ij}(x)/dx^k$, $s=1,\dots,N$, $k=0,\dots, N$,  such that the Bethe algebra of $\g_N$ is generated by coefficients of $G_s(x)$ considered as formal power series in $x^{-1}$.\qed
	\end{prop}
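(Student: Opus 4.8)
The plan is to identify $\cB$ with the image of the Feigin--Frenkel center and then to produce an explicit set of generators, as is done in \cite{FFRe, M}. Recall that for a simple Lie algebra the Bethe algebra is the image in $\mc U(\g_N[t])$ of the center $\mathfrak z(\widehat{\g}_N)$ of the affine Kac--Moody algebra $\widehat{\g}_N$ at the critical level. By the Feigin--Frenkel theorem, $\mathfrak z(\widehat{\g}_N)$ is commutative, is freely generated by $r$ Segal--Sugawara series, and is canonically isomorphic to the classical $\mc W$-algebra $\mc W({}^L\g_N)$ of the Langlands dual Lie algebra ${}^L\g_N$. This isomorphism is the source of the number $N$ in the statement: the dual ${}^L\g_N$ carries an $N$-dimensional defining representation (for $N=2r+1$ one has ${}^L\g_N=\mathfrak{so}_{2r+1}$, and for $N=2r$ one has ${}^L\g_N=\mathfrak{sp}_{2r}$), so its invariants are naturally encoded by $N\times N$ matrices.

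First I would write down the generators following \cite{M}. One collects the currents of a basis of $\g_N$ into an $N\times N$ matrix $F(x)=(F_{ij}(x))$, arranged through the defining representation of ${}^L\g_N$, and forms the type-B/C/D analogue of the Talalaev operator: a symmetrized, column-ordered noncommutative version of $\det(\pa_x+F(x))$, or equivalently the generating series of the noncommutative Newton traces $\tr\,F(x)^s$. Its coefficients, organized into the series $G_s(x)$, are polynomials in the $d^kF_{ij}(x)/dx^k$. The \emph{main obstacle} is to show that these coefficients are central, i.e.\ that they commute with $\mc U(\g_N)$ and with one another; this is the technical heart of \cite{M}, established by an $R$-matrix (RTT-type) computation with the rational $R$-matrix of the defining representation of ${}^L\g_N$, together with the symmetry or antisymmetry relations cut out by the orthogonal or symplectic form.

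It then remains to check that the coefficients of $G_1(x),\dots,G_N(x)$ generate all of $\cB$ rather than a proper subalgebra. Here I would pass to the associated graded algebra for the canonical PBW/current filtration on $\mc U(\g_N[t])$ and compute the principal symbols of the coefficients. By the Feigin--Frenkel description, $\operatorname{gr}\mathfrak z(\widehat{\g}_N)\cong\mc W({}^L\g_N)$ is a free Poisson-commutative polynomial algebra on $r$ homogeneous generators, one in each degree equal to an exponent of ${}^L\g_N$ augmented by one. One verifies that the symbols of the coefficients of the $G_s(x)$ contain such a system of free generators, the redundancy among $s=1,\dots,N$ being absorbed by the classical invariant-theoretic relations for the orthogonal or symplectic group. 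Since a family generates a filtered algebra once its symbols generate the associated graded, the coefficients of $G_s(x)$ generate $\cB$. An alternative route, closer to the present paper, is to obtain these generators by folding the $\gl_N$ operator $\cDB$ by the Dynkin diagram automorphism whose fixed subalgebra is ${}^L\g_N\subset\gl_N$, as in Appendix A; the generation statement then follows from the $\gl_N$ case of Section \ref{sec: glN bethe} together with the description of this fixed subalgebra.
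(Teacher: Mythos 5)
The paper offers no proof of Proposition \ref{prop bethe BC} beyond its citations---it is quoted from \cite{FFRe} (the Bethe algebra as the image of the Feigin--Frenkel center) and \cite{M} (explicit Segal--Sugawara series in types B and C, with centrality via $R$-matrix/Brauer-algebra computations and completeness via symbols in the associated graded, exactly as you outline)---so your main sketch is essentially the same route, up to the minor slip that Molev's matrix $F(z)$ is built from the defining representation of $\g_N$ itself (size $2r$ for $\mathfrak{sp}_{2r}$, size $2r+1$ for $\mathfrak{so}_{2r+1}$), and the repackaging of these generators into the coefficients of an order-$N$ operator is what the paper later takes from Proposition 2.10 of \cite{R}. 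One caution: your closing ``alternative route'' does not work as stated, since the fixed-point subalgebra of $\varpi$ in $\gl_N$ is the Langlands dual ${}^L\g_N$, not $\g_N$, so folding $\cDB$ produces $\varpi$-invariant elements of $\mc U(\gl_N[t])$ rather than generators of the Bethe algebra inside $\mc U(\g_N[t])$; Appendix A uses $\varpi$ only to match $\gl_N$ Bethe vectors with self-dual spaces, not to construct the $\g_N$ Bethe algebra.
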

	
Similar to the $\gl_N$ case, for a collection of dominant integral $\g_N$-weights $\bLa=(\la^{(1)},\dots,\la^{(n)})$ and $\bs z=(z_1,\dots,z_n)\in \mathring{\bP}_n$, we set $V_{\bLa,\bm z}=\bigotimes_{s=1}^n V_{\la^{(s)}}(z_s)$, considered as a $\mc B$-module. 
Namely, if $\bs z\in\C^n$, then $V_{\bLa,\bm z}$ is a tensor product of evaluation $\g_N[t]$-modules and therefore a $\mc B$-module. 
If, say, $z_n=\infty$, then $\mc B$ acts trivially on $V_{\la^{(n)}}(\infty)$. More precisely, in this case, $b\in \mc B$ acts by $b\otimes 1$ where the first factor acts on   $\bigotimes_{s=1}^{n-1} V_{\la^{(s)}}(z_s)$ and $1$ acts on $V_{\la^{(n)}}(\infty)$.

We also denote $V_{\bLa}$ the module $V_{\bLa,\bm z}$ considered as a $\g_N$-module. 
	
Let $\mu$ be a dominant integral $\g_N$-weight and $k\in\Z_{\gge 0}$. Define an $N$-symmetric
partition $\mu_{A,k}$ with at most $N$ parts by the rule: $(\mu_{A,k})_N=k$ and
\beq\label{eq partition BC}
(\mu_{A,k})_i-(\mu_{A,k})_{i+1}=\begin{cases}
\langle \mu,\calpha_i\rangle,\quad &\text{if }1\lle i\lle \big[\frac{N}{2}\big],\\
\langle \mu,\calpha_{N-i}\rangle,\quad &\text{if }\big[\frac{N}{2}\big]< i\lle N-1.
\end{cases}
\eeq
We call $\mu_{A,k}$ the partition \emph{associated with weight $\mu$ and integer $k$}.
	
Let $\bLa=(\la^{(1)},\dots,\la^{(n)})$ be a sequence of dominant integral $\g_N$-weights and let $\bm k=(k_1,\dots,k_n)$ be an $n$-tuple of nonnegative integers. Then denote $\bLa_{A,\bm k}=(\la_{A,k_1}^{(1)},\dots,\la_{A,k_n}^{(n)})$ the sequence of partitions associated with $\la^{(s)}$ and $k_s$, $s=1,\dots,n$. 
	
We use notation $\mu_{A}=\mu_{A,0}$ and $\bLa_{A}=\bLa_{A,(0,\dots,0)}$.
	
\begin{lem}\label{lem sym weight new}
If $\bm\Xi$ is a $d$-nontrivial sequence of partitions with at most $N$ parts and $\s\Omega_{\bm\Xi}$ is nonempty, then $\bm\Xi$ has the form 
$\bm\Xi=\bLa_{A,\bm k}$ for 
a sequence of dominant integral $\g_N$-weights $\bLa=(\la^{(1)},\dots,\la^{(n)})$ and an $n$-tuple $\bm k$ of nonnegative integers. The pair $(\bLa,\bm k)$ is uniquely determined by $\bm \Xi$. Moreover, if $N=2r$, then $\sum_{s=1}^n\langle \la^{(s)},\calpha_r\rangle$ is even.
\end{lem}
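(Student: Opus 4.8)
The plan is to establish the three assertions of Lemma \ref{lem sym weight new} in sequence: first that every $\la^{(s)}$ in a nonempty, $d$-nontrivial $\s\Omega_{\bm\Xi}$ must be $N$-symmetric and hence expressible as $\mu_{A,k}$; second that the decomposition $\bm\Xi=\bLa_{A,\bm k}$ is unique; and finally the parity statement when $N=2r$. The $N$-symmetry of each partition was already noted immediately before the lemma (any nonempty $\s\Omega_{\bLa}$ forces the $\la^{(s)}$ to be $N$-symmetric), so the first task is really to verify that the map $(\mu,k)\mapsto \mu_{A,k}$ surjects onto $N$-symmetric partitions. I would read \eqref{eq partition BC} as a recipe that reconstructs a partition from its consecutive differences together with its smallest part $k=\la_N$: the differences are prescribed to be palindromic, $\la_i-\la_{i+1}=\la_{N-i}-\la_{N-i+1}$, which is exactly the $N$-symmetry condition, so any $N$-symmetric partition arises this way.

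For uniqueness, I would argue that both $\mu$ and $k$ are recoverable from $\xi:=\mu_{A,k}$. The integer $k$ is simply $\xi_N=(\mu_{A,k})_N$. The weight $\mu$ is recovered by inverting \eqref{eq partition BC}: the pairings $\langle\mu,\calpha_i\rangle$ for $1\lle i\lle[\frac N2]$ are read off as the consecutive differences $\xi_i-\xi_{i+1}$, and since a dominant integral $\g_N$-weight is determined by its pairings with all the coroots $\calpha_1,\dots,\calpha_r$, and those pairings are exactly the first $[\frac N2]=r$ differences, $\mu$ is uniquely determined. The one point requiring care is matching the index range: for both $N=2r$ and $N=2r+1$ one checks that $[\frac N2]=r$ equals the rank of $\g_N$, so the first $r$ differences carry precisely the data $(\langle\mu,\calpha_1\rangle,\dots,\langle\mu,\calpha_r\rangle)$ and the palindromic tail is then forced. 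Hence the pair $(\bLa,\bm k)$ is determined componentwise by $\bm\Xi$.

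The parity statement for $N=2r$ is where I expect the real work, and it should come from the $d$-nontriviality constraint $|\bm\Xi|=N(d-N)$ combined with the structure of the self-dual condition for $\g_{2r}=\mathfrak{so}_{2r+1}$. I would compute $|\xi|=\sum_i(\mu_{A,k})_i$ by summing the telescoping differences: writing each part as $\xi_i=k+\sum_{j\gge i}(\xi_j-\xi_{j+1})$ and using \eqref{eq partition BC}, the total $|\xi|$ becomes $Nk$ plus a linear combination of the $\langle\mu,\calpha_i\rangle$ with integer coefficients. The key observation is that in the $N=2r$ case the middle coroot $\calpha_r$ (the short root of $\mathfrak{so}_{2r+1}$, corresponding to the repeated middle difference) enters $|\xi|$ with an odd coefficient, while all other contributions and the $Nk$ term are even because $N=2r$ is even. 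Summing over $s$ and using $|\bLa_{A,\bm k}|=N(d-N)$, which is even, forces $\sum_s\langle\la^{(s)},\calpha_r\rangle$ to be even.

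The hardest step is pinning down the exact coefficient of $\langle\mu,\calpha_r\rangle$ in $|\mu_{A,k}|$ and confirming its parity; this requires carefully expanding the telescoping sum under the case split in \eqref{eq partition BC} and tracking how the middle index $i=r$ is shared between the two cases. Once that coefficient is shown to be odd modulo $2$ while every other term is even, the parity conclusion is immediate from $N(d-N)\equiv 0\pmod 2$. I would therefore devote most of the write-up to this explicit parity computation, treating the surjectivity and uniqueness parts as the more routine inversion of the piecewise-linear recipe \eqref{eq partition BC}.
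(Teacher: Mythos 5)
Your overall skeleton matches the paper's (very short) proof: first statement via $N$-symmetry plus inversion of the recipe \eqref{eq partition BC}, uniqueness by reading off $k=\xi_N$ and the first $r$ consecutive differences, parity from the counting identity $|\bm\Xi|=N(d-N)$. The uniqueness/surjectivity part of your write-up is correct and is exactly what the paper leaves implicit. But there are two genuine problems. First, your starting point is circular: the sentence before the lemma (``if $\s\Omega_{\bLa}$ is nonempty then all $\la^{(s)}$ are $N$-symmetric'') is itself a restatement of the lemma --- the paper even adds ``see also Lemma \ref{lem sym weight new} below'' --- so you cannot take it as given. The actual geometric input, and the entirety of what the paper cites for the first statement, is Lemma \ref{dual exponents}: if $X\in\Omega_{\bm\Xi,\bm z}$ is self-dual then $X=T_N\cdot X^\dag$, and $X^\dag\in\Omega_{\tl{\bm\Xi},\bm z}$ with $\tl\xi^{(s)}_i=\xi^{(s)}_1-\xi^{(s)}_{N+1-i}$; since multiplication by $T_N$ shifts every part by $\xi^{(s)}_N$ and the cells $\Omega_{\la}(\mc F(z_s))$ for a fixed flag are disjoint, one gets $\xi^{(s)}_i=\xi^{(s)}_1-\xi^{(s)}_{N+1-i}+\xi^{(s)}_N$, and taking consecutive differences yields the $N$-symmetry. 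Your proposal omits this derivation entirely.

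Second, your parity plan rests on a false expectation. Telescoping gives $|\mu_{A,k}|=Nk+\sum_{i=1}^{N-1}i\,d_i$ with $d_i$ the differences from \eqref{eq partition BC}, and for $N=2r$ this collapses to
\[
|\mu_{A,k}|=r\Big(2\sum_{i=1}^{r-1}\langle\mu,\calpha_i\rangle+\langle\mu,\calpha_r\rangle\Big)+2rk,
\]
which is precisely the identity displayed in the paper's proof. So the coefficient of $\langle\mu,\calpha_r\rangle$ is $r$, \emph{not} odd in general: for $N=4$ (so $r=2$) it is even, and your proposed mod-$2$ reduction then reads $0\equiv 0$ and proves nothing. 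The correct step is to observe that every term is divisible by $r$, divide the summed identity by $r$ (equivalently, work modulo $2r$), and use that $N(d-N)/r=2(d-N)$ is even: this gives $\sum_{s=1}^n\langle\la^{(s)},\calpha_r\rangle\equiv 0 \pmod 2$. This is an easy repair, but as written the step you flagged as ``the hardest'' would fail for all even $r$.
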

	
\begin{proof}
	The first statement follows from Lemma \ref{dual exponents}. If $N=2r$ is even, the second statement follows from the equality
	\[
	N(d-N)=|\bm{\Xi}|=\sum_{s=1}^nr\Big(2\sum_{i=1}^{r-1}\langle \la^{(s)},\calpha_i\rangle+\langle \la^{(s)},\calpha_r\rangle\Big)+N\sum_{s=1}^nk_s.\qedhere
	\]
\end{proof}
Therefore the strata are effectively parameterized by sequences of dominant integral $\g_N$-weights and tuples of nonnegative integers. 
In what follows we write $\s\Omega_{\bLa,\bm k}$ for $\s\Omega_{\bLa_{A,\bm k}}$ and $\s\Omega_{\bLa,\bm k,\bm z}$ for $\s\Omega_{\bLa_{A,\bm k},\bm z}$.	

\medskip
	
	Define a formal differential operator
	$$
	\mc D^{\mc B}=\partial_x^N+\sum_{i=1}^NG_i(x)\pa_x^{N-i}.
	$$
	For a $\mc B$-eigenvector $v\in V_{\bLa,\bs z}$, $G_i(x)v=h_i(x)v$, 
	we denote $\mc D_v=\partial_x^N+\sum_{i=1}^Nh_i(x)\pa_x^{N-i}$ the corresponding scalar differential operator.
 	
	\begin{thm}\label{bi rep sgr} Let $N>3$.
	
	There exists a choice of generators $G_i(x)$ of the $\g_N$ Bethe algebra $\mc B$ (see Proposition \ref{prop bethe BC}), such that for any sequence of dominant integral $\g_N$-weights $\bLa=(\la^{(1)},\dots,\la^{(n)})$, any $\bm z\in \mathring{\bP}_n$, and any $\mc B$-eigenvector $v\in (V_{\bLa,\bm z})^{\g_N}$, we have ${\rm{Ker}}\ ((T_1\dots T_{N})^{1/2}\cdot\mc D_v \cdot (T_1\dots T_{N})^{-1/2})\in \s\Omega_{\bLa_{A},\bs z}$, where $\bm T=(T_1,\dots,T_N)$ is associated with $\bLa_A,\bm z$.
	
	Moreover, if $|\bLa_A|=N(d-N)$, then this defines a bijection between the joint eigenvalues of $\mc B$ on $(V_{\bLa,\bm z})^{\g_N}$ and $\s\Omega_{\bLa_{A},\bs z}\subset\Gr(N,d)$.
	\end{thm}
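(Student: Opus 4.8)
The plan is to produce, from each joint eigenvalue of $\mc B$ on $(V_{\bLa,\bm z})^{\g_N}$, a self-adjoint scalar differential operator of order $N$ --- a $\g_N^\vee$-oper in the sense of Section \ref{sec oper} --- and to identify self-dual spaces with the kernels of such operators. This reduces the statement to the type A correspondence of Theorem \ref{thm bijection} restricted to its self-dual locus, the bridge being the diagram automorphism $\sigma$ of $\gl_N$ recalled in Section \ref{sec more notation} (and used in Appendix A), whose fixed-point data is Langlands dual to $\g_N$ and whose fixed spaces of polynomials are exactly the self-dual ones.

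First I would fix the generators $G_i(x)$ of Proposition \ref{prop bethe BC} so that the universal operator $\mc D^{\mc B}$ becomes self-adjoint after the normalizing conjugation: for a $\mc B$-eigenvector $v$, writing $\widetilde{\mc D}_v=(T_1\cdots T_N)^{1/2}\,\mc D_v\,(T_1\cdots T_N)^{-1/2}$, the constructions of \cite{FFRe,M} are designed precisely so that the formal adjoint satisfies $\widetilde{\mc D}_v^{\,*}=(-1)^N\widetilde{\mc D}_v$. I would then invoke the oper/self-dual-space dictionary of Section \ref{sec oper}: a monic order-$N$ operator is self-adjoint in this normalized sense if and only if its kernel is a self-dual space, with the precise factor $X=T_N\cdot X^\dag$ extracted from the conjugating weight $(T_1\cdots T_N)^{1/2}$ and from the dual-space exponents of Lemma \ref{dual exponents}. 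This already gives $\Ker\widetilde{\mc D}_v\in\sGr(N,d)$.

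To pin down the stratum I would read off the local exponents of $\mc D_v$ at each $z_s$ and at $\infty$; as in the type A statement (Theorem 4.1 of \cite{MTV2} underlying Theorem \ref{thm bijection}), these are governed by the weight $\la^{(s)}$, and translating them through the associated-partition rule \eqref{eq partition BC} shows that $\Ker\widetilde{\mc D}_v$ contains, at each $z_s$, polynomials vanishing to the prescribed orders. Hence the kernel lies in $\Omega_{\bLa_A,\bm z}$, and combined with the previous paragraph this yields $\Ker\widetilde{\mc D}_v\in\s\Omega_{\bLa_A,\bm z}$, proving the first assertion.

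Finally, for the bijection when $|\bLa_A|=N(d-N)$, I would compare with the type A bijection $\kappa$. The automorphism $\sigma$ identifies each $\g_N$-eigenvalue on $(V_{\bLa,\bm z})^{\g_N}$ with a $\sigma$-fixed type A eigenvalue whose operator is self-adjoint, and the self-dual points of $\Omega_{\bLa_A,\bm z}$ are exactly the $\sigma$-fixed ones, so the map in question is the self-dual restriction of $\kappa$; injectivity is then inherited. The main obstacle is surjectivity onto $\s\Omega_{\bLa_A,\bm z}$: one must show that \emph{every} self-dual space in the stratum is the kernel of such a self-adjoint operator and therefore comes from a genuine $\g_N$ eigenvalue. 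I expect to settle this by a counting argument matching $\dim(V_{\bLa,\bm z})^{\g_N}$ with the number of self-dual points in $\Omega_{\bLa_A,\bm z}$, using the $\g_N$ analog of the completeness/diagonalizability of Lemma \ref{lem completeness}; the delicate point is ruling out any omitted self-dual space and counting eigenvalues rather than eigenvectors without overcounting.
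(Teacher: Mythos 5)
Your proposal contains genuine gaps, and the most serious one is that you never invoke the single external input on which the whole first part actually rests: the identification of the joint spectrum of the $\g_N$ Bethe algebra with \emph{monodromy-free} opers for the Langlands dual algebra (Theorem 3.2, Corollary 3.3, and Proposition 2.10 of \cite{R}; the universal generators $G_i(x)$ come from there, not from a self-adjointness normalization of the constructions of \cite{FFRe,M}). Formal self-adjointness plus correct local exponents at $z_s$ and $\infty$ do \emph{not} imply that $\Ker\widetilde{\mc D}_v$ is an $N$-dimensional space of polynomials: monodromy-freeness is the crux, and it is precisely what \cite{R} supplies by placing the eigenvalues in $\Opg(\bP^1)_{\check\bLa,\bm z}$ rather than among arbitrary Fuchsian operators. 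The point is sharpest in type C: by Lemma \ref{lem monodromy}, the eigenvalue operator has monodromy $-I_{2r}$ around $z_s$ whenever $\langle\alpha_r,\la^{(s)}\rangle$ is odd, and it is exactly the conjugation by the multivalued function $(T_1\cdots T_N)^{1/2}$ that cancels it — the square root is forced by monodromy, not by bookkeeping (this is also why the paper notes that \cite{LMV,MM} do not cover this case). You also have the normalization backwards: it is $\mc D_v$ itself that satisfies $\mc D_v^*=\mc D_v$ in the paper's monic convention, being the Miura transform $(\pa_x+v_1)\cdots(\pa_x-v_1)$ of an oper; if instead $\widetilde{\mc D}_v=(T_1\cdots T_N)^{1/2}\mc D_v(T_1\cdots T_N)^{-1/2}$ were self-adjoint, Lemma \ref{lem dual-oper} would give $X^\dag=(T_1\cdots T_N)\cdot X$ for $X=\Ker\widetilde{\mc D}_v$, not the pure self-duality $X=X^\dag$ that holds on $\s\Omega_{\bLa_A,\bm z}$ (where $T_N=1$).

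The bijectivity argument has a second gap. The claimed identification of each $\g_N$-eigenvalue with a $\varpi$-fixed type A eigenvalue is not available as a construction: $(V_{\bLa,\bm z})^{\g_N}$ and $(V_{\bLa_A,\bm z})^{\sln_N}$ are different vector spaces built from representations of different Lie algebras, with no map between them — only an equality of dimensions via twining characters (Lemma \ref{lem dimension of singular space}) — and proving that the two spectra match is essentially the theorem itself. The paper's Appendix (Theorem \ref{thm w-invariant}) does prove ``self-dual $\Leftrightarrow$ $\mc I_\varpi$-fixed'' on the $\gl_N$ side, but as a consequence of the main results, not as an ingredient; note its proof itself uses a genericity-plus-limit argument. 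Moreover, your counting argument via Lemma \ref{lem BC completeness} applies only for generic (or real) $\bm z$, whereas the statement asserts the bijection for \emph{every} $\bm z\in\mathring{\bP}_n$, where diagonalizability may fail and fibers can collapse; a limiting argument gives at best that limits of eigenvalue operators hit some self-dual spaces, not a bijection. The paper avoids counting altogether: bijectivity for all $\bm z$ is inherited from \cite{R} composed with Theorem \ref{thm bij oper sd}, whose surjectivity is an explicit construction — from a pure self-dual space one builds a basis with $y_i=y_{N-i}$ (Lemma \ref{lem y}), hence a Miura oper, shown monodromy-free by Theorem 4.1 of \cite{MV2} — and whose injectivity uses the population results (Theorem 7.5 of \cite{MV} together with Theorem 4.2 of \cite{MV2}).
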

	\begin{proof}
		Theorem \ref{bi rep sgr} is deduced from \cite{R} in Section \ref{sec proof BC}.
	\end{proof}
	The second part of the theorem also holds for $N=3$, see Section \ref{sec proof N=3}.
	
	\begin{rem}\label{rem}
		In particular, Theorem \ref{bi rep sgr} implies that if $\dim(V_\bLa)^{\g_N}>0$, then $\dim(V_{\bLa_{A,\bm k}})^{\sln_N}>0$. 
		This statement also follows from Lemma \ref{lem dimension of singular space} given in the Appendix.
	\end{rem}
	
	We also have the following lemma from \cite{R}.
	
	\begin{lem}\label{lem BC completeness} Let $\bm z$  be a generic point in ${\mathring{\bP}}_n$. Then the action of the $\g_N$ Bethe algebra on $(V_{\bLa,\bm z})^{\g_N}$ is diagonalizable and has simple spectrum. In particular, this statement holds for any sequence $\bm z\in \R{\mathring{\bP}}_n$.\qed
	\end{lem}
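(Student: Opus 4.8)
The plan is to follow the strategy behind the type-A statement, Lemma \ref{lem completeness}, transported to $\g_N=\mathfrak{sp}_{2r},\mathfrak{so}_{2r+1}$, and to reduce the simple-spectrum claim to a transversality property via the oper bijection of Theorem \ref{bi rep sgr}. First I would equip each irreducible finite-dimensional $\g_N$-module $V_{\la^{(s)}}$ with its positive-definite contravariant Hermitian form, namely the Shapovalov form twisted by the compact anti-linear involution sending $\mathfrak n_+$ to $\mathfrak n_-$, and take the tensor-product form on $V_{\bLa}$, restricted to the invariants $(V_{\bLa,\bm z})^{\g_N}$. For $\bm z\in\R{\mathring{\bP}}_n$ the explicit generators $G_i(x)$ of Proposition \ref{prop bethe BC}, being built from the currents $g(x)$ with real poles, act as commuting \emph{self-adjoint} operators with respect to this form. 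A commuting family of self-adjoint operators on a finite-dimensional space carrying a positive-definite Hermitian form is simultaneously diagonalizable with real joint eigenvalues; this settles diagonalizability for every $\bm z\in\R{\mathring{\bP}}_n$. The generic complex statement then follows because the locus of diagonalizability is the complement of the vanishing of a suitable discriminant, a holomorphic function of $\bm z$ that cannot vanish identically since it is nonzero on the real locus.

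For the simple spectrum I would invoke Theorem \ref{bi rep sgr}: after choosing $d$ large enough that $|\bLa_{A}|=N(d-N)$, the joint eigenvalues of $\mc B$ on $(V_{\bLa,\bm z})^{\g_N}$ are in bijection with the points of the finite set $\s\Omega_{\bLa_A,\bm z}$. Since the abstract action of $\mc B$ on $(V_{\bLa,\bm z})^{\g_N}$ does not depend on this choice of $d$, no generality is lost. Combined with diagonalizability, the spectrum is simple if and only if the number of distinct points of $\s\Omega_{\bLa_A,\bm z}$ equals $\dim(V_{\bLa})^{\g_N}$, that is, if and only if the self-dual Schubert intersection is reduced at $\bm z$. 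Thus I would reduce the problem to the transversality (reality) theorem of \cite{R} for the Gaudin models of types $\rm B$ and $\rm C$, the analog of Theorem 1.1 of \cite{MTV2}: for real $\bm z$ all points of the self-dual intersection are real and simple. Feeding this back gives simple spectrum for every $\bm z\in\R{\mathring{\bP}}_n$, and the generic statement again follows by the discriminant argument above.

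The main obstacle is precisely this last input: simplicity over the \emph{entire} real locus $\R{\mathring{\bP}}_n$, not merely on a Zariski-open subset. Diagonalizability of a self-adjoint family does not prevent eigenvalues from colliding along real sub-loci, so the positive-definite form by itself is insufficient, and one genuinely needs the transversality theorem of \cite{R}. Its proof rests on degenerating $\bm z$ to the boundary strata of the real compactification $\overline{M}_{0,n+1}(\R)$, on an inductive analysis of how the spectrum behaves under such degenerations using that the limiting Bethe algebra is governed by smaller tensor products, and on a monodromy and connectedness argument showing that the eigenlines never merge over real points. Accordingly I would isolate this transversality statement as the crux, cite it from \cite{R}, and present the self-adjointness of the Gaudin Hamiltonians and the oper bijection of Theorem \ref{bi rep sgr} as the two routine reductions surrounding it.
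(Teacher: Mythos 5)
The paper contains no argument for this lemma at all: it is stated with a \qed and the sentence ``We also have the following lemma from \cite{R}'', because the statement \emph{is} the main theorem of \cite{R} (diagonalizability and simple spectrum of the Bethe algebra of an arbitrary simple Lie algebra on invariants/singular vectors of a tensor product, for all real and hence all generic $\bm z$). Your diagonalizability step is in the right spirit and is indeed an ingredient of \cite{R}'s own proof, but your justification is too quick: the generators $G_i(x)$ are complicated polynomials in the currents and their derivatives, and their self-adjointness with respect to the tensor Shapovalov form is not a consequence of ``real poles''; it requires the nontrivial fact that the Bethe algebra is stable under the relevant anti-involution of $\mc U(\g_N[t])$, which is itself established in \cite{R}. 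Your discriminant argument for passing from the real locus to generic complex $\bm z$ is correct once rephrased: diagonalizability is not cut out by a discriminant, but simplicity of the spectrum of a single well-chosen element of $\mc B$ is a Zariski-open condition, nonempty by the real case, and simple spectrum of one element forces simultaneous diagonalizability of the whole commuting family.

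The genuine gap is in your simple-spectrum step. You route it through Theorem \ref{bi rep sgr} and reduce it to the claim that for real $\bm z$ the set $\s\Omega_{\bLa_A,\bm z}$ has exactly $\dim(V_{\bLa})^{\g_N}$ points, attributing this ``transversality (reality) theorem'' to \cite{R}. No such geometric statement exists in \cite{R}: that paper concerns Gaudin models for arbitrary simple $\g$, proves simple spectrum directly (via degeneration to the caterpillar boundary point of $\overline{M}_{0,n+1}(\mathbb R)$ and a cyclic-vector argument for a commuting family of symmetric operators), and says nothing about self-dual Grassmannians — the link between the type $\rm B$/$\rm C$ Gaudin model and self-dual spaces is established only in the present paper. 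Moreover, the paper's logic runs in exactly the opposite direction from yours: the count of points of $\s\Omega_{\bLa_A,\bm z}$ over real $\bm z$ (Proposition \ref{prop bij BC}) is \emph{deduced} from this lemma together with Theorem \ref{bi rep sgr} and Theorem 1.1 of \cite{MTV2}. So your reduction either cites the lemma itself in disguised geometric form or is circular within the paper's framework. The detour through the self-dual Grassmannian should be dropped: the crux you correctly isolate — simplicity over the entire real locus — must be cited from \cite{R} in its original Gaudin-model formulation, which is precisely what the paper does.
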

	
\subsection{Properties of the strata}\label{sec prop strata} We describe simple properties of the strata  $\s\Omega_{\bLa,\bm k}$.
	
	Given $\bLa,\bs k,\bs z$, define $\tilde{\bLa},\tilde {\bm k}, \tilde {\bs z}$ by removing all zero components, 
	that is the ones with both $\la^{(s)}=0$ and $k_s=0$. 
	Then $\s\Omega_{\tl{\bLa},\tilde {\bs k},\tl{\bm z}}=\s\Omega_{\bLa,\bs k,\bm z}$ and  $\s\Omega_{\tl{\bLa},\tilde {\bs k}}=\s\Omega_{\bLa,\bs k}$. 	Also, by Remark \ref{rem}, if $(V_\bLa)^{\g_N} \ne 0$, then $\dim(V_{\bLa_{A,\bm k}})^{\sln_N}>0$, thus $|\bLa_{A,\bm k}|$ is divisible by $N$.

We say that $(\bLa,\bm k)$ is $d$-\emph{nontrivial} if and only if $(V_\bLa)^{\g_N} \ne 0$, $|\la_{A,k_s}^{(s)}|>0$, $s=1,\dots,n$, and $|\bLa_{A,\bm k}|=N(d-N)$. 
	
If $(\bLa,\bm k)$ is $d$-nontrivial then the corresponding stratum $\s\Omega_{\bs \La,\bm k}\subset \s\Gr(N,d)$ is nonempty, see Proposition \ref{prop bij BC} below.

\medskip
	
	Note that $|\bLa_{A,\bm k}|=|\bLa_{A}|+N|\bs k|$, where $|\bs k|=k_1+\dots +k_n$. In particular, if $(\bLa,\bs 0)$ is $d$-nontrivial then $(\bLa,\bs k)$ is $(d+|\bs k|)$-nontrivial.
	Further, there exists a bijection between $\Omega_{\bLa_A,\bm z}$ in $\Gr(N,d)$ and $\Omega_{\bLa_{A,\bm k},\bm z}$ in $\Gr(N,d+|\bs k|)$ given by
	\beq\label{mult}
	\Omega_{\bLa_A,\bm z}\rightarrow \Omega_{\bLa_{A,\bm k},\bm z},\quad X \mapsto \prod_{s=1}^n(x-z_s)^{k_s}\cdot X.
	\eeq
	Moreover, \eqref{mult} restricts to a bijection of $\s\Omega_{\bLa_A,\bm z}$ in $\s\Gr(N,d)$ and $\s\Omega_{\bLa_{A,\bm k},\bm z}$ in $\s\Gr(N,d+|\bs k|)$.

	\medskip

If $(\bLa,\bm k)$ is $d$-nontrivial then $\bLa_{A,\bm k}$ is $d$-nontrivial. The converse is not true.

\begin{eg} 
For this example we write the highest weights in terms of fundamental weights, e.g. $(1,0,0,1)=\omega_1+\omega_4$. 
We also use $\sln_N$-modules instead of $\gl_N$-modules, since the spaces of invariants are the same.

For $N=4$ and $\g_4=\mathfrak{so}_5$ of type ${\rm B}_2$, we have
$$
\dim (V_{(2,0)}\otimes V_{(1,0)}\otimes V_{(2,0)})^{\g_4}=0 \ \  {\rm and}\ \ 
\dim (V_{(2,0,2)}\otimes V_{(1,0,1)}\otimes V_{(2,0,2)})^{\sln_4}=2.
$$Let $\bLa=((2,0),(1,0),(2,0))$. Then $\bLa_A$ is $9$-nontrivial, but $(\bLa,(0,0,0))$ is not.

Similarly, for $N=5$ and $\g_5=\mathfrak{sp}_4$ of type ${\rm C}_2$, we have
$$
\dim (V_{(1,0)}\otimes V_{(0,1)}\otimes V_{(0,1)})^{\g_5}=0 \ \  {\rm and}\ \ 
\dim (V_{(1,0,0,1)}\otimes V_{(0,1,1,0)}\otimes V_{(0,1,1,0)})^{\sln_5}=2.
$$
Let $\bLa=((1,0),(0,1),(1,0))$. Then $\bLa_A$  is $8$-nontrivial, but $(\bLa,(0,0,0))$ is not.
\end{eg}

Let $\mu^{(1)},\dots,\mu^{(a)}$ be all distinct partitions in $\bLa_{A,\bm k}$. Let $n_i$ be the number of occurrences of ${\mu}^{(i)}$ in $\bLa_{A,\bm k}$, then $\sum_{i=1}^an_i=n$. Denote $\bm n=(n_1,\dots,n_a)$, we shall write $\bLa_{A,\bm k}$ in the following order: $\la_{A,k_i}^{(i)}=\mu^{(j)}$ for $\sum_{s=1}^{j-1}n_s+1\lle i\lle \sum_{s=1}^{j}n_s$, $j=1,\dots,a$.
	
	\begin{prop}\label{prop bij BC}
		Suppose $(\bLa,\bm k)$ is $d$-nontrivial. The set $\s\Omega_{\bLa,\bm k}$ is a ramified covering of ${\mathring{\bP}}_n/S_{\bm n}$.   Moreover, the degree of the covering is equal to $\dim(V_\bLa)^{\g_N}$.  In particular, $\dim \s\Omega_{\bLa,\bm k}=n$. Over $\R{\mathring{\bP}}_n/S_{\bm n}$, this covering is unramified of the same degree, moreover all points in fibers are real. 
	\end{prop}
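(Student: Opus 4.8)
The plan is to mirror the proof of Proposition \ref{prop deg of cover A}, replacing the type A inputs by their type B/C analogs. The covering map I would use is
\[
\pi:\ \s\Omega_{\bLa,\bm k}\ \longrightarrow\ {\mathring{\bP}}_n/S_{\bm n},\qquad X\mapsto [\bm z],
\]
where $[\bm z]$ is the unique $S_{\bm n}$-orbit such that $X\in \s\Omega_{\bLa_{A,\bm k},\bm z}$. By the definition \eqref{strata} of the strata, every $X\in\s\Omega_{\bLa,\bm k}$ lies in exactly one such $\s\Omega_{\bLa_{A,\bm k},\bm z}$, so $\pi$ is well defined and its fiber over $[\bm z]$ is precisely $\s\Omega_{\bLa_{A,\bm k},\bm z}=\s\Omega_{\bLa,\bm k,\bm z}$.

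First I would reduce to the case $\bm k=(0,\dots,0)$. Since $(\bLa,\bm k)$ is $d$-nontrivial we have $|\bLa_A|=N\big((d-|\bm k|)-N\big)$, so $(\bLa,\bm 0)$ is $(d-|\bm k|)$-nontrivial, and the multiplication isomorphism \eqref{mult} identifies $\s\Omega_{\bLa_A,\bm z}$ in $\s\Gr(N,d-|\bm k|)$ with the fiber $\s\Omega_{\bLa_{A,\bm k},\bm z}$ in $\s\Gr(N,d)$. Applying Theorem \ref{bi rep sgr} in $\s\Gr(N,d-|\bm k|)$ then gives a bijection between this fiber and the set of joint eigenvalues of the $\g_N$ Bethe algebra $\mc B$ on $(V_{\bLa,\bm z})^{\g_N}$. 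Counting eigenvalues with multiplicity yields $\dim(V_{\bLa})^{\g_N}$; by Lemma \ref{lem BC completeness} the spectrum is simple for generic $\bm z$, so the generic fiber consists of exactly $\dim(V_{\bLa})^{\g_N}$ distinct points. This identifies the degree of the covering, and since the base ${\mathring{\bP}}_n/S_{\bm n}$ is $n$-dimensional with finite fibers it gives $\dim\s\Omega_{\bLa,\bm k}=n$.

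For the real statement I would invoke Lemma \ref{lem BC completeness} once more: over $\R{\mathring{\bP}}_n$ the Bethe algebra acts with simple spectrum, so each fiber attains its maximal cardinality $\dim(V_{\bLa})^{\g_N}$ and the covering is unramified there; the reality of the joint eigenvalues, which underlies Theorem \ref{bi rep sgr}, then guarantees that all points of these fibers are real.

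The main obstacle, as in the type A case, is not the counting but upgrading the fiberwise bijection into a genuine (ramified) covering in the algebraic and topological sense: one must check that $\pi$ is finite and proper and that it is \'etale exactly over the locus where the spectrum of $\mc B$ is simple, so that ramification occurs precisely when eigenvalues collide. This requires that the eigenvalue bijection of Theorem \ref{bi rep sgr} vary algebraically with $\bm z$ and that the simple-spectrum locus be open, dense, and contain $\R{\mathring{\bP}}_n$, which is exactly what Lemma \ref{lem BC completeness} supplies; assembling these facts into the covering property is the step that needs the most care.
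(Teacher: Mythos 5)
Your route is the paper's own: the printed proof consists precisely of Theorem \ref{bi rep sgr} (the fiberwise bijection between $\s\Omega_{\bLa,\bm k,\bm z}$ and the joint eigenvalues of the $\g_N$ Bethe algebra on $(V_{\bLa,\bm z})^{\g_N}$), Lemma \ref{lem BC completeness} (simplicity of the spectrum for generic $\bm z$ and for all $\bm z\in\R{\mathring{\bP}}_n$, which gives the degree count and the unramifiedness over the real points), together with the reduction through the multiplication map \eqref{mult} that the paper sets up in Section \ref{sec prop strata} exactly as you use it. Two of your three ingredients therefore match, and your added detail (well-definedness of $\pi$, uniqueness of the orbit $[\bm z]$ because $d$-nontriviality forces $|\la^{(s)}_{A,k_s}|>0$ so each $z_s$ is recovered from $X$) is correct. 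One small slip: $(\bLa,\bm 0)$ need \emph{not} be $(d-|\bm k|)$-nontrivial, since some $(\la^{(s)},k_s)$ may equal $(0,k_s)$ with $k_s>0$ (e.g.\ the one-dimensional stratum with $\bLa=(0)$, $\bm k=(d-N)$); this is harmless because Theorem \ref{bi rep sgr} requires only $|\bLa_A|=N\big((d-|\bm k|)-N\big)$ and not nontriviality, but the assertion as you wrote it is false.

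The one genuine gap is the reality claim. You justify it by ``the reality of the joint eigenvalues, which underlies Theorem \ref{bi rep sgr}'', but no statement you cite says this: neither Theorem \ref{bi rep sgr} nor Lemma \ref{lem BC completeness} asserts that the eigenvalues are real for real $\bm z$, and even granting real eigenvalues you would still need an argument that the kernel of the resulting real monodromy-free Fuchsian operator admits a basis of real polynomials. The paper's third ingredient, which you never invoke, is what closes this: Theorem 1.1 of \cite{MTV2}, the B.\ and M.\ Shapiro conjecture theorem. For $\bm z\in\R{\mathring{\bP}}_n$, every $X$ in the fiber lies in $\Omega_{\bLa_{A,\bm k},\bm z}$, so $\Wr(X)=\prod_{s}(x-z_s)^{|\la^{(s)}_{A,k_s}|}$ has only real roots, and the Shapiro theorem then yields $X\in\Gr^\R(N,d)$. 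With that citation replacing your appeal to eigenvalue reality, your proposal coincides with the paper's proof; your closing concern about upgrading the fiberwise bijections to a ramified covering (properness, ramification exactly where eigenvalues collide) is legitimate, but the paper settles it at exactly the level of detail you propose, via the same three results, so nothing further is missing there.
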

	\begin{proof}
		The proposition follows from Theorem \ref{bi rep sgr}, Lemma \ref{lem BC completeness}, and Theorem 1.1 of \cite{MTV2}.
	\end{proof}

		We find strata $\s\Omega_{\bLa,\bm k}\subset\sGr(N,d)$ of the largest dimension. 
	\begin{lem}\label{thm dim sgr}
		If $N=2r$, then the $d$-nontrivial stratum $\s\Omega_{\bs \La,\bm k}\subset\sGr(N,d)$ with the largest dimension has $(\la^{(s)},k_s)=(\omega_r,0)$, $s=1,\dots,2(d-N)$. In particular, the dimension of this stratum is $2(d-N)$.
		
		If $N=2r+1$, the $d$-nontrivial strata $\s\Omega_{\bs \La,\bm k}\subset\sGr(N,d)$ with the largest dimension have $(\la^{(s)},k_s)$ equal to either $(\omega_{j_s},0)$ with some $j_s\in\{1,\dots,r\}$, or to $(0,1)$, for 	$s=1,\dots,d-N$. Each such stratum is either empty or has dimension $d-N$. There is at least one nonempty stratum of this dimension, and if $d>N+1$ then more than one.
	\end{lem}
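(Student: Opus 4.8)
The plan is to reduce the lemma to a combinatorial optimization. By Proposition \ref{prop bij BC}, for a $d$-nontrivial pair $(\bLa,\bm k)$ the dimension of $\s\Omega_{\bLa,\bm k}$ is exactly $n$, the number of its (nonzero) components, while $d$-nontriviality forces $\sum_{s=1}^n|\la_{A,k_s}^{(s)}|=|\bLa_{A,\bm k}|=N(d-N)$ with each $|\la_{A,k_s}^{(s)}|>0$ and $(V_\bLa)^{\g_N}\neq 0$. Hence maximizing the dimension is the same as writing $N(d-N)$ as a sum of as many positive integers $|\la_{A,k_s}^{(s)}|$ as possible, subject to the invariance constraint. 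So the first step is to compute $|\mu_{A,k}|$ as an explicit function of the dominant weight $\mu$ and the integer $k$.

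Writing $b_j=(\mu_{A,k})_j-(\mu_{A,k})_{j+1}$ for the differences prescribed by \eqref{eq partition BC} and using $(\mu_{A,k})_N=k$, a direct summation gives $|\mu_{A,k}|=Nk+\sum_{j=1}^{N-1}j\,b_j$. Folding the index $j\mapsto N-j$ in the upper half, where the symmetry of \eqref{eq partition BC} makes $b_j=\langle\mu,\calpha_{N-j}\rangle$, collapses this to a single closed form. For $N=2r+1$,
\[
|\mu_{A,k}|=N\Big(k+\sum_{j=1}^r\langle\mu,\calpha_j\rangle\Big),
\]
while for $N=2r$,
\[
|\mu_{A,k}|=Nk+r\Big(2\sum_{j=1}^{r-1}\langle\mu,\calpha_j\rangle+\langle\mu,\calpha_r\rangle\Big),
\]
the latter being exactly the identity already used in the proof of Lemma \ref{lem sym weight new}. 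Since $k\gge 0$ and all $\langle\mu,\calpha_j\rangle\gge 0$, the smallest positive value of $|\mu_{A,k}|$ is $N$ when $N=2r+1$ (attained precisely at $(\mu,k)=(\omega_j,0)$ for $j=1,\dots,r$ and at $(0,1)$), and is $r$ when $N=2r$ (attained uniquely at $(\mu,k)=(\omega_r,0)$).

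Substituting into $\sum_s|\la_{A,k_s}^{(s)}|=N(d-N)$ yields the bounds immediately. For $N=2r$ every summand is $\gge r$, so $n\lle 2(d-N)$, and equality forces every component to be the unique minimizer $(\omega_r,0)$; for $N=2r+1$ every summand is $\gge N$, so $n\lle d-N$, and equality forces every component into $\{(\omega_1,0),\dots,(\omega_r,0),(0,1)\}$. In both cases this is the asserted maximal dimension, and by Proposition \ref{prop bij BC} each such $d$-nontrivial stratum has that dimension, while a choice of these shapes that fails $d$-nontriviality can fail only through $(V_\bLa)^{\g_N}=0$, giving an empty stratum. It remains to check nonemptiness. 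For $N=2r$ with all components $\omega_r$, I would pair the $2(d-N)$ factors and use $\dim(V_{\omega_r}\otimes V_{\omega_r})^{\g_N}=1$ from Section \ref{sec sla} to build a nonzero invariant; uniqueness of the minimizer then gives the claimed single top stratum. For $N=2r+1$, taking all components $(0,1)$ gives $\bLa=(0,\dots,0)$ with $(V_0^{\otimes(d-N)})^{\g_N}$ one-dimensional, so at least one top stratum is nonempty, and when $d>N+1$ one may replace two components by $(\omega_1,0)$ and invoke $\dim(V_{\omega_1}\otimes V_{\omega_1})^{\g_N}=1$ to exhibit a second, distinct, nonempty top stratum.

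The index bookkeeping behind the size formula is routine; the genuine content, and the step I expect to need the most care, is the nonemptiness of the invariant spaces. Everything there reduces to the elementary fact $\dim(V_\la\otimes V_\mu)^{\g_N}=\delta_{\la,\mu}$ recorded in Section \ref{sec sla}, which lets me construct invariants by pairing identical factors; the one point to verify is that the parities cooperate, namely that in the even case $\sum_s\langle\la^{(s)},\calpha_r\rangle=2(d-N)$ is even (consistent with Lemma \ref{lem sym weight new}), so that an even number of each factor is available to be paired.
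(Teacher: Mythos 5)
Your proof is correct and takes essentially the same route as the paper's: both reduce via Proposition \ref{prop bij BC} to maximizing $n$ subject to $\sum_{s}|\la^{(s)}_{A,k_s}|=N(d-N)$, bound each summand below by $r$ (if $N=2r$) or $N$ (if $N=2r+1$) with equality only at the stated pairs, and establish nonemptiness using $\dim(V_{\la}\otimes V_{\mu})^{\g_N}=\delta_{\la,\mu}$. Your closed formula for $|\mu_{A,k}|$ is merely an explicit repackaging of the inequalities the paper uses (the even case is literally the identity appearing in the proof of Lemma \ref{lem sym weight new}).
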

	\begin{proof}
		By Proposition \ref{prop bij BC}, we are going to find the maximal $n$ such that $(\bLa,\bm k)$ is $d$-nontrivial, where $\bLa=(\la^{(1)},\dots,\la^{(n)})$ is a sequence of dominant integral $\g_N$-weights and $\bm k=(k_1,\dots,k_n)$ is an $n$-tuple of nonnegative integers. Since $\bLa_{A,\bm k}$ is $d$-nontrivial, it follows that $\la^{(s)}\ne 0$ or $\la^{(s)}= 0$ and $k_s>0$, for all $s=1,\dots,n$.

		Suppose $N=2r$. If $\la^{(s)}\ne 0$, we have\[
		|\la_{A,k_s}^{(s)}|\gge |\la_{A,0}^{(s)}|=r\Big(2\sum_{i=1}^{r-1}\langle \la^{(s)},\calpha_i\rangle+\langle \la^{(s)},\calpha_r\rangle\Big)\gge r.
		\]If $k_s>0$, then $|\la_{A,k_s}^{(s)}|\gge 2rk_s\gge 2r$. Therefore, it follows that
		\[
		rn\lle \sum_{s=1}^n|\la_{A,k_s}^{(s)}|=|\bLa_{A,\bm k}|=(d-N)N.
		\]Hence $n\lle 2(d-N)$.
		
		If we set $\la^{(s)}=w_r$ and $k_s=0$ for all $s=1,\dots,2(d-N)$. Then $(\bs \La, \bs k)$ is $d$-nontrivial
	 since
		\[
		\dim(V_{\omega_r}\otimes V_{\omega_r})^{\mathfrak{so}_{2r+1}}=1.
		\]
		
		Now let us consider $N=2r+1$, $r\gge 1$.  Similarly, if $\la^{(s)}\ne 0$, we have\[
		|\la_{A,k_s}^{(s)}|\gge |\la_{A,0}^{(s)}|=(2r+1)\sum_{i=1}^r\langle \la^{(s)},\calpha_i\rangle\gge 2r+1.
		\]If $k_s>0$, then $|\la_{A,k_s}^{(s)}|\gge (2r+1)k_s\gge 2r+1$. It follows that
		\[
		(2r+1)n\lle \sum_{s=1}^n|\la_{A,k_s}^{(s)}|=|\bLa_{A,\bm k}|=(d-N)N.
		\]Hence $n\lle d-N$.
	Clearly, the equality is achieved only for the $(\bLa,\bs k)$ described in the statement of the lemma. Note that if $(\la^{(s)},k_s)=(0,1)$ for all $s=1,\dots,d-N$, then $(\bLa,\bs k)$ is $d$-nontrivial and therefore nonempty. If $d>N+1$ we also have $d$-nontrivial tuples parameterized by $i=1,\dots,r$, such that $(\la^{(s)},k_s)=(0,1)$, $s=3,\dots, d-N$, and $(\la^{(s)},k_s)=(\omega_i,0)$, $s=1,2$.
	\end{proof}

	\subsection{The $\g_N$-stratification of self-dual Grassmannian}
	The following theorem follows directly from Theorems \ref{thm gr dec} and \ref{bi rep sgr}. 
	
	\begin{thm}\label{thm sgr dec}
		We have
		\beq\sGr(N,d)=\bigsqcup_{d\text{-nontrivial }(\bLa,\bm k)}\s\Omega_{\bLa,\bm k}.\eeq
		\qed
	\end{thm}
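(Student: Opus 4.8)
The plan is to obtain the $\g_N$-stratification by restricting the $\gl_N$-stratification of Theorem \ref{thm gr dec} to the algebraic subset $\sGr(N,d)\subset\Gr(N,d)$. Concretely, I would start from the disjoint decomposition
$$\Gr(N,d)=\bigsqcup_{d\text{-nontrivial }\bLa}\Omega_{\bLa}$$
and intersect both sides with $\sGr(N,d)$. Since the sets $\Omega_{\bLa}$ are pairwise disjoint with union all of $\Gr(N,d)$, and since $\sGr(N,d)$ is contained in $\Gr(N,d)$, intersection distributes over the disjoint union to give
$$\sGr(N,d)=\bigsqcup_{d\text{-nontrivial }\bLa}\big(\Omega_{\bLa}\cap\sGr(N,d)\big)=\bigsqcup_{d\text{-nontrivial }\bLa}\s\Omega_{\bLa},$$
using the definition $\s\Omega_{\bLa}=\Omega_{\bLa}\cap\sGr(N,d)$. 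This already exhibits $\sGr(N,d)$ as a disjoint union of self-dual strata, so the only remaining work is to identify the index set with the set of $d$-nontrivial pairs $(\bLa,\bm k)$ appearing in the statement.

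To reindex, I would discard the empty strata and match the nonempty ones with $d$-nontrivial pairs $(\bLa,\bm k)$. In one direction, Lemma \ref{lem sym weight new} shows that if a $d$-nontrivial $\bm\Xi$ has $\s\Omega_{\bm\Xi}\ne\emptyset$, then necessarily $\bm\Xi=\bLa_{A,\bm k}$ for a sequence of dominant integral $\g_N$-weights $\bLa$ and an $n$-tuple $\bm k$ of nonnegative integers, with the pair $(\bLa,\bm k)$ uniquely determined by $\bm\Xi$; thus each nonempty self-dual stratum carries a well-defined label $(\bLa,\bm k)$, and this labeling is injective. In the other direction, Proposition \ref{prop bij BC} (which rests on Theorem \ref{bi rep sgr}) guarantees that every $d$-nontrivial pair $(\bLa,\bm k)$ yields a nonempty stratum $\s\Omega_{\bLa,\bm k}=\s\Omega_{\bLa_{A,\bm k}}$, since its degree as a ramified covering equals $\dim(V_{\bLa})^{\g_N}>0$. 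Combining the two directions produces a bijection between the nonempty strata in the decomposition above and the set of $d$-nontrivial pairs $(\bLa,\bm k)$, and after dropping the empty strata and rewriting $\s\Omega_{\bLa_{A,\bm k}}$ as $\s\Omega_{\bLa,\bm k}$ I obtain exactly the claimed decomposition.

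I do not expect a genuine obstacle here: the decomposition and its disjointness are inherited verbatim from Theorem \ref{thm gr dec}, and the substance has already been packaged into Theorem \ref{bi rep sgr}, Lemma \ref{lem sym weight new}, and Proposition \ref{prop bij BC}. The only point requiring care is purely bookkeeping, namely verifying that the correspondence $\bm\Xi\leftrightarrow(\bLa,\bm k)$ between nonempty $d$-nontrivial sequences and $d$-nontrivial pairs is a genuine bijection, so that relabeling the surviving terms neither collapses two distinct strata nor omits one. Uniqueness in Lemma \ref{lem sym weight new} handles injectivity and nonemptiness from Proposition \ref{prop bij BC} handles surjectivity, so the argument reduces to assembling the already-established facts.
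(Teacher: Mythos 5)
Your overall route --- intersect the $\gl_N$-decomposition of Theorem \ref{thm gr dec} with the algebraic subset $\sGr(N,d)$, discard empty strata, and reindex by pairs $(\bLa,\bm k)$ via Lemma \ref{lem sym weight new} --- is exactly the paper's argument: the paper derives Theorem \ref{thm sgr dec} directly from Theorems \ref{thm gr dec} and \ref{bi rep sgr}, and your first displayed computation is the intended restriction step. But your reindexing paragraph has one concrete hole. You establish (a) every nonempty self-dual stratum $\s\Omega_{\bm\Xi}$ has a unique label $\bm\Xi=\bLa_{A,\bm k}$ (Lemma \ref{lem sym weight new}), and (b) every $d$-nontrivial pair $(\bLa,\bm k)$ gives a nonempty stratum (Proposition \ref{prop bij BC}). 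These two facts do not yet yield the claimed bijection: you must also show that the label of a nonempty stratum is itself a $d$-nontrivial \emph{pair}, i.e.\ that $\s\Omega_{\bLa_{A,\bm k}}\neq\emptyset$ forces $(V_{\bLa})^{\g_N}\neq 0$. This is not automatic from $d$-nontriviality of the sequence $\bm\Xi=\bLa_{A,\bm k}$: the paper warns in Section \ref{sec prop strata} that ``if $(\bLa,\bm k)$ is $d$-nontrivial then $\bLa_{A,\bm k}$ is $d$-nontrivial; the converse is not true,'' and gives explicit examples, e.g.\ for $N=4$, $\bLa=((2,0),(1,0),(2,0))$ has $\dim(V_{\bLa_A})^{\sln_4}=2$ (so the $\gl_4$-stratum $\Omega_{\bLa_A}$ is nonempty and $9$-nontrivial) while $\dim(V_{\bLa})^{\g_4}=0$. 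Were the corresponding self-dual stratum nonempty, your final union over $d$-nontrivial pairs would miss its points and the theorem would fail.

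The missing step is exactly the ``emptiness'' direction of the bijection in Theorem \ref{bi rep sgr}, which you cite only parenthetically in support of Proposition \ref{prop bij BC}: for $|\bLa_A|=N(d-N)$, the points of $\s\Omega_{\bLa_A,\bm z}$ are in bijection with the joint eigenvalues of the Bethe algebra $\mc B$ on $(V_{\bLa,\bm z})^{\g_N}$, so when $(V_{\bLa})^{\g_N}=0$ the stratum $\s\Omega_{\bLa_A,\bm z}$ is empty; the shift bijection \eqref{mult} between $\s\Omega_{\bLa_A,\bm z}$ and $\s\Omega_{\bLa_{A,\bm k},\bm z}$ then handles general $\bm k$, and the $N=3$ case is covered by Section \ref{sec proof N=3}. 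With this one-line addition the correspondence between nonempty strata and $d$-nontrivial pairs is a genuine bijection and your proof closes; since the repair uses only tools you already invoked, the gap is small, but it is the one point where the representation theory (rather than bookkeeping) is indispensable, and your proposal as written glosses over it.
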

	
	Next, for a $d$-nontrivial $(\bLa,\bm k)$, we call the closure of $\s\Omega_{\bLa,\bm k}$ inside $\sGr(N,d)$, a {\it $\g_N$-cycle}.
	The $\g_N$-cycles $\overline{\s\Omega}_{\bLa,\bm k}$ are algebraic sets in $\sGr(N,d)$ and therefore in $\Gr(N,d)$. We 
	describe $\g_N$-cycles as unions of $\g_N$-strata similar to \eqref{eq A strata}.
		
		Define a partial order $\gge$ on the set of pairs $\{(\bLa,\bm k)\}$ as follows.
	Let $\bLa=(\la^{(1)},\dots,\la^{(n)})$, $\bs\Xi=(\xi^{(1)},\dots,\xi^{(m)})$ be two sequences of dominant integral $\g_N$-weights. Let $\bm k=(k_1,\dots,k_n)$, $\bm l=(l_1,\dots,l_m)$ be two tuples of nonnegative integers. We say that $(\bLa,\bm k)\gge (\bm\Xi,\bm l)$ if and only if there exists a partition $\{I_1,\dots,I_m\}$ of $\{1,2,\dots,n\}$ such that
	\[
	{ \rm {Hom}}_{\g_N}(V_{\xi^{(i)}},\bigotimes_{j\in I_i} V_{\la^{(j)}})\ne 0,\qquad |\xi^{(i)}_{A,l_i}|=\sum_{j\in I_i}|\la_{A,k_j}^{(j)}|,
	\]for $i=1,\dots,m$.
	
	If $(\bLa,\bm k)\gge (\bm\Xi,\bm l)$ are $d$-nontrivial, we call $\s\Omega_{\bm\Xi,\bm l}$ a \emph{degeneration} of $\s\Omega_{\bLa,\bm k}$. If we suppose further that $m=n-1$, we call $\s\Omega_{\bm\Xi,\bm l}$ a \emph{simple degeneration} of $\s\Omega_{\bLa,\bm k}$. 
	
	\begin{thm}\label{thm simple deg BC} 
		If $\s\Omega_{\bm\Xi,\bm l}$ is a degeneration of $\s\Omega_{\bLa,\bm k}$, then $\s\Omega_{\bm\Xi,\bm l}$ is contained in the $\g_N$-cycle $\overline{\s\Omega}_{\bLa,\bm k}$.
	\end{thm}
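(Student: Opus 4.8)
My plan is to reduce Theorem \ref{thm simple deg BC} to the corresponding type $\mathrm{A}$ statement, Theorem \ref{thm simple deg A}, by intersecting with the self-dual Grassmannian. The key observation is that, by definition, $\s\Omega_{\bLa,\bm k}=\Omega_{\bLa_{A,\bm k}}\cap \sGr(N,d)$, and that closures in the subspace topology on $\sGr(N,d)$ are computed as $\overline{\s\Omega}_{\bLa,\bm k}=\overline{\Omega}_{\bLa_{A,\bm k}}\cap \sGr(N,d)$, since $\sGr(N,d)$ is closed (it is an algebraic subset of $\Gr(N,d)$). First I would show that if $\s\Omega_{\bm\Xi,\bm l}$ is a degeneration of $\s\Omega_{\bLa,\bm k}$ in the sense defined just above the theorem, then the associated type $\mathrm{A}$ data satisfy $\bm\Xi_{A,\bm l}\lle \bLa_{A,\bm k}$ in the partial order on sequences of partitions. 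This is where the two conditions defining the $\g_N$-order are used: the $\Hom$-nonvanishing condition $\Hom_{\g_N}(V_{\xi^{(i)}},\otimes_{j\in I_i}V_{\la^{(j)}})\ne 0$ must be promoted to the corresponding $\gl_N$ statement $\Hom_{\gl_N}(V_{\xi^{(i)}_{A,l_i}},\otimes_{j\in I_i}V_{\la^{(j)}_{A,k_j}})\ne 0$, and the degree-matching condition $|\xi^{(i)}_{A,l_i}|=\sum_{j\in I_i}|\la^{(j)}_{A,k_j}|$ guarantees the $d$-nontriviality is preserved partition-by-partition in the block $I_i$.

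Granting that reduction, the argument proceeds as follows. Since $(\bm\Xi,\bm l)\lle(\bLa,\bm k)$ and both are $d$-nontrivial, we obtain $\bm\Xi_{A,\bm l}\lle \bLa_{A,\bm k}$ with both $d$-nontrivial in the type $\mathrm{A}$ sense, so by Theorem \ref{thm simple deg A} (and its Corollary for arbitrary degenerations) we have $\Omega_{\bm\Xi_{A,\bm l}}\subset \overline{\Omega}_{\bLa_{A,\bm k}}$ inside $\Gr(N,d)$. Intersecting both sides with $\sGr(N,d)$ yields
\[
\s\Omega_{\bm\Xi,\bm l}=\Omega_{\bm\Xi_{A,\bm l}}\cap \sGr(N,d)\subset \overline{\Omega}_{\bLa_{A,\bm k}}\cap \sGr(N,d)=\overline{\s\Omega}_{\bLa,\bm k},
\]
which is exactly the claim. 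I would remark that it suffices to treat the simple-degeneration case $m=n-1$ and then iterate, since a general degeneration is a composition of simple ones; but here the uniform reduction to Theorem \ref{thm simple deg A} handles both at once.

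The main obstacle I anticipate is the implication from the $\g_N$-level $\Hom$-nonvanishing to the $\gl_N$-level $\Hom$-nonvanishing for the associated partitions. One cannot simply invoke a branching rule in the wrong direction: the passage $\mu\mapsto \mu_{A,k}$ from a dominant $\g_N$-weight to an $N$-symmetric partition is the representation-theoretic shadow of the self-duality constraint, and the correct tool is Theorem \ref{bi rep sgr}, which realizes $\s\Omega_{\bLa_A,\bm z}$ via $\g_N$ Bethe eigenvectors sitting inside the $\gl_N$ picture, together with Remark \ref{rem}, which already records that $\dim(V_\bLa)^{\g_N}>0$ forces $\dim(V_{\bLa_{A,\bm k}})^{\sln_N}>0$. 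I expect that the blockwise version of Remark \ref{rem}, applied to each $I_i$ with target weight $\xi^{(i)}$, together with the degree-matching condition, yields precisely $\Hom_{\gl_N}(V_{\xi^{(i)}_{A,l_i}},\otimes_{j\in I_i}V_{\la^{(j)}_{A,k_j}})\ne 0$; making this compatibility precise, and checking it is the genuine content behind the otherwise formal topological reduction, is the step that requires care.
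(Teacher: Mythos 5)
Your reduction breaks at the closure step, and it breaks at exactly the point where the theorem's real content lives. You assert that $\overline{\s\Omega}_{\bLa,\bm k}=\overline{\Omega}_{\bLa_{A,\bm k}}\cap \sGr(N,d)$ ``since $\sGr(N,d)$ is closed.'' Closedness of $B$ gives only $\overline{A\cap B}\subseteq \overline{A}\cap B$; the reverse inclusion is false in general (in $\R$, take $A=\R\setminus\{0\}$, $B=\{0\}$: then $\overline{A\cap B}=\emptyset$ but $\overline{A}\cap B=\{0\}$), and the reverse inclusion is precisely what your chain needs, since you establish $\s\Omega_{\bm\Xi,\bm l}\subseteq \overline{\Omega}_{\bLa_{A,\bm k}}\cap\sGr(N,d)$ and must still get inside $\overline{\s\Omega}_{\bLa,\bm k}$. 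Unwinding it, the missing inclusion says: every self-dual space that is a limit of (generally non-self-dual) spaces of $\Omega_{\bLa_{A,\bm k}}$ is a limit of \emph{self-dual} spaces of $\s\Omega_{\bLa,\bm k}$ --- which is the theorem itself, so the argument is circular. Nor can the equality be patched formally: by Theorem \ref{thm A strata}, $\overline{\Omega}_{\bLa_{A,\bm k}}\cap\sGr(N,d)$ is the union of the $\s\Omega_{\bm\Xi,\bm l}$ with $\bm\Xi_{A,\bm l}\lle \bLa_{A,\bm k}$ in the $\gl_N$ order, and the paper's own example in Section \ref{sec prop strata} (for $N=4$, $\bLa=((2,0),(1,0),(2,0))$, where $\dim(V_{\bLa_A})^{\sln_4}=2$ while $\dim(V_{\bLa})^{\mathfrak{so}_5}=0$) shows that the $\gl_N$ Hom condition on associated partitions does \emph{not} imply the $\g_N$ Hom condition, so the two partial orders genuinely diverge. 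Incidentally, the direction you flagged as the main obstacle --- promoting $\Hom_{\g_N}\ne 0$ to $\Hom_{\gl_N}\ne 0$ for the associated partitions --- is the unproblematic one (it follows from the trace formula of Lemma \ref{lem dimension of singular space}, as in Remark \ref{rem}); the danger is in the step you treated as formal topology.

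The paper avoids this by never leaving type $\g_N$: it repeats the collision argument of Theorem \ref{thm simple deg A} verbatim with Theorem \ref{bi rep sgr} in place of Theorem \ref{thm bijection}. A point $X\in\s\Omega_{\bm\Xi,\bm l,\bm z_0}$ corresponds to a $\mc B$-eigenvector $v\in (V_{\bm\Xi,\bm z_0})^{\g_N}$ of the $\g_N$ Bethe algebra; the hypothesis $\Hom_{\g_N}(V_{\xi^{(n-1)}},V_{\la^{(n-1)}}\otimes V_{\la^{(n)}})\ne 0$ embeds $(V_{\bm\Xi,\bm z_0})^{\g_N}$ into $(V_{\bLa,\bm z_0'})^{\g_N}$ with $\bm z_0'=(z_1,\dots,z_{n-1},z_{n-1})$, so $X$ is a limit, as $z_n\to z_{n-1}$, of spaces that are themselves self-dual because they too arise from $\g_N$-eigenvectors via Theorem \ref{bi rep sgr}; the label-matching condition $|\xi^{(i)}_{A,l_i}|=\sum_{j\in I_i}|\la^{(j)}_{A,k_j}|$, together with the multiplication map \eqref{mult}, keeps everything in the same $\sGr(N,d)$. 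This is exactly the mechanism that produces an approximating family inside the self-dual Grassmannian --- the step your intersection argument skips. To repair your write-up you would have to replace the closure identity by this eigenvector degeneration, at which point the detour through Theorem \ref{thm simple deg A} becomes unnecessary.
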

	
	Theorem \ref{thm simple deg BC} is proved in Section \ref{sec proof BC}.
	
	\begin{thm}\label{thm BC strata}
		For $d$-nontrivial $(\bLa,\bm k)$, we have
		\beq
		\overline{\s\Omega}_{\bLa,\bm k}=\bigsqcup_{\substack{(\bm\Xi,\bm l)\lle (\bLa,\bm k),\\ d\text{-nontrivial }(\bm\Xi,\bm l)}}\s\Omega_{\bm\Xi,\bm l}.
		\eeq
	\end{thm}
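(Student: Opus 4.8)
The plan is to establish the two inclusions of the claimed identity separately. The inclusion $\supseteq$ is essentially free: if $(\bm\Xi,\bm l)\lle(\bLa,\bm k)$ are both $d$-nontrivial, then $\s\Omega_{\bm\Xi,\bm l}$ is a degeneration of $\s\Omega_{\bLa,\bm k}$, and Theorem \ref{thm simple deg BC} places it inside $\overline{\s\Omega}_{\bLa,\bm k}$; transitivity of the order then gives $\bigsqcup_{(\bm\Xi,\bm l)\lle(\bLa,\bm k)}\s\Omega_{\bm\Xi,\bm l}\subseteq\overline{\s\Omega}_{\bLa,\bm k}$. The substance of the theorem is the reverse inclusion, for which I would exploit the fact that the self-dual strata are slices of the $\gl_N$-strata, $\s\Omega_{\bLa,\bm k}=\Omega_{\bLa_{A,\bm k}}\cap\sGr(N,d)$, together with Theorem \ref{thm A strata}, which already resolves the $\gl_N$-cycles.

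Since $\sGr(N,d)$ is a Zariski-closed subset of $\Gr(N,d)$, the closure of $\s\Omega_{\bLa,\bm k}$ inside $\sGr(N,d)$ may be computed inside $\Gr(N,d)$, so $\overline{\s\Omega}_{\bLa,\bm k}\subseteq\overline{\Omega}_{\bLa_{A,\bm k}}\cap\sGr(N,d)$. Expanding the $\gl_N$-cycle by Theorem \ref{thm A strata} and intersecting with $\sGr(N,d)$ gives
\[
\overline{\s\Omega}_{\bLa,\bm k}\subseteq\bigsqcup_{\substack{\bm\Pi\lle\bLa_{A,\bm k},\\ d\text{-nontrivial }\bm\Pi}}\bigl(\Omega_{\bm\Pi}\cap\sGr(N,d)\bigr).
\]
By Theorem \ref{bi rep sgr} the self-dual points of $\Omega_{\bm\Pi,\bm z}$ correspond to $\g_N$-Bethe eigenvectors, so $\Omega_{\bm\Pi}\cap\sGr(N,d)$ is nonempty precisely when $\bm\Pi=\bm\Xi_{A,\bm l}$ for dominant integral $\g_N$-weights $\bm\Xi$ with $(V_{\bm\Xi})^{\g_N}\ne0$; by Lemma \ref{lem sym weight new} this writes such $\bm\Pi$ uniquely as $\bm\Xi_{A,\bm l}$ and identifies the slice with the $d$-nontrivial stratum $\s\Omega_{\bm\Xi,\bm l}$. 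Discarding the empty slices therefore bounds $\overline{\s\Omega}_{\bLa,\bm k}$ from above by the union of all $d$-nontrivial $\s\Omega_{\bm\Xi,\bm l}$ with $\bm\Xi_{A,\bm l}\lle\bLa_{A,\bm k}$ in the $\gl_N$-order.

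It then remains to match this index set with the $\g_N$-order. One direction is the folding correspondence: if $(\bm\Xi,\bm l)\lle(\bLa,\bm k)$ in $\g_N$, then applying Remark \ref{rem} (equivalently Lemma \ref{lem dimension of singular space}) blockwise over the partition $\{I_i\}$, and using that every $\g_N$-module is self-dual together with the degree matching, yields $\Hom_{\gl_N}(V_{\xi^{(i)}_{A,l_i}},\otimes_{j\in I_i}V_{\la^{(j)}_{A,k_j}})\ne0$, i.e. $\bm\Xi_{A,\bm l}\lle\bLa_{A,\bm k}$; this shows the desired right-hand side sits inside the upper bound. For the reverse pruning I would argue geometrically: a point of a surviving slice lying in $\overline{\s\Omega}_{\bLa,\bm k}$ is a limit of points of $\s\Omega_{\bLa,\bm k}$, and under the covering of Proposition \ref{prop bij BC} such a limit is obtained by letting the evaluation parameters $z_1,\dots,z_n$ collide along some partition $\{I_1,\dots,I_m\}$. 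Continuity of the Wronskian forces the degree matching $|\xi^{(i)}_{A,l_i}|=\sum_{j\in I_i}|\la^{(j)}_{A,k_j}|$, while the fusion of $\g_N$-Bethe eigenvectors underlying Theorem \ref{thm simple deg BC} forces $V_{\xi^{(i)}}$ to occur in $\otimes_{j\in I_i}V_{\la^{(j)}}$; together these are exactly $(\bm\Xi,\bm l)\lle(\bLa,\bm k)$.

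The main obstacle is precisely this last pruning step. The example in the text, where $\bLa=((2,0),(1,0),(2,0))$ has $(V_{\bLa})^{\g_4}=0$ yet $\bLa_A$ is $9$-nontrivial in $\gl_4$, shows that the $\gl_N$-order on the lifts is strictly coarser than the $\g_N$-order and that the naive upper bound genuinely over-counts; the $d$-nontriviality hypothesis and self-duality must be used essentially. Making the fusion argument rigorous for simultaneous collisions is the delicate point: I would reduce a general collision to a chain of simple degenerations, control the limit of the associated differential operators $\mc D_v$, and invoke the simple-spectrum statement of Lemma \ref{lem BC completeness} over $\R{\mathring{\bP}}_n$ to guarantee that the limiting eigenvector lands in the predicted stratum rather than in a strictly smaller one.
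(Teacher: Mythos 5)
Your proposal is correct, and its engine coincides with the paper's: the paper proves Theorem \ref{thm BC strata} by running the type A argument of Section \ref{sec proof A} with Theorem \ref{bi rep sgr} in place of Theorem \ref{thm bijection} --- points of the closure arise by letting coordinates of $\bm z$ collide, and under a collision along a partition $\{I_1,\dots,I_m\}$ the $\mc B$-module $V_{\bLa,\bm z}$ degenerates into a direct sum of modules $\bigotimes_i V_{\xi^{(i)}}(w_i)$ with multiplicities $\prod_i\dim\Hom_{\g_N}\big(V_{\xi^{(i)}},\bigotimes_{j\in I_i}V_{\la^{(j)}}\big)$, which together with continuity of the Wronskian (giving the label matching $|\xi^{(i)}_{A,l_i}|=\sum_{j\in I_i}|\la^{(j)}_{A,k_j}|$) is exactly your ``pruning'' step, while your inclusion $\supseteq$ is, as you say, literally Theorem \ref{thm simple deg BC}. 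What you do differently is to precede this with an upper bound obtained by intersecting the $\gl_N$-cycle $\overline{\Omega}_{\bLa_{A,\bm k}}$ (Theorem \ref{thm A strata}, applicable since $d$-nontriviality of $(\bLa,\bm k)$ implies $d$-nontriviality of $\bLa_{A,\bm k}$) with $\sGr(N,d)$, discarding empty slices via Lemma \ref{lem sym weight new} and Theorem \ref{bi rep sgr}. This detour is sound, and you correctly diagnose --- with the paper's own example --- that it cannot suffice by itself, since the $\gl_N$-order on the lifts is strictly coarser than the $\g_N$-order; but it is logically redundant, because your pruning argument, applied to an arbitrary convergent sequence in $\s\Omega_{\bLa,\bm k}$, already yields the inclusion $\subseteq$ without any reference to the $\gl_N$-stratification (and for the same reason the ``folding'' verification that the right-hand side sits inside the upper bound is a consistency check rather than a needed step). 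Two small corrections to your final paragraph: for the closure inclusion you cannot, and need not, reduce a simultaneous collision to a chain of simple degenerations --- a given convergent sequence does not factor through intermediate configurations, and one instead decomposes the limiting $\mc B$-action all at once; chains are what the paper uses in the opposite direction, to deduce general degenerations from simple ones. Likewise, the worry that the limit ``lands in a strictly smaller stratum'' is vacuous in this direction, since the order $\lle$ is transitive and a smaller stratum still lies in the right-hand side; where continuity of eigenvalues and Lemma \ref{lem BC completeness} genuinely matter is in deforming an eigenvalue over a collided configuration back to nearby generic ones, i.e.\ inside the proof of Theorem \ref{thm simple deg BC}, which you quote as a black box anyway.
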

Theorem \ref{thm BC strata}  is proved in Section \ref{sec proof BC}.

\medskip

Theorems \ref{thm sgr dec} and \ref{thm BC strata} imply that the subsets $\s\Omega_{\bLa,\bm k}$ with $d$-nontrivial $(\bLa,\bm k)$ give a stratification of $\sGr(N,d)$, similar to the $\gl_{N}$-stratification of $\Gr(N,d)$, see \eqref{eq decom gr} and \eqref{eq A strata}. We call it the {\it $\g_N$-stratification of $\s\Gr(N,d)$}.
	
\begin{eg}\label{eg gl-strata}
The following picture gives an example for $\mathfrak{so}_{5}$-stratification of $\sGr(4,6)$. In the following picture, we write $((\la^{(1)})_{k_1},\dots,(\la^{(n)})_{k_n})$ for $\s\Omega_{\bLa,\bm k}$. We also simply write $\la^{(s)}$ for $(\la^{(s)})_{0}$. For instance, $((0,1)_1,(0,1))$ represents $\s\Omega_{\bLa,\bm k}$ where $\bLa=((0,1),(0,1))$ and $\bm k=(1,0)$. The solid arrows represent simple degenerations. Unlike the picture in Example \ref{A ex} we do not include here the pairs of sequences which are not $6$-nontrivial, as there are too many of them. 
		
\begin{center}
\begin{tikzpicture}[->,>=stealth',shorten >=1pt,auto,node distance=2.8cm]
\tikzstyle{every state}=[rectangle,draw=none,text=black]
			
\node[state]         (S) at (-4, 4.6)              {$((0,1),(0,1),(0,1),(0,1))$};
\node[state]         (xin1) at (-8, 2.6)           {$((0,2),(0,1),(0,1))$};
\node[state]         (xin2) at (-4, 2.6)        {$((1,0),(0,1),(0,1))$};
\node[state]         (xin3) at (0, 2.6)        {$((0,0)_1,(0,1),(0,1))$};
\node[state]         (xout1) at (0.5,0.6 )          {$((0,0)_1,(0,0)_1)$};
\node[state]         (xout2) at (-5.5, 0.6)        {$((1,0),(1,0))$};
\node[state]         (xout6) at (-8.5, 0.6)        {$((0,2),(0,2))$};
\node[state]         (xout8) at (-2.5, 0.6)        {$((0,1)_1,(0,1))$};
\node[state]         (DC) at (-4, -2)           {$((0,0)_2)$};
			
\path
(S) edge                  node[xshift=-1.9cm,yshift=0.2cm]{} (xin1)
edge              node[xshift=0.4cm,yshift=-0.4cm]{} (xin2)
edge              node[xshift=0.4cm,yshift=-0.4cm]{} (xin3)
(xin1) edge  (xout6)
(xin2) edge  node[xshift=0.15cm,yshift=0.3cm]{} (xout2)
(xout1) edge   node[xshift=0cm,yshift=0cm]{} (DC)
(xin3) edge (xout8)
(xin3) edge (xout1)
(xin1) edge (xout8)
(xin2) edge (xout8)			
(xout6) edge (DC)
(xout1) edge (DC)
(xout2) edge (DC)
(xout8) edge (DC);		
\end{tikzpicture}
\end{center}
In particular, the stratum $\s\Omega_{((0,1),(0,1),(0,1),(0,1))}$ is dense in $\sGr(4,6)$.	
\end{eg}
	\begin{prop}\label{dense} If $N=2r$ is even, then the stratum
	$\s\Omega_{\bLa,\bm k}$ with $(\la^{(s)},k_s)=(\omega_r,0)$, where $s=1,\dots,2(d-N),$ is dense in $\sGr(N,d)$.
	\end{prop}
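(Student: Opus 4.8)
The plan is to show that the closure of this stratum is all of $\sGr(N,d)$, using the combinatorial description of $\g_N$-cycles in Theorem \ref{thm BC strata}. Write $\bLa=(\omega_r,\dots,\omega_r)$ with $2(d-N)$ entries and $\bm k=(0,\dots,0)$; by Lemma \ref{thm dim sgr} the pair $(\bLa,\bm k)$ is $d$-nontrivial. Combining Theorem \ref{thm sgr dec} with Theorem \ref{thm BC strata}, the closure $\overline{\s\Omega}_{\bLa,\bm k}$ equals $\sGr(N,d)$ if and only if every $d$-nontrivial pair $(\bm\Xi,\bm l)$ satisfies $(\bLa,\bm k)\gge(\bm\Xi,\bm l)$. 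So I would reduce the whole statement to verifying this single inequality for an arbitrary $d$-nontrivial $(\bm\Xi,\bm l)$ with $\bm\Xi=(\xi^{(1)},\dots,\xi^{(m)})$.

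First I would record the size bookkeeping. From the formula $|\mu_{A,k}|=r(2\sum_{i=1}^{r-1}\langle\mu,\calpha_i\rangle+\langle\mu,\calpha_r\rangle)+Nk$ used in the proof of Lemma \ref{thm dim sgr}, one gets $|\omega_{r,A,0}|=r$, and for each factor $|\xi^{(i)}_{A,l_i}|=r\,p_i$ with $p_i:=2\sum_{j=1}^{r-1}\langle\xi^{(i)},\calpha_j\rangle+\langle\xi^{(i)},\calpha_r\rangle+2l_i$. Since $(\bm\Xi,\bm l)$ is $d$-nontrivial, each $p_i$ is a positive integer and $\sum_{i=1}^m p_i=|\bm\Xi_{A,\bm l}|/r=N(d-N)/r=2(d-N)$. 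Thus there is a partition $\{I_1,\dots,I_m\}$ of $\{1,\dots,2(d-N)\}$ with $|I_i|=p_i$, and because all the weights in $\bLa$ are equal to $\omega_r$, this automatically satisfies the size condition $|\xi^{(i)}_{A,l_i}|=\sum_{j\in I_i}|\omega_{r,A,0}|$ in the definition of $\gge$; only the assignment of block sizes matters, not the particular indices.

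The representation-theoretic heart of the argument is the claim that, for every dominant integral $\mathfrak{so}_{2r+1}$-weight $\xi$ and integer $l\gge 0$, the module $V_\xi$ is a summand of $V_{\omega_r}^{\otimes p}$ with $p=2\sum_{j=1}^{r-1}\langle\xi,\calpha_j\rangle+\langle\xi,\calpha_r\rangle+2l$. To prove it I would use the well-known decomposition of the tensor square of the spin representation of $\mathfrak{so}_{2r+1}$, namely $V_{\omega_r}\otimes V_{\omega_r}\cong\bigoplus_{k=0}^{r}\bigwedge^k\C^{2r+1}$, whose summands include the trivial module $V_0$ and all the modules $V_{\omega_1},\dots,V_{\omega_{r-1}}$. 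Since the Cartan (highest-weight) component $V_\xi$ always occurs in $\bigotimes_{j=1}^{r}V_{\omega_j}^{\otimes\langle\xi,\calpha_j\rangle}$, and since $V_{\omega_j}\subset V_{\omega_r}^{\otimes 2}$ for $j<r$ while $V_{\omega_r}=V_{\omega_r}^{\otimes 1}$, compatibility of the summand relation with tensor products gives $V_\xi\subset V_{\omega_r}^{\otimes q}$ with $q=2\sum_{j=1}^{r-1}\langle\xi,\calpha_j\rangle+\langle\xi,\calpha_r\rangle$. Finally $V_0\subset V_{\omega_r}^{\otimes 2}$ lets me pad by factors of two, giving $V_\xi\subset V_{\omega_r}^{\otimes(q+2l)}=V_{\omega_r}^{\otimes p}$ as claimed. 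Applying this with $\xi=\xi^{(i)}$, $l=l_i$, and $p=p_i=|I_i|$ yields $\Hom_{\g_N}(V_{\xi^{(i)}},\bigotimes_{j\in I_i}V_{\omega_r})\ne 0$, so $(\bLa,\bm k)\gge(\bm\Xi,\bm l)$ and the density follows.

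I expect the main obstacle to be the representation-theoretic claim: one must control precisely how many spin factors are needed so that the count $p_i$ matches the block size $|I_i|$ exactly, rather than merely ``some large power''. The decomposition of $V_{\omega_r}^{\otimes 2}$ together with the exact arithmetic $|\xi_{A,l}|/r=p$ are what make the bookkeeping close up, and verifying that this decomposition indeed contains each fundamental module $V_{\omega_j}$ ($j<r$) and the trivial module is the crucial input.
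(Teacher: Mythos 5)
Your proof is correct and takes essentially the same route as the paper: the paper's proof invokes exactly the decomposition $V_{\omega_r}\otimes V_{\omega_r}=V_{2\omega_r}\oplus V_{\omega_1}\oplus \dots\oplus V_{\omega_{r-1}}\oplus V_{(0,\dots,0)}$ (your $\bigoplus_{k=0}^{r}\bigwedge^k\C^{2r+1}$ in highest-weight notation) and then asserts that it implies $(\bLa,\bm k)\gge (\bm\Xi,\bm l)$ for every $d$-nontrivial $(\bm\Xi,\bm l)$, concluding via Theorems \ref{thm sgr dec} and \ref{thm BC strata}. The only difference is that the paper leaves implicit the bookkeeping you spell out, namely the block sizes $|I_i|=p_i=|\xi^{(i)}_{A,l_i}|/r$ and the embedding $V_{\xi}\subset V_{\omega_r}^{\otimes p}$ with padding by trivial summands.
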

	\begin{proof}
	For $N=2r$, one has the $\g_N$-module decomposition
	\beq\label{eq dec last}
	V_{\omega_r}\otimes V_{\omega_r}=V_{2\omega_r}\oplus V_{\omega_1}\oplus \dots\oplus V_{\omega_{r-1}}\oplus V_{(0,\dots,0)}.
	\eeq
	It is clear that $(\bLa,\bm k)$ is $d$-nontrivial. It also follows from \eqref{eq dec last} that if $(\bm\Xi,\bm l)$ is $d$-nontrivial then $(\bLa,\bm k)\gge (\bm\Xi,\bm l)$. The proposition follows from Theorems \ref{thm sgr dec} and \ref{thm BC strata}.
\end{proof}

\begin{rem}\label{sd translations}
	The group of affine translations, see Remark \ref{translations}, preserves the self-dual Grassmannian $\s\Gr(N,d)$ and the strata $\s\Omega_{\bLa,\bs k}$.
	\end{rem}
\subsection{The $\g_N$-stratification of $\sGr(N,d)$ and the Wronski map}
Let $\bLa=(\la^{(1)},\dots,\la^{(n)})$ be a sequence of dominant integral $\g_N$-weights and let $\bm k=(k_1,\dots,k_n)$ be an $n$-tuple of nonnegative integers. Let $\bs z=(z_1,\dots,z_n)\in \mathring{\bP}_n$.

 Recall that $\la^{(s)}_i=\langle \la^{(s)},\calpha_i\rangle$. If $X\in\s\Omega_{\bLa,\bm k,\bm z}$, one has
 \[
 \Wr(X)=\begin{cases}

\Big( \prod\limits_{s=1}^n(x-z_s)^{\la^{(s)}_1+\dots+\la^{(s)}_{r}+k_s}\Big)^{N},\quad &\text{if }N=2r+1;\\
\Big( \prod\limits_{s=1}^n(x-z_s)^{2\la^{(s)}_1+\dots+2\la^{(s)}_{r-1}+\la^{(s)}_{r}+2k_s}\Big)^r,\quad & \text{if }N=2r.
 \end{cases}
 \]
 
 We define the \emph{reduced Wronski map} $\overline{\Wr}$ as follows. 
 
 If $N=2r+1$, the reduced Wronski map
 \[
 \overline{\Wr}:\sGr(N,d)\to \Gr(1,d-N+1)
 \]is sending $X\in \sGr(N,d)$ to $\C(\Wr(X))^{1/N}$.

 If $N=2r$, the reduced Wronski map
 \[
 \overline{\Wr}:\sGr(N,d)\to \Gr(1,2(d-N)+1)
 \]is sending $X\in \sGr(N,d)$ to $\C(\Wr(X))^{1/r}$.
 
 The reduced Wronski map is also a finite map. 

  For $N=2r$, the degree of the reduced Wronski map is given by $\dim (V_{\omega_r}^{\otimes{2(d-N)}})^{\g_{N}}$. This dimension is given by, see \cite{KLP},
  \beq\label{BC degree}
  (N-1)!!\prod_{1\lle i<j\lle r} \big((j-i)(N-i-j+1)\big)\prod_{k=0}^{r-1}\ \frac{(2(d-N+k))!}{(d-k-1)!(d-N+k)!}.
  \eeq
 
 Let $\tl d=d-N+1$ if $N=2r+1$ and $\tl d=2(d-N)+1$ if $N=2r$. Let $\bm m=(m_1,\dots,m_n)$ be an unordered sequence of positive integers such that $|\bm m|=\tl d-1$.
 
 Similar to Section \ref{sec wronski gr}, we have the following proposition.
 
	\begin{prop}\label{B preimage}
	The preimage of the stratum $\Omega_{\bm m}$ of $\Gr(1,\tl d)$ under the reduced Wronski map is a union of all strata $\s\Omega_{\bLa,\bm k}$ of $\sGr(N,d)$ such that $|\la^{(s)}_{A,k_s}|=N m_s$, $s=1,\dots,n$, if $N$ is odd and such that $|\la^{(s)}_{A,k_s}|=r m_s$, $s=1,\dots,n$, if $N=2r$ is even. \qed
	\end{prop}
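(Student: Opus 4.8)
The plan is to reduce the whole statement to the explicit formula for $\Wr(X)$ displayed immediately above, following verbatim the pattern of the $\gl_N$-case Proposition~\ref{preimage}. The reduced Wronskian is obtained from that formula by extracting the $N$-th root when $N=2r+1$ and the $r$-th root when $N=2r$, so once $\Wr(X)$ is known the proposition becomes a bookkeeping of root multiplicities.

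First I would fix $X\in\s\Omega_{\bLa,\bm k,\bm z}$ with $\bm z=(z_1,\dots,z_n)\in\mathring{\bP}_n$. Taking the appropriate root in the displayed Wronskian formula gives
\[
\overline{\Wr}(X)=\C\prod_{s=1}^n(x-z_s)^{m_s},
\]
with $m_s=\la_1^{(s)}+\dots+\la_r^{(s)}+k_s$ if $N=2r+1$ and $m_s=2\la_1^{(s)}+\dots+2\la_{r-1}^{(s)}+\la_r^{(s)}+2k_s$ if $N=2r$. Comparing these with the size of $\la_{A,k_s}^{(s)}$ computed from \eqref{eq partition BC} (the same computation appearing in the proofs of Lemmas~\ref{lem sym weight new} and \ref{thm dim sgr}), namely $|\la_{A,k_s}^{(s)}|=N(\la_1^{(s)}+\dots+\la_r^{(s)}+k_s)$ in the odd case and $|\la_{A,k_s}^{(s)}|=r(2\la_1^{(s)}+\dots+2\la_{r-1}^{(s)}+\la_r^{(s)}+2k_s)$ in the even case, I read off $m_s=|\la_{A,k_s}^{(s)}|/N$ when $N$ is odd and $m_s=|\la_{A,k_s}^{(s)}|/r$ when $N=2r$. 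Since the $z_s$ are distinct, this shows $\overline{\Wr}(X)\in\Omega_{\bm m,\bm z}$; as the multiset $\bm m=(m_1,\dots,m_n)$ depends only on $(\bLa,\bm k)$ and not on $\bm z$, the entire stratum $\s\Omega_{\bLa,\bm k}$ is carried by $\overline{\Wr}$ into the single $\gl_1$-stratum $\Omega_{\bm m}$.

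To assemble the preimage I would then use Theorem~\ref{thm sgr dec}: $\sGr(N,d)$ is the disjoint union of the strata $\s\Omega_{\bLa,\bm k}$, each of which, by the previous paragraph, maps into exactly one $\gl_1$-stratum of $\Gr(1,\tl d)$. Since distinct $\gl_1$-strata are disjoint, $\overline{\Wr}^{-1}(\Omega_{\bm m})$ is precisely the union of those $\s\Omega_{\bLa,\bm k}$ whose associated multiplicities $m_s$ equal the prescribed ones, that is, with $|\la_{A,k_s}^{(s)}|=Nm_s$ if $N$ is odd and $|\la_{A,k_s}^{(s)}|=rm_s$ if $N=2r$, which is the assertion. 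A one-line consistency check confirms the target: summing multiplicities gives $|\bm m|=|\bLa_{A,\bm k}|/N=d-N$ for $N=2r+1$ and $|\bm m|=|\bLa_{A,\bm k}|/r=2(d-N)$ for $N=2r$, matching $|\bm m|=\tl d-1$ for the stated values of $\tl d$.

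I do not expect a genuine obstacle, since this is the self-dual analogue of Proposition~\ref{preimage} and the argument is the same multiplicity count, with $|\la^{(s)}|$ replaced by $|\la_{A,k_s}^{(s)}|$ and an extra root extraction folded into $\overline{\Wr}$. The only points requiring care are bookkeeping: one must keep the convention of Section~\ref{sec wronski gr} that $x-z_s$ is read as the constant $1$ when $z_s=\infty$, so that a contribution at infinity is correctly recorded as a root of $\overline{\Wr}(X)$ at $\infty$ in $\Gr(1,\tl d)$; and one should note that the divisibility of every root multiplicity of $\Wr(X)$ by $N$ (resp.\ $r$), which is exactly what makes $\overline{\Wr}$ well defined, is already built into the displayed Wronskian formula and needs no separate verification.
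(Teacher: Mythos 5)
Your proof is correct and is exactly the argument the paper intends: the paper states Proposition \ref{B preimage} with no written proof (marked \qed, ``Similar to Section \ref{sec wronski gr}''), and your multiplicity bookkeeping---reading off $m_s=|\la^{(s)}_{A,k_s}|/N$ (odd $N$) or $|\la^{(s)}_{A,k_s}|/r$ ($N=2r$) from the displayed Wronskian formula and \eqref{eq partition BC}, then partitioning via Theorem \ref{thm sgr dec} and disjointness of the $\gl_1$-strata---is the self-dual analogue of Proposition \ref{preimage} that the authors have in mind. Your computations of $|\la^{(s)}_{A,k_s}|$ in both parities and the consistency check $|\bm m|=\tl d-1$ are accurate, so nothing is missing.
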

	
Let  $\bLa=(\la^{(1)},\dots,\la^{(n)})$ be an unordered sequence of dominant integral $\g_N$-weights and $\bm k=(k_1,\dots,k_n)$ a sequence of nonnegative integers.
Let $a$ be the number of distinct pairs in the set $\{(\la^{(s)},k_s),\ s=1,\dots,n\}$. We can assume that $(\la^{(1)},k_1),\dots,(\la^{(a)},k_a)$ are all distinct, and let $n_1,\dots,n_a$ be their multiplicities, $n_1+\dots+n_a=n$. 

Consider the unordered set of integers $\bs m=(m_1,\dots,m_n)$,
where $Nm_s=|\la^{(s)}_{A,k_s}|$ if $N$ is odd or $rm_s=|\la^{(s)}_{A,k_s}|$ if $N=2r$ is even. Consider the stratum $\Omega_{\bs m}$ in $\Gr(1,\tl d)$, corresponding to polynomials with $n$ distinct roots of multiplicities $m_1,\dots,m_n$. 

\begin{prop}
\label{prop cov}
Let $(\bLa,\bs k)$ be $d$-nontrivial. Then the reduced Wronski map 
$\overline{\Wr}|_{\s\Omega_{\bLa,\bs k}} : \s\Omega_{\bLa,\bs k}\to \Omega_{\bs m}$ is a ramified covering
 of degree $b(\bLa_{A,\bs k}) \dim (V_{\bLa})^{\g_N}$, where $b(\bLa_{A,\bs k})$ is given by \eqref{sym co}.  
	\end{prop}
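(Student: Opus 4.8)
The plan is to mirror the proof of the $\gl_N$-analogue of this statement (the proposition asserting that $\Wr|_{\Omega_\bLa}:\Omega_\bLa\to\Omega_{\bm m}$ is a ramified covering of degree $b(\bLa)\dim(V_\bLa)^{\sln_N}$), replacing each type $\rm A$ input by its type $\rm B$/$\rm C$ counterpart and then realizing the combinatorial factor $b(\bLa_{A,\bm k})$ geometrically as the degree of a quotient map between configuration spaces. First I would observe that for $X\in\s\Omega_{\bLa,\bm k,\bm z}$ the reduced Wronskian $\overline\Wr(X)=\C\prod_{s=1}^n(x-z_s)^{m_s}$ depends only on $\bm z$: the exponent $m_s$ is determined by the pair $(\la^{(s)},k_s)$ through $Nm_s=|\la^{(s)}_{A,k_s}|$ when $N$ is odd and $rm_s=|\la^{(s)}_{A,k_s}|$ when $N=2r$ (with the convention $x-z_s=1$ for $z_s=\infty$, so the factorization is valid over the point at infinity too). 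Hence $\overline\Wr|_{\s\Omega_{\bLa,\bm k}}$ factors through the covering projection $\pi:\s\Omega_{\bLa,\bm k}\to{\mathring{\bP}}_n/S_{\bm n}$ sending $X\in\s\Omega_{\bLa,\bm k,\bm z}$ to the class $[\bm z]$, and by Proposition \ref{B preimage} its image lies in $\Omega_{\bm m}$ with the prescribed matching of multiplicities.

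Next I would invoke Proposition \ref{prop bij BC} (which itself rests on Theorem \ref{bi rep sgr}, Lemma \ref{lem BC completeness}, and Theorem 1.1 of \cite{MTV2}): the projection $\pi$ is a ramified covering of degree $\dim(V_\bLa)^{\g_N}$, unramified and with real fibers over $\R{\mathring{\bP}}_n/S_{\bm n}$. I would then identify the base map: $\overline\Wr$ descends to the natural quotient $\bar\pi:{\mathring{\bP}}_n/S_{\bm n}\to{\mathring{\bP}}_n/S_{\bm m}=\Omega_{\bm m}$, where $S_{\bm m}$ permutes the marked points carrying equal multiplicity $m_s$ (recall $\Omega_{\bm m}$ is precisely the configuration space of $n$ unordered distinct points with multiplicities forming the multiset $\bm m$). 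Because equal pairs $(\la^{(s)},k_s)$ force equal $m_s$, one has $S_{\bm n}\subseteq S_{\bm m}$, so $\bar\pi$ is a covering of degree $[S_{\bm m}:S_{\bm n}]$. The essential computation is that this index equals the symmetry coefficient: grouping the distinct partitions of $\bLa_{A,\bm k}$ by their size and applying \eqref{sym co} gives $[S_{\bm m}:S_{\bm n}]=b(\bLa_{A,\bm k})$. Composing $\bar\pi\circ\pi$ yields that $\overline\Wr|_{\s\Omega_{\bLa,\bm k}}$ is a ramified covering of degree $b(\bLa_{A,\bm k})\dim(V_\bLa)^{\g_N}$, as claimed.

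I expect the only genuine obstacle to be the bookkeeping in this last step. Using Lemma \ref{lem sym weight new}, the assignment $(\la,k)\mapsto\la_{A,k}$ is injective, so $S_{\bm n}$ (permutations of equal partitions in $\bLa_{A,\bm k}$) really is the symmetry group of the pairs; the subtle point is the case where two genuinely different pairs $(\la,k)\neq(\la',k')$ produce the same multiplicity $m$, so that $S_{\bm m}$ is strictly larger than $S_{\bm n}$ and $b(\bLa_{A,\bm k})>1$ is not merely accounted for by repeated pairs. One must check that, organized by size $|\cdot|$, the orbit sizes $n_s$ combine into the multinomial factors of \eqref{sym co} exactly as $[S_{\bm m}:S_{\bm n}]$ demands. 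Everything else — finiteness, the degree of the fiber, the ramification and real-fiber behavior — is inherited verbatim from Proposition \ref{prop bij BC} and the matching statement Proposition \ref{B preimage}, so no new analytic input is needed beyond what the type $\rm B$/$\rm C$ analogues already provide.
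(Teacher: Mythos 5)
Your proposal is correct and takes essentially the same route as the paper, whose proof is a one-line citation of Theorem \ref{bi rep sgr}, Lemma \ref{lem BC completeness}, and Proposition \ref{B preimage}: you merely make explicit what the paper leaves implicit, namely the factorization of $\overline{\Wr}$ through the degree-$\dim(V_{\bLa})^{\g_N}$ covering $\s\Omega_{\bLa,\bm k}\to{\mathring{\bP}}_n/S_{\bm n}$ of Proposition \ref{prop bij BC} followed by the quotient ${\mathring{\bP}}_n/S_{\bm n}\to{\mathring{\bP}}_n/S_{\bm m}\cong\Omega_{\bm m}$. Your bookkeeping is also right: by Lemma \ref{lem sym weight new} the pairs $(\la^{(s)},k_s)$ biject with the partitions $\la^{(s)}_{A,k_s}$, distinct multiplicities $m$ correspond to distinct sizes $|\la^{(s)}_{A,k_s}|$, and the resulting index $[S_{\bm m}:S_{\bm n}]$ is exactly the product of multinomial coefficients in \eqref{sym co}.
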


	\begin{proof}
		The statement follows from Theorem \ref{bi rep sgr}, Lemma \ref{lem BC completeness}, and Proposition \ref{B preimage}.
	\end{proof}
	
In other words, the $\mathfrak{g}_N$-stratification of $\s\Gr(N,d)$ given by Theorems \ref{thm sgr dec} and \ref{thm BC strata},
is adjacent to the swallowtail $\gl_1$-stratification of $\Gr(1,\tl d)$ and the reduced Wronski map. 

\subsection{Self-dual Grassmannian for $N=3$}\label{sec proof N=3} Let $N=3$ and $\g_3=\sln_2$. We identify the dominant integral $\sln_2$-weights with nonnegative integers. Let $\bLa=(\la^{(1)},\dots,\la^{(n)},\la)$ be a sequence of nonnegative integers and $\bm z=(z_1,\dots,z_n,\infty)\in {\mathring{\bP}}_{n+1}$. 
	
	Choose $d$ large enough so that $k:=d-3-\sum_{s=1}^n\la^{(s)}-\la\gge 0$. Let $\bs k=(0,\dots,0,k)$.
	Then $\bLa_{A,\bs k}$ has coordinates
	\[
	\la_{A}^{(s)}=(2\la^{(s)},\la^{(s)},0),\quad s=1,\dots,n,
	\]
	\[
	\la_{A, k}=\Big(d-3-\sum_{s=1}^n\la^{(s)}+\la,d-3-\sum_{s=1}^n\la^{(s)},d-3-\sum_{s=1}^n\la^{(s)}-\la\Big).
	\]
	
	Note that we always have $|\bLa_{A,\bs k}|=3(d-3)$ and spaces of polynomials in $\s\Omega_{\bLa,\bs k,\bm z}$ are pure self-dual spaces.
	\begin{thm}\label{thm psGr N=3}
		There exists a bijection between the common eigenvectors in $(V_{\bLa,\bm z})^{\sln_2}$ of the $\gl_2$ Bethe algebra $\cB$ and $\s\Omega_{\bLa,\bs k,\bs z}$.
	\end{thm}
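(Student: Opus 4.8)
The plan is to reduce to the $N=2$ case of Theorem~\ref{thm bijection} through the classical symmetric-square construction for second-order operators. Write $\sigma=\sum_{s=1}^n\la^{(s)}$ and put $d'=(d+1-k)/2$; since $(V_{\bLa})^{\sln_2}\neq 0$ forces $\sigma+\la$ to be even, $d'$ is a positive integer, and $k=d-3-\sigma-\la$ gives $2d'=\sigma+\la+4$. Regarding each $\sln_2$-weight as the $\gl_2$-partition $(\la^{(s)},0)$ (resp.\ $(\la,0)$), we have $|\bLa|=\sigma+\la=2(d'-2)$, so Theorem~\ref{thm bijection} for $N=2$ furnishes a bijection $\kappa:v\mapsto Y:=\Ker\mathcal D_v$ between $\cB$-eigenvectors in $(V_{\bLa,\bm z})^{\sln_2}$ and the finite set $\Omega_{\bLa,\bm z}\subset\Gr(2,d')$. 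It then suffices to produce a bijection $\Omega_{\bLa,\bm z}\to\s\Omega_{\bLa,\bs k,\bm z}$ and to compose it with $\kappa$.

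First I would send $Y=\langle u,v\rangle$ to $X:=\Sym^2 Y=\langle u^2,uv,v^2\rangle$. The identities $\Wr(u^2,uv)=u^2W$, $\Wr(u^2,v^2)=2uvW$, $\Wr(uv,v^2)=v^2W$ with $W=\Wr(u,v)$ show that the space of pairwise Wronskians of $X$ equals $W\cdot X$; since $W=\Wr(Y)$ is precisely the factor removed by the divided Wronskian $\Wr^\dag$ attached to $\bLa_{A,\bs k},\bm z$, this yields $X^\dag=X$, so $X$ is pure self-dual. For the exponents: if $Y$ has exponents $\{\la^{(s)}+1,0\}$ at $z_s$ (i.e.\ $Y\in\Omega_{(\la^{(s)},0)}(\mc F(z_s))$), then $\Sym^2 Y$ has exponents $\{2\la^{(s)}+2,\la^{(s)}+1,0\}$, which is exactly $X\in\Omega_{\la_A^{(s)}}(\mc F(z_s))$. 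A parallel computation of top degrees at $\infty$, using $2d'=\sigma+\la+4$, gives the three degrees $\sigma-\la,\ \sigma+1,\ \sigma+\la+2$ of $X$, matching $X\in\Omega_{\la_{A,k}}(\mc F(\infty))$. Hence $X\in\Omega_{\bLa_{A,\bs k},\bm z}\cap\sGr(3,d)=\s\Omega_{\bLa,\bs k,\bm z}$.

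To see that $Y\mapsto\Sym^2 Y$ is a bijection onto $\s\Omega_{\bLa,\bs k,\bm z}$, injectivity is clear: since $Y$ is base-point free, it is recovered from $X$ as the set of polynomials whose square lies in $X$. For surjectivity I would show that any pure self-dual three-dimensional space is a symmetric square. Normalizing the monic third-order operator $\mathcal D_X$ with $\Ker\mathcal D_X=X$ so that its $\pa_x^2$-coefficient vanishes, pure self-duality becomes the anti-self-adjointness $\mathcal D_X^{*}=-\mathcal D_X$; every such operator has the form $\mathcal D_X=\pa_x^3+a\,\pa_x+\tfrac12 a'$, which is the symmetric square of $\mathcal L=\pa_x^2+\tfrac14 a$. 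Thus $X=\Sym^2(\Ker\mathcal L)$, and reading the exponent computation backwards shows $Y:=\Ker\mathcal L$ has exponents $0$ and $\la^{(s)}+1$ at each $z_s$ and the correct degrees at $\infty$, so $Y$ is a genuine point of $\Omega_{\bLa,\bm z}\subset\Gr(2,d')$. Composing with $\kappa^{-1}$ returns the eigenvector, completing the bijection.

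The main obstacle is this surjectivity step, i.e.\ the identification of pure self-dual spaces with symmetric squares. Two points require care: that pure self-duality of $X$, defined through $\Wr^\dag$, is equivalent to anti-self-adjointness of the correctly gauged $\mathcal D_X$---which rests on identifying $X^\dag$ with (a normalization of) the kernel of the formal adjoint $\mathcal D_X^{*}$---and that the extracted square-root space $Y$ consists of honest polynomials lying in the prescribed Schubert cells rather than of rational functions. Both are settled by the local exponent analysis at $z_1,\dots,z_n,\infty$ carried out above, whose integrality and nonnegativity force polynomiality and fix the degree $d'$ and the labels $\bs k$.
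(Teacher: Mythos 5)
Your proposal follows the same global strategy as the paper's proof: both realize the bijection as the composition of the $\gl_2$ case of Theorem~\ref{thm bijection} with the squaring correspondence $\mathrm{span}\{p,q\}\mapsto\mathrm{span}\{p^2,pq,q^2\}$, and your forward direction (Wronskian identities showing $\Sym^2 Y$ is pure self-dual, plus the exponent count at $z_s$ and $\infty$) matches the paper's computation of $\deg p=\frac12(\sigma-\la)$, $\deg q=\frac12(\sigma+\la)+1$ exactly. Where you genuinely diverge is the hard step, surjectivity: the paper does not argue through anti-self-adjoint operators, but takes a Witt basis $u_1,u_2,u_3$ of the pure self-dual space, forms the family $y(x,c)=u_1+cu_2+\frac{c^2}{2}u_3$ satisfying $\Wr\big(y,\frac{\pa y}{\pa c}\big)=T_1y$, and invokes Lemmas 6.15 and 6.16 of [MV1] to conclude $y(x,c)=(p+(c-c')q)^2$ with $\Wr(p,q)=2T_1$; polynomiality of $p,q$ is then automatic. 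Your alternative--gauging $\mc D_X$ by $T_1=\Wr(X)^{1/3}$, using Lemma~\ref{lem dual-oper} to turn pure self-duality into anti-self-adjointness, and using the classical fact that $\pa_x^3+a\pa_x+\frac12 a'$ is the symmetric square of $\pa_x^2+\frac14 a$--is legitimate and buys independence from the population machinery of [MV1].

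Two steps, however, need repair. First, your injectivity recipe is false as stated: the set $\{f~|~f^2\in X\}$ can strictly contain $Y$ (take $Y=\langle 1,x^2\rangle$; then $x^2\in\Sym^2Y$ but $x\notin Y$). Injectivity should instead be extracted from your own operator framework: $X$ determines $\mc D_X$, hence the gauged coefficient $a$, hence $\mathcal L=\pa_x^2+\frac14 a$ and $Y=T_1^{1/2}\Ker\mathcal L$ uniquely. Second, and more seriously, ``integrality and nonnegativity of local exponents force polynomiality'' is not a valid inference: the exponents of $\mathcal L$ at $z_s$ are $-\la^{(s)}/2$ and $(\la^{(s)}+2)/2$, which are half-integers when $\la^{(s)}$ is odd, and even integral exponents do not by themselves exclude logarithmic solutions or global multivaluedness. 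The correct mechanism uses that the gauged kernel is $T_1^{-1}X$ (note also your identity should read $T_1^{-1}X=\Sym^2(\Ker\mathcal L)$, not $X=\Sym^2(\Ker\mathcal L)$), which consists of single-valued rational functions: since $\Sym^2 M=I$ forces $M=\pm I$ in $SL_2$, the monodromy of $\mathcal L$ is scalar along every loop; scalar local monodromy rules out logarithms, the $\pm$ signs are cancelled by the factor $T_1^{1/2}$ by exponent parity, and only then do the nonnegative integer exponents at all finite points together with bounded growth at $\infty$ give that $Y=T_1^{1/2}\Ker\mathcal L$ consists of polynomials of the prescribed degrees. With these two repairs your argument is complete and constitutes a self-contained substitute for the paper's appeal to Lemmas 6.15--6.16 of [MV1].
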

	\begin{proof}
		Let $X\in\s\Omega_{\bLa,\bs k,\bm z}$, and let $\bm T=(T_1(x),T_2(x),T_3(x))$ be associated with $\bLa_{A,\bm k},\bm z$, then\[T_1(x)=T_2(x)=\prod_{s=1}^n(x-z_s)^{\la^{(s)}}.\]
		
		Following Section 6 of \cite{MV}, let $\bm u=(u_1,u_2,u_3)$ be a Witt basis of $X$, one has
		\[
		\Wr(u_1,u_2)=T_1u_1,\quad \Wr(u_1,u_3)=T_1u_2,\quad\Wr(u_2,u_3)=T_1u_3.
		\]Let $y(x,c)=u_1+cu_2+\frac{c^2}{2}u_3$, it follows from Lemma 6.15 of \cite{MV} that
		\[\Wr\Big(y(x,c),\frac{\pa y}{\pa c}(x,c)\Big)=T_1y(x,c).\]Since $X$ has no base points, there must exist $c'\in\C$ such that $y(x,c')$ and $T_1(x)$ do not have common roots. It follows from Lemma 6.16 of \cite{MV} that $y(x,c')=p^2$ and $y(x,c)=(p+(c-c')q)^2$ for suitable polynomials $p(x),q(x)$ satisfying $\Wr(p,q)=2T_1$. In particular, $\{p^2,pq,q^2\}$ is a basis of $X$. Without loss of generality, we can assume that $\deg p<\deg q$. Then
		\[
		\deg p=\frac{1}{2}\Big(\sum_{s=1}^n\la^{(s)}-\la\Big),\quad \deg q=\frac{1}{2}\Big(\sum_{s=1}^n\la^{(s)}+\la\Big)+1.
		\]Since $X$ has no base points, $p$ and $q$ do not have common roots. Combining with the equality $\Wr(p,q)=2T_1$, one has that the space spanned by $p$ and $q$ has singular points at $z_1,\dots,z_n$ and $\infty$ only. Moreover, the exponents at $z_s$, $s=1,\dots,n$, are equal to $0,\la^{(s)}+1$, and the exponents at $\infty$ are equal to $-\deg p,-\deg q$.
		
		By Theorem \ref{thm bijection}, the space $\mathrm{span}\{p,q\}$ corresponds to a common eigenvector of the $\gl_2$ Bethe subalgebra in the subspace $\big(\bigotimes_{s=1}^n V_{(\la^{(s)},0)}(z_s)\otimes V_{(d-2-\deg p,d-1-\deg q)}(\infty)\big)^{\sln_2}$.
		
		Conversely, given a common eigenvector of the $\gl_2$ Bethe algebra in $(V_{\bLa,\bm z})^{\sln_2}$, by Theorem \ref{thm bijection}, it corresponds to a space $\tl X$ of polynomials in $\Gr(2,d)$ without base points. Let $\{p,q\}$ be a basis of $\tl X$, define a space of polynomials $\mathrm{span}\{p^2,pq,q^2\}$ in $\Gr(3,d)$. It is easy to see that $\mathrm{span}\{p^2,pq,q^2\}\in \s\Omega_{\bLa,\bs k,\bm z}$ is a pure self-dual space.
	\end{proof}
	
	Let $X\in \Gr(2,d)$, denote by $X^2$ the space spanned by $f^2$ for all polynomials $f\in X$. It is clear that $X^2\in\sGr(3,2d-1)$. Define 
	\beq\label{map N=3}
	\pi:\Gr(2,d)\to \sGr(3,2d-1)
	\eeq
	by sending $X$ to $X^2$. The map $\pi$ is an injective algebraic map.
	\begin{cor}
	The map $\pi$ defines a bijection between the subset of spaces of polynomials without base points in $\Gr(2,d)$ and the subset of pure self-dual spaces in $\sGr(3,2d-1)$.\qed
	\end{cor}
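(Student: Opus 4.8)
The plan is to prove the corollary by unpacking the two directions of Theorem~\ref{thm psGr N=3} together with the already-asserted injectivity of $\pi$. For the forward inclusion, take a base-point-free $X=\mathrm{span}\{p,q\}\in\Gr(2,d)$. The three generators $p^2,pq,q^2$ are linearly independent (a relation $ap^2+bpq+cq^2=0$ with $(a,b,c)\ne 0$ would make the nonconstant ratio $p/q$ a root of $at^2+bt+c$, forcing $a=b=c=0$) and have degree $\le 2d-2$, so $X^2\in\Gr(3,2d-1)$. First I would compute the pairwise Wronskians $\Wr(p^2,pq)=p^2W$, $\Wr(p^2,q^2)=2pqW$, $\Wr(pq,q^2)=q^2W$, where $W=\Wr(p,q)$; hence the Wronskians of pairs of elements of $X^2$ span $W\cdot X^2$. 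Since $X$ is base-point-free, so is $X^2$ (a common zero of $p^2,pq,q^2$ is a common zero of $p,q$), which forces $T_3=1$ and $T_1=T_2$ equal to the monic form of $W$; this is exactly the relation $\Wr(p,q)=2T_1$ used in the proof of Theorem~\ref{thm psGr N=3}. Then the space $X^\dag=\{\Wr(g_1,g_2)T_3^{-2}T_2^{-1}\ |\ g_i\in X^2\}=(T_3^{-2}T_2^{-1}W)\cdot X^2=X^2$, so $X^2$ is pure self-dual, and $\pi$ indeed maps base-point-free spaces to pure self-dual spaces.

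For surjectivity, given a pure self-dual $Y\in\sGr(3,2d-1)$, I would invoke the converse direction of the proof of Theorem~\ref{thm psGr N=3}: the Witt basis argument together with Lemmas~6.15 and~6.16 of \cite{MV} produces polynomials $p,q$ with $Y=\mathrm{span}\{p^2,pq,q^2\}$. Setting $X=\mathrm{span}\{p,q\}$, the space $Y$ has no base points since $Y=Y^\dag$ (Lemma~\ref{dual exponents}), so $p,q$ have no common root and $X$ is base-point-free; the degree bound $2\deg q\le 2d-2$ then gives $X\in\Gr(2,d)$ with $\pi(X)=Y$.

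Injectivity of $\pi$ on all of $\Gr(2,d)$ has already been stated (and can be seen directly, since the locus of perfect squares inside $X^2$ recovers $\bP(X)$ and hence $X$), so the two directions above combine to the claimed bijection. The main obstacle is the surjectivity step, namely the fact that every pure self-dual three-dimensional space is the symmetric square of a two-dimensional one; this is the nontrivial factorization of \cite{MV} (Lemmas~6.15--6.16), which is precisely the input already used in the proof of Theorem~\ref{thm psGr N=3}. Everything else is the elementary Wronskian bookkeeping of the first paragraph.
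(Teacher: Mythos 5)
Your proposal is correct and follows essentially the paper's own route: the corollary is precisely the content of the proof of Theorem \ref{thm psGr N=3} (the Witt-basis argument with Lemmas 6.15--6.16 of \cite{MV} giving that every pure self-dual space is $\mathrm{span}\{p^2,pq,q^2\}$, the converse direction which the paper dismisses as ``easy to see'' and which you usefully make explicit via $\Wr(p^2,pq)=p^2\Wr(p,q)$, $\Wr(p^2,q^2)=2pq\Wr(p,q)$, $\Wr(pq,q^2)=q^2\Wr(p,q)$, together with the injectivity of $\pi$ stated just before the corollary). One minor caveat: your parenthetical ``direct'' justification of injectivity --- that the locus of perfect squares inside $X^2$ recovers $\bP(X)$ --- is false as stated (for $X=\mathrm{span}\{1,x^2\}$ one has $x^2=(x)^2\in X^2$ although $x\notin X$, so the square locus is the Veronese conic of $X$ plus extra points), but this is harmless since your proof actually relies on the paper's asserted injectivity of $\pi$, not on this remark.
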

	 Note that not all self-dual spaces in $\sGr(3,2d-1)$ can be expressed as $X^2$ for some $X\in\Gr(2,d)$ since the greatest common divisor of a self-dual space does not have to be a square of a polynomial.
	 
\subsection{Geometry and topology}\label{sec ex} It would be very interesting to determine the topology and geometry of the strata and cycles of $\Gr(N,d)$ and of $\s\Gr(N,d)$. In particular, it would be interesting to understand the geometry and topology of the self-dual Grassmannian $\s\Gr(N,d)$. Here are some simple examples of small dimension.

\medskip

Of course, $\sGr(N,N)=\Gr(N,N)$ is just one point. Also, $\s\Gr(2r+1,2r+2)$ is just ${\Bbb P}^1$.

\medskip

Consider $\s\Gr(2r,2r+1)$, $r\gge 1$. It has only two strata: $\s\Omega_{(\omega_r,\omega_r),(0,0)}$ and $\s\Omega_{(0),(1)}$. Moreover, the reduced Wronski map has degree 1 and defines a bijection: $\overline{\Wr} :\s\Gr(2r,2r+1)\to \Gr(1,3)$. In particular, the ${\mathfrak{so}}_{2r+1}$-stratification in this case is identified with the swallowtail $\gl_1$-stratification of quadratics. There are two strata: polynomials with two distinct roots and polynomials with one double root. 
Therefore through the reduced Wronski map, the self-dual Grassmannian $\s\Gr(2r,2r+1)$ can be identified with $ {\Bbb P}^2$ with coordinates $(a_0:a_1:a_2)$ and the stratum $\s\Omega_{(0),(1)}$ is a nonsingular curve of degree 2 given by the equation $a_1^2-4a_0a_2=0$. 

\medskip

Consider $\s\Gr(2r+1,2r+3)$, $r\gge 1$. In this case we have $r+2$ strata: $\s\Omega_{(\omega_i,\omega_i),(0,0)}$, $i=1,\dots,r$, $\s\Omega_{(0,0),(1,1)}$, and 
$\s\Omega_{(0),(2)}$. The reduced Wronski map  $\overline{\Wr} :\s\Gr(2r+1,2r+3)\to \Gr(1,3)$ restricted to any strata again 
has degree 1. Therefore, through the reduced Wronski map, the self-dual Grassmannian $\s\Gr(2r+1,2r+3)$ can be identified with $r+1$ copies of ${\Bbb P}^2$ all intersecting in the same nonsingular degree 2 curve corresponding to the stratum $\s\Omega_{(0),(2)}$. In particular, every 2-dimensional ${\mathfrak{sp}}_{2r}$-cycle is just ${\Bbb P}^2$.

\medskip

Consider $\s\Gr(2r+1,2r+4)$, $r\gge 1$. We have $\dim \s\Gr(2r+1,2r+4) =3$. 
This is the last case when for all strata the coverings of Proposition \ref{prop bij BC} have degree one. There are already many strata. For example,  consider $\sGr(5,8)$, that is $r=2$. There are four strata of dimension 3 
	corresponding to the following sequences of ${\mathfrak {sp}}_4$-weights and 3-tuples of nonnegative integers:
	\[
	\bLa_1=(\omega_1,\omega_1,0),\quad \bs k_1=(0,0,1);\qquad
	\bLa_2=(\omega_1,\omega_1,\omega_2),\quad \bm k_2=(0,0,0);
	\]	\[
	\bLa_3=(\omega_2,\omega_2,0),\quad \bm k_3=(0,0,1);\qquad
	\bLa_4=(0,0,0),\quad \bm k_4=(1,1,1).
	\]
By the reduced Wronski map, the stratum $\Omega_{\bLa_4,\bs k_4}$ is identified with the subset of 
$\Gr(1,4)$ represented by cubic polynomials without multiple roots and the cycle 
	$\overline{\Omega}_{\bLa_4,\bs k_4}$ with $\Gr(1,4)={\Bbb P}^3$. The stratification of $\overline{\Omega}_{\bLa_4,\bs k_4}$ is just the swallowtail of cubic polynomials.
However, for other three strata the reduced Wronski map has degree 3. Using instead the map in Proposition \ref{prop bij BC}, we identify each of these strata with ${\mathring{\bP}}_{3}/(\Z/2\Z)$ or with the subset of $\Gr(1,3)\times \Gr(1,2)$ represented by a pair of polynomials $(p_1,p_2)$, such that $\deg(p_1)\lle 2$, $\deg(p_2)\lle 1$ and such that 
all three roots (including infinity) of $p_1p_2$ are distinct. Then the corresponding ${\mathfrak {sp}}_4$-cycles $\overline{\Omega}_{\bLa_i,\bs k_i}$ , $i=1,2,3$, are identified with $\Gr(1,3)\times \Gr(1,2)={\Bbb P}^2\times {\Bbb P}^1$.
	
	\medskip
	
	A similar picture is observed for 3-dimensional strata in the case of $\s\Gr(2r,2r+2)$. Consider,	for example, $\Gr(2,4)$, see Example \ref{A ex}. Then the 4-dimensional stratum $\Omega_{(1,0),(1,0),(1,0),(1,0)}$ is dense and (relatively) complicated, as the corresponding covering in Proposition \ref{prop deg of cover A} has degree 2. But for the 3-dimensional strata the degrees are 1. Therefore,  $\Omega_{(2,0),(1,0),(1,0)}$ and  $\Omega_{(1,1),(1,0),(1,0)}$ are identified with ${\mathring{\bP}}_{3}/(\Z/2\Z)$ and the corresponding cycles are just $\Gr(1,3)\times \Gr(1,2)={\Bbb P}^2\times {\Bbb P}^1$.

	\section{More notation}\label{sec more notation}
	\subsection{Lie algebras}
	Let $\g$ and $\h$ be as in Section \ref{sec sla}. One has the Cartan decomposition $\g=\n_-\oplus\h\oplus\n_+$. Introduce also the positive and negative Borel subalgebras $\mathfrak b=\h\oplus \n_+$ and $\mathfrak b_-=\h\oplus \n_-$.
	
	Let $\mathscr G$ be a simple Lie group, $\mathscr B$ a Borel subgroup, and $ \mathscr N=[\mathscr B,\mathscr B]$ its unipotent radical, with the corresponding Lie algebras $\n_+\subset\mathfrak b\subset\g$. Let $\mathscr G$ act on $\g$ by adjoint action.
	
	Let $E_1,\dots,E_r\in \n_+$, $\calpha_1,\dots,\calpha_r\in \h$, $F_1,\dots,F_r\in\n_-$ be the Chevalley generators of $\g$. Let $p_{-1}$ be the regular nilpotent element $\sum_{i=1}^rF_i$. The set $p_{-1}+\mathfrak b=\{p_{-1}+b~|~b\in\mathfrak b\}$ is invariant under conjugation by elements of $\mathscr N$. Consider the quotient space $(p_{-1}+\mathfrak b)/\mathscr N$ and denote the $\mathscr N$-conjugacy class of $g\in p_{-1}+\mathfrak b$ by $[g]_\g$.
	
	Let $\check{\mathcal P}=\{\cla\in\mathfrak h|\langle \alpha_i,\cla\rangle\in\mathbb Z,\ i=1,\dots,r\}$ and $\check{\mathcal P}^+=\{\cla\in\mathfrak h|\langle \alpha_i,\cla\rangle\in\mathbb Z_{\gge 0},\ i=1,\dots,r\}$ be the coweight lattice and the cone of dominant integral coweights. Let $\rho\in\h^*$ and $\crho\in\h$ be the Weyl vector and covector such that $\langle \rho,\calpha_i\rangle =1$ and $\langle \alpha_i,\crho\rangle =1$, $i=1,\dots,r$.
	
	The Weyl group $\mc W\subset\mathrm{Aut}(\mathfrak h^*)$ is generated by simple reflections $s_i$, $i=1,\dots,r$,
	\[s_i(\la)=\la-\langle \la,\calpha_i\rangle \alpha_i,\quad \la\in\mathfrak h^*.\]The restriction of the bilinear form $(\cdot,\cdot)$ to $\h$ is nondegenerate and induces an isomorphism $\h\cong\h^*$. The action of $\mathcal W$ on $\h$ is given by $s_i(\cmu)=\cmu-\langle \alpha_i,\cmu\rangle \calpha_i$ for $\cmu\in\h$. We use the notation
	\[w\cdot\la=w(\la+\rho)-\rho,\quad w\cdot\cla=w(\cla+\crho)-\crho,\quad w\in\mc W,~\la\in\h^*,~\cla\in\h,\]
	for the shifted action of the Weyl group on $\h^*$ and $\h$, respectively.
	
	Let $^t\g=\g(^tA)$ be the Langlands dual Lie algebra of $\g$, then $^t(\mathfrak{so}_{2r+1})=\mathfrak{sp}_{2r}$ and $^t(\mathfrak{sp}_{2r})=\mathfrak{so}_{2r+1}$. A system of simple roots of $^t\g$ is $\calpha_1,\dots,\calpha_r$ with the corresponding coroots $\alpha_1,\dots,\alpha_r$. A coweight $\cla\in\h$ of $\g$ can be identified with a weight of $^t\g$.
	
	For a vector space $X$ we denote by $\cM(X)$ the space of $X$-valued meromorphic functions on $\mathbb P^1$. For a group $R$ we denote by $R(\cM)$ the group of $R$-valued meromorphic functions on $\bP^1$.
	
	\subsection{$\mathfrak{sp}_{2r}$ as a subalgebra of $\sln_{2r}$}\label{sec id C}Let $v_1,\dots,v_{2r}$ be a basis of $\C^{2r}$. Define a nondegenerate skew-symmetric form $\chi$ on $\C^{2r}$ by
	\[\chi(v_i,v_j)=(-1)^{i+1}\delta_{i,2r+1-j},\quad i,j=1,\dots,2r.\]The special symplectic Lie algebra $\g=\mathfrak{sp}_{2r}$ by definition consists of all endomorphisms $K$ of $\C^{2r}$ such that $\chi(Kv,v')+\chi(v,Kv')=0$ for all $v,v'\in\C^{2r}$. This identifies $\mathfrak{sp}_{2r}$ with a Lie subalgebra of $\sln_{2r}$.
	
	Denote $E_{ij}$ the matrix with zero entries except 1 at the intersection of the $i$-th row and $j$-th column.
	
	The Chevalley generators of $\g=\mathfrak{sp}_{2r}$ are given by
	\[
	E_i=E_{i,i+1}+E_{2r-i,2r+1-i},\quad  F_i=E_{i+1,i}+E_{2r+1-i,2r-i},\quad i=1,\dots,r-1,\]
	\[E_r=E_{r,r+1},\qquad F_r=E_{r+1,r},\]
	\[ \calpha_j=E_{jj}-E_{j+1,j+1}+E_{2r-j,2r-j}-E_{2r+1-j,2r+1-j},~~\calpha_r=E_{rr}-E_{r+1,r+1},~~j=1,\dots,r-1.
	\]Moreover, a coweight $\cla\in\h$ can be written as
	\beq\label{eq id B}
	\cla=\sum_{i=1}^r\Big(\langle \alpha_i,\cla\rangle+\dots+\langle \alpha_{r-1},\cla\rangle+\langle \alpha_r,\cla\rangle/2\Big)(E_{ii}-E_{2r+1-i,2r+1-i}).
	\eeq
	In particular,
	\[
	\crho=\sum_{i=1}^r\frac{2r-2i+1}{2}(E_{ii}-E_{2r+1-i,2r+1-i}).
	\]
	For convenience, we denote the coefficient of $E_{ii}$ in the right hand side of \eqref{eq id B} by $(\cla)_{ii}$, for $i=1,\dots,2r$.
	
	\subsection{$\mathfrak{so}_{2r+1}$ as a subalgebra of $\sln_{2r+1}$}\label{sec id B}Let $v_1,\dots,v_{2r+1}$ be a basis of $\C^{2r+1}$. Define a nondegenerate symmetric form $\chi$ on $\C^{2r+1}$ by
	\[\chi(v_i,v_j)=(-1)^{i+1}\delta_{i,2r+2-j},\quad i,j=1,\dots,2r+1.\]The special orthogonal Lie algebra $\g=\mathfrak{so}_{2r+1}$ by definition consists of all endomorphisms $K$ of $\C^{2r+1}$ such that $\chi(Kv,v')+\chi(v,Kv')=0$ for all $v,v'\in\C^{2r+1}$. This identifies $\mathfrak{so}_{2r+1}$ with a Lie subalgebra of $\sln_{2r+1}$.
	
	Denote $E_{ij}$ the matrix with zero entries except 1 at the intersection of the $i$-th row and $j$-th column.
	
	The Chevalley generators of $\g=\mathfrak{so}_{2r+1}$ are given by
	\[
	E_i=E_{i,i+1}+E_{2r+1-i,2r+2-i},\quad F_i=E_{i+1,i}+E_{2r+2-i,2r+1-i},\quad i=1,\dots,r,\]
	\[ \calpha_j=E_{jj}-E_{j+1,j+1}+E_{2r+1-j,2r+1-j}-E_{2r+2-j,2r+2-j},\quad j=1,\dots,r.
	\]Moreover, a coweight $\cla\in\h$ can be written as
	\beq\label{eq id C}
	\cla=\sum_{i=1}^r\Big(\langle \alpha_i,\cla\rangle+\dots+\langle \alpha_r,\cla\rangle\Big)(E_{ii}-E_{2r+2-i,2r+2-i}).
	\eeq
	In particular,
	\[
	\crho=\sum_{i=1}^r(r+1-i)(E_{ii}-E_{2r+2-i,2r+2-i}).
	\]
	For convenience, we denote the coefficient of $E_{ii}$ in the right hand side of \eqref{eq id C} by $(\cla)_{ii}$, for $i=1,\dots,2r+1$.
	
	\subsection{Lemmas on spaces of polynomials}Let $\bLa=(\la^{(1)},\dots,\la^{(n)},\la)$ be a sequence of partitions with at most $N$ parts such that $|\bLa|=N(d-N)$ and let $\bm z=(z_1,\dots,z_n,\infty)\in {\mathring{\bP}}_{n+1}$.
	
	Given an $N$-dimensional space of polynomials $X$, denote by $\mc D_X$ the monic scalar differential operator of order $N$ with kernel $X$. The operator $\mc D_X$ is a monodromy-free Fuchsian differential
	operator with rational coefficients.
	
	\begin{lem}\label{lem exp}
		A subspace $X\subset\C_d[x]$ is a point of $\Omega_{\bLa,\bs z}$ if and only if the operator $\mc D_X$ is Fuchsian,  regular in $\C\setminus\{z_1,\dots,z_n\}$, the exponents at $z_s$, $s=1,\dots,n$, being equal to $\la_N^{(s)},\la_{N-1}^{(s)}+1,\dots,\la_1^{(s)}+N-1$, and the exponents at $\infty$ being equal to $1+\la_N-d,2+\la_{N-1}-d,\dots,N+\la_1-d$.\qed
	\end{lem}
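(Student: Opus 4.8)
The plan is to translate the Schubert-cell conditions defining $\Omega_{\bLa,\bs z}$ into statements about the local exponents of the Fuchsian operator $\mc D_X$, via the standard dictionary between a space of polynomials and its associated monic scalar operator. I would fix a basis $f_1,\dots,f_N$ of $X$ and write $\mc D_X y=\Wr(f_1,\dots,f_N,y)/\Wr(f_1,\dots,f_N)$, which is monic of order $N$ with kernel $X$ and whose coefficients are rational functions whose only finite poles lie at the zeros of $\Wr(X)$. In particular $\mc D_X$ is automatically regular at every finite point where $\Wr(X)$ does not vanish, and it is Fuchsian at $\infty$ because $X\subset\C_d[x]$ consists of polynomials.

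First I would carry out the local computation at a finite point $z$: the set $\{\,\mathrm{ord}_z f: 0\ne f\in X\,\}$ consists of exactly $N$ distinct nonnegative integers $a_1<\dots<a_N$, and since $X$ consists of polynomials there are no logarithmic solutions, so these integers are precisely the exponents of $\mc D_X$ at $z$; moreover $\mathrm{ord}_z\Wr(X)=\sum_{i=1}^N(a_i-(i-1))$. Comparing with the description of $\Omega_{\la^{(s)}}(\mc F(z_s))$ following \eqref{F(z)}, membership $X\in\Omega_{\la^{(s)}}(\mc F(z_s))$ is then equivalent to $(a_1,\dots,a_N)=(\la_N^{(s)},\la_{N-1}^{(s)}+1,\dots,\la_1^{(s)}+N-1)$, which yields the asserted exponents at $z_s$ and gives $\mathrm{ord}_{z_s}\Wr(X)=|\la^{(s)}|$. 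Dually, at $\infty$ the exponent attached to a polynomial solution of degree $m$ is $-m$, and the description following \eqref{F(inf)} shows $X\in\Omega_\la(\mc F(\infty))$ iff the degrees occurring in $X$ are $d-\la_{N+1-i}-i$ for $i=1,\dots,N$, that is, iff the exponents at $\infty$ are $i+\la_{N+1-i}-d$, namely $1+\la_N-d,\dots,N+\la_1-d$.

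The remaining ingredient is the global ramification identity $\deg\Wr(X)+\mathrm{ram}_\infty(X)=N(d-N)$, where the degree data above gives $\mathrm{ram}_\infty(X)=|\la|$. Since $|\bLa|=N(d-N)$, the ramification $|\la^{(s)}|$ contributed at each $z_s$ together with $|\la|$ at $\infty$ already accounts for the full total $N(d-N)$; hence $\Wr(X)$ is a constant multiple of $\prod_{s=1}^n(x-z_s)^{|\la^{(s)}|}$, has no further zeros, and $\mc D_X$ is regular on $\C\setminus\{z_1,\dots,z_n\}$. Finally I would run these equivalences in both directions: if $X\in\Omega_{\bLa,\bs z}$ the exponent data and the regularity follow from the local computation and the count, while conversely the prescribed integer exponents force the stated vanishing orders and degrees---again because the kernel is polynomial, so exponents coincide with vanishing orders---placing $X$ in each Schubert cell.

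The only genuinely nonroutine point, I expect, will be the clean justification of the global identity $\sum_{w\in\bP^1}\mathrm{ram}_w(X)=N(d-N)$ together with the matching $\mathrm{ram}_\infty(X)=|\la|$ for the operator at infinity; once this Plücker-type count is in hand, everything else reduces to the elementary local correspondence between orders of vanishing and exponents.
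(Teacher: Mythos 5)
Your proposal is correct, and in fact the paper offers no argument to compare against: Lemma \ref{lem exp} is stated with a \qed\ as a standard fact (it goes back to the Schubert-calculus/Bethe-ansatz literature, e.g.\ \cite{MV}, \cite{MTV4}), so your writeup supplies exactly the omitted standard dictionary between orders of vanishing of a polynomial space and exponents of its monic annihilating operator. All the key steps check out: the $N$ distinct local orders $a_1<\dots<a_N$ are $N$ distinct indicial roots of a monic operator whose indicial polynomial has degree $N$, hence are the full set of exponents (the absence of logarithms is not even needed for this counting); $\on{ord}_{z}\Wr(X)=\sum_i\bigl(a_i-(i-1)\bigr)$; the cell descriptions after \eqref{F(z)} and \eqref{F(inf)} match your exponent lists, including the convention that a degree-$m$ solution has exponent $-m$ at $\infty$; and since the setup of the lemma includes the partition $\la$ at $\infty$ in $\bLa$ with $|\bLa|=N(d-N)$, your zero count forces $\Wr(X)=c\prod_s(x-z_s)^{|\la^{(s)}|}$ and hence regularity off $\{z_1,\dots,z_n\}$. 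Two small remarks. First, you argue Fuchsianity only at $\infty$ and at the regular finite points; at the finite singular points $z_s$ it follows by the same token --- all $N$ solutions of $\mc D_X$ are polynomials, hence single-valued of moderate growth, so every singularity is regular by Fuchs's criterion (alternatively, factor $\mc D_X$ into first-order factors with logarithmic-derivative coefficients as in Lemma \ref{lem D_X}). Second, the point you flag as ``genuinely nonroutine'' is actually immediate: for a space with distinct degrees $e_1>\dots>e_N$ one has exactly $\deg\Wr(X)=\sum_i e_i-\binom{N}{2}$, because the top coefficient is a nonzero generalized Vandermonde determinant --- this is the same computation you already used locally at a finite point, applied in the coordinate $1/x$ --- and then $\deg\Wr(X)+\on{ram}_\infty(X)=N(d-N)$ follows by arithmetic, with $\on{ram}_\infty(X)=|\la|$ precisely when $X\in\Omega_\la(\mc F(\infty))$. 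With those two sentences added, your proof is complete.
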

	
	Let $\bs T=(T_1,\dots,T_N)$  be associated with $\bLa,\bm z$, see \eqref{eq def T}. Let $\Gamma=\{u_1,\dots,u_N\}$ be a basis of $X\in \Omega_{\bLa,\bm z}$, define a sequence of polynomials
	\beq\label{eq y}
	y_{N-i}=\Wr^\dag(u_1,\dots,u_i),\quad i=1,\dots,N-1.
	\eeq
	Denote $(y_1,\dots,y_{N-1})$ by $\bs y_\Gamma$. We say that $\bs y_\Gamma$ is {\it constructed from the basis} $\Gamma$.
	\begin{lem}[\cite{MV}]\label{lem D_X}
		Suppose $X\in \Omega_{\bLa,\bm z}$ and let $\Gamma=\{u_1,\dots,u_N\}$ be a basis of $X$. If $\bs y_\Gamma=(y_1,\dots,y_{N-1})$ is constructed from $\Gamma$, then
		\begin{align*}
		\mc D_X=\Big(\pa_x-\ln'\Big(\frac{T_1\cdots T_N}{y_1}\Big)\Big)&\Big(\pa_x-\ln'\Big(\frac{y_1T_2\cdots T_{N}}{y_2}\Big)\Big)\times\dots\\
		&\times\Big(\pa_x-\ln'\Big(\frac{y_{N-2}T_{N-1}T_{N}}{y_{N-1}}\Big)\Big)\Big(\pa_x-\ln'(y_{N-1}T_{N})\Big).
		\end{align*}\qed
	\end{lem}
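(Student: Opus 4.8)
The plan is to deduce the claimed factorization from the classical Wronskian factorization of a monic scalar differential operator, after which the statement reduces to a bookkeeping identity among products of the polynomials $T_k$. Write $W_i=\Wr(u_1,\dots,u_i)$ for $i=0,\dots,N$, with $W_0=1$; then $W_N$ is a nonzero scalar multiple of $\Wr(X)$. First I would recall, or re-derive by iterated reduction of order, the standard factorization of the unique monic order-$N$ operator annihilating $u_1,\dots,u_N$:
\begin{align*}
\mc D_X=\bigl(\pa_x-\ln'(W_N/W_{N-1})\bigr)\bigl(\pa_x-\ln'(W_{N-1}/W_{N-2})\bigr)\cdots\bigl(\pa_x-\ln'(W_1/W_0)\bigr),
\end{align*}
the factors read left to right with decreasing index. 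The derivation is elementary: the rightmost factor $\pa_x-\ln' u_1$ annihilates $u_1$ and sends each $u_j$ to $\Wr(u_1,u_j)/u_1$, whose Wronskians are again ratios of the $W_i$, so that iterating lowers the order by one and yields the displayed factors.

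Next I would re-express the divided Wronskians through the $W_i$ and $T_k$. Unwinding $y_{N-i}=\Wr^\dag(u_1,\dots,u_i)=W_i\prod_{j=1}^i T_{N+1-j}^{\,j-i-1}$ and reindexing by $k=N+1-j$ gives, for $m=N-i$,
\begin{align*}
y_m=W_{N-m}\prod_{k=m+1}^N T_k^{\,m-k},\qquad m=0,\dots,N,
\end{align*}
where the extreme cases read $y_N=W_0=1$ and $y_0=W_N\prod_{k=1}^N T_k^{-k}$. The one global input is the identity $\prod_{k=1}^N T_k^{\,k}=\Wr(X)$ up to a nonzero scalar; by $T_k=\prod_s(x-z_s)^{\la^{(s)}_k-\la^{(s)}_{k+1}}$ and the telescoping sum $\sum_{k=1}^N k(\la^{(s)}_k-\la^{(s)}_{k+1})=|\la^{(s)}|$ (using $\la^{(s)}_{N+1}=0$), this equals $\prod_s(x-z_s)^{|\la^{(s)}|}=\Wr(X)$ for $X\in\Omega_{\bLa,\bm z}$. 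In particular $y_0$ is a nonzero constant, consistent with the convention $y_0=1$ in the leftmost factor of the lemma.

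Finally I would match the two products factor by factor. Let $M_i$ denote the $i$-th factor (from the left) of the claimed expression, so that $M_i=\pa_x-\ln'(y_{i-1}T_iT_{i+1}\cdots T_N/y_i)$ with $y_0=y_N=1$. Substituting the formula for $y_m$, the powers of $T_k$ in the ratio $y_{i-1}/y_i$ telescope to $\prod_{k=i}^N T_k^{-1}$, which exactly cancels the numerator factor $T_i\cdots T_N$, leaving
\begin{align*}
\frac{y_{i-1}\,T_iT_{i+1}\cdots T_N}{y_i}=\frac{W_{N+1-i}}{W_{N-i}}\qquad\text{up to a nonzero scalar,}
\end{align*}
for $i=1,\dots,N$; the endpoints use $y_{N-1}T_N=W_1$ and the identity $\prod_k T_k^{\,k}=\Wr(X)$. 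Hence $M_i=\pa_x-\ln'(W_{N+1-i}/W_{N-i})$ is precisely the factor of index $N+1-i$ in the classical factorization, and the two operators agree. The genuine obstacle is not the telescoping but fixing the classical factorization with the correct ordering and logarithmic-derivative normalization, and confirming that each $\Wr^\dag(u_1,\dots,u_i)$ is a polynomial so that the $\ln'$ terms are the honest rational functions whose poles match the Fuchsian data of Lemma~\ref{lem exp}; both are governed by the exponents of $X\in\Omega_{\bLa,\bm z}$.
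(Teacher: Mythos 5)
Your proof is correct, and it is essentially the argument behind the paper's citation: the paper gives no internal proof of this lemma (it is quoted from \cite{MV}), and the standard derivation there likewise combines the classical factorization $\mc D_X=\prod_i\bigl(\pa_x-\ln'(W_{N+1-i}/W_{N-i})\bigr)$ with the telescoping of the divided Wronskians. Your bookkeeping checks out, including the key identity $\prod_{k=1}^N T_k^{\,k}=\Wr(X)$ up to scalar and the endpoint cases $y_{N-1}T_N=W_1$, $y_0=\mathrm{const}$; the only inessential point is the emphasis on polynomiality of the $\Wr^\dag$, since the factor-by-factor identity already holds at the level of rational coefficients.
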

	
	Let $\mc D=\pa_x^N+\sum_{i=1}^Nh_i(x)\pa_x^{N-i}$ be a differential operator with meromorphic coefficients. The operator $\mc D^*=\pa_x^N+\sum_{i=1}^N(-1)^i\pa_x^{N-i}h_i(x)$ is called the {\it formal conjugate to} $\mc D$.
	
	\begin{lem}\label{lem dual conj}
		Let $X\in \Omega_{\bLa,\bm z}$ and let $\{u_1,\dots,u_N\}$ be a basis of $X$, then
		\[
		\frac{\Wr(u_1,\dots,\widehat{u_i},\dots,u_N)}{\Wr(u_1,\dots,u_N)},\quad i=1,\dots,N,
		\]form a basis of $\Ker ((\mc D_{X})^*)$. The symbol $\widehat{u_i}$ means that $u_i$ is skipped. Moreover, given an arbitrary factorization of $\mc D_X$ to linear factors,
		$\mc D_X=(\pa_x+f_1)(\pa_x+f_2)\dots(\pa_x+f_N)$, we have $(\mc D_{X})^*= (\pa_x-f_N)(\pa_x-f_{N-1})\dots(\pa_x-f_1)$.
	\end{lem}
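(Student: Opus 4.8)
The plan is to treat the two assertions separately: the factorization identity is purely algebraic, while the description of $\Ker((\mc D_X)^*)$ rests on a Wronskian--cofactor computation. I would dispatch the factorization first. Two facts characterize the conjugation $\mc D\mapsto\mc D^*$: for a first order operator $(\pa_x+f)^*=\pa_x-f$, which is immediate from the defining formula $\mc D^*=\pa_x^N+\sum_i(-1)^i\pa_x^{N-i}h_i$; and conjugation is an order reversing anti-homomorphism, $(\mc A\mc B)^*=\mc B^*\mc A^*$. The latter I would obtain by noting that $\mc D^*$ is $(-1)^{\operatorname{ord}\mc D}$ times the classical formal adjoint $\sum_k(-1)^k\pa_x^k\circ a_k$ of $\mc D=\sum_k a_k\pa_x^k$; the classical adjoint is the familiar anti-automorphism coming from integration by parts, and the sign $(-1)^{\operatorname{ord}}$ is multiplicative in the order, hence preserves the anti-homomorphism property. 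Applying this to $\mc D_X=(\pa_x+f_1)\cdots(\pa_x+f_N)$ reverses the factors and flips each sign, yielding $(\mc D_X)^*=(\pa_x-f_N)(\pa_x-f_{N-1})\cdots(\pa_x-f_1)$.

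For the kernel statement I would pass to the first order form. Put $W=\Wr(u_1,\dots,u_N)$ and let $M$ be the Wronskian matrix, $M_{k+1,i}=u_i^{(k)}$ for $k=0,\dots,N-1$. Since the $u_i$ span $\Ker\mc D_X$, the matrix $M$ is a fundamental matrix of the companion system $Y'=AY$ of $\mc D_X$, whence $\Psi=(M^{-1})^{\mathrm T}$ satisfies $\Psi'=-A^{\mathrm T}\Psi$. Eliminating variables in the adjoint system $Z'=-A^{\mathrm T}Z$ shows that the bottom component $z_N$ of any solution satisfies the scalar conjugate equation, namely $(\mc D_X)^*z_N=0$ up to the global sign $(-1)^N$ fixed by our convention, which does not change the kernel. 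The columns of $\Psi$ are a fundamental system of $Z'=-A^{\mathrm T}Z$, and the bottom entry of the $i$-th column is $(M^{-1})_{iN}=C_{Ni}/W=(-1)^{N+i}\Wr(u_1,\dots,\widehat{u_i},\dots,u_N)/W$, with $C_{Ni}$ the $(N,i)$ cofactor of $M$. Thus all $N$ of these functions lie in $\Ker((\mc D_X)^*)$.

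Finally I would check they form a basis. They are linearly independent, since each solution of the first order adjoint system is reconstructed from its bottom component together with the latter's derivatives, so a relation among the bottom entries would force one among the columns of the invertible matrix $\Psi$. As $(\mc D_X)^*$ is again Fuchsian of order $N$ with rational coefficients, its solution space is $N$-dimensional, so these independent solutions span it, and discarding the harmless scalars $(-1)^{N+i}$ leaves exactly the functions in the statement. The one genuinely non-formal step, which I would write out in detail, is the elimination identifying $z_N$ with a solution of $(\mc D_X)^*$; equivalently it is the cofactor identity $\sum_{i=1}^N(-1)^{N+i}\Wr(u_1,\dots,\widehat{u_i},\dots,u_N)\,u_i^{(k)}=\delta_{k,N-1}W$, obtained from Laplace expansion of $M$ along its last row. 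Everything else is bookkeeping.
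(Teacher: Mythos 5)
Your proof is correct, but it takes a genuinely different route from the paper, whose entire proof is a citation: the kernel description is quoted as Theorem 3.14 of \cite{MTV4}, and the factorization reversal is then \emph{deduced} from that first statement together with Lemma A.5 of \cite{MV}. Your logical order is the opposite, and arguably cleaner: you prove the factorization identity first, purely algebraically, by noting that $\mc D\mapsto\mc D^*$ is $(-1)^{\mathrm{ord}\,\mc D}$ times the classical formal adjoint, hence an order-reversing anti-homomorphism with $(\pa_x+f)^*=\pa_x-f$; this part is independent of the kernel statement and of any properties of $X$. Your kernel argument --- companion system, fundamental matrix $M$, the fact that $(M^{-1})^{\mathrm T}$ solves the adjoint system $Z'=-A^{\mathrm T}Z$, elimination showing the bottom component satisfies $(\mc D_X)^*z=0$ up to the sign $(-1)^N$, and the cofactor formula identifying the bottom entries with $(-1)^{N+i}\Wr(u_1,\dots,\widehat{u_i},\dots,u_N)/\Wr(u_1,\dots,u_N)$ --- is the classical proof and in effect reproves the cited Theorem 3.14 of \cite{MTV4} in the case at hand. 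What each approach buys: the paper's citation is short and defers all computation to the literature; your version is self-contained, makes the sign conventions transparent, and shows the second assertion needs nothing beyond formal algebra. Two small points to tighten when writing it up: the aside that the elimination step is ``equivalently the cofactor identity'' is loose --- the Laplace-expansion identity $\sum_{i=1}^N(-1)^{N+i}\Wr(u_1,\dots,\widehat{u_i},\dots,u_N)\,u_i^{(k)}=\delta_{k,N-1}\Wr(u_1,\dots,u_N)$ is automatic from $M^{-1}M=I$, while the statement that bottom components of adjoint-system solutions satisfy the scalar adjoint equation is a separate (if routine) recursion that you should carry out as you indicate; and the spanning claim should be phrased as: the solution space is $N$-dimensional locally on simply connected domains avoiding the singularities, so $N$ globally defined independent solutions form a basis of $\Ker((\mc D_X)^*)$ by analytic continuation.
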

	\begin{proof}
		The first statement follows from Theorem 3.14 of \cite{MTV4}. The second statement follows from the first statement and Lemma A.5 of \cite{MV}.
	\end{proof}
	\begin{lem}\label{lem dual-oper}
		Let $X\in \Omega_{\bLa,\bm z}$. Then $$\mc D_{X^\dag}=(T_1\cdots T_N)\cdot \big(\mc D_{X}\big)^*\cdot (T_1\cdots T_N)^{-1}.$$
	\end{lem}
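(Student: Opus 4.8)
The plan is to show that both sides of the claimed identity are monic scalar differential operators of order $N$ with the same kernel, and then to invoke the uniqueness of the monic operator annihilating a given $N$-dimensional space of functions. First I would record that conjugation by the nonzero function $\phi:=T_1\cdots T_N$ preserves the leading symbol $\pa_x^N$, so the right-hand side $\phi\cdot(\mc D_X)^*\cdot\phi^{-1}$ is again monic of order $N$; moreover its kernel is exactly $\phi\cdot\Ker((\mc D_X)^*)$, since $\phi\cdot(\mc D_X)^*\cdot\phi^{-1}$ annihilates $\phi v$ precisely when $(\mc D_X)^*$ annihilates $v$. Thus everything reduces to the identification $X^\dag=\phi\cdot\Ker((\mc D_X)^*)$.

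Next I would compute $X^\dag$ explicitly. Fix a basis $\Gamma=\{u_1,\dots,u_N\}$ of $X$. By the multilinearity and antisymmetry of the Wronskian, $\{\Wr(g_1,\dots,g_{N-1}):g_i\in X\}$ is spanned by the $N$ cofactor Wronskians $\Wr(u_1,\dots,\widehat{u_i},\dots,u_N)$, $i=1,\dots,N$ (there are no Pl\"ucker relations to worry about, as $\Gr(N-1,N)\cong\mb P^{N-1}$). Since the normalizing factor in $\Wr^\dag(g_1,\dots,g_{N-1})=\Wr(g_1,\dots,g_{N-1})\prod_{j=1}^{N-1}T_{N+1-j}^{\,j-N}$ does not depend on the $g_i$, after reindexing $k=N+1-j$ I obtain
\[
X^\dag=\Big(\prod_{k=2}^{N}T_k^{\,1-k}\Big)\cdot\mathrm{span}\big\{\Wr(u_1,\dots,\widehat{u_i},\dots,u_N):i=1,\dots,N\big\}.
\]

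On the other hand, Lemma \ref{lem dual conj} identifies $\Ker((\mc D_X)^*)$ as the span of the same cofactor Wronskians divided by $W:=\Wr(u_1,\dots,u_N)$, whence
\[
X^\dag=\Big(W\prod_{k=2}^{N}T_k^{\,1-k}\Big)\cdot\Ker\big((\mc D_X)^*\big).
\]
The remaining point is to evaluate the scalar $W\prod_{k=2}^{N}T_k^{1-k}$. For $X\in\Omega_{\bLa,\bm z}$ one has $W=\Wr(X)=\prod_{s=1}^n(x-z_s)^{|\la^{(s)}|}$ up to a constant, and a telescoping computation with $T_k=\prod_s(x-z_s)^{\la_k^{(s)}-\la_{k+1}^{(s)}}$ (using $\la_{N+1}^{(s)}=0$) gives $W=\prod_{k=1}^{N}T_k^{\,k}$. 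Therefore $W\prod_{k=2}^{N}T_k^{1-k}=T_1\prod_{k=2}^{N}T_k=T_1\cdots T_N=\phi$, so $X^\dag=\phi\cdot\Ker((\mc D_X)^*)$, which is precisely the kernel of the right-hand side.

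Finally, $\mc D_{X^\dag}$ is well defined because $X^\dag\in\Omega_{\tl{\bLa},\bm z}$ by Lemma \ref{dual exponents}; both $\mc D_{X^\dag}$ and $\phi\cdot(\mc D_X)^*\cdot\phi^{-1}$ are then monic of order $N$ with kernel $X^\dag$, so they coincide. I expect the main obstacle to be the bookkeeping in the last paragraph: matching the normalization of the divided Wronskian $\Wr^\dag$ against the $1/W$ appearing in Lemma \ref{lem dual conj}, and in particular verifying the telescoping identity $W=\prod_k T_k^{\,k}$ so that the leftover scalar collapses to exactly $T_1\cdots T_N$ rather than to some other monomial in the $T_k$.
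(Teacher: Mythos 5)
Your proof is correct and is precisely the argument the paper intends: its proof of this lemma is the one-liner ``follows from Lemma \ref{lem dual conj} and the definition of $X^\dag$,'' and you have simply supplied the details it leaves implicit. In particular, your key bookkeeping steps --- the span of the cofactor Wronskians exhausting $\{\Wr(g_1,\dots,g_{N-1})\}$, and the telescoping identity $\Wr(X)=\prod_{k=1}^N T_k^{\,k}$ collapsing the leftover scalar to $T_1\cdots T_N$ --- are exactly what makes the paper's citation of Lemma \ref{lem dual conj} suffice.
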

	\begin{proof}
The statement follows from Lemma \ref{lem dual conj} and the definition of $X^\dag$.
	\end{proof}
	\begin{lem}\label{lem y}
		Suppose $X\in \Omega_{\bLa,\bm z}$ is a pure self-dual space and $z$ is an arbitrary complex number, then there exists a basis $\Gamma=\{u_1,\dots,u_N\}$ of $X$ such that for $\bm y_\Gamma=(y_1,\dots,y_{N-1})$ given by \eqref{eq y}, we have $y_i=y_{N-i}$ and $y_i(z)\ne 0$ for every $i=1,\dots,N-1$.
	\end{lem}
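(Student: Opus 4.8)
The plan is to translate the symmetry $y_i=y_{N-i}$ into a statement about the complete flag underlying $\Gamma$ and then to exhibit a flag that is \emph{self-dual} for the invariant bilinear form attached to a pure self-dual space. First I would record the consequences of purity: from $X=X^\dag$ and Lemma \ref{dual exponents} one gets $\la^{(s)}_N=0$ for all $s$, so that $T_N=1$ and $X$ has no base points, together with $\la^{(s)}_i+\la^{(s)}_{N+1-i}=\la^{(s)}_1$, which yields the crucial symmetry $T_i=T_{N-i}$, $i=1,\dots,N-1$. By Lemma \ref{lem dual-oper}, conjugating $\mc D_X$ by $(T_1\cdots T_N)^{1/2}$ produces a formally self-adjoint operator, so its kernel carries a nondegenerate bilinear form, symmetric if $N$ is odd and skew-symmetric if $N$ is even. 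A nondegenerate form on an $N$-dimensional space admits complete self-dual flags $W_1\subset\dots\subset W_{N-1}$ with $W_i^\perp=W_{N-i}$, and the isometry group acts transitively on them.

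Next I would connect such flags to the divided Wronskians. Since $y_{N-i}=\Wr^\dag(u_1,\dots,u_i)$ depends, up to a nonzero constant, only on $W_i=\langle u_1,\dots,u_i\rangle$, the list $\bm y_\Gamma$ is a function of the flag $W_\bullet$. The divided Wronskian also assigns to $W_\bullet$ a dual flag $W_\bullet^\dag$ in $X^\dag=X$, whose adapted basis is furnished by Lemma \ref{lem dual conj}; this dual flag agrees with the form-perpendicular flag, $W_j^\dag=W_{N-j}^\perp$. The computation I rely on is the factorization of Lemma \ref{lem D_X}, namely $\mc D_X=(\pa_x+f_1)\cdots(\pa_x+f_N)$ with $f_k=\ln'(y_k)-\ln'(y_{k-1})-\sum_{j=k}^{N}\ln'(T_j)$ and the convention $y_0=y_N=1$. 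Applying Lemma \ref{lem dual conj} and then Lemma \ref{lem dual-oper} shows that the same operator $\mc D_X=\mc D_{X^\dag}$ also factors as $(\pa_x-f_N-\tau)\cdots(\pa_x-f_1-\tau)$ with $\tau=\ln'(T_1\cdots T_N)$, and this second factorization is the one adapted to $W_\bullet^\dag$. Because a flag-adapted factorization is unique, a flag with $W_\bullet=W_\bullet^\dag$ makes the two factorizations coincide, forcing $f_k=-f_{N+1-k}-\tau$ for all $k$.

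I would then integrate this identity. Using $T_i=T_{N-i}$ and $T_N=1$ one checks $\sum_{j=k}^N\ln'(T_j)+\sum_{j=N+1-k}^N\ln'(T_j)=\tau$, so the relation $f_k+f_{N+1-k}=-\tau$ reduces to $\ln'(y_k)-\ln'(y_{k-1})+\ln'(y_{N+1-k})-\ln'(y_{N-k})=0$. Setting $\delta_k=\ln'(y_k)-\ln'(y_{N-k})$ this says $\delta_k=\delta_{k-1}$, and since $\delta_0=0$ we get $\delta_k=0$ for all $k$; hence each $y_k/y_{N-k}$ is a nonzero constant. Rescaling the basis vectors $u_1,\dots,u_N$ rescales the $y_m$ by products of the scaling factors, and there is enough freedom to normalize all these constants to $1$, giving $y_i=y_{N-i}$.

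Finally I would arrange $y_i(z)\ne 0$. The self-dual flags form the flag variety of the isometry group, a positive-dimensional family; for fixed $z$ each condition $y_i(z)\ne 0$ is Zariski open, so it suffices to show that, as the self-dual flag varies, no $y_i$ vanishes identically at $z$, i.e. to find a self-dual flag transversal at $z$ to the relevant osculating data. When $z$ equals a singular point $z_s$ this uses Lemma \ref{lem exp} and $\la^{(s)}_N=0$ to guarantee that a generic flag has nonvanishing divided Wronskian there. For $N=3$ this is exactly the device in the proof of Theorem \ref{thm psGr N=3}, where the one-parameter family $y(x,c)$ of isotropic vectors is used and $c$ avoids finitely many bad values; in general I would move a fixed self-dual flag off the loci $\{y_i(z)=0\}$ using the transitive isometry action. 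The hard part is precisely this genericity step, showing the open conditions $y_i(z)\ne 0$ are simultaneously achievable on the fixed locus of the duality involution, together with verifying cleanly that the divided-Wronskian duality $W_\bullet\mapsto W_\bullet^\dag$ coincides with the form-perpendicular duality, so that \emph{self-dual flag} is unambiguous. Granting these, the factorization identity and the resulting symmetry $y_i=y_{N-i}$ follow directly.
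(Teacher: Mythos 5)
Your outline is, in substance, the very argument the paper points to: the printed proof of Lemma \ref{lem y} consists entirely of the citation of Theorems 8.2 and 8.3 of \cite{MV}, whose proofs run along your lines (a nondegenerate invariant bilinear form on a pure self-dual space, complete self-dual ``Witt'' flags, symmetry of the flag-adapted factorization, and a genericity argument). Your computational core checks out and is consistent with the paper's lemmas: purity gives $\la^{(s)}_i+\la^{(s)}_{N+1-i}=\la^{(s)}_1$, hence $T_i=T_{N-i}$ and $T_N=1$; comparing the two factorizations of $\mc D_X=\mc D_{X^\dag}$ supplied by Lemmas \ref{lem D_X}, \ref{lem dual conj}, \ref{lem dual-oper} does yield $f_k+f_{N+1-k}=-\tau$, which telescopes to $y_k=c_k\,y_{N-k}$ with nonzero constants $c_k$; and since rescaling $u_i\mapsto c_iu_i$ multiplies $y_{N-i}$ by $c_1\cdots c_i$, the normalization to $c_k=1$ is a solvable triangular system, as you claim.

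However, the two steps you explicitly ``grant'' are precisely the content of the cited proofs, and they are genuine gaps in your text. First, the identification $W_j^\dag=W_{N-j}^\perp$ of the divided-Wronskian duality with form-perpendicularity---equivalently, the existence of even one flag fixed by the $\dag$-duality---is never established, and without it the termwise comparison of the factorizations has no starting point: the second factorization is adapted to $W_\bullet^\dag$, and only the hypothesis $W_\bullet=W_\bullet^\dag$ lets you equate factors. Your proposed construction of the form via the kernel of $(T_1\cdots T_N)^{-1/2}\,\mc D_X\,(T_1\cdots T_N)^{1/2}$ carries an unaddressed wrinkle: $(T_1\cdots T_N)^{1/2}$ is multivalued, so one must verify that the concomitant pairing descends to a single-valued form on $X$ itself and that it implements $\dag$; in \cite{MV} the form is instead built directly out of divided Wronskians, which makes that compatibility automatic. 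Second, the genericity step: irreducibility of the self-dual flag variety does reduce the simultaneous conditions $y_i(z)\neq0$ to nonemptiness of each open set separately, but nonemptiness itself---the existence of a self-dual flag transversal at $z$ to the osculating flag of $X$, with the exponents of Lemma \ref{lem exp} when $z=z_s$---does not follow from transitivity of the isometry group, because the osculating flag is an arbitrary $GL_N$-flag while the group acting on self-dual flags is only the isometry group, so Bruhat-type transversality for pairs of flags within one group is not available. That existence statement is exactly what the proof of Theorem 8.3 of \cite{MV} constructs by hand. As it stands, then, your proposal is a faithful road map of the cited argument rather than a complete proof.
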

	\begin{proof}
		The lemma follows from the proofs of Theorem 8.2 and Theorem 8.3 of \cite{MV}.
	\end{proof}

	\section{$\g$-oper}\label{sec oper}
	We fix $N$, $N\gge 4$, and set $\g$ to be the Langlands dual of $\g_N$. Explicitly, $\g=\mathfrak{sp}_{2r}$ if $N=2r$ and $\g=\mathfrak{so}_{2r+1}$ if $N=2r+1$.
	\subsection{Miura $\g$-oper}\label{sec Miura}
	Fix a global coordinate $x$ on $\C\subset\bP^1$. Consider the following subset of differential operators
	\[
	\text{op}_{\g}(\bP^1)=\{\pa_x+p_{-1}+\bs v~|~ \bs v\in\cM(\mathfrak b)\}.
	\]This set is stable under the gauge action of the unipotent subgroup $\mathscr N(\cM)\subset \mathscr G(\cM)$. The space of $\g$-{\it opers} is defined as the quotient space $\Opg(\bP^1):=\opg(\bP^1)/\mathscr N(\cM)$. We denote by $[\nabla]$ the class of $\nabla\in \opg(\bP^1)$ in $\Opg(\bP^1)$.
	
	We say that $\nabla=\pa_x+p_{-1}+\bs v\in\opg(\bP^1)$ is {\it regular} at $z\in \bP^1$ if $\bs v$ has no pole at $z$. A $\g$-oper $[\nabla]$ is said to be {\it regular} at $z$ if there exists $f\in \mathscr N(\cM)$ such that $f^{-1}\cdot\nabla\cdot f$ is regular at $z$.
	
	Let $\nabla=\pa_x+p_{-1}+\bs v$ be a representative of a $\g$-oper $[\nabla]$. Consider $\nabla$ as a $\mathscr G$-connection on the trivial principal bundle $p:\mathscr G\times \bP^1\to \bP^1$. The connection has singularities at the set $\mathrm{Sing}\subset \C$ where the function $\bs v$ has poles (and maybe at infinity). Parallel translations with respect to the connection define the monodromy representation $\pi_1(\C\setminus\mathrm{Sing})\to \mathscr G$. Its image is called the {\it monodromy group} of $\nabla$. If the monodromy group of one of the representatives of $[\nabla]$ is contained in the center of $\mathscr G$, we say that $[\nabla]$ is a {\it monodromy-free} $\g$-oper.
	
	A {\it Miura $\g$-oper} is a differential operator of the form $\nabla=\pa_x+p_{-1}+\bs v$, where $\bs v\in\cM(\h)$. 
	
	A $\g$-oper $[\nabla]$ has {\it regular singularity} at $z\in\bP^1\setminus\{\infty\}$, if there exists a representative $\nabla$ of $[\nabla]$ such that
	\[
	(x-z)^{\crho}\cdot\nabla\cdot (x-z)^{-\crho}=\pa_x+\frac{p_{-1}+\bs w}{x-z},
	\]where $\bs w\in\cM(\mathfrak b)$ is regular at $z$. The residue of $[\nabla]$ at $z$ is $[p_{-1}+\bs w(z)]_\g$. We denote the residue of $[\nabla]$ at $z$ by $\mathrm{res}_{z}[\nabla]$.
	
	Similarly, a $\g$-oper $[\nabla]$ has {\it regular singularity} at $\infty\in\bP^1$, if there exists a representative $\nabla$ of $[\nabla]$ such that
	\[
	x^{\crho}\cdot\nabla\cdot x^{-\crho}=\pa_x+\frac{p_{-1}+\bs {\tilde w} }{x},
	\]where $\bs{\tilde w}\in\cM(\mathfrak b)$ is regular at $\infty$. The residue of $[\nabla]$ at $\infty$ is $-[p_{-1}+\bs {\tilde w}(\infty)]_\g$. We denote the residue of $[\nabla]$ at $\infty$ by $\mathrm{res}_{\infty}[\nabla]$.
	
	\begin{lem}\label{lem res id}
		For any $\cla,\cmu\in\h$, we have $[p_{-1}-\crho-\cla]_\g=[p_{-1}-\crho-\cmu]_\g$ if and only if there exists $w\in\mc W$ such that $\cla=w\cdot \cmu$. \qed
	\end{lem}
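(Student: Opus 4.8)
The plan is to reduce the $\mathscr N$-conjugacy problem to equality of an adjoint invariant, and then to evaluate that invariant on the elements $p_{-1}-\crho-\cla$ by a scaling/limit argument.

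First I would invoke Kostant's structure theory of the slice $p_{-1}+\mathfrak b$. Every element of $p_{-1}+\mathfrak b$ is regular, the group $\mathscr N$ acts freely on $p_{-1}+\mathfrak b$, and the adjoint quotient map $q\colon \g\to\g/\!/\mathscr G$ has the property that the fibers of $q|_{p_{-1}+\mathfrak b}$ are exactly the $\mathscr N$-orbits; equivalently, $q$ induces a bijection $(p_{-1}+\mathfrak b)/\mathscr N\xrightarrow{\ \sim\ }\g/\!/\mathscr G$. Composing with the Chevalley isomorphism $\g/\!/\mathscr G\cong\h/\mc W$, we conclude that for $g,g'\in p_{-1}+\mathfrak b$ one has $[g]_\g=[g']_\g$ if and only if $g$ and $g'$ have the same image in $\h/\mc W$. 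Since $-\crho-\cla,\,-\crho-\cmu\in\h\subset\mathfrak b$, both elements in the lemma lie in $p_{-1}+\mathfrak b$, so this criterion applies. This is the substantive input, and I expect it to be the main obstacle for a self-contained account; the remaining steps are a short computation.

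Next I would compute the image of $p_{-1}-\crho-\cla$ in $\h/\mc W$. Put $a=\crho+\cla\in\h$. From $[\crho,F_i]=-\langle\alpha_i,\crho\rangle F_i=-F_i$ it follows that $p_{-1}=\sum_{i=1}^{r}F_i$ is an eigenvector of $\mathrm{ad}\,\crho$ with eigenvalue $-1$, so $\mathrm{Ad}(\exp(s\crho))\,p_{-1}=e^{-s}p_{-1}$ for $s\in\C$, while $\mathrm{Ad}(\exp(s\crho))$ fixes $a\in\h$. Hence
\[
\mathrm{Ad}(\exp(s\crho))\,(p_{-1}-a)=e^{-s}p_{-1}-a ,
\]
so the elements $v\,p_{-1}-a$ are all $\mathscr G$-conjugate as $v=e^{-s}$ ranges over $\C$ nonzero. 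Therefore $v\mapsto q(v\,p_{-1}-a)$ is a polynomial map that is constant on the nonzero values of $v$, hence constant, and its value at $v=0$ gives $q(p_{-1}-a)=q(-a)$. Under Chevalley's isomorphism this image is the $\mc W$-orbit of $-a=-(\crho+\cla)$.

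Finally I would combine the two steps. By the above, $[p_{-1}-\crho-\cla]_\g=[p_{-1}-\crho-\cmu]_\g$ holds if and only if $-(\crho+\cla)$ and $-(\crho+\cmu)$ lie in one $\mc W$-orbit for the linear action, i.e. $\crho+\cla=w(\crho+\cmu)$ for some $w\in\mc W$. Subtracting $\crho$ and recalling $w\cdot\cmu=w(\cmu+\crho)-\crho$, this is precisely $\cla=w\cdot\cmu$, which is the claim. Note that the sign discrepancy between $-a$ and $a$ is harmless here: the equivalence is phrased symmetrically, since $-(\crho+\cla)=w(-(\crho+\cmu))$ is equivalent to $\crho+\cla=w(\crho+\cmu)$.
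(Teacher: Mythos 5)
Your proof is correct. The paper offers no proof of this lemma (it is stated with a \qed as a standard consequence of Kostant's theory of the slice $p_{-1}+\mathfrak b$, as in \cite{F}), and your argument --- Kostant's theorem that the adjoint quotient $q\colon\g\to\h/\mc W$ separates the $\mathscr N$-orbits on $p_{-1}+\mathfrak b$, combined with the scaling identity $\mathrm{Ad}(\exp(s\crho))\,(p_{-1}-\crho-\cla)=e^{-s}p_{-1}-\crho-\cla$ and constancy of the polynomial map $v\mapsto q(v\,p_{-1}-\crho-\cla)$ to identify the image with the $\mc W$-orbit of $-(\crho+\cla)$, hence the shifted-action criterion $\cla=w\cdot\cmu$ --- is precisely the standard proof underlying that citation.
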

	
	Hence we can write $[\cla]_{\mc W}$ for $[p_{-1}-\crho-\cla]_\g$. In particular, if $[\nabla]$ is regular at $z$, then $\mathrm{res}_z[\nabla]=[0]_{\mc W}$.
	
	Let $\check\bLa=(\cla^{(1)},\dots,\cla^{(n)},\cla)$ be a sequence of $n+1$ dominant integral $\g$-coweights and let $\bs z=(z_1,\dots,z_n,\infty)\in {\mathring{\bP}_{n+1}}$. Let $\text{Op}_\g(\mathbb{P}^1)_{\check\bLa,\bm{z}}^{\text{RS}}$ denote the set of all $\g$-opers with at most regular singularities at points $z_s$ and $\infty$ whose residues are given by
	\[
	\text{res}_{z_s}[\nabla]=[\cla^{(s)}]_{\mathcal W},\quad \text{res}_{\infty}[\nabla]=-[\cla]_{\mathcal W},\quad s=1,\dots,n,
	\]and which are regular elsewhere. Let $\text{Op}_\g(\mathbb{P}^1)_{\check\bLa,\bm{z}}\subset \text{Op}_\g(\mathbb{P}^1)_{\check\bLa,\bm{z}}^{\text{RS}}$ denote the subset consisting of those $\g$-opers which are also monodromy-free.
	
	\begin{lem}[\cite{F}]\label{lem sur}
		For every $\g$-oper $[\nabla]\in\Opg(\bP^1)_{\check\bLa,\bm{z}}$, there exists a Miura $\g$-oper as one of its representatives.\qed
	\end{lem}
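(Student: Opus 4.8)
The plan is to produce the Miura representative by constructing a horizontal Borel reduction of the oper connection and then verifying that it is meromorphic. First I would reformulate the statement geometrically. If $\nabla=\pa_x+p_{-1}+\bs v$ is a representative with $\bs v\in\cM(\h)$, then its connection term $p_{-1}+\bs v$ lies in $\mathfrak b_-=\h\oplus\n_-$, so $\nabla$ preserves the opposite Borel subgroup $\mathscr B_-\subset\mathscr G$ (with Lie algebra $\mathfrak b_-$) and is in generic (transversal) relative position with respect to the tautological $\mathscr B$-reduction built into the definition of an oper. Conversely, any $\nabla$-horizontal reduction of the trivial $\mathscr G$-bundle to $\mathscr B_-$ that is generically transversal to the oper flag yields, in a suitable trivialization, a connection term in $\cM(\mathfrak b_-)$, which is exactly a Miura representative of $[\nabla]$. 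Thus it suffices to exhibit such a horizontal, generically transversal $\mathscr B_-$-reduction.

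Next I would use the monodromy-free hypothesis to build the reduction by parallel transport. View $\nabla$ as a flat $\mathscr G$-connection on $\bP^1\setminus\mathrm{Sing}$, where $\mathrm{Sing}=\{z_1,\dots,z_n,\infty\}$, and consider the induced flat connection on the associated bundle with fiber the flag variety $\mathscr G/\mathscr B_-$. Since $[\nabla]\in\Opg(\bP^1)_{\check\bLa,\bm z}$ is monodromy-free, its monodromy lies in the center of $\mathscr G$, which acts trivially on $\mathscr G/\mathscr B_-$; hence the induced connection on the flag bundle has trivial monodromy. Fixing a base point $x_0\notin\mathrm{Sing}$ and a flag there transversal to the oper flag, parallel transport then defines a single-valued horizontal section over $\bP^1\setminus\mathrm{Sing}$, i.e. a horizontal $\mathscr B_-$-reduction. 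The relative position with the oper flag is the longest element $w_0$ of $\mc W$ at $x_0$ and can only degenerate on a proper closed (hence finite) subset, so the reduction is generically transversal and the resulting $\bs v\in\cM(\h)$ is well defined on a dense open subset of $\C$.

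Finally I would check meromorphy across the singular points, which I expect to be the main obstacle. Near each $z_s$ the regular-singularity condition provides a representative for which $(x-z_s)^{\crho}\cdot\nabla\cdot(x-z_s)^{-\crho}$ is regular, so the horizontal sections of the flag bundle have at most moderate power-like growth at $z_s$; translating back through the gauge $(x-z_s)^{-\crho}$ shows that the $\h$-valued function $\bs v$ has at worst a pole there, and likewise at $\infty$ using the stated residue condition. Consequently $\bs v$ extends to an element of $\cM(\h)$ on all of $\bP^1$, producing a Miura representative of $[\nabla]$. The delicate points are the equivalence between horizontal, generically transversal $\mathscr B_-$-reductions and Miura representatives (the representation-theoretic bookkeeping that the generic relative position is $w_0$), together with the control of the reduction at the regular singular points: the monodromy-free hypothesis is precisely what guarantees global single-valuedness of the parallel-transported flag, while the regular-singularity hypothesis is what guarantees meromorphy rather than an essential singularity.
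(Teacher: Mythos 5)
Your proof is correct and is essentially the argument behind the paper's citation of \cite{F}: the paper gives no independent proof, and Frenkel's proof is exactly your construction --- reinterpret a Miura representative as a $\nabla$-horizontal Borel reduction generically transversal to the oper's tautological $\mathscr B$-reduction (using $\mathfrak b\cap\mathfrak b_-=\h$ to get the form $\pa_x+p_{-1}+\bs v$, $\bs v\in\cM(\h)$), produce it by parallel transport of a flag chosen transversal at a base point (single-valued because central monodromy acts trivially on $\mathscr G/\mathscr B_-$), and use the regular-singularity/moderate-growth condition to extend $\bs v$ meromorphically across $z_1,\dots,z_n,\infty$. No gaps worth flagging; the two ``delicate points'' you isolate are precisely the ones the cited proof handles the same way.
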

	
	\begin{lem}[\cite{F}]\label{lem m-form}
		Let $\nabla$ be a Miura $\g$-oper, then $[\nabla]\in\Opg(\bP^1)_{\check\bLa,\bm{z}}^{\mathrm{RS}}$ if and only if the following conditions hold:
		\begin{enumerate}
			\item $\nabla$ is of the form
			\beq\label{eq m-form}
			\nabla=\pa_x+p_{-1}-\sum_{s=1}^n\frac{w_s\cdot\cla^{(s)}}{x-z_s}-\sum_{j=1}^m\frac{\tl w_j\cdot 0}{x-t_j}
			\eeq
			for some $m\in\Z_{\gge 0}$, $w_s\in\mathcal W$ for $s=1,\dots,n$ and $ \tl w_j\in\mathcal W$, $t_j\in\bP^1\setminus  \bm z$ for $j=1,\dots,m$,
			\item there exists $w_\infty\in\mathcal W$ such that
			\beq\label{eq weight infty}
			\sum_{s=1}^n w_s\cdot\cla^{(s)}+\sum_{j=1}^m \tl w_j\cdot 0=w_\infty\cdot \cla,
			\eeq
			\item $[\nabla]$ is regular at $t_j$ for $j=1,\dots,m$.
		\end{enumerate}\qed
	\end{lem}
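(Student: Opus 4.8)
The plan is to read the singularities of the oper $[\nabla]$ directly off the poles of the Cartan-valued function $\bs v$, using the conjugations that define regular singularities together with Lemma \ref{lem res id}.

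First I would record the basic local computation. Since $\langle\alpha_i,\crho\rangle=1$ gives $\mathrm{Ad}\big((x-z)^{\crho}\big)p_{-1}=(x-z)^{-1}p_{-1}$ and $(x-z)^{\crho}\pa_x(x-z)^{-\crho}=\pa_x-\crho/(x-z)$, while $\bs v\in\cM(\h)$ commutes with $(x-z)^{\crho}$, one finds
\[
(x-z)^{\crho}\,\nabla\,(x-z)^{-\crho}=\pa_x+\frac{p_{-1}-\crho+(x-z)\bs v}{x-z}.
\]
Comparing with the definition of regular singularity, $[\nabla]$ has at worst a regular singularity at a finite point $z$ if and only if $\bs v$ has at worst a simple pole there; and if $\cmu\in\h$ denotes the coefficient of $(x-z)^{-1}$ in $\bs v$, then $\mathrm{res}_z[\nabla]=[p_{-1}-\crho+\cmu]_\g=[-\cmu]_{\mc W}$. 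The analogous computation with $x^{\crho}$ shows that regular singularity at $\infty$ forces $\bs v=O(x^{-1})$ and gives $\mathrm{res}_\infty[\nabla]=-[-\bs c]_{\mc W}$, where $\bs c\in\h$ is the sum of the residues of $\bs v$ at its finite poles.

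For the forward direction, assume $[\nabla]\in\Opg(\bP^1)^{\mathrm{RS}}_{\check\bLa,\bm z}$. As $\bs v$ is meromorphic on $\bP^1$ it is rational, and the computation above forces all of its poles to be simple with $\bs v\to 0$ at $\infty$, so $\bs v=\sum_k\cmu_k/(x-a_k)$ with $\cmu_k\in\h$. The poles $a_k$ lie among $z_1,\dots,z_n$ and finitely many further points, which I label $t_1,\dots,t_m$. At $z_s$ one has $[-\cmu_s]_{\mc W}=\mathrm{res}_{z_s}[\nabla]=[\cla^{(s)}]_{\mc W}$, so Lemma \ref{lem res id} yields $w_s\in\mc W$ with $-\cmu_s=w_s\cdot\cla^{(s)}$; at $t_j$ the oper is regular, i.e. $[-\cmu_{t_j}]_{\mc W}=[0]_{\mc W}$, so $-\cmu_{t_j}=\tl w_j\cdot 0$ for some $\tl w_j\in\mc W$. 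This is precisely the shape \eqref{eq m-form}, which is (1), and regularity at the $t_j$ is (3). Finally $\mathrm{res}_\infty[\nabla]=-[\cla]_{\mc W}$ together with the infinity computation gives $[-\bs c]_{\mc W}=[\cla]_{\mc W}$, and since $-\bs c=\sum_s w_s\cdot\cla^{(s)}+\sum_j\tl w_j\cdot 0$, Lemma \ref{lem res id} produces $w_\infty\in\mc W$ realizing \eqref{eq weight infty}, which is (2). The converse is immediate: given (1)--(3), the same local computations reproduce $\mathrm{res}_{z_s}[\nabla]=[\cla^{(s)}]_{\mc W}$ and, via (2), $\mathrm{res}_\infty[\nabla]=-[\cla]_{\mc W}$; condition (3) supplies regularity at the $t_j$, and $[\nabla]$ is visibly regular off $\{z_s,t_j,\infty\}$, so $[\nabla]\in\Opg(\bP^1)^{\mathrm{RS}}_{\check\bLa,\bm z}$.

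I expect the genuine subtleties to concern the auxiliary points $t_j$, the \emph{apparent singularities} at which the diagonal representative $\bs v$ has a real simple pole with residue of the special form $\tl w_j\cdot 0$ while the underlying oper $[\nabla]$ is nonetheless regular. Two points need care: that regular singularity of the oper really forces the Cartan part $\bs v$ to have only simple poles --- i.e. the $\mathscr N(\cM)$-gauge freedom cannot lower the pole order of the diagonal component, which is exactly where one uses that $\nabla$ is written in Miura form --- and that ``regular at $t_j$'' is strictly stronger than ``regular singularity with trivial residue $[0]_{\mc W}$'', so that (3) cannot be omitted. The passage from the residue classes in $(p_{-1}+\cb)/\mathscr N$ to honest Cartan residues through Lemma \ref{lem res id} is what manufactures the Weyl-group elements $w_s,\tl w_j,w_\infty$ and hence the shifted-action balance \eqref{eq weight infty}.
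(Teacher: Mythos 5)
The paper offers no proof of this lemma at all --- it is quoted from \cite{F} and marked as known --- so the benchmark is the cited source rather than an in-paper argument. Your gauge computation $(x-z)^{\crho}\,\nabla\,(x-z)^{-\crho}=\pa_x+\bigl(p_{-1}-\crho+(x-z)\bs v\bigr)/(x-z)$, the identification $\mathrm{res}_z[\nabla]=[p_{-1}-\crho+\cmu]_\g=[-\cmu]_{\mc W}$, the analogous computation at $\infty$, and the use of Lemma \ref{lem res id} to manufacture $w_s$, $\tl w_j$, $w_\infty$ are all correct, and your converse direction is complete as written: granted (i)--(iii), the Miura representative itself witnesses the regular-singularity normal form at each $z_s$ and at $\infty$, with (iii) covering the auxiliary points $t_j$.

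The forward direction, however, has a genuine gap at its pivot. You assert that ``the computation above forces all of its poles to be simple with $\bs v\to 0$ at $\infty$,'' but your computation shows only that simple poles of $\bs v$ are \emph{sufficient} for the RS normal form; the definition of regular singularity requires merely that \emph{some} $\mathscr N(\cM)$-gauge transform of $\nabla$ admit the form $\pa_x+(p_{-1}+\bs w)/(x-z)$, so nothing you have written rules out a Miura representative whose Cartan part $\bs v$ has a pole of order $k\gge 2$ at $z$. You flag exactly this point yourself (``the $\mathscr N(\cM)$-gauge freedom cannot lower the pole order of the diagonal component''), but flagging is not proving, and this is precisely the nontrivial content for which the paper defers to \cite{F}. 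Two standard ways to close it: (a) Frenkel's route --- a Miura oper is an oper equipped with a second Borel reduction preserved by the connection, and at a point of regular singularity the relative position $w\in\mc W$ of the two reductions in the flag manifold yields, in one stroke, both the simple pole of $\bs v$ and the residue of the form $w\cdot\cla^{(s)}$ (resp.\ $\tl w_j\cdot 0$ at points where $[\nabla]$ is regular); or (b) a canonical-form argument --- if $\bs v$ had a pole of order $k\gge 2$ at $z$ with leading coefficient $\cmu\neq 0$, then, since the fundamental generators of $\C[\h]^{\mc W}$ have no common zero on $\h\setminus\{0\}$, some coefficient $v_{d_i+1}$ of the canonical form of $[\nabla]$ (the $d_i$ being the exponents of $\g$) would acquire a pole of order $k(d_i+1)>d_i+1$ whose leading term is the corresponding invariant evaluated at $\cmu$, all derivative terms contributing strictly smaller pole orders --- contradicting regular singularity. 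Note that the naive rank-one shortcut through the quadratic coefficient alone fails in rank $\gge 2$, because $(\cmu,\cmu)$ can vanish for nonzero complex $\cmu$; one genuinely needs the full set of invariants. The same scaling argument applied at $\infty$ is what forces $\bs v=O(x^{-1})$ there, after which your residue bookkeeping and the derivation of \eqref{eq weight infty} go through verbatim.
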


\begin{rem}\label{rem sum even}
	The condition \eqref{eq weight infty} implies that $\sum_{s=1}^n\langle \alpha_r,\cla^{(s)}\rangle+\langle \alpha_r,\cla\rangle$ is even if $N=2r$.
\end{rem}
	\subsection{Miura transformation}
	Following \cite{DS}, one can associate a linear differential operator $L_{\nabla}$ to each Miura $\g$-oper $\nabla=\pa_x+p_{-1}+\bs v(x)$, $\bs v(x)\in \cM(\h)$.
	
	In the case of $\sln_{r+1}$, $\bs v(x)\in \cM(\h)$ can be viewed as an $(r+1)$-tuple $(v_1(x),\dots,v_{r+1}(x))$ such that $\sum_{i=1}^{r+1}v_i(x)=0$. The {\it Miura transformation} sends $\nabla=\pa_x+p_{-1}+\bs v(x)$ to the  operator
	\[L_{\nabla}=(\pa_x+v_1(x))\dots(\pa_x+v_{r+1}(x)).\]Similarly, the Miura transformation takes the form
	\[
	L_{\nabla}=(\pa_x+v_1(x))\dots(\pa_x+v_{r}(x))(\pa_x-v_{r}(x))\dots(\pa_x-v_{1}(x))
	\]for $\g=\mathfrak{sp}_{2r}$ and
	\[
	L_{\nabla}=(\pa_x+v_1(x))\dots(\pa_x+v_{r}(x))\pa_x(\pa_x-v_{r}(x))\dots(\pa_x-v_{1}(x))
	\]for $\g=\mathfrak{so}_{2r+1}$. The formulas of the corresponding linear differential operators for the cases of $\mathfrak{sp}_{2r}$ and $\mathfrak{so}_{2r+1}$ can be understood with the embeddings described in Sections \ref{sec id C} and \ref{sec id B}.
	
	It is easy to see that different representatives of $[\nabla]$ give the same differential operator, we can write this map as $[\nabla]\mapsto L_{[\nabla]}$.
	
	Recall the definition of $(\cla)_{ii}$ for $\cla\in\h$ from Sections \ref{sec id C} and \ref{sec id B}.
	\begin{lem}\label{lem exps}
		Suppose $\nabla$ is a Miura $\g$-oper with $[\nabla]\in\Opg(\bP^1)_{\check\bLa,\bm{z}}$, then $L_{[\nabla]}$ is a monic Fuchsian differential operator with singularities at points in $\bm z$ only. The exponents of $L_{[\nabla]}$ at $z_s$, $s=1,\dots,n$, are $(\cla^{(s)})_{ii}+N-i$, and the exponents at $\infty$ are $-(\cla)_{ii}-N+i$, $i=1,\dots,N$.
	\end{lem}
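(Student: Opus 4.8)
The strategy is to make everything explicit through the factorization of $L_{[\nabla]}$ furnished by the Miura transformation, and then to read off the Fuchsian data from the residues of $\bs v$. First I would apply Lemma \ref{lem m-form}: since $[\nabla]\in\Opg(\bP^1)_{\check\bLa,\bm z}\subset\Opg(\bP^1)_{\check\bLa,\bm z}^{\mathrm{RS}}$ and $\nabla$ is a Miura $\g$-oper, $\nabla$ has the normal form \eqref{eq m-form}, so that $\bs v=-\sum_{s=1}^n (w_s\cdot\cla^{(s)})/(x-z_s)-\sum_{j=1}^m (\tl w_j\cdot 0)/(x-t_j)$ has only simple poles, located at the $z_s$ and the auxiliary points $t_j$, and $\bs v=O(x^{-1})$ at infinity. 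Under the embeddings of Sections \ref{sec id C} and \ref{sec id B}, setting $v_i(x)=(\bs v(x))_{ii}$, the Miura transformation reads $L_{[\nabla]}=\prod_{i=1}^N(\pa_x+v_i(x))$ (the central factor being $\pa_x$ when $N=2r+1$, as then $v_{r+1}\equiv 0$). Hence $L_{[\nabla]}$ is monic of order $N$ with rational coefficients, and since every $v_i$ has at worst a simple pole at each $z_s,t_j$ and decays at infinity, it is Fuchsian with singular locus contained in $\{z_1,\dots,z_n,t_1,\dots,t_m,\infty\}$.

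The computational core is the indicial polynomial of a product of first-order operators: if $L=\prod_{i=1}^N(\pa_x+f_i)$ with $f_i=a_i/(x-z)+O(1)$ near a finite point $z$, then tracking the leading term of $L[(x-z)^\gamma]$ gives indicial polynomial $\prod_{i=1}^N(\gamma-(N-i)+a_i)$, so the exponents at $z$ are $N-i-a_i$. Applying this at $z=z_s$, where $a_i=\res_{z_s}v_i=-(w_s\cdot\cla^{(s)})_{ii}$, gives the $i$-th exponent $N-i+(w_s\cdot\cla^{(s)})_{ii}$. The individual factor does not produce $(\cla^{(s)})_{ii}+N-i$ on the nose because of the twist by $w_s$; only the multiset of exponents is intrinsic, and the point is that this multiset is $w_s$-independent. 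Writing $w_s\cdot\cla^{(s)}=w_s(\cla^{(s)}+\crho)-\crho$ and reading off $(\crho)_{ii}=(N+1-2i)/2$ from the formulas for $\crho$ in Sections \ref{sec id C} and \ref{sec id B}, the $i$-th exponent equals $\tfrac{N-1}{2}+(w_s(\cla^{(s)}+\crho))_{ii}$. The Weyl group of $\g_N$ acts on $\h$ by signed permutations of the $r$ coordinates, hence permutes the full diagonal $N$-tuple $((\,\cdot\,)_{11},\dots,(\,\cdot\,)_{NN})$, which is invariant under sign change and order reversal; therefore $\{(w_s(\cla^{(s)}+\crho))_{ii}\}_{i=1}^N=\{(\cla^{(s)}+\crho)_{ii}\}_{i=1}^N$ as multisets. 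Substituting and using $\tfrac{N-1}{2}+(\crho)_{ii}=N-i$ gives exactly the multiset $\{(\cla^{(s)})_{ii}+N-i\}$ at $z_s$. The exponents at $\infty$ follow identically from \eqref{eq weight infty}: there $\bs v=-(w_\infty\cdot\cla)/x+O(x^{-2})$, and the analogous indicial computation at infinity (with the standard convention that a solution $\sim x^{\gamma}$ has exponent $-\gamma$) yields the multiset $\{-(\cla)_{ii}-N+i\}$.

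It remains to rule out genuine singularities at the auxiliary points $t_j$, and this is the main obstacle. The same factorization at $t_j$, now with $a_i=-(\tl w_j\cdot 0)_{ii}$ and the same reshuffling, produces the exponent multiset $\{N-i\}_{i=1}^N=\{0,1,\dots,N-1\}$. By part (3) of Lemma \ref{lem m-form}, $[\nabla]$ is regular at $t_j$, and as an element of $\Opg(\bP^1)_{\check\bLa,\bm z}$ it is moreover monodromy-free; hence its local monodromy at $t_j$ is trivial and the scalar operator $L_{[\nabla]}$ has no logarithmic solution there. A Fuchsian point carrying the exponents $0,1,\dots,N-1$ together with a full basis of holomorphic solutions has non-vanishing Wronskian, so reconstructing $L_{[\nabla]}$ via $L_{[\nabla]}y=\Wr(y_1,\dots,y_N,y)/\Wr(y_1,\dots,y_N)$ shows that its coefficients are holomorphic at $t_j$. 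Thus each $t_j$ is an ordinary point and the singular locus is precisely $\{z_1,\dots,z_n,\infty\}$. The essential difficulty is exactly this last step: upgrading the representation-theoretic regularity of the oper to analytic regularity of the associated scalar operator, i.e. excluding apparent singularities at the $t_j$.
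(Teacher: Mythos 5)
Your proof is correct, but it takes a genuinely different route from the paper's, which is essentially a two-line reduction: after putting $\nabla$ in the normal form of Lemma \ref{lem m-form}, the paper invokes Theorem 5.11 of \cite{F} together with Lemma \ref{lem res id} to replace the Miura representative, for each given point, by one with $w_s=1$ there; then the residue of $\bs v$ at $z_s$ is $-\cla^{(s)}$ and the exponents $(\cla^{(s)})_{ii}+N-i$ are read off immediately from the factorization, while at an auxiliary point $t_j$ the representative with $\tl w_j=1$ has residue $1\cdot 0=0$ and hence is literally regular there, so regularity of $L_{[\nabla]}$ at the $t_j$ comes for free and no apparent-singularity analysis is needed. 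You instead keep an arbitrary representative and prove by hand that the indicial multiset $\{N-i+(w\cdot\cla)_{ii}\}_{i=1}^N$ is independent of $w\in\mc W$, via the $\crho$-shift and the signed-permutation description of the Weyl group; your computation is correct in both cases, since $(\crho)_{ii}=(N+1-2i)/2$ under both embeddings and the diagonal $N$-tuple of any $\cmu\in\h$ is stable, as a multiset, under signed permutations of its first $r$ entries. This amounts to a direct, self-contained proof of exactly the consequence of Frenkel's theorem that the paper quotes, and the price is that you must then exclude apparent singularities at the $t_j$ yourself; your argument there (exponents $\{0,1,\dots,N-1\}$, trivial local monodromy, hence no logarithms, hence a holomorphic solution basis with nonvanishing Wronskian and the reconstruction $L_{[\nabla]}y=\Wr(y_1,\dots,y_N,y)/\Wr(y_1,\dots,y_N)$) is valid and is the standard apparent-singularity exclusion. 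One small caution: at $t_j$ the operative hypothesis is regularity of $[\nabla]$ there, i.e. part (iii) of Lemma \ref{lem m-form}, which already yields trivial local monodromy; the global monodromy-freeness you also invoke would not suffice by itself when $\g=\mathfrak{sp}_{2r}$, since the center $\{\pm I_{2r}\}$ allows local monodromy $-I_{2r}$ (though integrality of the exponents combined with regularity closes this anyway). In sum, your route buys self-containedness at the cost of an explicit Weyl-invariance computation and a local Frobenius argument; the paper's route buys brevity by outsourcing both to the theory of Miura opers over a fixed oper.
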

	\begin{proof}
		Note that $\nabla$ satisfies the conditions (i)-(iii) in Lemma \ref{lem m-form}. By Theorem 5.11 in \cite{F} and Lemma \ref{lem res id}, we can assume $w_s=1$ for given $s$. The lemma follows directly.
	\end{proof}
	Denote by $Z(\mathscr G)$ the center of $\mathscr G$, then
	\[
	Z(\mathscr G)=\begin{cases}\{I_{2r+1}\} &\text{if }\g=\mathfrak{so}_{2r+1},\\\{\pm I_{2r}\}&\text{if }\g=\mathfrak{sp}_{2r}.\end{cases}
	\]We have the following lemma.
	\begin{lem}\label{lem monodromy}
		Suppose $\nabla$ is a Miura $\g$-oper with $[\nabla]\in\Opg(\bP^1)_{\check\bLa,\bm{z}}$. If $\g=\mathfrak{so}_{2r+1}$, then $L_{[\nabla]}$ is a monodromy-free differential operator. If $\g=\mathfrak{sp}_{2r}$, then the monodromy of $L_{[\nabla]}$ around
		$z_s$ is $-I_{2r}$ if and only if $\langle \alpha_r,\cla^{(s)}\rangle$ is odd for given $s\in\{1,\dots,n\}$.\qed
	\end{lem}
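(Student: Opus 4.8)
The plan is to compare the monodromy of the scalar operator $L_{[\nabla]}$ with the monodromy of the connection $\nabla$ in the standard representation of $\mathscr G$, and then to read off the local monodromy around each $z_s$ from the exponents already computed in Lemma \ref{lem exps}.

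First I would fix a Miura representative $\nabla=\pa_x+p_{-1}+\bs v$ of $[\nabla]$, which exists by Lemma \ref{lem sur}, with $\bs v\in\cM(\h)$. Under the embeddings of Sections \ref{sec id C} and \ref{sec id B} one checks that $p_{-1}=\sum_i F_i$ is exactly the full principal nilpotent $\sum_{j=1}^{N-1}E_{j+1,j}$ of $\sln_N$, while $\bs v$ becomes the diagonal matrix $\mathrm{diag}(v_1,\dots,v_r,-v_r,\dots,-v_1)$ for $\g=\mathfrak{sp}_{2r}$ and $\mathrm{diag}(v_1,\dots,v_r,0,-v_r,\dots,-v_1)$ for $\g=\mathfrak{so}_{2r+1}$, with $v_i=(\bs v)_{ii}$. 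Writing the horizontal-section equation $\nabla\Psi=0$ for a column vector $\Psi=(\psi_1,\dots,\psi_N)$ in the standard representation, the subdiagonal form of $p_{-1}$ yields the recursion $\psi_{i-1}=-(\pa_x+v_i)\psi_i$ together with $(\pa_x+v_1)\psi_1=0$, so $\psi_N$ satisfies $L_{[\nabla]}\psi_N=0$ and every solution arises this way. Hence projection to the last coordinate is a linear isomorphism from the space of horizontal sections of $\nabla$ onto $\Ker L_{[\nabla]}$, and it intertwines the monodromy of $\nabla$ in the standard representation with the monodromy of the scalar operator $L_{[\nabla]}$.

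Next I would use that $[\nabla]\in\Opg(\bP^1)_{\check\bLa,\bm z}$ is monodromy-free, so the monodromy group of $\nabla$ lies in $Z(\mathscr G)$. For $\g=\mathfrak{so}_{2r+1}$ we have $Z(\mathscr G)=\{I_{2r+1}\}$, whence the monodromy in the standard representation is trivial and $L_{[\nabla]}$ is monodromy-free, proving the first assertion. For $\g=\mathfrak{sp}_{2r}$ the center $\{\pm I_{2r}\}$ acts as $\pm I_{2r}$ in the standard representation, so each local monodromy of $L_{[\nabla]}$ around $z_s$ is $\pm I_{2r}$; in particular it is scalar, and since scalar matrices are conjugation-invariant the intertwining isomorphism forces the monodromy of $L_{[\nabla]}$ around $z_s$ to equal this same central element (and rules out logarithmic solutions).

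Finally I would pin down the sign from the exponents. By Lemma \ref{lem exps} the exponents of $L_{[\nabla]}$ at $z_s$ are $(\cla^{(s)})_{ii}+N-i$, and a scalar local monodromy equals $e^{2\pi\sqrt{-1}\,\rho}I_{2r}$ for any exponent $\rho$; since $N-i\in\Z$ this scalar is $e^{2\pi\sqrt{-1}\,(\cla^{(s)})_{11}}$. By \eqref{eq id B} one has $(\cla^{(s)})_{11}=\langle\alpha_1,\cla^{(s)}\rangle+\dots+\langle\alpha_{r-1},\cla^{(s)}\rangle+\tfrac12\langle\alpha_r,\cla^{(s)}\rangle$, which differs from $\tfrac12\langle\alpha_r,\cla^{(s)}\rangle$ by an integer, so $e^{2\pi\sqrt{-1}\,(\cla^{(s)})_{11}}=(-1)^{\langle\alpha_r,\cla^{(s)}\rangle}$. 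Thus the monodromy around $z_s$ is $-I_{2r}$ exactly when $\langle\alpha_r,\cla^{(s)}\rangle$ is odd, as claimed. The main obstacle is the second step: making precise that the scalar-ODE monodromy agrees with the connection monodromy in the standard representation and that it is genuinely scalar with no logarithms, which is precisely where the centrality of the image of the monodromy representation is used.
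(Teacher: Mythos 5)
Your proof is correct. The paper states Lemma \ref{lem monodromy} with no proof at all (it carries an immediate \qed), so there is nothing to compare against; your argument supplies precisely the standard reasoning the statement presupposes: the factorized form of $L_{[\nabla]}$ means that, under the embeddings of Sections \ref{sec id C} and \ref{sec id B}, solutions of $L_{[\nabla]}y=0$ are the last coordinates of horizontal sections of $\nabla$ in the standard representation, so the scalar monodromy of $L_{[\nabla]}$ is the image of the monodromy of $\nabla$, which is central since $[\nabla]$ is monodromy-free; the sign in the symplectic case then follows from the exponents of Lemma \ref{lem exps} together with \eqref{eq id B}, since $(\cla^{(s)})_{11}\equiv \tfrac12\langle\alpha_r,\cla^{(s)}\rangle \pmod{\Z}$. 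One small point worth making explicit: monodromy-freeness is defined via \emph{some} representative of $[\nabla]$, so you should note that gauge transformations by single-valued elements of $\mathscr N(\cM)$ conjugate the monodromy representation and that centrality is conjugation-invariant, whence the Miura representative you fixed also has monodromy in $Z(\mathscr G)$; with that remark added, your deduction that the local monodromy around each $z_s$ is a genuine scalar (hence log-free, with common fractional part of all exponents) is complete.
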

	\subsection{Relations with pure self-dual spaces}
	Let $\check\bLa=(\cla^{(1)},\dots,\cla^{(n)},\cla)$ be a sequence of $n+1$ dominant integral $\g$-coweights and let $\bs z=(z_1,\dots,z_n,\infty)\in {\mathring{\bP}_{n+1}}$.
	
	Consider $\check\bLa$ as a sequence of dominant integral $\g_N$-weights. Choose $d$ large enough so that $k:=d-N-\sum_{s=1}^n(\cla^{(s)})_{11}-(\cla)_{11}\gge 0$. (We only need to consider the case that $\sum_{s=1}^n(\cla^{(s)})_{11}+(\cla)_{11}$ is an integer for $N=2r$, see Lemma \ref{lem sym weight new} and Remark \ref{rem sum even}.) Let $\bm k=(0,\dots,0,k)$. Note that we always have $|\check\bLa_{A,\bm k}|=N(d-N)$ and spaces of polynomials in $\s\Omega_{\check\bLa,\bm k,\bm z}$ ($=\s\Omega_{\check\bLa_{A,\bm k},\bm z}$) are pure self-dual spaces.
	
	\begin{thm}\label{thm bij oper sd}
		There exists a bijection between $\Opg(\mathbb{P}^1)_{\check\bLa,\bm{z}}$ and $\s\Omega_{\check\bLa,\bm k,\bm z}$ given by the map $[\nabla]\mapsto \Ker (f^{-1}\cdot L_{[\nabla]}\cdot f)$, where $\bs T=(T_1,\dots,T_{N})$ is associated with $\check\bLa_{A,\bm k},\bs z$ and $f=(T_1\dots T_{N})^{-1/2}$.
	\end{thm}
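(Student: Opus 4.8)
The plan is to construct the forward map $\Phi\colon[\nabla]\mapsto X:=\Ker(f^{-1}\cdot L_{[\nabla]}\cdot f)$ explicitly, to build a candidate inverse $\Psi$ out of the factorization of self-dual operators, and to check the two are mutually inverse. Write $P=T_1\cdots T_N$, so $f=P^{-1/2}$; since the exponents of the $T_i$ telescope one has $P=\prod_{s=1}^n(x-z_s)^{\mu_1^{(s)}}$, where $\mu^{(s)}$ denotes the $s$-th partition of $\check\bLa_{A,\bm k}$. Two mechanisms make everything fit: (a) conjugation by the half-power $f$ is exactly calibrated to turn the symmetric exponent pattern of a $\g$-oper into the partition exponents of a self-dual stratum and to absorb the $(-I_{2r})$-monodromy in type $\mathrm C$; and (b) the Miura transformation produces a formally self-conjugate scalar operator, which through Lemma~\ref{lem dual-oper} forces pure self-duality.

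First I would verify well-definedness of $\Phi$. Choosing a Miura representative of $[\nabla]$ (Lemma~\ref{lem sur}), the resulting $L_{[\nabla]}$ is independent of the choice, and by Lemma~\ref{lem exps} it is monic Fuchsian with singularities only in $\bm z$, with exponents $(\cla^{(s)})_{ii}+N-i$ at $z_s$ and $-(\cla)_{ii}-N+i$ at $\infty$. Conjugation by $f=P^{-1/2}$ shifts each exponent at $z_s$ by $\tfrac12\mu_1^{(s)}$; using the descriptions of $(\cla)_{ii}$ in Sections~\ref{sec id C} and~\ref{sec id B} together with the definition of $\mu_{A,k}$ one checks $(\cla^{(s)})_{ii}+\tfrac12\mu_1^{(s)}=\mu_i^{(s)}$, so the shifted exponents at $z_s$ become $\mu_N^{(s)},\mu_{N-1}^{(s)}+1,\dots,\mu_1^{(s)}+N-1$, and symmetrically at $\infty$. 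The one delicate point is that $f^{-1}L_{[\nabla]}f$ is monodromy-free: for $N=2r+1$ the operator $L_{[\nabla]}$ is already monodromy-free (Lemma~\ref{lem monodromy}), whereas for $N=2r$ its monodromy at $z_s$ is $-I_{2r}$ exactly when $\langle\alpha_r,\cla^{(s)}\rangle$ is odd, and the half-integer twist kills it because $\mu_1^{(s)}\equiv\langle\alpha_r,\cla^{(s)}\rangle\pmod 2$. Lemma~\ref{lem exp} then places $X=\Ker(f^{-1}L_{[\nabla]}f)$ in $\Omega_{\check\bLa_{A,\bm k},\bm z}\subset\Gr(N,d)$ (the degree count being guaranteed by $|\check\bLa_{A,\bm k}|=N(d-N)$). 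Finally, the symmetric shape of the Miura transformation gives $L_{[\nabla]}^*=L_{[\nabla]}$ (its monic formal conjugate equals itself); combined with the conjugation rule $(f^{-1}L_{[\nabla]}f)^*=P^{-1}(f^{-1}L_{[\nabla]}f)P$ and Lemma~\ref{lem dual-oper} this yields $\mc D_{X^\dag}=\mc D_X$, so $X$ is pure self-dual and $\Phi([\nabla])\in\s\Omega_{\check\bLa,\bm k,\bm z}$.

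Next, for the inverse I would start from a pure self-dual $X\in\s\Omega_{\check\bLa_{A,\bm k},\bm z}$. By Lemma~\ref{lem y} there is a basis whose divided Wronskians satisfy $y_i=y_{N-i}$, and Lemma~\ref{lem D_X} writes $\mc D_X$ as a product of factors $\pa_x-\ln'(y_{i-1}T_i\cdots T_N/y_i)$. Conjugating by $f$ turns the $i$-th factor into $\pa_x+a_i$ with $a_i=\ln'(y_i/y_{i-1})-\ln'(T_i\cdots T_N)+\tfrac12\ln' P$. Since $T_i\cdots T_N=\prod_s(x-z_s)^{\mu_i^{(s)}}$ and the $\mu^{(s)}$ are $N$-symmetric, the identity $\mu_i^{(s)}+\mu_{N+1-i}^{(s)}=\mu_1^{(s)}$ holds, so $P=(T_i\cdots T_N)(T_{N+1-i}\cdots T_N)$ up to a constant; with $y_i=y_{N-i}$ this gives $a_{N+1-i}=-a_i$, and $a_{r+1}=0$ when $N=2r+1$. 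Thus $f\,\mc D_X\,f^{-1}$ has exactly the symmetric Miura form, and $\bs v=(a_1,\dots,a_r)\in\cM(\h)$ defines a Miura $\g$-oper $\nabla=\pa_x+p_{-1}+\bs v$ with $L_{[\nabla]}=f\,\mc D_X\,f^{-1}$. Reading its exponents through Lemma~\ref{lem exps} and Lemma~\ref{lem res id} identifies the residues as $[\cla^{(s)}]_{\mc W}$ at $z_s$ and $-[\cla]_{\mc W}$ at $\infty$, and monodromy-freeness of $\mc D_X$ (as $X$ consists of polynomials) forces $[\nabla]$ to be monodromy-free; hence $[\nabla]\in\Opg(\bP^1)_{\check\bLa,\bm z}$ and $\Phi([\nabla])=\Ker\mc D_X=X$. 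This proves surjectivity, i.e. $\Phi\circ\Psi=\id$.

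The main obstacle will be injectivity, equivalently $\Psi\circ\Phi=\id$. Since $\Ker(f^{-1}Lf)$ determines the monic operator $f^{-1}Lf$, two opers with the same image satisfy $L_{[\nabla_1]}=L_{[\nabla_2]}$, so everything reduces to showing that the scalar operator $L_{[\nabla]}$ determines the oper class $[\nabla]$. This is genuinely nontrivial: the symmetric factorization above is not unique, since the left half $(\pa_x+v_1)\cdots(\pa_x+v_r)$ may be taken with any $r$-dimensional kernel inside $X$, so I must show that all symmetric factorizations of $L_{[\nabla]}$ are Miura representatives of one and the same oper class. I would settle this with the structure theory of Miura opers in \cite{F} (Lemmas~\ref{lem sur},~\ref{lem m-form}) and \cite{DS}: the fibre of the Miura transformation over a fixed $\g$-oper is a single $\mathscr N(\cM)$-gauge orbit, and faithfulness of the defining representation of $\mathscr G$ modulo $Z(\mathscr G)$ transfers triviality of the scalar monodromy to centrality of the $\mathscr G$-monodromy, so distinct classes in $\Opg(\bP^1)_{\check\bLa,\bm z}$ cannot share the same $L_{[\nabla]}$. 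A subsidiary technical point, handled by the same references, is that the apparent singularities of $f^{-1}L_{[\nabla]}f$ at the zeros of the $y_i$ — the auxiliary points $t_j$ of Lemma~\ref{lem m-form}(i) — are automatically regular for the oper once the scalar operator is monodromy-free.
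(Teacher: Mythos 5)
Your forward map and your construction of the inverse agree in substance with the paper's proof: you choose a Miura representative via Lemmas \ref{lem sur} and \ref{lem m-form}, match exponents through Lemmas \ref{lem exp} and \ref{lem exps}, handle the type $\mathrm C$ parity via Lemma \ref{lem monodromy} (your observation that $\mu_1^{(s)}\equiv\langle\alpha_r,\cla^{(s)}\rangle \pmod 2$, so the half-power twist cancels the $-I_{2r}$ monodromy, is exactly the paper's mechanism), get pure self-duality from the symmetric factorization together with Lemma \ref{lem dual-oper}, and for surjectivity build the symmetric Miura form from the basis of Lemma \ref{lem y} and the factorization of Lemma \ref{lem D_X} --- equivalent to the paper's definition $\langle\alpha_i,\bs v\rangle=-\ln'\big(T_i\prod_j y_j^{-a_{i,j}}\big)$. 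You also correctly isolate the crux: the symmetric factorization of the scalar operator is not unique, so you must show all such factorizations represent a single oper class.

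However, your resolution of that crux is a genuine gap. The assertion that ``the fibre of the Miura transformation over a fixed $\g$-oper is a single $\mathscr N(\cM)$-gauge orbit'' is circular: lying in one $\mathscr N(\cM)$-orbit is the \emph{definition} of representing the same oper class, and in fact the Miura opers over a fixed oper form (generically) a flag variety's worth of distinct Cartan forms, all with the same scalar operator $L$ --- which is precisely why $L$ could a priori be shared by distinct oper classes and why this point is nontrivial; neither Lemma \ref{lem sur} nor Lemma \ref{lem m-form} addresses it. Your second clause --- faithfulness of the defining representation modulo $Z(\mathscr G)$ transferring trivial scalar monodromy to central $\mathscr G$-monodromy --- is an argument for monodromy-freeness of the oper (where the paper instead cites Theorem 4.1 of \cite{MV2}), not for injectivity; the conclusion ``distinct classes cannot share the same $L_{[\nabla]}$'' simply does not follow from it. The paper closes this gap with population theory: two Miura opers arising from different bases of the same kernel $X$ lie in one $\mathfrak{so}_{2r+1}$-population by Theorem 7.5 of \cite{MV}, and opers attached to one population coincide by Theorem 4.2 and the remarks in Section 4.3 of \cite{MV2}. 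The same mechanism also carries your ``subsidiary technical point'': regularity of $[\nabla_\Gamma]$ at the auxiliary points $t_j$ is \emph{not} automatic from monodromy-freeness of the scalar operator; the paper obtains it by choosing, via Lemma \ref{lem y}, for each $x_0\in\C\setminus\bm z$ a basis $\Gamma$ with $y_i(x_0)\ne 0$ and invoking the independence $[\nabla_\Gamma]=[\nabla_{\Gamma'}]$ --- which again rests on the population argument you have not supplied. A genuinely different route would be the Drinfeld--Sokolov identification of type $\mathrm B$/$\mathrm C$ opers with (skew-)self-adjoint scalar operators, but making that bijection precise is essentially equivalent to the missing injectivity statement, and your bare citation of \cite{DS} and \cite{F} does not articulate it.
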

	\begin{proof}
		We only prove it for the case of $\g=\mathfrak{sp}_{2r}$. Suppose $[\nabla]\in \Opg(\mathbb{P}^1)_{\check\bLa,\bm{z}}$, by Lemmas \ref{lem sur} and \ref{lem m-form}, we can assume $\nabla$ has the form \eqref{eq m-form} satisfying the conditions (i), (ii), and (iii) in Lemma \ref{lem m-form}.
		
		Note that if $\langle \alpha_r,\cla^{(s)}\rangle$ is odd, $f$ has monodromy $-I_{2r}$ around the point $z_s$. By Lemma \ref{lem monodromy}, one has that $f^{-1}\cdot L_{[\nabla]}\cdot f$ is monodromy-free around the point $z_s$ for $s=1,\dots,n$. Note also that $\sum_{s=1}^n\langle \alpha_r,\cla^{(s)}\rangle+\langle \alpha_r,\cla\rangle$ is even, it follows that $f^{-1}\cdot L_{[\nabla]}\cdot f$ is also monodromy-free around the point $\infty$. Hence $f^{-1}\cdot L_{[\nabla]}\cdot f$ is a monodromy-free differential operator. 
		
		It follows from Lemmas \ref{lem exp} and \ref{lem exps} that $\Ker (f^{-1}\cdot L_{[\nabla]}\cdot f)\in\Omega_{\check\bLa_{A,\bm k},\bm z}$. Since $L_{[\nabla]}$ takes the form \[(\pa_x+v_1(x))\dots(\pa_x+v_{r}(x))(\pa_x-v_{r}(x))\dots(\pa_x-v_{1}(x)),\]
		it follows that $\Ker (f^{-1}\cdot L_{[\nabla]}\cdot f)$ is a pure self-dual space by Lemma \ref{lem dual-oper}.
		
		If there exist $[\nabla_1],[\nabla_2]\in\Opg(\mathbb{P}^1)_{\check\bLa,\bm{z}}$ such that $f^{-1}\cdot L_{[\nabla_1]}\cdot f=f^{-1}\cdot L_{[\nabla_2]}\cdot f$, then they are the same differential operator constructed from different bases of $\Ker (f^{-1}\cdot L_{[\nabla]}\cdot f)$ as described in Lemma \ref{lem D_X}. Therefore they correspond to the same $\mathfrak{so}_{2r+1}$-population by Theorem 7.5 of \cite{MV}. It follows from Theorem 4.2 and remarks in Section 4.3 of \cite{MV2} that $[\nabla_1]=[\nabla_2]$.
		
		Conversely, give a self-dual space $X\in\s\Omega_{\check\bLa,\bm k,\bm z}$. By Lemma \ref{lem y}, there exists a basis $\Gamma$ of $X$ such that for $\bs y_{\Gamma}=(y_1,\dots,y_{N-1})$ we have $y_i=y_{N-i}$, $i=1,\dots,N-1$. Following \cite{MV2}, define $\bs v\in\cM(\h)$ by
		\[
		\langle \alpha_i,\bs v\rangle=-\ln'\Big(T_i\prod_{j=1}^r y_j^{-a_{i,j}}\Big),
		\]then we introduce the Miura $\g$-oper $\nabla_\Gamma=\pa_x+p_{-1}+\bs v$, which only has  regular singularities. It is easy to see from Lemma \ref{lem D_X} that $ f^{-1}\cdot L_{[\nabla_\Gamma]}\cdot f=\mathcal D_X$. It follows from the same argument as the previous paragraph that $[\nabla_\Gamma]=[\nabla_{\Gamma'}]$ for any other basis $\Gamma'$ of $X$ and hence $[\nabla_\Gamma]$ is independent of the choice of $\Gamma$. Again by Lemma \ref{lem y}, for any $x_0\in \C\setminus\bs z$ we can choose $\Gamma$ such that $y_i(x_0)\ne 0$ for all $i=1,\dots,N-1$, it follows that $[\nabla_\Gamma]$ is regular at $x_0$. By exponents reasons, see Lemma \ref{lem exps}, we have
		\[
		\text{res}_{z_s}[\nabla_\Gamma]=[\cla^{(s)}]_{\mathcal W},\quad \text{res}_{\infty}[\nabla_\Gamma]=-[\cla]_{\mathcal W},\quad s=1,\dots,n.
		\]On the other hand, $[\nabla_\Gamma]$ is monodromy-free by Theorem 4.1 of \cite{MV2}. It follows that $[\nabla_\Gamma]\in\Opg(\mathbb{P}^1)_{\check\bLa,\bm{z}}$, which completes the proof.
	\end{proof}
	
	\section{Proof of main theorems}\label{sec proof}
	\subsection{Proof of Theorems \ref{thm simple deg A} and \ref{thm A strata}}\label{sec proof A}We prove Theorem \ref{thm simple deg A} first. 
	
	By assumption, $\bm\Xi=(\xi^{(1)},\dots,\xi^{(n-1)})$ is a simple degeneration of $\bLa=(\la^{(1)},\dots,\la^{(n)})$. Without loss of generality, we assume that $\xi^{(i)}=\la^{(i)}$ for $i=1,\dots,n-2$ and
	\[
	\dim(V_{\la^{(n-1)}}\otimes V_{\la^{(n)}})_{{\xi}^{(n-1)}}^\sing >0.
	\]
	
	Recall the strata $\Omega_{\bLa}$ is a union of intersections of Schubert cells $\Omega_{\bLa,\bm z}$, see \eqref{strata}.
	Taking the closure of $\Omega_\bLa$ is equivalent to allowing coordinates of $\bm z\in  {\mathring{\bP}}_n$ coincide. 
	
	Let $\bm z_0=(z_1,\dots,z_{n-1})\in {\mathring{\bP}}_{n-1}$. Let $X\in\Omega_{\bm{\Xi},\bm z_0}$. By Theorem \ref{thm bijection}, there exists a common eigenvector $v\in (V_{\bm{\Xi},\bm z_0})^{\sln_N}$ of the Bethe algebra $\mc B$ such that $\mc D_v=\mc D_{X}$. 
	
	Let $\bm z_0'=(z_1,\dots,z_{n-1},z_{n-1})$. Consider the $\mc B$-module $V_{\bLa,\bm z_0'}$, then we have
	\begin{align*}
	V_{\bLa,\bm z'_0}=&(\bigotimes_{s=1}^{n-2} V_{\la^{(s)}}(z_s))\otimes (V_{\la^{(n-1)}}\otimes V_{\la^{(n)}})(z_{n-1})\\
	=&\bigoplus_{\mu}c_{\la^{(n-1)},\la^{(n)}}^{{\mu}}(\bigotimes_{s=1}^{n-2} V_{ \la^{(s)}}(z_s))\otimes V_{{\mu}}(z_{n-1}),
	\end{align*}
	where $c_{\la^{(n-1)},\la^{(n)}}^{{\mu}}:=\dim(V_{\la^{(n-1)}}\otimes V_{\la^{(n)}})_{\mu}^\sing$ are the Littlewood-Richardson coefficients. Since $\dim(V_{\la^{(n-1)}}\otimes V_{\la^{(n)}})_{{\xi}^{(n-1)}}^\sing >0$, we have $V_{\bm{\Xi},\bm z_0}\subset V_{\bLa,\bm z'_0}$. In particular, $(V_{\bm{\Xi},\bm z_0})^{\sln_N}\subset (V_{\bLa,\bm z'_0})^{\sln_N}$. Hence $v$ is a common eigenvector of the Bethe algebra $\mc B$ on $(V_{\bm{\La},\bm z'_0})^{\sln_N}$ such that $\mc D_v=\mc D_X$. 
	
	It follows that $X$ is a limit point of $\Omega_{\bLa, \bs z}$ as $z_n$ approaches $z_{n-1}$. 
	This completes the proof of Theorem \ref{thm simple deg A}.
	
	Theorem \ref{thm A strata} follows directly from Theorem \ref{thm simple deg A}.
\subsection{Proof of Theorems \ref{bi rep sgr}, \ref{thm simple deg BC}, and \ref{thm BC strata}}\label{sec proof BC} We prove Theorem \ref{bi rep sgr} first. We follow the convention of Section \ref{sec oper}.
	
We can identify the sequence $\check\bLa=(\cla^{(1)},\dots,\cla^{(n)},\cla)$ of dominant integral $\g$-coweights as a sequence of dominant integral $\g_N$-weights. Consider the $\g_N$-module
	$V_{\check\bLa}=V_{\cla^{(1)}}\otimes\dots\otimes V_{\cla^{(n)}}\otimes V_{\cla}$. It follows from Theorem 3.2 and Corollary 3.3 of \cite{R} that there exists a bijection between the joint eigenvalues of the $\g_N$ Bethe algebra $\cB$ acting on $(V_{\cla^{(1)}}(z_1)\otimes\dots\otimes V_{\cla^{(n)}}(z_n))^\sing$  and the $\g$-opers in $\Opg(\mathbb{P}^1)_{\check\bLa,\bm{z}}$ for all possible dominant integral $\g$-coweight $\cla$. In fact, one can show that Theorem 3.2 and Corollary 3.3 of \cite{R} are also true for the subspaces of $(V_{\cla^{(1)}}(z_1)\otimes\dots\otimes V_{\cla^{(n)}}(z_n))_{\cla}^\sing$ with specific $\g_N$-weight $\cla$. Recall that $\bm k=(0,\dots,0,k)$, where $k=d-N-\sum_{s=1}^n(\cla^{(s)})_{11}-(\cla)_{11}\gge 0$. Since one has the canonical isomorphism of $\cB$-modules
	\[
	(V_{\check{\bLa},\bm z})^{\g_N}\cong(V_{\cla^{(1)}}(z_1)\otimes\dots\otimes V_{\cla^{(n)}}(z_n))_{\cla}^\sing,
	\]
	by Theorem \ref{thm bij oper sd}, we have the following theorem.
	\begin{thm}\label{thm bij selfdual} 
		There exists a bijection between the joint eigenvalues of the $\g_N$ Bethe algebra $\mc B$ acting on $(V_{\check\bLa,\bm z})^{\g_N}$ and $\s\Omega_{\check\bLa,\bm k,\bs z}\subset\s\Gr(N,d)$ such that given a joint eigenvalue of $\mc B$ with a corresponding $\mc B$-eigenvector $v$ in $(V_{\check\bLa,\bm z})^{\g_N}$ we have ${\rm{Ker}}\ ((T_1\dots T_{N})^{1/2}\cdot\mc D_v \cdot (T_1\dots T_{N})^{-1/2})\in \s\Omega_{\check\bLa,\bm k,\bs z}$.\qed
	\end{thm}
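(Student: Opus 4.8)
The plan is to realize the asserted bijection as a composition of two correspondences that are already at our disposal: the eigenvalue-to-oper correspondence of \cite{R}, and the oper-to-self-dual-space bijection of Theorem \ref{thm bij oper sd}. First I would recall that Theorem 3.2 and Corollary 3.3 of \cite{R} furnish a bijection between the joint eigenvalues of the $\g_N$ Bethe algebra $\cB$ acting on the full singular subspace $(V_{\cla^{(1)}}(z_1)\otimes\dots\otimes V_{\cla^{(n)}}(z_n))^\sing$ and the monodromy-free $\g$-opers with the prescribed regular singularities, where the oper $[\nabla]$ attached to an eigenvector $v$ is pinned down by the identity $L_{[\nabla]}=\mc D_v$ relating the Drinfeld--Sokolov operator of the oper to the scalar operator $\mc D_v$ read off from the eigenvalue. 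Since $\cB$ commutes with $\mc U(\g_N)$, it preserves every weight subspace, so this bijection refines to one between the eigenvalues on the fixed-weight piece $(\dots)^\sing_{\cla}$ and those opers in $\Opg(\bP^1)_{\check\bLa,\bm z}$ whose residue at $\infty$ equals $-[\cla]_{\mc W}$. This refinement is precisely the extension of \cite{R} to a specific $\g_N$-weight $\cla$ asserted in the text, and it is the step that carries the weight $\cla$ through the construction.

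Next I would invoke the canonical $\cB$-module isomorphism $(V_{\check\bLa,\bm z})^{\g_N}\cong(V_{\cla^{(1)}}(z_1)\otimes\dots\otimes V_{\cla^{(n)}}(z_n))^\sing_{\cla}$, which is the $\g_N$-analog of Lemma \ref{nonzero weight}. It is obtained by projecting the $\g_N$-invariants in $V_{\cla^{(1)}}\otimes\dots\otimes V_{\cla^{(n)}}\otimes V_{\cla}$ onto a lowest weight vector of the last tensor factor, using that every finite-dimensional irreducible $\g_N$-module is self-dual, so that the multiplicity of $V_{\cla}$ in $V_{\cla^{(1)}}\otimes\dots\otimes V_{\cla^{(n)}}$ equals $\dim(V_{\check\bLa})^{\g_N}$; because $\cB$ acts trivially on $V_{\cla}(\infty)$, this projection intertwines the two $\cB$-actions. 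Composing this isomorphism with the refined bijection of the previous paragraph identifies the joint eigenvalues of $\cB$ on $(V_{\check\bLa,\bm z})^{\g_N}$ with $\Opg(\bP^1)_{\check\bLa,\bm z}$.

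Finally I would apply Theorem \ref{thm bij oper sd}, which bijects $\Opg(\bP^1)_{\check\bLa,\bm z}$ with $\s\Omega_{\check\bLa,\bm k,\bm z}$ via $[\nabla]\mapsto\Ker(f^{-1}\cdot L_{[\nabla]}\cdot f)$ with $f=(T_1\dots T_N)^{-1/2}$. Tracing an eigenvector $v$ through the entire composition and substituting $L_{[\nabla]}=\mc D_v$ produces $\Ker((T_1\dots T_N)^{1/2}\cdot\mc D_v\cdot(T_1\dots T_N)^{-1/2})$, which is the formula in the statement. I expect the genuine obstacle to live entirely in the first step: one must verify that the eigenvalue-to-oper dictionary of \cite{R} does respect the fixed weight $\cla$ (equivalently, that the residue at $\infty$ is governed by the $\g_N$-weight of the singular vectors) and that, for the distinguished generators $G_i(x)$ of the Bethe algebra, the Miura operator $L_{[\nabla]}$ coincides on the nose with the scalar operator $\mc D_v$. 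Once these two identifications are secured, the theorem is a purely formal composition of the above bijections.
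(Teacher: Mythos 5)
Your proposal is correct and follows essentially the same route as the paper: it composes the eigenvalue--oper correspondence of Theorem 3.2 and Corollary 3.3 of \cite{R} (refined to the fixed weight $\cla$), the canonical $\cB$-module isomorphism $(V_{\check\bLa,\bm z})^{\g_N}\cong(V_{\cla^{(1)}}(z_1)\otimes\dots\otimes V_{\cla^{(n)}}(z_n))_{\cla}^{\sing}$, and the oper-to-self-dual-space bijection of Theorem \ref{thm bij oper sd}. You even correctly locate the only nontrivial verification (the weight refinement of \cite{R} and the identification of $L_{[\nabla]}$ with $\mc D_v$ via the generators $G_i(x)$, which the paper handles by citing Proposition 2.10 of \cite{R}), exactly where the paper places it.
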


The fact that ${\rm{Ker}}\ ((T_1\dots T_{N})^{1/2}\cdot\mc D_v \cdot (T_1\dots T_{N})^{-1/2})\in \s\Omega_{\check\bLa,\bm k,\bs z}$
for the eigenvector $v\in (V_{\check\bLa,\bm z})^{\g_N}$ of the $\g_N$ Bethe algebra (except for the case of even $N$ when there exists $s\in\{1,2,\dots,n\}$
such that $\langle \alpha_r,\cla^{(s)}\rangle $ is odd) also follows from the results of \cite{LMV} and \cite{MM}.

Note that by Proposition 2.10  in \cite{R}, the $i$-th coefficient of the scalar differential operator $L_{[\nabla]}$ in Theorem \ref{thm bij oper sd} is obtained by action of a universal series $G_i(x)\in \mc U(\g_N[t][[x^{-1}]])$.
Theorem \ref{bi rep sgr} for the case of $N\gge 4$ is a direct corollary of Theorems \ref{thm bij oper sd} and \ref{thm bij selfdual}.

Thanks to Theorem \ref{bi rep sgr}, Theorems \ref{thm simple deg BC} and \ref{thm BC strata} can be proved in a similar way as Theorems \ref{thm simple deg A} and \ref{thm A strata}.

	\begin{appendix}
		\section{Self-dual spaces and  $\varpi$-invariant vectors}
		\subsection{Diagram automorphism $\varpi$}
		There is a diagram automorphism $\varpi:\sln_{N}\to \sln_{N}$ such that
		\[\varpi(E_i)=E_{N-i},\quad \varpi(F_i)=F_{N-i},\quad \varpi^2=1,\quad\varpi(\h_A)=\h_A.\]The automorphism $\varpi$ is extended to the automorphism of $\gl_N$ by
		\[
		\gl_N\to\gl_N,\quad e_{ij}\mapsto (-1)^{i-j-1}e_{N+1-j,N+1-i},\quad i,j=1,\dots,N.
		\]
	 By abuse of notation, we denote this automorphism of $\gl_N$ also by $\varpi$.
		
		The restriction of $\varpi$ to the Cartan subalgebra $\h_A$ induces a dual map $\varpi^*:\h_A^*\to \h_A^*$, $\la\mapsto \la^\star$, by
		\[
		\la^\star(h)=\varpi^*(\lambda)(h)=\lambda (\varpi(h)),
		\]
		for all $\lambda\in\h_A^*,h\in \h_A$.
		
		Let $(\h_A^*)^0=\{\lambda\in\h_A^*~|~\la^\star=\la\}\subset\h_A^*$. We call elements of $(\h_A^*)^0$ \emph{symmetric weights}.
		
		Let $\h_N$ be the Cartan subalgebra of $\g_N$. Consider the root system of type $\mathrm{A}_{N-1}$ with simple roots  $\alpha_1^A,\dots,\alpha_{N-1}^A$ and the root system of  $\g_N$ with simple roots $\alpha_1,\dots,\alpha_{\left[\frac{N}{2}\right]}$.
		
		There is a linear isomorphism $P_\varpi^*:\h_N^*\to (\h_A^*)^0$, $\la\mapsto\la_A$, where $\la_A$ is defined by
		\beq\label{eq A weight}
		\langle \la_A,\calpha_i^A\rangle =\langle \la_A,\calpha_{N-i}^A\rangle=\langle \la,\calpha_i\rangle,\quad i=1,\dots,\left[\frac{N}{2}\right].
		\eeq
		
		Let $\lambda\in \h_A^*$ and fix two nonzero highest weight vectors $v_\la\in (V_\la)_\la,v_{\la^\star}\in (V_{\la^\star})_{\la^\star}$. Then there exists a unique linear isomorphism $\mc I_\varpi: V_\la\to V_{\la^\star}$ such that
		\beq\label{eq twining map}
		\mc I_\varpi (v_\la)=v_{\la^\star},\quad \mc I_\varpi(gv)=\varpi(g)\mc I_\varpi(v),
		\eeq
		for all $g\in\sln_N,v\in V_\la$.
		In particular, if $\la$ is a symmetric weight, $\mc I_\varpi$ is a linear automorphism of $V_\la$, where we always assume that $v_\la=v_{\la^\star}$.
		
		Let $M$ be a finite-dimensional $\sln_N$-module with a weight space decomposition $M=\bigoplus_{\mu\in \h_A^*}(M)_\mu $. Let $f: M\to M$ be a linear map such that $f(hv)=\varpi(h)f(v)$ for $h\in\h_A,v\in M$. Then it follows that $f((M)_\mu)\subset (M)_{\mu^\star}$ for all $\mu\in \h_A^*$. Define a formal sum
		\[
		\mathrm{Tr}_M^{\varpi} f =\sum_{\mu\in (\h_A^*)^0} \mathrm{Tr}(f|_{(M)_\mu})e(\mu),
		\]
		where $\mathrm{Tr}(f|_{(M)_\mu})$ for $\mu\in (\h_A^*)^0$ denotes the trace of the restriction of $f$ to the weight space $(M)_\mu$.

		\begin{lem}\label{lem twining char product}
			We have $
			\mathrm{Tr}_{M\otimes M'}^{\varpi}(f\otimes f')=(\mathrm{Tr}_{M}^{\varpi}f)\cdot(\mathrm{Tr}_{M'}^{\varpi}f')$.\qed
		\end{lem}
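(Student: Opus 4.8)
The plan is to reduce the identity to the action of $f\otimes f'$ on the weight spaces of $M\otimes M'$ and to the elementary fact that a block-permuting operator contributes to its trace only through the blocks it maps to themselves. First I would record two structural observations. Since $\varpi$ restricts to a linear map on $\h_A$, the induced map $\la\mapsto\la^\star$ on $\h_A^*$ is linear, so $(\mu+\mu')^\star=\mu^\star+\mu'^\star$; in particular the sum of two symmetric weights is again symmetric. Second, the tensor product carries the weight decomposition $(M\otimes M')_\nu=\bigoplus_{\mu+\mu'=\nu}(M)_\mu\otimes(M')_{\mu'}$, and by the twining hypotheses $f((M)_\mu)\subseteq(M)_{\mu^\star}$ and $f'((M')_{\mu'})\subseteq(M')_{\mu'^\star}$, the operator $f\otimes f'$ sends the summand indexed by $(\mu,\mu')$ into the summand indexed by $(\mu^\star,\mu'^\star)$.

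Next I would fix a symmetric weight $\nu\in(\h_A^*)^0$ and compute the trace of the restriction $(f\otimes f')|_{(M\otimes M')_\nu}$. Viewing $f\otimes f'$ as an operator permuting the summands $(M)_\mu\otimes(M')_{\mu'}$, only those summands sent to themselves contribute to the trace; these are exactly the pairs with $\mu^\star=\mu$ and $\mu'^\star=\mu'$, that is $\mu,\mu'\in(\h_A^*)^0$. On such a fixed summand the restriction of $f\otimes f'$ is $f|_{(M)_\mu}\otimes f'|_{(M')_{\mu'}}$, whose trace is the product $\mathrm{Tr}(f|_{(M)_\mu})\,\mathrm{Tr}(f'|_{(M')_{\mu'}})$. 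Hence
\[
\mathrm{Tr}\big((f\otimes f')|_{(M\otimes M')_\nu}\big)=\sum_{\substack{\mu+\mu'=\nu\\ \mu,\mu'\in(\h_A^*)^0}}\mathrm{Tr}(f|_{(M)_\mu})\,\mathrm{Tr}(f'|_{(M')_{\mu'}}).
\]

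Finally I would assemble the generating sum. Multiplying by $e(\nu)$, summing over symmetric $\nu$, and using $e(\mu)e(\mu')=e(\mu+\mu')$ together with the first observation (that $\mu+\mu'$ is symmetric whenever $\mu,\mu'$ are), the right-hand side reorganizes into $\big(\sum_{\mu\in(\h_A^*)^0}\mathrm{Tr}(f|_{(M)_\mu})e(\mu)\big)\big(\sum_{\mu'\in(\h_A^*)^0}\mathrm{Tr}(f'|_{(M')_{\mu'}})e(\mu')\big)$, which is precisely $(\mathrm{Tr}_M^{\varpi}f)\cdot(\mathrm{Tr}_{M'}^{\varpi}f')$. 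The step I expect to require the most care, though it is modest, is the vanishing of the off-diagonal contributions: one must argue that a pair $(\mu,\mu')$ in which $\mu$ or $\mu'$ fails to be symmetric contributes nothing to the trace \emph{even when} $\mu+\mu'$ happens to be symmetric, so that the set of contributing indices is exactly the pairs of symmetric weights rather than the pairs whose sum is symmetric.
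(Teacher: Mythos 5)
Your proof is correct, and it coincides with the argument the paper leaves implicit: the lemma is stated there without proof, marked as immediate, and the standard justification is precisely your block decomposition $(M\otimes M')_\nu=\bigoplus_{\mu+\mu'=\nu}(M)_\mu\otimes(M')_{\mu'}$ together with the observation that $f\otimes f'$ permutes the summands and only the fixed blocks (those with $\mu,\mu'\in(\h_A^*)^0$) contribute to the trace. The point you single out for care --- that a pair $(\mu,\mu')$ with $\mu+\mu'$ symmetric but $\mu$ or $\mu'$ non-symmetric contributes nothing, since $f\otimes f'$ moves that summand off itself --- is exactly the detail worth making explicit, and your linearity argument for $\varpi^*$ (so that sums of symmetric weights are symmetric) correctly underpins the final factorization of the generating sum.
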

		Let $\bLa=(\la^{(1)},\dots,\la^{(n)})$ be a sequence of dominant integral $\g_N$-weights, then the tuple $\bLa^A=(\la_A^{(1)},\dots,\la_A^{(n)})$ is a sequence of symmetric dominant integral $\sln_N$-weights. Let $V_{\bLa^A}=\bigotimes_{s=1}^n V_{\la_A^{(s)}}$. The tensor product of maps $\mc I_\varpi$ in \eqref{eq twining map} with respect to $\la_A^{(s)}$, $s=1,\dots,n$,  gives a linear isomorphism
		\beq\label{eq tensor-twin}
		\mc I_\varpi:V_{\bLa^A}\to V_{\bLa^A},
		\eeq
		of $\sln_{N}$-modules. Note that the map $\mc I_\varpi$ preserves the weight spaces with symmetric weights and the corresponding spaces of singular vectors. In particular,
		$(V_{\bLa^A})^{\sln_N}$ is invariant under $\mc I_\varpi$.
		\begin{lem}\label{lem dimension of singular space}
			Let $\mu$ be a $\g_N$-weight. Then we have $$\dim(V_\bLa)^{\sing}_\mu = \Tr\big({\mc I_\varpi|_{ (V_{\bLa^A})^{\sing}_{\mu_A}}}\big),\qquad \dim(V_\bLa)_\mu = \Tr\big({\mc I_\varpi|_{ (V_{\bLa^A})_{\mu_A}}}\big).$$
			In particular,
			$\dim(V_\bLa)^{\g_N} = \Tr\big({\mc I_\varpi|_{ (V_{\bLa^A})^{\sln_N}}}\big)$.
		\end{lem}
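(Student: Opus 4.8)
The plan is to read the statement as a twining-character identity. Identifying $\g_N$ with the orbit Lie algebra of the pair $(\sln_N,\varpi)$, the assertion is that the $\varpi$-twining character of the symmetric-highest-weight $\sln_N$-module $V_{\bLa^A}$ coincides, weight by weight through the isomorphism $P_\varpi^*$ of \eqref{eq A weight}, with the ordinary $\g_N$-character of $V_\bLa$, and likewise for multiplicities of irreducibles. First I would dispose of the weight-space equality $\dim(V_\bLa)_\mu=\Tr(\mc I_\varpi|_{(V_{\bLa^A})_{\mu_A}})$ by multiplicativity: by Lemma \ref{lem twining char product} one has $\Tr^\varpi_{V_{\bLa^A}}(\mc I_\varpi)=\prod_{s=1}^n \Tr^\varpi_{V_{\la^{(s)}_A}}(\mc I_\varpi)$, while $\mathrm{ch}_{\g_N}V_\bLa=\prod_{s=1}^n\mathrm{ch}_{\g_N}V_{\la^{(s)}}$; since $P_\varpi^*$ is linear, the tensor-product identity reduces to the single-module case.

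The single-module case is the crux. Here one must show, for a symmetric dominant weight $\la_A$, that $\Tr(\mc I_\varpi|_{(V_{\la_A})_{\mu_A}})=\dim(V_\la)_\mu$, the right side being the weight multiplicity in the irreducible $\g_N$-module $V_\la$. On $\sln_N$ the map $\varpi$ is the standard Dynkin diagram automorphism $E_i\mapsto E_{N-i}$, $F_i\mapsto F_{N-i}$, and $\mc I_\varpi$ is normalized in \eqref{eq twining map} to fix the highest weight vector; the twining character theorem (Kac; Fuchs--Schellekens--Schweigert; Jantzen) then identifies the left side with the character of the irreducible module of the orbit Lie algebra. The one fact to verify is that this orbit Lie algebra is $\g_N$ --- equivalently, the Langlands dual of the fixed-point subalgebra $(\sln_N)^\varpi$, which is $C_r$ for $N=2r$ and $B_r$ for $N=2r+1$ --- and that the resulting weight bijection is exactly $P_\varpi^*$. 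I expect this identification, together with invoking the twining character theorem, to be the main obstacle; once granted, the weight version follows.

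It remains to upgrade the weight version to the singular-vector version $\dim(V_\bLa)^{\sing}_\mu=\Tr(\mc I_\varpi|_{(V_{\bLa^A})^{\sing}_{\mu_A}})$. I would decompose the $\sln_N$-module into isotypic components $V_{\bLa^A}=\bigoplus_\kappa V_\kappa\otimes M_\kappa$. Because $\mc I_\varpi$ is $\varpi$-twisted equivariant, see \eqref{eq tensor-twin}, it carries the $\kappa$-component to the $\kappa^\star$-component and hence contributes to any trace only through the symmetric components $\kappa=\mu_A$, on which it factors as $\mc I_\varpi^{\mu_A}\otimes\phi_{\mu_A}$, where $\phi_{\mu_A}$ is its action on the multiplicity space $M_{\mu_A}\cong(V_{\bLa^A})^{\sing}_{\mu_A}$. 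Combining this factorization with the single-module identity of the previous paragraph gives $\Tr(\mc I_\varpi|_{(V_{\bLa^A})_{\mu_A}})=\sum_{\nu}\Tr(\mc I_\varpi|_{(V_{\bLa^A})^{\sing}_{\nu_A}})\,\dim(V_\nu)_\mu$, the sum over dominant $\g_N$-weights $\nu$. Comparing with the $\g_N$-isotypic expansion $\dim(V_\bLa)_\mu=\sum_\nu \dim(V_\bLa)^{\sing}_\nu\,\dim(V_\nu)_\mu$ and using the already established weight identity, the linear independence of the $\g_N$-characters $\mathrm{ch}(V_\nu)$ forces $\Tr(\mc I_\varpi|_{(V_{\bLa^A})^{\sing}_{\nu_A}})=\dim(V_\bLa)^{\sing}_\nu$ for every $\nu$. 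The final assertion is the case $\mu=0$, since a weight-zero $\g_N$-singular vector is the same as a $\g_N$-invariant and $0_A=0$.
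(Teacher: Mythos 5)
Your proposal is correct and takes essentially the same route as the paper: the paper's one-line proof cites exactly your two ingredients, namely the multiplicativity of twining traces (Lemma \ref{lem twining char product}) and the Fuchs--Schellekens--Schweigert twining character theorem (Theorem 1 of Section 4.4 of \cite{FSS}), whose orbit Lie algebra for $(\sln_N,\varpi)$ is indeed $\g_N$ under the weight identification $P_\varpi^*$ of \eqref{eq A weight}. Your final isotypic-decomposition and linear-independence-of-characters argument, upgrading the weight-multiplicity identity to the singular-vector identity, is precisely the step the paper leaves implicit in its ``follows from'' citation, and your Schur-lemma factorization $\mc I_\varpi|_{V_{\nu_A}\otimes M_{\nu_A}}=\mc I_\varpi^{\nu_A}\otimes\phi_{\nu_A}$ fills it in correctly.
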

		
		\begin{proof}
			The statement follows from Lemma \ref{lem twining char product} and Theorem 1 of Section 4.4 of \cite{FSS}.
		\end{proof}
		
		\subsection{Action of $\varpi$ on the Bethe algebra}
		The automorphism $\varpi$ is extended to the automorphism of current algebra $\gl_N[t]$ by the formula $\varpi(g\otimes t^s)=\varpi(g)\otimes t^s$, where $g\in\gl_N$ and $s=0,1,2,\dots\ $. Recall the operator $\cDB$, see \eqref{eq rdet expanded}.
		\begin{prop}\label{prop B invol}
			We have the following identity
			\[
			\varpi(\mc D^{\mc B})=\pa_x^N+\sum_{i=1}^N (-1)^i\pa_x^{N-i}B_i(x).
			\]
		\end{prop}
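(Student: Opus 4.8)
The plan is to turn the statement into an identity of row determinants and then reduce it to a transposition symmetry of the Bethe algebra. Since $\varpi$ is an algebra automorphism of $\mc U(\gl_N[t])$ that fixes $\pa_x$, it commutes with the formation of $\rdet$ (the defining ordered product of entries is preserved), so
\[
\varpi(\cDB)=\rdet\big(\delta_{ij}\pa_x-\varpi(e_{ji}(x))\big)_{i,j=1}^N,\qquad \varpi(e_{ji}(x))=(-1)^{j-i-1}e_{N+1-i,N+1-j}(x).
\]
I would then conjugate this matrix by the diagonal sign matrix $S=\mathrm{diag}((-1)^1,\dots,(-1)^N)$: this multiplies each entry by the central factor $(-1)^{i-j}$ and leaves the row determinant unchanged, because in every term the total sign is $(-1)^{\sum_k(k-\sigma(k))}=1$. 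This absorbs all the signs and gives
\[
\varpi(\cDB)=\rdet\big(\delta_{ij}\pa_x+e_{N+1-i,N+1-j}(x)\big)_{i,j=1}^N=:\rdet P.
\]

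Next I would introduce the transposition (Cartan) anti-automorphism $\omega$ of $\mc U(\gl_N[t])$, given by $e_{ij}\otimes t^s\mapsto e_{ji}\otimes t^s$, and extend it to the anti-automorphism $\theta$ of the algebra of differential operators determined by $\theta|_{\mc U(\gl_N[t])[[x^{-1}]]}=\omega$ and $\theta(\pa_x)=-\pa_x$ (this is consistent with $\pa_x a=a\pa_x+a'$). Because $\theta$ reverses ordered products, applying it to $\cDB=\rdet(\delta_{ij}\pa_x-e_{ji}(x))$ and using $\theta(\delta_{ij}\pa_x-e_{ji}(x))=-(\delta_{ij}\pa_x+e_{ij}(x))$ converts the row determinant into a reverse-ordered one. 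Reindexing by the reflection $i\mapsto N+1-i$, which is a conjugation of permutations and hence preserves signs, identifies that reverse-ordered determinant with $\rdet P$, so that
\[
\theta(\cDB)=(-1)^N\rdet P=(-1)^N\varpi(\cDB).
\]

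On the other hand, writing $\cDB=\sum_{k=0}^N a_k\pa_x^k$ in standard form and applying the anti-automorphism $\theta$ directly gives $\theta(\cDB)=\sum_{k=0}^N(-1)^k\pa_x^k\,\omega(a_k)$, whereas unwinding the definition of the formal conjugate in the statement gives $(-1)^N(\cDB)^*=\sum_{k=0}^N(-1)^k\pa_x^k\,a_k$. Comparing these two expansions, the desired identity $\varpi(\cDB)=(\cDB)^*$ is exactly equivalent to $\omega(a_k)=a_k$ for all $k$, i.e. to the invariance of the coefficients $B_{ij}$ of $\cDB$ — the generators of the Bethe algebra $\cB$ — under the transposition anti-automorphism $\omega$.

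This last point is the crux and the step I expect to be the main obstacle: all the rest is bookkeeping of signs and reflections. I would establish $\omega|_{\cB}=\mathrm{id}$ from the transposition symmetry of the $\gl_N$ Bethe algebra, namely that the higher Gaudin Hamiltonians are fixed by the Cartan anti-involution (a property of the Feigin--Frenkel center, cf.\ \cite{T}); alternatively one can prove $\omega(B_{ij})=B_{ij}$ directly by the Manin-matrix calculus for $\delta_{ij}\pa_x-e_{ji}(x)$, tracking how its row determinant behaves under the combined transposition and order reversal. Granting $\omega|_{\cB}=\mathrm{id}$, the two computations of $\theta(\cDB)$ coincide and yield $\varpi(\cDB)=(\cDB)^*$, as claimed.
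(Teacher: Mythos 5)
Your sign and order bookkeeping is correct and is a clean reformulation of the statement: $\varpi$ commutes with the formation of $\rdet$; conjugation by $\mathrm{diag}((-1)^1,\dots,(-1)^N)$ multiplies each entry by the central scalar $(-1)^{i-j}$ and leaves every $\rdet$ term unchanged, giving $\varpi(\cDB)=\rdet\bigl(\delta_{ij}\pa_x+e_{N+1-i,N+1-j}(x)\bigr)$; the anti-automorphism $\theta$ with $\theta(\pa_x)=-\pa_x$ and $\theta|_{\mc U(\gl_N[t])}=\omega$ is consistent with $\pa_x a=a\pa_x+a'$; and the reflection $i\mapsto N+1-i$, being conjugation by the longest permutation, preserves signs, so $\theta(\cDB)=(-1)^N\varpi(\cDB)$. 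Comparing the two right-coefficient expansions is also legitimate (the $\pa_x^k$ form a free right-module basis over the coefficient algebra), and it correctly shows that the proposition is equivalent to $\omega(B_i(x))=B_i(x)$ for all $i$.

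But that is exactly where your proof stops being a proof: by your own chain of equalities, the $\omega$-invariance of the $B_{ij}$ is not merely ``the crux'' --- it is logically \emph{equivalent} to the proposition, so asserting it ``from the transposition symmetry of the Bethe algebra'' assumes what is to be proved unless you supply an independent argument. The reference you offer does not supply one: \cite{T} constructs $\cDB$ and proves commutativity of its coefficients, not their invariance under the transposition anti-involution, and ``a property of the Feigin--Frenkel center'' is not, as stated, a citable fact. The paper fills precisely this hole with an actual computation: it first reduces to the faithful family of modules $\bigotimes_{s=1}^n L(z_s)$ (no nonzero element of $\mc U(\gl_N[t])$ annihilates all of them, by Lemma 3.5 of \cite{BHLW}), and then verifies an explicit Capelli-type row-determinant identity --- a variant of Theorem 2.1 of \cite{MTV6} with a modified normal ordering --- whose right-hand side is manifestly symmetric in the required way. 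Your alternative suggestion, a direct Manin-matrix computation for $\delta_{ij}\pa_x-e_{ji}(x)$, is a plausible route to $\omega(B_{ij})=B_{ij}$, but it would be a combinatorial argument of essentially the same substance as the paper's, and you have not carried it out. The invariance itself is true (indeed it follows a posteriori from the proposition, and it is the algebraic form of the symmetry of the higher Gaudin Hamiltonians with respect to tensor Shapovalov forms), so your argument can be salvaged --- but only by either performing that computation or citing a source that genuinely proves the transposition invariance, neither of which the proposal does.
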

		
		\begin{proof}
			It follows from the proof of Lemma 3.5 of \cite{BHLW} that no nonzero elements of $\mc U(\gl_N[t])$ kill all $\bigotimes_{s=1}^n L(z_s)$ for all $n\in \Z_{>0}$ and all $z_1,\dots, z_n$. It  suffices to show the identity when it evaluates on $\bigotimes_{s=1}^n L(z_s)$. 
			
			Following the convention of \cite{MTV6}, define the $N\times N$ matrix
			$\mc G_h=\mc G_h(N,n,x,p_x,\bs z,\bs \la,X,P)$ by the formula
			\[
			\mc G_h:=
			\Big(\,(p_x-\la_i)\,\delta_{ij}+
			\sum_{a=1}^n(-1)^{i-j}\frac{x_{N+1-i,a}p_{N+1-j,a}}{x-z_a}\,\Big)_{i,j=1}^N.
			\]By Theorem 2.1 of \cite{MTV6}, it suffices to show that
			\beq\label{eq rdet identity}
			\on{rdet}(\mc G_h)\prod_{a=1}^n(x-z_a)=\hspace{-10pt}
			\sum_{A,B, |A|=|B|} \prod_{b\not \in A}(p_x-\la_b)\prod_{a\not\in B}(x-z_a)
			\det(x_{ab})_{a\in A}^{b\in B} \
			\det(p_{ab})_{a\in A}^{b\in B}.
			\eeq
			The proof of \eqref{eq rdet identity} is similar to the proof of Theorem 2.1 in \cite{MTV6} with the following modifications.
			
			Let $m$ be a product whose factors are of the form $f(x)$,
			$p_x$, $p_{ij}$, $x_{ij}$ where $f(x)$ is a rational function in $x$. Then the product $m$ will be called \emph{normally ordered} if all factors
			of the form $p_x$, $x_{ij}$ are on the left from all factors of the form $f(x)$, $p_{ij}$.
			
			Correspondingly, in Lemma 2.4 of \cite{MTV6}, we put the normal order for the first $i$ factors of each summand. 
		\end{proof}

		We have the following corollary of Proposition \ref{prop B invol}.
		
		\begin{cor} The $\gl_N$ Bethe algebra $\mc B$ is invariant under $\varpi$, that is $\varpi (\mc B)= \mc B$.\qed
	\end{cor}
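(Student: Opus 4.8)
The plan is to deduce the corollary directly from Proposition \ref{prop B invol} together with the elementary fact that $\varpi^2=\id$ on $\gl_N[t]$. Recall that $\cB$ is the unital subalgebra of $\mc U(\gl_N[t])$ generated by the coefficients $B_{ij}$ of the series $B_i(x)=\sum_{j\gge i}B_{ij}x^{-j}$, which in turn are the coefficients of the universal operator $\cDB=\pa_x^N+\sum_{i=1}^N B_i(x)\pa_x^{N-i}$. Since $\varpi$ acts only on the $\mc U(\gl_N[t])$-part and commutes with $\pa_x$ and with multiplication by functions of $x$, applying $\varpi$ coefficientwise gives $\varpi(\cDB)=\pa_x^N+\sum_{i=1}^N\varpi(B_i(x))\pa_x^{N-i}$, where $\varpi(B_i(x))=\sum_{j\gge i}\varpi(B_{ij})x^{-j}$.

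First I would rewrite the right-hand side of Proposition \ref{prop B invol} in standard (left-coefficient) form. Moving each $\pa_x^{N-i}$ past $B_i(x)$ by the Leibniz rule, $\pa_x^{N-i}B_i(x)=\sum_{k=0}^{N-i}\binom{N-i}{k}\big(\pa_x^k B_i(x)\big)\pa_x^{N-i-k}$, where $\pa_x^k B_i(x)$ denotes the $k$-th $x$-derivative of the series $B_i(x)$ taken with its $\mc U(\gl_N[t])$-coefficients held constant. Collecting powers of $\pa_x$ shows that the coefficient of $\pa_x^{N-i}$ in $\pa_x^N+\sum_{i=1}^N(-1)^i\pa_x^{N-i}B_i(x)$ equals $\sum_{i'=1}^{i}(-1)^{i'}\binom{N-i'}{i-i'}\pa_x^{i-i'}B_{i'}(x)$, a fixed combination of the $x$-derivatives of $B_1(x),\dots,B_i(x)$. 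Since differentiating a series $B_{i'}(x)=\sum_m B_{i'm}x^{-m}$ in $x$ only rescales its coefficients by scalars and shifts powers of $x$, every such coefficient series has all of its $x$-coefficients lying in the $\C$-span of the generators $\{B_{kl}\}$.

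Comparing this expression with $\varpi(\cDB)=\pa_x^N+\sum_{i=1}^N\varpi(B_i(x))\pa_x^{N-i}$ coefficient by coefficient, first in $\pa_x$ and then in $x$, I would conclude that each $\varpi(B_{ij})$ is a $\C$-linear combination of the generators $B_{kl}$. Hence $\varpi(B_{ij})\in\cB$ for all $i,j$. Because $\varpi$ is an algebra automorphism, $\varpi(\cB)$ is the subalgebra generated by the elements $\varpi(B_{ij})$, and therefore $\varpi(\cB)\subseteq\cB$.

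Finally, since the diagram automorphism satisfies $\varpi^2=\id$ on $\gl_N$ (one checks $e_{ij}\mapsto(-1)^{i-j-1}e_{N+1-j,N+1-i}\mapsto e_{ij}$) and this extends to $\gl_N[t]$, applying $\varpi$ to the inclusion $\varpi(\cB)\subseteq\cB$ gives $\cB=\varpi^2(\cB)\subseteq\varpi(\cB)$, so $\varpi(\cB)=\cB$. The only real work is the bookkeeping of the second step, converting the right-multiplication form of $\varpi(\cDB)$ supplied by Proposition \ref{prop B invol} into left-coefficient form; this is routine once the genuinely substantive identity of Proposition \ref{prop B invol} is in hand, as that proposition carries all the essential difficulty.
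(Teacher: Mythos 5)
Your proof is correct and follows exactly the route the paper intends: the paper states the corollary with no separate argument (marked \qed) precisely because it is the immediate consequence of Proposition \ref{prop B invol} that you spell out, namely rewriting $\pa_x^N+\sum_{i=1}^N(-1)^i\pa_x^{N-i}B_i(x)$ in left-coefficient form via Leibniz to see that each $\varpi(B_{ij})$ is a $\C$-linear combination of the generators $B_{kl}$, then upgrading $\varpi(\cB)\subseteq\cB$ to equality using $\varpi^2=\id$.
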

	Let $\bLa=(\la^{(1)},\dots,\la^{(n)})$ be a sequence of partitions with at most $N$ parts and $\bm z=(z_1,\dots,z_n)\in {\mathring{\bP}}_n$. 
	
	Let $v\in (V_{\bLa,\bm z})^{\sln_N}$ be an eigenvector of the $\gl_N$ Bethe algebra $\mc B$. Denote the $\varpi(\mc D^{\mc B})_v$ the scalar differential operator obtained by acting by the formal operator $\varpi(\mc D^{\mc B})$ on $v$.
		
		\begin{cor}\label{cor omega}
			Let $v\in (V_{\bLa,\bm z})^{\sln_N}$ be a common eigenvector of the $\gl_N$ Bethe algebra; then the identity $\varpi(\mc D^{\mc B})_v=\big(\mc D_v\big)^*$ holds.\qed
		\end{cor}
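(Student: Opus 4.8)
The plan is to derive the identity directly from Proposition \ref{prop B invol} together with the definitions of $\mc D_v$ and of the formal conjugate, so that essentially no computation is needed beyond keeping track of where the operator-valued coefficients sit relative to the powers of $\pa_x$.

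First I would record the relevant definitions in a compatible form. Writing $B_i(x)v=h_i(x)v$ for the eigenvalues of the coefficients of $\cDB$ on the common $\mc B$-eigenvector $v$, the associated scalar operator is $\mc D_v=\pa_x^N+\sum_{i=1}^N h_i(x)\pa_x^{N-i}$, and its formal conjugate is, by definition, $(\mc D_v)^*=\pa_x^N+\sum_{i=1}^N(-1)^i\pa_x^{N-i}h_i(x)$. Thus the target identity amounts to showing that acting with $\varpi(\cDB)$ on $v$ produces precisely the right-hand side of this last expression.

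Next I would invoke Proposition \ref{prop B invol}, which rewrites the formal operator as $\varpi(\cDB)=\pa_x^N+\sum_{i=1}^N(-1)^i\pa_x^{N-i}B_i(x)$. The key feature of this form is that the operator-valued coefficients $B_i(x)$ already stand to the right of the powers of $\pa_x$, matching exactly the placement of the coefficients $h_i(x)$ in $(\mc D_v)^*$. Acting on $v$ then replaces each $B_i(x)$ by its eigenvalue $h_i(x)$, which gives $\varpi(\cDB)_v=\pa_x^N+\sum_{i=1}^N(-1)^i\pa_x^{N-i}h_i(x)=(\mc D_v)^*$, as desired. Note that one does not need the eigenvalues of the coefficients $\varpi(B_i(x))$ of the ``naive'' expression $\pa_x^N+\sum_i\varpi(B_i(x))\pa_x^{N-i}$ (which exist by the preceding corollary $\varpi(\mc B)=\mc B$); Proposition \ref{prop B invol} lets one bypass them entirely.

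The one point that needs care, and which I would treat as the main (if modest) obstacle, is the well-definedness of ``acting on $v$'' when the coefficients sit to the right of $\pa_x$. Since $\pa_x$ does not commute with $B_i(x)$, one must check that reducing $\pa_x^{N-i}B_i(x)$ to standard left-coefficient form and then substituting eigenvalues agrees with substituting eigenvalues directly and interpreting $\pa_x^{N-i}h_i(x)$ as a scalar operator. This holds because the eigenvector relation is preserved under $x$-differentiation: from $B_{ij}v=h_{ij}v$ for all coefficients, with $h_i(x)=\sum_j h_{ij}x^{-j}$, one obtains $B_i^{(k)}(x)v=h_i^{(k)}(x)v$ for every $k$, so the two orders of operation yield the same scalar differential operator. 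With this observed, the identity $\varpi(\cDB)_v=(\mc D_v)^*$ is immediate.
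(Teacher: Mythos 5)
Your proposal is correct and follows the same route as the paper, which states Corollary \ref{cor omega} with no proof precisely because it is immediate from Proposition \ref{prop B invol} in the way you describe: acting on $v$ replaces each $B_i(x)$ by its eigenvalue $h_i(x)$ in the right-coefficient form $\pa_x^N+\sum_{i=1}^N(-1)^i\pa_x^{N-i}B_i(x)$, which is by definition $(\mc D_v)^*$. Your extra check that the two normal orderings agree --- via $B_i^{(k)}(x)v=h_i^{(k)}(x)v$, valid since the coefficients $B_{ij}$ are $x$-independent elements of $\mc B$ --- is a correct and sensible elaboration of a point the paper leaves implicit.
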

		
		 Let $\bm\Xi=(\xi^{(1)},\dots,\xi^{(n)})$ be a sequence of $N$-tuples of integers. Suppose
		\[\xi^{(s)}-\la^{(s)}=m_s(1,\dots,1),\quad s=1,\dots,n.\]
		Define the following rational functions depending on $m_s$, $s=1,\dots,n$,
		\[\varphi(x)=\prod_{s=1}^n(x-z_s)^{m_s},\quad \psi(x)=\ln'(\varphi(x))=\sum_{s=1}^n\frac{m_s}{x-z_s}.\]Here we use the convention that $1/(x-z_s)$ is considered as the constant function $0$ if $z_s=\infty$.
		\begin{lem}\label{lem shifted auto}
			For any formal power series $a(x)$ in $x^{-1}$ with complex coefficients, the linear map obtained by sending $e_{ij}(x)$ to $e_{ij}(x)+\delta_{ij}a(x)$ induces an automorphism of $\gl_N[t]$.\qed
		\end{lem}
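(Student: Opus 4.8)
The plan is to verify directly that the proposed map preserves the defining relations of $\gl_N[t]$ and is invertible. Write $\Phi$ for the linear map determined on generating series by $\Phi\colon e_{ij}(x)\mapsto e_{ij}(x)+\delta_{ij}a(x)$, i.e. on generators $\Phi(e_{ij}\otimes t^s)=e_{ij}\otimes t^s+\delta_{ij}a_s$, where $a(x)=\sum_{s\gge 0}a_sx^{-s-1}$. For $i\ne j$ the map is the identity, and on the diagonal it shifts each $e_{ii}(x)$ by the \emph{scalar} series $a(x)$; consequently the added terms are central, $\Phi$ fixes $\sln_N[t]$ (since $\Phi(e_{ii}(x)-e_{jj}(x))=e_{ii}(x)-e_{jj}(x)$), and it moves only the trace current $\sum_i e_{ii}(x)$, sending it to $\sum_i e_{ii}(x)+Na(x)$. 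The one bookkeeping point requiring care is that the diagonal shift produces scalar contributions, which are to be read as central elements; the substantive content is the preservation of brackets, which I address next.

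Since $\gl_N[t]$ is determined by the relation $(x_2-x_1)[e_{ij}(x_1),e_{sk}(x_2)]=\delta_{js}(e_{ik}(x_1)-e_{ik}(x_2))-\delta_{ik}(e_{sj}(x_1)-e_{sj}(x_2))$, it suffices to check that the images $\Phi(e_{ij}(x_1))$ and $\Phi(e_{sk}(x_2))$ satisfy the same relation. On one side, because $a(x_1)$ and $a(x_2)$ are central, $[\Phi(e_{ij}(x_1)),\Phi(e_{sk}(x_2))]=[e_{ij}(x_1),e_{sk}(x_2)]$, so the left-hand side is unchanged. On the other side, applying $\Phi$ to the right-hand side replaces $e_{ik}(x_\bullet)$ by $e_{ik}(x_\bullet)+\delta_{ik}a(x_\bullet)$ and $e_{sj}(x_\bullet)$ by $e_{sj}(x_\bullet)+\delta_{sj}a(x_\bullet)$, which introduces the single extra term $(\delta_{js}\delta_{ik}-\delta_{ik}\delta_{sj})(a(x_1)-a(x_2))$. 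This vanishes identically because $\delta_{js}\delta_{ik}=\delta_{ik}\delta_{sj}$, so the relation is preserved and $\Phi$ is a Lie algebra homomorphism.

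Invertibility is then immediate: the map $\Phi'$ defined in the same way by $e_{ij}(x)\mapsto e_{ij}(x)-\delta_{ij}a(x)$ satisfies $\Phi\circ\Phi'=\Phi'\circ\Phi=\id$, so $\Phi$ is an automorphism. The only genuinely delicate point is the cancellation $\delta_{js}\delta_{ik}-\delta_{ik}\delta_{sj}=0$ together with the observation that the diagonal shift is by a central scalar series, so that it drops out of the bracket of images; everything else is a one-line verification. (In the applications the relevant choice is $a(x)=\psi(x)=\sum_s m_s/(x-z_s)$, expanded as a power series in $x^{-1}$, so that $\Phi$ implements the weight shift $\la^{(s)}\mapsto\la^{(s)}+m_s(1,\dots,1)$ recorded before the lemma.)
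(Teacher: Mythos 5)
Your proof is correct, and it is exactly the routine verification the authors intended: the paper states this lemma with an immediate \qed and no written proof, treating as evident that the bracket relation $(x_2-x_1)[e_{ij}(x_1),e_{sk}(x_2)]=\delta_{js}(e_{ik}(x_1)-e_{ik}(x_2))-\delta_{ik}(e_{sj}(x_1)-e_{sj}(x_2))$ survives the shift because the added scalar series is central and the extra terms cancel via $\delta_{js}\delta_{ik}=\delta_{ik}\delta_{sj}$, with the inverse given by the shift by $-a(x)$. You also correctly flag the one genuine subtlety the paper's phrasing glosses over --- the shifted image $e_{ij}\otimes t^s+\delta_{ij}a_s$ lives in $\mc U(\gl_N[t])$ (or a trivial central extension) rather than in $\gl_N[t]$ itself, which is consistent with how $\eta_{a(x)}$ is actually used in Lemmas \ref{lem shifted module} and \ref{lem conjugation operator} to twist modules and conjugate $\cDB$.
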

		We denote the automorphism  in Lemma \ref{lem shifted auto} by $\eta_{a(x)}$.
	
		\begin{lem}\label{lem shifted module}
			The $\cB$-module obtained by pulling $V_{\bLa,\bm z}$ via $\eta_{\psi(x)}$ is isomorphic to $V_{\bm\Xi,\bm z}$.\qed
		\end{lem}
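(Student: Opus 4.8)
The plan is to establish the stronger fact that, when every $z_s$ is finite, the pulled-back module $\eta_{\psi(x)}^*V_{\bLa,\bm z}$ is isomorphic to $V_{\bm\Xi,\bm z}$ already as a $\gl_N[t]$-module; since $\cB\subset\mc U(\gl_N[t])$, the asserted isomorphism of $\cB$-modules is then immediate. First I would unwind the action of the currents on the pullback: by definition of the pullback and of $\eta_{\psi(x)}$, the series $e_{ij}(x)$ acts on $\eta_{\psi(x)}^*V_{\bLa,\bm z}$ the way $\eta_{\psi(x)}(e_{ij}(x))=e_{ij}(x)+\delta_{ij}\psi(x)$ acts on $V_{\bLa,\bm z}$, that is, as
\[
\sum_{s=1}^n\frac{e_{ij}^{(s)}+\delta_{ij}m_s}{x-z_s},
\]
where $e_{ij}^{(s)}$ denotes the copy of $e_{ij}$ in the $s$-th tensor factor and I have used $\psi(x)=\sum_s m_s/(x-z_s)$.

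The representation-theoretic input is the determinant twist. Let $D=\Lambda^N L$ be the one-dimensional determinant representation, on which $e_{ij}$ acts by the scalar $\delta_{ij}$. Tensoring an irreducible module with the line $D^{\otimes m_s}$ preserves irreducibility and shifts the highest weight by $m_s(1,\dots,1)$, so $V_{\xi^{(s)}}\cong V_{\la^{(s)}}\otimes D^{\otimes m_s}$; fixing a generator of $D^{\otimes m_s}$ yields a vector-space isomorphism $\phi_s\colon V_{\la^{(s)}}\to V_{\xi^{(s)}}$ that carries the operator $e_{ij}+\delta_{ij}m_s$ on $V_{\la^{(s)}}$ to $e_{ij}$ on $V_{\xi^{(s)}}$. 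Consequently $e_{ij}(x)$ acts on $V_{\bm\Xi,\bm z}=\bigotimes_s V_{\xi^{(s)}}(z_s)$ by $\sum_s e_{ij}^{(s)}/(x-z_s)$, which the tensor-product isomorphism $\Phi=\bigotimes_s\phi_s$ identifies with the displayed expression above. Thus $\Phi$ intertwines every current $e_{ij}(x)$, hence every generator $e_{ij}\otimes t^p$ of $\gl_N[t]$, and is the desired isomorphism.

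It is conceptually clarifying, and useful for the surrounding oper/Bethe correspondence, to record the same phenomenon at the level of the universal operator: since $\psi=\ln'\varphi$ gives $\pa_x-\psi=\varphi\,\pa_x\,\varphi^{-1}$, and the scalar $\varphi$ commutes with each $e_{ji}(x)$, conjugating every factor of the row-determinant yields
\[
\eta_{\psi(x)}(\cDB)=\rdet\big(\delta_{ij}(\pa_x-\psi)-e_{ji}(x)\big)_{i,j=1}^N=\varphi\cdot\cDB\cdot\varphi^{-1},
\]
which shows that $\eta_{\psi(x)}$ preserves $\cB$ and transports its action by conjugation by $\varphi$. The one point demanding care—and the main (if minor) obstacle—is a coordinate $z_s=\infty$: there the term $m_s/(x-z_s)$ drops out of $\psi$ by convention and $e_{ij}(x)$ acts by $0$ on that factor, so $\cB$ sees $V_{\la^{(s)}}(\infty)$ merely as a trivial multiplicity space. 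Because the determinant twist preserves dimension, $V_{\la^{(s)}}(\infty)$ and $V_{\xi^{(s)}}(\infty)$ are then isomorphic as trivial $\cB$-modules, and I would finish by applying the finite-$\bm z$ isomorphism to the remaining factors together with an arbitrary linear isomorphism on the factor at infinity.
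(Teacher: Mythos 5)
Your proof is correct and is essentially the argument the paper treats as immediate (the lemma is stated there without proof): pulling back the currents gives $\sum_{s}\bigl(e_{ij}^{(s)}+\delta_{ij}m_s\bigr)/(x-z_s)$, and the determinant twist $V_{\xi^{(s)}}\cong V_{\la^{(s)}}\otimes(\Lambda^N L)^{\otimes m_s}$ intertwines this with the action of $e_{ij}(x)$ on $V_{\bm\Xi,\bm z}$, while your handling of a coordinate $z_s=\infty$ matches the paper's conventions exactly (the term drops from $\psi(x)$ and $\cB$ acts trivially on that tensor factor, so any linear isomorphism there suffices). Note only that your closing identity $\eta_{\psi(x)}(\cDB)=\varphi(x)\,\cDB\,(\varphi(x))^{-1}$ is not needed for this lemma; it is the paper's separate Lemma \ref{lem conjugation operator}, used later for Proposition \ref{prop shift}.
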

		
		By Lemma \ref{lem shifted module}, we can identify the $\cB$-module $V_{\bm\Xi,\bm z}$ with the $\cB$-module $V_{\bLa,\bm z}$ as vector spaces. This identification is an isomorphism of $\sln_N$-modules.
		For $v\in (V_{\bLa,\bm z})^{\sln_N}$ we use $\eta_{\psi(x)}(v)$ to express the same vector in $(V_{\bm\Xi,\bm z})^{\sln_N}$ under this identification. 
		
		\begin{lem}\label{lem conjugation operator}
			The following identity for differential operators holds
			\[
			\eta_{\psi(x)}(\cDB)=\varphi(x)\cDB (\varphi(x))^{-1}.
			\]
		\end{lem}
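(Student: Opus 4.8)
The plan is to show that both sides of the asserted identity equal the row determinant of the single matrix $\big(\delta_{ij}(\pa_x-\psi(x))-e_{ji}(x)\big)_{i,j=1}^N$. The crucial structural observation is that each of the two operations involved — the automorphism $\eta_{\psi(x)}$ and conjugation by the scalar function $\varphi(x)$ — is an algebra homomorphism of the ring of differential operators in $x$ with coefficients in $\mc U(\gl_N[t])[[x^{-1}]]$, and therefore commutes with the formation of the row determinant. Indeed, since
\[
\rdet\,B=\sum_{\sigma\in S_N}(-1)^\sigma b_{1\sigma(1)}b_{2\sigma(2)}\cdots b_{N\sigma(N)}
\]
is a signed sum of ordered products of the matrix entries, any ring homomorphism $\Phi$ satisfies $\Phi(\rdet\,B)=\rdet\big(\Phi(b_{ij})\big)$, because $\Phi$ preserves sums and the left-to-right order of the factors in each monomial. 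Thus the claim reduces to an entry-by-entry comparison of the two operations.

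For the left-hand side, I would first extend $\eta_{\psi(x)}$ from $\gl_N[t]$ to the differential operator ring by declaring it to fix $\pa_x$ and every scalar function of $x$; one checks this is consistent with the relation $\pa_x f=f\pa_x+f'$, so that $\eta_{\psi(x)}$ becomes a ring automorphism. By Lemma \ref{lem shifted auto} its action on the generating series is $\eta_{\psi(x)}(e_{ij}(x))=e_{ij}(x)+\delta_{ij}\psi(x)$. Applying it entrywise to $\cDB=\rdet(\delta_{ij}\pa_x-e_{ji}(x))$ and using $\delta_{ji}=\delta_{ij}$ gives
\[
\eta_{\psi(x)}(\cDB)=\rdet\big(\delta_{ij}\pa_x-e_{ji}(x)-\delta_{ij}\psi(x)\big)
=\rdet\big(\delta_{ij}(\pa_x-\psi(x))-e_{ji}(x)\big).
\]

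For the right-hand side, the key computation is $\varphi(x)\,\pa_x\,\varphi(x)^{-1}=\pa_x-\varphi'(x)/\varphi(x)=\pa_x-\psi(x)$, which holds because $\psi=\ln'\varphi$ by definition, while the operator-valued coefficients $e_{ji}(x)$, being functions of $x$ with coefficients in $\mc U(\gl_N[t])$, commute with multiplication by the scalar $\varphi(x)$ and are hence fixed under conjugation. Consequently conjugation sends each diagonal entry $\pa_x-e_{ii}(x)$ to $(\pa_x-\psi(x))-e_{ii}(x)$ and fixes each off-diagonal entry $-e_{ji}(x)$, so that
\[
\varphi(x)\,\cDB\,\varphi(x)^{-1}=\rdet\big(\delta_{ij}(\pa_x-\psi(x))-e_{ji}(x)\big),
\]
which coincides with the expression already obtained for $\eta_{\psi(x)}(\cDB)$, proving the lemma. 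The only point requiring genuine care — and the step I would watch most closely — is the justification that both maps distribute over the noncommutative ordered product defining $\rdet$; this rests entirely on their being honest algebra automorphisms that fix $\pa_x$ (for $\eta_{\psi(x)}$), respectively transform it as $\pa_x\mapsto\pa_x-\psi(x)$ (for conjugation by $\varphi$), while leaving the order of factors in each monomial of the row determinant intact.
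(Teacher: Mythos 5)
Your proof is correct and takes essentially the same route as the paper: the paper's entire proof is the single diagonal-entry computation $\varphi(x)(\pa_x-e_{ii}(x))(\varphi(x))^{-1}=\pa_x-e_{ii}(x)-\psi(x)$, leaving implicit exactly the points you spell out, namely that conjugation by $\varphi(x)$ and the extension of $\eta_{\psi(x)}$ are ring homomorphisms fixing the off-diagonal entries and hence commute with the ordered products defining $\rdet$. Your write-up simply makes the paper's one-line argument explicit, and all the details you supply are sound.
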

		\begin{proof}
			The lemma follows from the simple computation: \[\varphi(x)(\pa_x-e_{ii}(x))(\varphi(x))^{-1}=\pa_x-e_{ii}(x)-\psi(x).\]
		\end{proof}
		
		\begin{prop}\label{prop shift}
			Let $v\in (V_{\bLa,\bm z})^{\sln_N}$ be an eigenvector of the Bethe algebra such that $\mc D_v=\mc D_X$ for some $X\in\Omega_{\bLa,\bm z}$, then $\mc D_{\eta_{\psi(x)}(v)}=\mc D_{\varphi(x)\cdot  X}$.
		\end{prop}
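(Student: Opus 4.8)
The plan is to reduce the statement to the conjugation identity of Lemma~\ref{lem conjugation operator} together with the defining property of the bijection $\kappa$ in Theorem~\ref{thm bijection}, which says that the scalar operator attached to a $\cB$-eigenvector is exactly the monic operator with kernel equal to its kernel. First I would unwind what $\mc D_{\eta_{\psi(x)}(v)}$ means. By Lemma~\ref{lem shifted module} the $\cB$-module $V_{\bm\Xi,\bm z}$ is identified, as a vector space, with $V_{\bLa,\bm z}$ in such a way that the coefficient series $B_i(x)$ of $\cDB$ acts on the vector $\eta_{\psi(x)}(v)\in V_{\bm\Xi,\bm z}$ precisely as $\eta_{\psi(x)}(B_i(x))$ acts on $v\in V_{\bLa,\bm z}$. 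Thus $\mc D_{\eta_{\psi(x)}(v)}=\pa_x^N+\sum_i\tilde h_i(x)\pa_x^{N-i}$, where $\tilde h_i(x)$ is the eigenvalue of $\eta_{\psi(x)}(B_i(x))$ on $v$.

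Next I would compute these eigenvalues through the conjugation identity $\eta_{\psi(x)}(\cDB)=\varphi\,\cDB\,\varphi^{-1}$ of Lemma~\ref{lem conjugation operator}, an equality of operators with coefficients in $\mc U(\gl_N[t])[[x^{-1}]]$. Since each $B_j(x)$ commutes with the scalar function $\varphi(x)$ and $\varphi\,\pa_x\,\varphi^{-1}=\pa_x-\psi$, one gets coefficientwise that $\eta_{\psi(x)}(B_i(x))$ is the coefficient of $\pa_x^{N-i}$ in $(\pa_x-\psi)^N+\sum_j B_j(x)(\pa_x-\psi)^{N-j}$, a $\C[[x^{-1}]]$-linear combination of the $B_j(x)$. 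Replacing each $B_j(x)$ by its scalar eigenvalue $h_j(x)$ on $v$ (where $\mc D_v=\pa_x^N+\sum_j h_j(x)\pa_x^{N-j}$) shows that $\eta_{\psi(x)}(\cDB)$ acts on the eigenline of $v$ as the scalar operator $(\pa_x-\psi)^N+\sum_j h_j(x)(\pa_x-\psi)^{N-j}=\varphi\,\mc D_v\,\varphi^{-1}$. Reading off the coefficients yields $\mc D_{\eta_{\psi(x)}(v)}=\varphi\,\mc D_v\,\varphi^{-1}$.

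Finally I would identify the conjugate operator geometrically. By hypothesis $\mc D_v=\mc D_X$, so $\mc D_{\eta_{\psi(x)}(v)}=\varphi\,\mc D_X\,\varphi^{-1}$. Conjugation by a scalar preserves the leading symbol $\pa_x^N$, so this operator is again monic of order $N$. A function $f$ lies in its kernel if and only if $\mc D_X(\varphi^{-1}f)=0$, i.e. if and only if $\varphi^{-1}f\in\Ker\mc D_X=X$, i.e. $f\in\varphi\cdot X$. A monic order-$N$ differential operator is determined by its kernel, hence $\varphi\,\mc D_X\,\varphi^{-1}=\mc D_{\varphi\cdot X}$, and therefore $\mc D_{\eta_{\psi(x)}(v)}=\mc D_{\varphi\cdot X}$, as claimed.

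I expect the only delicate point to be the second step: justifying that applying the operator identity $\eta_{\psi(x)}(\cDB)=\varphi\,\cDB\,\varphi^{-1}$ to $v$ produces the scalar conjugate $\varphi\,\mc D_v\,\varphi^{-1}$, rather than an expression involving derivatives of the vector $v$. The clean way around this is to work at the level of the coefficient power series, treating the $\psi$-dependent factors as scalars and substituting the eigenvalues $h_j(x)$ for the $\cB$-valued $B_j(x)$ before performing the conjugation; the commutativity of $\varphi(x)$ with the $\cB$-action and the fact that $v$ is a simultaneous eigenvector are exactly what make this substitution legitimate.
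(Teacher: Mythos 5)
Your proposal is correct and follows essentially the same route as the paper: the paper's proof is exactly the chain $\mc D_{\eta_{\psi(x)}(v)}=\big(\eta_{\psi(x)}(\cDB)\big)_v=\varphi(x)\,\mc D_v\,(\varphi(x))^{-1}=\varphi(x)\,\mc D_X\,(\varphi(x))^{-1}=\mc D_{\varphi(x)\cdot X}$, using the identification of Lemma~\ref{lem shifted module} and the conjugation identity of Lemma~\ref{lem conjugation operator}. Your careful handling of the ``delicate point'' (substituting the scalar eigenvalues $h_j(x)$ for the $B_j(x)$ coefficientwise, since the conjugated coefficients are $\C[[x^{-1}]]$-linear combinations of the $B_j(x)$) simply makes explicit what the paper leaves implicit in its second equality.
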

		\begin{proof}
			With the identification between the $\mc B$-modules $V_{\bm\Xi,\bm z}$ and $V_{\bLa,\bm z}$, we have
			\[\mc D_{\eta_{\psi(x)}(v)}=\big(\eta_{\psi(x)}(\cDB)\big)_v=\varphi(x)\mc D_v (\varphi(x))^{-1}=\varphi(x)\mc D_X (\varphi(x))^{-1}=\mc D_{\varphi(x)\cdot  X}.\]
			The second equality follows from Lemma \ref{lem conjugation operator}.
		\end{proof}

		\subsection{$\mc I_\varpi$-invariant Bethe vectors and self-dual spaces}Let $\bLa=(\la^{(1)},\dots,\la^{(n)})$ be a tuple of dominant integral $\g_N$-weights. Recall the map $\mc I_\varpi:V_{\bLa^A}\to V_{\bLa^A},$ from \eqref{eq tensor-twin}. 
		
		Note that an $\sln_N$-weight can be lifted to a $\gl_N$-weight such that the $N$-th coordinate of the corresponding $\gl_N$-weight is zero. From now on, we consider $\la_A^{(s)}$ from \eqref{eq A weight} as $\gl_N$-weights obtained from \eqref{eq partition BC}, that is as the partitions with at most $N-1$ parts.
		
		Let $\bm\Xi=(\xi^{(1)},\dots,\xi^{(n)})$ be a sequence of $N$-tuples of integers such that
		\[
		\xi^{(s)}-\la_A^{(s)}=-(\la_A^{(s)})_1(1,\dots,1),\quad s=1,\dots,n.
		\]
		Consider the $\sln_{N}$-module $V_{\bLa^A}$ as the $\gl_{N}$-module $V_{\bLa_A}$, the image of $V_{\bLa_A}$ under $\mc I_\varpi$ in \eqref{eq tensor-twin}, considered as a $\gl_N$-module, is $V_{\bm\Xi}$. Furthermore,
		the image of $(V_{\bLa_A})^{\sln_N}$ under $\mc I_\varpi$ is $(V_{\bm\Xi})^{\sln_N}$.
		
		Let $\bm T=(T_1,\dots,T_N)$ be associated with $\bLa_A,\bm z$, we have
		\[
		T_1\cdots T_N=\prod_{s=1}^n(x-z_s)^{(\la_A^{(s)})_1}.
		\]Let $\varphi(x)=T_1\cdots T_N$ and let $\psi(x)=\varphi'(x)/\varphi(x)$. Hence by Lemma \ref{lem shifted module}, the pull-back of $V_{\bm\Xi,\bm z}$ through $\eta_{\psi(x)}$ is isomorphic to $V_{\bLa_A,\bm z}$. Furthermore, the pull-back of $(V_{\bm\Xi,\bm z})^{\sln_N}$ through $\eta_{\psi(x)}$ is isomorphic to $(V_{\bLa_A,\bm z})^{\sln_N}$.
		
		\begin{thm}\label{thm w-invariant}
			Let $v\in (V_{\bLa_A,\bm z})^{\sln_N}$ be an eigenvector of the $\gl_N$ Bethe algebra $\mc B$ such that $\mc D_v=\mc D_X$ for some $X\in\Omega_{\bLa_A,\bm z}$, then $\mc D_{\eta_{\psi(x)}\circ \mc I_\varpi(v)}=\mc D_{X^\dag}$. Moreover, $X$ is self-dual if and only if $\mc I_\varpi(v)=v$.
		\end{thm}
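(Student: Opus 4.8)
The plan is to translate the entire statement into the $\gl_N$ dictionary furnished by Theorem \ref{thm bijection}, feeding in three structural facts that are already available: the behaviour of the universal operator under the diagram automorphism (Corollary \ref{cor omega}, which rests on Proposition \ref{prop B invol}), the shift automorphism $\eta_{\psi(x)}$ (Proposition \ref{prop shift}), and the operator description of duality (Lemma \ref{lem dual-oper}). The first target is the operator identity $\mc D_{\eta_{\psi(x)}\circ\mc I_\varpi(v)}=\mc D_{X^\dag}$; the self-duality criterion will then follow from the injectivity of $\kappa$.

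First I would check that $\mc I_\varpi(v)$ is again a $\mc B$-eigenvector and compute its associated operator. Since all tensor factors share the same evaluation points $\bm z$, the intertwiner \eqref{eq tensor-twin} upgrades to the current algebra, $\mc I_\varpi\big((g\otimes t^p)w\big)=\varpi(g\otimes t^p)\,\mc I_\varpi(w)$, and since $\varpi(\mc B)=\mc B$ one gets $B_i(x)\mc I_\varpi(v)=\mc I_\varpi\big(\varpi(B_i(x))v\big)$ with $\varpi(B_i(x))v$ a scalar multiple of $v$. Hence $\mc I_\varpi(v)\in(V_{\bm\Xi,\bm z})^{\sln_N}$ is a $\mc B$-eigenvector whose associated operator is $\varpi(\cDB)_v$, which equals $(\mc D_v)^*=(\mc D_X)^*$ by Corollary \ref{cor omega}. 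By Theorem \ref{thm bijection} applied in $V_{\bm\Xi,\bm z}$ this means $\kappa(\mc I_\varpi(v))=Y$, where $Y:=\Ker(\mc D_X)^*\in\Omega_{\bm\Xi,\bm z}$.

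Next I would transport $\mc I_\varpi(v)$ back into $V_{\bLa_A,\bm z}$ through $\eta_{\psi(x)}$, where $\varphi(x)=T_1\cdots T_N=\prod_{s}(x-z_s)^{(\la_A^{(s)})_1}$ and $\psi=\varphi'/\varphi$; the weight shift $\xi^{(s)}\mapsto\la_A^{(s)}$ is exactly the one attached to $\varphi$, so Lemma \ref{lem shifted module} applies and Proposition \ref{prop shift} yields $\mc D_{\eta_{\psi(x)}\circ\mc I_\varpi(v)}=\mc D_{\varphi\cdot Y}$. Now Lemma \ref{lem dual-oper} reads $\mc D_{X^\dag}=\varphi\,(\mc D_X)^*\,\varphi^{-1}$, so $X^\dag=\Ker\mc D_{X^\dag}=\varphi\cdot\Ker(\mc D_X)^*=\varphi\cdot Y$; combining gives $\mc D_{\eta_{\psi(x)}\circ\mc I_\varpi(v)}=\mc D_{X^\dag}$, which is the first assertion.

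For the equivalence, observe that $\bm k=(0,\dots,0)$ forces $(\la_A^{(s)})_N=0$, hence $T_N=1$, so self-duality of $X$ means $X=X^\dag$. Under the identification of Lemma \ref{lem shifted module} the vector $\eta_{\psi(x)}\circ\mc I_\varpi(v)$ is literally $\mc I_\varpi(v)$, so by the first assertion and bijectivity of $\kappa$, the equality $X=X^\dag$ holds if and only if $\mc D_v=\mc D_{\mc I_\varpi(v)}$, i.e.\ if and only if $v$ and $\mc I_\varpi(v)$ are proportional. As $\varpi^2=1$ and $\mc I_\varpi$ fixes the highest weight vectors, $\mc I_\varpi$ is an involution of $(V_{\bLa^A})^{\sln_N}$, so proportionality gives $\mc I_\varpi(v)=\pm v$. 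I expect excluding the sign $-1$ to be the main obstacle, and I would resolve it with the twining trace of Lemma \ref{lem dimension of singular space}: for generic (e.g.\ real) $\bm z$ the spectrum is simple, $\mc I_\varpi$ permutes the eigenlines, the nonfixed ones cancel in pairs, and writing $d_\pm$ for the numbers of $\pm1$-eigenlines one has $d_+-d_-=\dim(V_\bLa)^{\g_N}$ while the number of self-dual points equals $d_++d_-$; comparison with the count $\dim(V_\bLa)^{\g_N}$ of self-dual points (Theorem \ref{bi rep sgr}) forces $d_-=0$. Alternatively, a pure self-dual $X$ carries the symmetric basis of Lemma \ref{lem y} with $y_i=y_{N-i}$, which exhibits the corresponding Bethe vector as a genuine $+1$-eigenvector; either route upgrades proportionality to the stated equality $\mc I_\varpi(v)=v$.
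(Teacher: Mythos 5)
Your argument is essentially the paper's proof: the operator identity is obtained from the same three ingredients (Corollary \ref{cor omega} for $\varpi(\cDB)_v=(\mc D_v)^*$, the conjugation by $\varphi=T_1\cdots T_N$ coming from $\eta_{\psi(x)}$, and Lemma \ref{lem dual-oper}), and the self-duality criterion via proportionality, involutivity of $\mc I_\varpi$, and the twining-trace count of Lemma \ref{lem dimension of singular space} compared against the number $\dim(V_\bLa)^{\g_N}$ of self-dual points is exactly the paper's ``equality of dimensions'' step, which you in fact spell out more explicitly than the paper does. One cosmetic caveat in your first part: you invoke Theorem \ref{thm bijection} ``in $V_{\bm\Xi,\bm z}$'' to write $\kappa(\mc I_\varpi(v))=Y$ with $Y=\Ker(\mc D_X)^*\in\Omega_{\bm\Xi,\bm z}$, but $\bm\Xi$ here consists of $N$-tuples with negative entries (not partitions) and $Y$ is a space of rational functions, so neither Theorem \ref{thm bijection} nor, strictly, Proposition \ref{prop shift} applies to it as stated. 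This is harmless: as in the paper, the chain only needs the operator identities $\mc D_{\mc I_\varpi(v)}=(\mc D_X)^*$, $\mc D_{\eta_{\psi(x)}(w)}=\varphi\,\mc D_w\,\varphi^{-1}$ (Lemma \ref{lem conjugation operator}), and $\mc D_{X^\dag}=\varphi(\mc D_X)^*\varphi^{-1}$, with no Grassmannian interpretation of the intermediate kernel.

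The one genuine gap is at the very end. Your trace argument settles the sign only for generic $\bm z$ (where the $\mc B$-spectrum on $(V_{\bLa_A,\bm z})^{\sln_N}$ is simple), while the theorem asserts $\mc I_\varpi(v)=v$ for every $\bm z\in{\mathring{\bP}}_n$; the paper closes this by remarking that ``the general case is obtained by taking the limit,'' i.e.\ a degeneration/continuity argument from generic $\bm z$, which your write-up omits. Your proposed alternative route does not repair this: Lemma \ref{lem y} produces a basis of a pure self-dual $X$ with $y_i=y_{N-i}$, but nothing in this paper constructs the eigenvector $v$ from the tuple $\bm y_\Gamma$, so the symmetry of $\bm y_\Gamma$ cannot be transferred to the statement $\mc I_\varpi(v)=v$; making that route work would require Bethe-ansatz input (an explicit weight-function formula for $v$ and its $\varpi$-equivariance with the correct sign) that is not available here. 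With the limit step restored, your proof coincides with the paper's.
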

		\begin{proof}
			It follows from Proposition \ref{prop shift}, Corollary \ref{cor omega}, and Lemma \ref{lem dual-oper} that
			\begin{align*}
			\mc D_{\eta_{\psi(x)}\circ \mc I_\varpi(v)}=&\varphi(x)\mc D_{\mc I_\varpi(v)}(\varphi(x))^{-1}=\varphi(x)\varpi(\cDB)_v(\varphi(x))^{-1}\\=&(T_1\dots T_N)(\mc D_X)^*(T_1\dots T_N)^{-1}=\mc D_{X^\dag}.
			\end{align*}
			
			Since $(\la_A^{(s)})_N=0$ for all $s=1,\dots,n$, $X$ has no base points. Therefore $X$ is self-dual if and only if $\mc D_X=\mc D_{X^\dag}$. Suppose $X$ is self-dual, it follows from Theorem \ref{thm bijection} that $\eta_{\psi(x)}\circ \mc I_\varpi(v)$ is a scalar multiple of $v$. By our identification, in terms of an $\sln_N$-module homomorphism, $\eta_{\psi(x)}$ is the identity map. Moreover, since $\mc I_\varpi$ is an involution, we have $\mc I_\varpi(v)=\pm v$.
			
			Finally, generically, we have an eigenbasis of the action of $\mc B$ in $(V_{\bLa_A,\bm z})^{\sln_N}$ (for example for all $\bm z\in \R{\mathring{\bP}}_n$). In such a case, by the equality of dimensions using Lemma \ref{lem dimension of singular space}, we have $\mc I_\varpi(v)=v$. Then the general case is obtained by taking the limit.		
		\end{proof}
		
	\end{appendix}

\end{document}